\newcommand{\circled}[1]{%
  \tikz[baseline=(char.base)]{%
    \node[
      circle,
      draw,
      inner sep=0.1pt,      % 内边距（根据字体微调）
      minimum size=1.10em,    % 固定直径（关键参数！）
    ] (char) {#1};%
  }%
}
\newcommand{\C}{\mathbb{C}}
\newcommand{\R}{\mathbb{R}}
\newcommand{\N}{\mathbb{N}}
\newcommand{\I}{\mathbb{I}}
\newcommand{\J}{\mathbb{J}}
\newcommand{\DD}{\mathcal{D}}
\newcommand{\TT}{\mathcal{T}}
\newcommand{\Z}{\mathbb{Z}}
\newcommand{\F}{\mathbb{F}}
\newcommand{\1}{{\bm 1}}
\newcommand{\nn}{{\vec{k}}}
\newcommand{\rank}{\mathrm{rank}}
\newcommand{\supp}{\mathrm{supp}}
\newtheorem{theorem}{Theorem}[section]
\newtheorem{lemma}{Lemma}[section]
\newtheorem{proposition}[lemma]{Proposition}
\newtheorem{corollary}[lemma]{Corollary}
\newtheorem{definition}[lemma]{Definition}
\newtheorem{conjecture}[lemma]{Conjecture}
\newtheorem{remark}[lemma]{Remark}
\newtheorem{question}[lemma]{Question}
\theoremstyle{definition}
\numberwithin{equation}{section}
\newcommand{\norm}[1]{\left\|#1\right\|}
\newcommand{\abs}[1]{\left|#1\right|}
\begin{document}
	
	\title[Weighted $L^2$ restriction and comparison of nondegeneracy conditions]{Weighted $L^2$ restriction and comparison of nondegeneracy conditions for quadratic manifolds of arbitrary codimensions}
	\author[Zhenbin~Cao,~Jingyue~Li,~Changxing ~Miao~and~Yixuan~Pang]{Zhenbin~Cao,~Jingyue~Li,~Changxing ~Miao~and~Yixuan~Pang}
	
	\date{\today}
	
	\address{Institute of Mathematics, Henan Academy of Sciences, Zhengzhou 450046, China}
	\email{caozhenbin@hnas.ac.cn}
	
	\address{School of Mathematics and Statistics, Zhengzhou University, Zhengzhou 450001, China}
	\email{m\_lijingyue@163.com}
	
	\address{Institute of Applied Physics and Computational Mathematics, Beijing 100088, China}
	\email{miao\_{}changxing@iapcm.ac.cn}
	
	\address{Department of Mathematics, University of Pennsylvania, Philadelphia, PA 19104}
	\email{pyixuan@sas.upenn.edu}
	
	\subjclass[2010]{42B20, 42B37.}
	\keywords{quadratic manifolds, broad-narrow analysis, decoupling, geometric invariant theory}
	
	\begin{abstract}
        We systematically study weighted $L^2$ restriction for quadratic manifolds of arbitrary codimensions by sharp uniform Fourier decay estimates and a refinement of the Du-Zhang method in \cite{dz19}. Comparison with prior results is also discussed. In addition, we obtain an almost complete relation diagram for all existing nondegeneracy conditions for quadratic manifolds of arbitrary codimensions. These conditions come from various topics in harmonic analysis related to ``curvature'': Fourier restriction, decoupling, Fourier decay, Fourier dimension, weighted restriction, and Radon-like transforms. The diagram has many implications, such as ``best possible Stein-Tomas implies best possible $\ell^pL^p$ decoupling''. The proof of the diagram requires a combination of ideas from Fourier analysis, complex analysis, convex geometry, geometric invariant theory, combinatorics, and matrix analysis.
	\end{abstract}

	\maketitle

    \tableofcontents
	
	\section{Introduction}\label{section1}
	
	\subsection{Weighted restriction}\label{Weighted restriction estimates1}\phantom{x}

	Let $d,n \geq 1$. We denote by $\mathbf{Q}(\xi)=(Q_1(\xi),...,Q_n(\xi))$ an $n$-tuple of real quadratic forms in $d$ variables. The graph of such a tuple, $S_{\mathbf{Q}}\coloneqq\{ (\xi,\mathbf{Q}(\xi)): \xi \in [0,1]^d \}$, is a quadratic manifold of dimension $d$ and codimension $n$ in $\mathbb{R}^{d+n}$. Let $\mu_\mathbf{Q}$ be the pushforward of the Lebesgue measure on $\R^d$ via $\xi\mapsto (\xi,\mathbf{Q}(\xi))$. Define the Fourier extension operator $E^\mathbf{Q}$ associated with $S_\mathbf{Q}$ as
    \begin{equation}\label{s1e0}
        E^{\mathbf{Q}}f(x):= \int_{[0,1]^d} e^{ix\cdot(\xi,\mathbf{Q}(\xi))}f(\xi)d\xi,\quad \quad x\in \mathbb{R}^{d+n}.
    \end{equation}

    Let $0< \alpha \leq d+n$. For Lebesgue measurable functions $H:\mathbb{R}^{d+n} \rightarrow [0,1]$, we define
	\begin{equation}\label{def weight}
		A_\alpha(H):=\sup_{x'\in \mathbb{R}^{d+n},r\geq1} \frac{\int_{B(x',r)} H(y)dy}{r^\alpha}.     
	\end{equation}
	We say $H$ is an $\alpha$-dimensional weight if $A_\alpha(H)<\infty$. In this paper, we mainly focus on   weighted $L^2$ restriction estimates of the form
	\begin{equation}\label{s1e1}
		\|E^{\mathbf{Q}}f\|_{L^2(B_R,H)}\lesssim_H R^{s}\|f\|_{L^2([0,1]^d)}.
	\end{equation}
	We denote by $s(\alpha,\mathbf{Q})$ the infimum of $s$ for which (\ref{s1e1}) holds for any $f\in L^2([0,1]^d)$, $R\geq1$, and $\alpha$-dimensional weight $H$.

    \subsubsection{History of weighted restriction for quadratic manifolds}\phantom{x}
    
	The case $n=1$, in which $S_\mathbf{Q}$ is a quadratic hypersurface (paraboloid/hyperbolic paraboloid), has been fully researched. When $S_\mathbf{Q}$ is a paraboloid, i.e.,
    $$  \mathbf{Q}(\xi)=(\xi_1^2+\cdots+\xi_d^2),    $$
    the best upper bounds of $s(\alpha,\mathbf{Q})$ yet are 
	\begin{align}\label{his1}
		~s(\alpha,\mathbf{Q})\leq 
		\begin{cases}
			~	0, \quad \quad\quad \quad    & 0<  \alpha \leq \frac{d}{2},  \\
			~	\frac{2\alpha-d}{4}, \quad \quad \quad\quad 
			&\frac{d}{2}<  \alpha< \frac{d+1}{2},  \\
			~	\frac{\alpha}{2(d+1)}, \quad \quad \quad\quad &\frac{d+1}{2}\leq   \alpha\leq d+1. 
		\end{cases}
	\end{align}
	Besides, (\ref{his1}) is sharp for all $\alpha$ when $d=1$ (see  \cite{mattila87,wolff99}), and sharp for $0<\alpha\leq d/2$ and $d\leq \alpha\leq d+1$ when $d\geq 2$ (see \cite{d20}). The first and second bounds of (\ref{his1}) were derived by Shayya \cite{shayya21} via  global weighted restriction estimates. The last bound in (\ref{his1}) was derived by Du and Zhang \cite{dz19}, where they proved one class of new estimates, named the fractal $L^2$ restriction estimate. (\ref{his1}) were used to obtain the essentially sharp range for pointwise convergence of solutions to the free Schr\"odinger equation  \cite{dgl17,dz19}, and to make new progress on Falconer's distance set conjecture in geometric measure theory \cite{dz19}. When $S_\mathbf{Q}$ is a hyperbolic paraboloid, i.e.,
	$$\mathbf{Q}(\xi)=(\xi_1^2+\cdots+\xi_{d-m}^2-\xi_{d-m+1}^2-\cdots-\xi_d^2)$$
	for some $0<m\leq d/2$, Barron, Erdo\u{g}an, and Harris \cite{beh21} obtained
	\begin{align}\label{his2}
		~s(\alpha,\mathbf{Q})\leq 
		\begin{cases}
			~	0, \quad \quad\quad \quad    & 0<  \alpha \leq \frac{d}{2},   \\
			~	\frac{2\alpha-d}{4}, \quad \quad \quad\quad 
			&\frac{d}{2}<  \alpha< d+1 -m,   \\
			~	\frac{1}{2}, \quad \quad \quad\quad &d+1 -m\leq   \alpha\leq d+1. 
		\end{cases}
	\end{align}
	Moreover, for $m>1$ and $j\in [1,m-1]$, there is
	\begin{align}\label{his3}
		~s(\alpha,\mathbf{Q})\leq 
		\begin{cases}
			~	\frac{\alpha}{2(d+1-j)}, \quad \quad\quad \quad    & \frac{(j-1)(d+1-j)}{m-1}  \leq  \alpha <\frac{j(d+1-j)}{m},   \\
			~	\frac{\alpha-j}{2(d+1-j-m)}, \quad \quad \quad\quad 
			&\frac{j(d+1-j)}{m} \leq   \alpha< \frac{j(d-j)  }{m-1}.
		\end{cases}
	\end{align}
	If $m<d/2$, they also got
	\begin{equation}\label{his4}
		s(\alpha,\mathbf{Q})\leq \frac{\alpha+1}{4}-\frac{d+1}{8},\quad \quad \frac{d}{2}<\alpha <\frac{d+3}{2}.
	\end{equation}
	By considering one variant of the parabolic example of \cite{d20}, they showed that (\ref{his2})-(\ref{his4}) can give sharp bounds for all $\alpha$ for the cases: even $d\geq 2$ and $m=\frac{d}{2}$, odd $d\geq 3$ and $m=\frac{d-1}{2}$. In particular, it also indicates that $s(\alpha,\mathbf{Q})$ is completely determined when $d=2$ and $d=3$ for all $m>0$. Their argument includes two new effects compared to the paraboloid. The first is that the third bound $1/2$ in (\ref{his2}) is sharp in the range $d+1 -m\leq   \alpha\leq d+1$. The second is, in order to obtain (\ref{his3}) and (\ref{his4}), they considered  one variant of the Du-Zhang method, which can explore the lower-dimensional curvature information more fully for the hyperbolic paraboloid.

	%Their proof includes one variant of the Du-Zhang method and applications on the bilinear weighted restriction estimates with more adaptive transverse condition for the hyperbolic paraboloid. 
    
    However, the research on the case $n\geq 2$ is still inadequate. When $d=3$ and $n=2$, Guo and Oh \cite{go22}  gave a preliminary classification of all irreducible quadratic forms $\mathbf{Q}$. Based on this classification, Wang, the first and third authors \cite{cmw24} applied several methods, such as the Du-Zhang method adapted to quadratic manifolds and sharp Stein-Tomas-type inequalities, to study weighted restriction estimates for each individual case. 
    
    In this paper, we will make use of new ingredients and provide systematic results on weighted restriction for quadratic manifolds of arbitrary codimensions. When $d=3$ and $n=2$, our results also improve upon that in \cite{cmw24} in certain ranges.

	%To be specific, Shayya used annulus decomposition, which is in a  similar spirit to Bourgain \cite{bourgain91}, to study the case $0<  \alpha \leq d/2$. When $ d/2<  \alpha< (d+1)/2$, Shayya built a weighted H\"older-type inequality, and achieved the bootstrapping between different $\alpha$. 
	
    \subsubsection{Main result on weighted $L^2$ restriction}\phantom{x}
    
	Before stating our main result, we shall first introduce some notations. For any $N \in \R^{n \times n'}$ ($0 \leq n' \leq n$) and $M \in \R^{d \times d}$, define\footnote{Here we adopt the convention that $R_{N,M}\mathbf{Q} \coloneqq (0)$ if $n'=0$.}
    \begin{align}\label{def:R_{M',M}}
        R_{N,M}\mathbf{Q} \coloneqq (\mathbf{Q} \circ M) \cdot N,
    \end{align}
    where $\mathbf{Q}\circ M$ is the composition of $\mathbf{Q}$ and $M$. We say that $\mathbf{Q}$ and $\mathbf{Q}'$ are equivalent and write $\mathbf{Q}\equiv \mathbf{Q}'$ if $\mathbf{Q}' = R_{N,M}\mathbf{Q}$ for some $(N,M) \in {\rm GL}(n,\R) \times {\rm GL}(d,\R)$.
    \begin{definition}[\cite{gozzk23}]\label{s1d1}
		Given an $n$-tuple of real quadratic forms $\mathbf{Q}=(Q_1,...,Q_n)$ in $d$ variables, we define
		$$  {\rm NV}(\mathbf{Q}) := \# \{    1 \leq d' \leq d : \partial_{\xi_{d'}} Q_{n'} \not\equiv 0 {\rm~for~some~} 1\leq n'\leq n \}      .  $$
		For any $0 \leq d' \leq d$ and $0\leq n'\leq n$, we define
		\begin{equation}\label{s2def2e1}
			\mathfrak{d}_{d',n'}(\mathbf{Q}):= \inf_{\substack{   M \in \mathbb{R}^{d\times d} \\ {\rm rank}(M)=d'   }}  \inf_{\substack{   N \in \mathbb{R}^{n\times n'} \\ {\rm rank}(N)=n'   }} {\rm NV}(R_{N,M}\mathbf{Q}).
	\end{equation}
    \end{definition}
	
	\begin{definition}[\cite{gozzk23}]\label{decouping def for high codim}
		Let $q,p \geq 2$. For any dyadic $K\geq 1$, let $\{\tau\}$ be the partition of $[0,1]^d$ into $K^{-1}$-cubes. Define $\Gamma_{q,p}^d(\mathbf{Q})$ to be the smallest number $\Gamma$ such that
		\begin{equation}\label{eq:decoupling_extension}
			\|E^{\mathbf{Q}}f\|_{L^p(\R^{d+n})} \lesssim_\epsilon K^{\Gamma+\epsilon} \Big(   \sum_{\tau} \|E^{\mathbf{Q}}f_\tau\|_{L^p(\R^{d+n})}^q  \Big)^{\frac{1}{q}}
		\end{equation}
		holds for any measurable function $f$ supported on $[0,1]^d$, all dyadic $K\geq 1$, and all $\epsilon>0$, where $f_\tau \coloneqq f\chi_\tau$ for any $\tau $.
	\end{definition}

	Guo, Oh, Zhang and Zorin-Kranich \cite{gozzk23} used the algebraic quantities $\mathfrak{d}_{d',n'}(\mathbf{Q})$ to measure ``curvature" properties of the quadratic manifold $S_{\mathbf{Q}}$, which allowed  them to give a complete and sharp description of the decoupling exponents $\Gamma_{q,p}^d(\mathbf{Q})$ for all $q,p\geq 2$ and $\mathbf{Q}$. More precisely, they proved: For $2 \leq q \leq p <\infty$, there is
	\begin{equation}\label{dec th1 00}
		\Gamma_{q,p}^d(\textbf{Q})= \max_{0\leq d'\leq d} \max_{0\leq n' \leq n} \Big\{  d'\Big(1-\frac{1}{p}-\frac{1}{q}\Big)- \mathfrak{d}_{d',n'}(\textbf{Q})\Big(\frac{1}{2}-\frac{1}{p}\Big) -\frac{2(n-n')}{p}    \Big\}.
	\end{equation}
	And for $2\leq p<q \leq \infty$, there is
	$$  \Gamma_{q,p}^d(\textbf{Q})=\Gamma_{p,p}^d(\textbf{Q})+ d\Big(\frac{1}{p}-\frac{1}{q}\Big) .   $$
	Since we will fix $q=2$ in the rest of the paper, we abbreviate $\Gamma_{2,p}^d(\mathbf{Q})$ to $\Gamma_p^d(\mathbf{Q})$ for convenience. Besides, for $2\leq k\leq d+1$, we will also need to use the quantity $\Gamma_{p}^{k-2}(\textbf{Q})$, defined by replacing $d$ with $k-2$ in (\ref{dec th1 00}). 
	
    Another quantity was introduced by Gan, Guth and Oh \cite{ggo23} to derive general $k$-linear restriction estimates for quadratic manifolds of arbitrary codimensions. 
    \begin{definition}[\cite{ggo23}]\label{s1d2}
        Suppose that $\mathbf{Q}=(Q_1,...,Q_n)$ is an $n$-tuple of real quadratic forms in $d$ variables. Let $2\leq k\leq d+1$ and $0\leq m\leq d+n$ be integers. Define $X(\mathbf{Q},k,m)$ to be the biggest integer $X\leq d+1$ such that  
        \begin{equation}\label{s1d2e1}
            \sup_{\dim V=m} \dim \{ \xi\in \mathbb{R}^d : \dim (\pi_{V_\xi} (V))<X   \}\leq k-2,    
        \end{equation}
        where $V_\xi$ denotes the tangent space of the graph $S_{\mathbf{Q}}$ at the point $(\xi,\mathbf{Q}(\xi))$, and $\pi_{V_\xi}$ denotes the orthogonal projection from $\mathbb{R}^{d+n}$ onto $V_\xi$. 
    \end{definition}

    Our first main result is as follows. 
    \begin{theorem}\label{th1}
        Let $d,n\geq 1$ and $0<\alpha\leq d+n$. Suppose that $\mathbf{Q}=(Q_1,...,Q_n)$ is an $n$-tuple of real quadratic forms in $d$ variables. Denote by $\tilde{d}$ the largest number $d'$ such that $\mathfrak{d}_{d',n}(\mathbf{Q})=0$. Then we have
        \begin{align}\label{t1e1}
            ~s(\alpha,\mathbf{Q})\leq 
            \begin{cases}
                ~	0, \quad \quad\quad \quad    & 0<  \alpha \leq \frac{\mathfrak{d}_{d,1}(\mathbf{Q})}{2},   \\
                ~	\frac{2\alpha-\mathfrak{d}_{d,1}(\mathbf{Q})}{4}, \quad \quad \quad\quad 
                &\frac{\mathfrak{d}_{d,1}(\mathbf{Q})}{2}<  \alpha< d-\tilde{d}+n,   \\
                ~	\frac{n}{2}, \quad \quad \quad\quad &d-\tilde{d}+n\leq   \alpha\leq d+n,  
            \end{cases}
        \end{align}
        where the first and third estimates and associated ranges in {\rm (\ref{t1e1})} are sharp. Additionally, when $\frac{\mathfrak{d}_{d,1}(\mathbf{Q})}{2}<  \alpha< d-\tilde{d}+n$, we have
        \begin{equation}\label{t1e2}
            s(\alpha,\mathbf{Q})\leq \min_{ \substack{ 2\leq k\leq d+1 \\ p\geq 2 } } \max\left\{   \max_{1\leq m\leq d+n} \frac{m-X(\mathbf{Q},k,m)}{2m}\cdot\alpha ,~\frac{\Gamma_p^{k-2}(\mathbf{Q})}{2}+\frac{d+n-\alpha}{2p}+\frac{\alpha+n-d}{4} \right\}.
        \end{equation}        
    \end{theorem}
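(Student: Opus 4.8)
\emph{Strategy and reductions.} The plan is to handle the three regimes of (\ref{t1e1}) by three separate mechanisms and to obtain (\ref{t1e2}) by a broad--narrow iteration following \cite{dz19}. First observe that $s(\alpha,\mathbf{Q})$ depends only on the equivalence class of $\mathbf{Q}$: a change of variables $M\in{\rm GL}(d,\R)$ together with a recombination $N\in{\rm GL}(n,\R)$ merely replaces $B_R$ by a comparable ellipsoid and an $\alpha$-dimensional weight by one with comparable $A_\alpha$, so I may normalize $\mathbf{Q}$ freely. Next, by standard pigeonholing on the height of $H$ and the locally-constant property of the band-limited function $E^{\mathbf{Q}}f$, it suffices to bound $\|E^{\mathbf{Q}}f\|_{L^2(X)}$ where $X$ is a union of unit balls with $|X\cap B(y,r)|\lesssim r^\alpha$ for all $r\ge1$. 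With $x_B$ the centers of these balls, writing $\|E^{\mathbf{Q}}f\|_{L^2(X)}^2\sim\sum_B|E^{\mathbf{Q}}f(x_B)|^2$, dualizing in $\ell^2$, inserting a smooth cutoff $\psi\equiv1$ on $[0,1]^d$, and applying Schur's test reduces the first two lines of (\ref{t1e1}) to the dyadic bound $\sup_B\sum_{B'}\langle x_B-x_{B'}\rangle^{-\mathfrak{d}_{d,1}(\mathbf{Q})/2}\lesssim R^{\max(0,\alpha-\mathfrak{d}_{d,1}(\mathbf{Q})/2)}$, which uses the $\alpha$-dimensional packing of $\{x_B\}$ together with the sharp uniform Fourier decay estimate $|\widehat{\psi\, d\mu_{\mathbf{Q}}}(\xi)|\lesssim_\psi\langle\xi\rangle^{-\mathfrak{d}_{d,1}(\mathbf{Q})/2}$ (by stationary phase the exponent $\mathfrak{d}_{d,1}(\mathbf{Q})/2=\tfrac12\min_{N\ne0}{\rm rank}(N\cdot\mathbf{Q})$ is best possible). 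This yields $s(\alpha,\mathbf{Q})\le\tfrac12\max(0,\alpha-\mathfrak{d}_{d,1}(\mathbf{Q})/2)$.

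\emph{Third line and sharpness.} The bound $s(\alpha,\mathbf{Q})\le n/2$ for every $\alpha$ is the trivial local estimate: localized to $B_R$, $E^{\mathbf{Q}}f$ has Fourier support in an $R^{-1}$-neighbourhood of $S_{\mathbf{Q}}\subset\R^{d+n}$, of measure $\sim R^{-n}$, whence $\|E^{\mathbf{Q}}f\|_{L^2(B_R)}^2\lesssim R^n\|f\|_2^2$. For the matching lower bound I would use the flat-subspace example: $\mathfrak{d}_{\tilde d,n}(\mathbf{Q})=0$ means $\mathbf{Q}$ vanishes on a $\tilde d$-plane, so taking $f$ the indicator of an $R^{-1}$-slab around that plane one finds $|E^{\mathbf{Q}}f|\sim|{\rm supp}\,f|$ on a box $T$ with $\tilde d$ sides of length $\sim1$ and $d-\tilde d+n$ sides of length $\sim R$; since $\chi_T$ is an $\alpha$-dimensional weight precisely when $\alpha\ge d-\tilde d+n$, this forces $s(\alpha,\mathbf{Q})\ge n/2$ on that range. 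Sharpness of the first range — that $s(\alpha,\mathbf{Q})>0$ once $\alpha>\mathfrak{d}_{d,1}(\mathbf{Q})/2$ — I would obtain from a Knapp-type configuration supported in the flat directions of a combination $N\cdot\mathbf{Q}$ of minimal rank $\mathfrak{d}_{d,1}(\mathbf{Q})$, so that the slowly-decaying Fourier direction is witnessed, refined into a fractal weight in the manner of \cite{d20,beh21}.

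\emph{The curved estimate (\ref{t1e2}).} Fix $2\le k\le d+1$ and $p\ge2$ and posit (\ref{t1e2}) as an induction hypothesis on $R$. At an intermediate scale $K=R^\delta$ partition $[0,1]^d$ into $K^{-1}$-cubes and run the broad--narrow dichotomy of \cite{ggo23}: at $x$, either (broad) there are $k$ cubes whose tangent spaces, together with a generic $m$-plane, project onto a subspace of dimension $\ge X(\mathbf{Q},k,m)$, or (narrow) the relevant cubes lie in a sub-configuration of dimension $\le k-2$. For the broad part I would feed the $k$-linear restriction estimate of \cite{ggo23} into the fractal-localization scheme of \cite{dz19}: tested against $X$, the $\alpha$-dimensional packing converts the multilinear gain into a factor $R^{\frac{m-X(\mathbf{Q},k,m)}{m}\alpha}$, and optimizing over $m$ produces the exponent $\max_{1\le m\le d+n}\tfrac{m-X(\mathbf{Q},k,m)}{2m}\alpha$. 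For the narrow part, on the image of the $(\le k-2)$-dimensional sub-configuration $S_{\mathbf{Q}}$ restricts to a quadratic manifold of dimension $\le k-2$, so I would apply the sharp $\ell^2L^p$ decoupling of \cite{gozzk23} with exponent $\Gamma_p^{k-2}(\mathbf{Q})$, pass from $L^p$ back to $L^2(X)$ by H\"older against $X$ (the bound $|X|\lesssim R^\alpha$ giving the $\tfrac{d+n-\alpha}{2p}$ contribution), parabolically rescale each $K^{-1}$-cube to unit scale, and invoke the induction hypothesis at scale $R/K^2$. As each zoom is by $K^2$, the $\sim\tfrac{\log R}{2\log K}$ decoupling steps accumulate to $R^{\Gamma_p^{k-2}(\mathbf{Q})/2}$, and summing the per-step losses (decoupling, H\"older against $X$, rescaling Jacobian, and the rescaled weight's contribution) as a geometric series closes the induction with narrow exponent $\tfrac{\Gamma_p^{k-2}(\mathbf{Q})}{2}+\tfrac{d+n-\alpha}{2p}+\tfrac{\alpha+n-d}{4}$. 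Taking the maximum of the broad and narrow exponents and then the infimum over $(k,p)$ yields (\ref{t1e2}); as a consistency check, for the elliptic paraboloid the choice $k=d+1$, $p=\tfrac{2(d+1)}{d-1}$ makes both exponents equal $\tfrac{\alpha}{2(d+1)}$, recovering the third line of (\ref{his1}).

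\emph{Main obstacle.} The core of the argument is the curved estimate (\ref{t1e2}): making the Du--Zhang induction function at arbitrary codimension. The delicate points are (i) extracting the fractal/weighted form of the Gan--Guth--Oh $k$-linear inequality with precisely the $m$-dependent exponent above; (ii) aligning the decoupling scale with the dimension $\alpha$ of the weight so that the H\"older loss is exactly $\tfrac{d+n-\alpha}{2p}$ and not worse; and (iii) a secondary induction on the dimension $d$ for the narrow pieces, whose defining subspaces need not be coordinate subspaces, forcing one to track the invariants $\mathfrak{d}_{d',n'}(\mathbf{Q})$ and $X(\mathbf{Q},k,m)$ intrinsically. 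Interlocking these two inductions so that the geometric series sums to the stated exponent is the crux; the remaining ingredients are classical (Schur's test, the trivial local estimate) or are adaptations of the extremizers in \cite{d20,beh21}.
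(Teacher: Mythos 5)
Your Schur-test/$TT^\ast$ route to the first two lines of (\ref{t1e1}) is a valid alternative to the paper's argument, which instead follows Shayya: the paper first proves the global weighted bound $\|E^\mathbf{Q}f\|_{L^p(\R^{d+n},H)}\lesssim A_\alpha(H)^{1/p}\|f\|_2$ for $p>\max\{2,4\alpha/\mathfrak{d}_{d,1}(\mathbf{Q})\}$ via a layer-cake/Parseval argument, then applies H\"older against $\int_{B_R}H$; you instead pigeonhole $H$ into a union of unit balls, dualize pointwise samples, and count dyadic annuli. Both give the same exponents and both hinge on the identical key input, the uniform decay $|\widehat{\varphi\,d\mu_\mathbf{Q}}(\zeta)|\lesssim\langle\zeta\rangle^{-\mathfrak{d}_{d,1}(\mathbf{Q})/2}$ of a smoothly truncated surface measure, so the difference here is largely cosmetic (the paper's global $L^p$ bound is somewhat stronger than needed). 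The third line (Agmon--H\"ormander) and the overall broad--narrow architecture for (\ref{t1e2}) agree with the paper.

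There are, however, three genuine gaps. (1) The uniform decay estimate is itself a main contribution of the paper, not a corollary of stationary phase: stationary phase yields Banner's anisotropic bound $\prod_j(1+|\mu_j(\theta)||\zeta|)^{-1/2}$ with $\theta=\zeta''/|\zeta''|$, and since individual eigenvalues $\mu_j(\theta)$ can vanish or become arbitrarily small as $\theta$ varies, this bound is not uniform in direction. Making it uniform requires showing that the $m=\mathfrak{d}_{d,1}(\mathbf{Q})$ largest eigenvalue magnitudes are bounded below uniformly on $\mathbb{S}^{n-1}$ (Theorem~\ref{thm:U_L_bound_eigen}), which rests on eigenvalue-perturbation continuity plus compactness, together with the identification $\inf_\theta\rank\overline{Q}(\theta)=\mathfrak{d}_{d,1}(\mathbf{Q})$ (Lemma~\ref{lem:alg_char_m}). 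Your parenthetical ``by stationary phase'' elides all of this. (2) In the narrow case of (\ref{t1e2}), the clustering set $Z_V=\{\xi:\dim\pi_{V_\xi}(V)<X(\mathbf{Q},k,m)\}$ is a curved semi-algebraic set of dimension $\le k-2$, not a coordinate $(k-2)$-plane; the paper has to invoke the adaptive multi-scale decoupling Lemma~\ref{dec low var} of \cite{ggo23}, which returns a ladder of scales $K^c\le K_1\le\dots\le K_L\le K^{1/2}$, and the parabolic rescaling and induction are then carried out at scale $R/K_1^2$ rather than $R/K^2$, since the narrow pieces are $K_1^{-1}$-cubes. Your sketch rescales at $K^{-1}$ and does not engage this hierarchy, which is precisely what makes the Du--Zhang induction nontrivial in higher codimension. (3) You verify the lower bound $s(\alpha,\mathbf{Q})\ge n/2$ on $[d-\tilde d+n,\,d+n]$ but not the maximality of that range (i.e.\ $s(\alpha,\mathbf{Q})<n/2$ for $\alpha<d-\tilde d+n$), which the paper derives from the decoupling-only exponent (\ref{s3ff1}) with a suitable $p>2$ together with monotonicity properties of $\mathfrak{d}_{d',n'}(\mathbf{Q})$.
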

    \begin{remark}
        Theorem~{\rm\ref{th1}} is a very general result that applies to all possible $\mathbf{Q}$. But in practice, we do not need to compute the upper bounds for $s(\alpha,\mathbf{Q})$ for each individual $\mathbf{Q}$: It suffices to pick one $\mathbf{Q}$ from each equivalence class, by observing that equivalent quadratic forms enjoy the same weighted restriction theory.
    \end{remark}

    In general, finding the exact values of $X(\mathbf{Q},k,m)$ and $\Gamma_p^{k-2}(\mathbf{Q})$ in (\ref{t1e2}) is not an easy task. Though $\Gamma_p^{k-2}(\mathbf{Q})$ is completely determined by $\mathfrak{d}_{d',n'}(\mathbf{Q})$ in view of (\ref{dec th1 00}), computing $\mathfrak{d}_{d',n'}(\mathbf{Q})$ for all $0\leq d'\leq d$ and $0\leq n'\leq n$ can still be hard. In Appendix ~A~and~B of \cite{ggo23}, Gan, Guth and Oh provided algorithms to compute $\mathfrak{d}_{d',n'}(\mathbf{Q})$ and $X(\mathbf{Q},k,m)$ by using tools from real algebraic geometry. However, their algorithms can be very cumbersome to work out by hand, and in some cases it is the most convenient (although still not easy) to directly compute the quantities via ad hoc methods. For these reasons, we will show how to calculate the upper bounds in (\ref{t1e1}) and (\ref{t1e2}) for a host of special cases, which can also be regarded as applications of Theorem~\ref{th1}: 
    \begin{itemize}
        \item When $n=1$ and $S_\mathbf{Q}$ is a paraboloid or a hyperbolic paraboloid, Theorem \ref{th1} can recover (\ref{his1}) and (\ref{his2})-(\ref{his3}) respectively (see Corollary~\ref{cor1}~and~\ref{cor2}).
        \item When $d=3$ and $n=2$, Theorem \ref{th1} can further improve upon Theorem 1.3 in \cite{cmw24}. Not only that, we will also give a complete classification of all quadratic forms $\mathbf{Q}$ when $d+n\leq 5$ (see Lemma \ref{addth2}) and obtain the corresponding weighted restriction estimates (see Corollary~\ref{th3}). Our classification refines that of Guo and Oh \cite{go22}.
        \item When $n=2$, we obtain weighted restriction estimates for a small but interesting class of manifolds with a good curvature property introduced in \cite{ggo23}, called good manifolds (see Corollary~\ref{cor:good_mfd}). The calculation also relies on our study of the relations between various nondegeneracy conditions for quadratic manifolds, which will be discussed in Subsection~\ref{subsec:relations}.
    \end{itemize}

    \subsubsection{Application in geometric measure theory}\phantom{x}
    
    A direct application of Theorem \ref{th1} is the average Fourier decay rates of fractal measures along quadratic manifolds. As mentioned before, the $n=1$ case can be further applied to Falconer's distance set conjecture. 
	
	For any set $A\subset\mathbb{R}^{d+n}$, we denote by $\mathcal{M}(A)$ the collection of all nonnegative finite Borel measures with compact support contained in $ A$. 
	For $\alpha\in(0,d+n]$, we define the $\alpha$-dimensional energy of $\mu\in\mathcal{M}(B^{d+n}(0,1))$ as
	$$I_{\alpha}(\mu):=\int_{\R^{d+n}}\int_{\R^{d+n}}\frac{d\mu(x)d\mu(y)}{|x-y|^{\alpha}}=C_{\alpha}\int_{\R^{d+n}} |\xi|^{\alpha-d-n}|\widehat{\mu}(\xi)|^2d\xi.$$
	Let $\beta(\alpha,\mathbf{Q})$ denote the supremum of the number $\beta>0$ for which
	\begin{align}\label{WZJ1}
		\int_{ S_{\mathbf{Q}}}|\widehat{\mu}(R\xi)|^2d\sigma(\xi)\lesssim R^{-\beta},\quad\quad\forall ~R>1,
	\end{align}
	for all $\mu\in\mathcal{M}(B^{d+n}(0,1))$ with $I_{\alpha}(\mu)<\infty$. 
	
	When $n=1$ and $S_\mathbf{Q}$ is a hyperbolic paraboloid, Barron, Erdo\u{g}an, and Harris \cite{beh21} established the following relation: 
	\begin{align}\label{W1}
		\beta(\alpha,\mathbf{Q})=\alpha-2s(\alpha,\mathbf{Q}).
	\end{align}
	In fact, one can easily check that their proof carries over to all quadratic forms  $\mathbf{Q}$. Now by combining Theorem \ref{th1} with (\ref{W1}), we get the following corollary.
    
    \begin{corollary}\label{th00000}
		Let $d,n\geq 1$ and $0<\alpha\leq d+n$. Suppose that $\mathbf{Q}=(Q_1,...,Q_n)$ is an $n$-tuple of real quadratic forms in $d$ variables. Then we have
		\begin{align}\label{t1000e1}
			~\beta(\alpha,\mathbf{Q})\geq 
			\begin{cases}
				~	\alpha, \quad \quad\quad \quad    & 0<  \alpha \leq \frac{\mathfrak{d}_{d,1}(\mathbf{Q})}{2},   \\
				~	\frac{\mathfrak{d}_{d,1}(\mathbf{Q})}{2}, \quad \quad \quad\quad 
				&\frac{\mathfrak{d}_{d,1}(\mathbf{Q})}{2}<  \alpha< d-\tilde{d}+n,   \\
				~	\alpha-n, \quad \quad \quad\quad &d-\tilde{d}+n\leq   \alpha\leq d+n,  
			\end{cases}
		\end{align}
		where the first and third estimates and associated ranges in {\rm (\ref{t1000e1})} are sharp. Additionally, when $\frac{\mathfrak{d}_{d,1}(\mathbf{Q})}{2}<  \alpha< d-\tilde{d}+n$, we have
		\begin{equation}\label{t1000e2}
			\beta(\alpha,\mathbf{Q})\geq \max_{ \substack{ 2\leq k\leq d+1 \\ p\geq 2 } } \min\left\{   \min_{1\leq m\leq d+n} \frac{X(\mathbf{Q},k,m)}{m}\cdot\alpha ,~\frac{d+\alpha-n}{2}  - \frac{d+n-\alpha}{p} - \Gamma_p^{k-2}(\mathbf{Q})\right\}.
		\end{equation}
	\end{corollary}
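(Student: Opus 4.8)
The plan is to derive Corollary~\ref{th00000} as a direct consequence of Theorem~\ref{th1} together with the identity $\beta(\alpha,\mathbf{Q})=\alpha-2s(\alpha,\mathbf{Q})$ in \eqref{W1}, whose proof (given in \cite{beh21} for hyperbolic paraboloids) carries over verbatim to arbitrary $n$-tuples $\mathbf{Q}$ of real quadratic forms. First I would recall why \eqref{W1} holds in general: expanding $\int_{S_\mathbf{Q}}|\widehat{\mu}(R\xi)|^2\,d\sigma(\xi)$ and writing $\widehat{\mu}$ as an average of modulated exponentials, one recognizes the left-hand side as $R^{-(d+n)}$ times a weighted $L^2$ norm of $E^\mathbf{Q}$ applied to a density built from $\mu$, where the relevant weight $H$ is (essentially) $|\widehat{\mu}|^2$ localized to a ball of radius $\sim R$; the hypothesis $I_\alpha(\mu)<\infty$ forces $H$ to be an $\alpha$-dimensional weight up to the energy constant via the Fourier-analytic formula $I_\alpha(\mu)=C_\alpha\int|\xi|^{\alpha-d-n}|\widehat\mu(\xi)|^2\,d\xi$ and a dyadic pigeonholing. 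This converts \eqref{s1e1} with exponent $s$ into decay \eqref{WZJ1} with exponent $\beta=\alpha-2s$, and conversely; taking suprema/infima gives \eqref{W1}. None of this step uses the hypersurface structure, only that $S_\mathbf{Q}$ is the graph of $\mathbf{Q}$ over $[0,1]^d$ with the pushforward measure $\mu_\mathbf{Q}$.

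Next I would simply substitute the three-case bound \eqref{t1e1} into $\beta=\alpha-2s$. In the first regime $0<\alpha\le \mathfrak{d}_{d,1}(\mathbf{Q})/2$ we have $s(\alpha,\mathbf{Q})\le 0$, hence $\beta(\alpha,\mathbf{Q})\ge \alpha$; in the middle regime $s\le \frac{2\alpha-\mathfrak{d}_{d,1}(\mathbf{Q})}{4}$ gives $\beta\ge \alpha - \frac{2\alpha-\mathfrak{d}_{d,1}(\mathbf{Q})}{2}=\frac{\mathfrak{d}_{d,1}(\mathbf{Q})}{2}$; in the last regime $s\le n/2$ gives $\beta\ge \alpha-n$. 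This reproduces \eqref{t1000e1}. The sharpness assertions for the first and third cases transfer mechanically from the corresponding sharpness in Theorem~\ref{th1}: since $\beta=\alpha-2s$ is an exact equality (not merely an inequality), an optimal lower bound for $\beta$ is equivalent to an optimal upper bound for $s$, so the extremizing measures constructed for Theorem~\ref{th1} (the fractal/Radon-type examples and the flat-subspace example witnessing $d-\tilde d+n$) witness sharpness here as well.

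Finally, for the refined bound \eqref{t1000e2} in the middle range, I would plug \eqref{t1e2} into $\beta=\alpha-2s$ and push the $\alpha-2(\cdot)$ through the $\min_{k,p}\max\{\cdots,\cdots\}$. Using $\alpha - 2\min_{k,p}\max\{A_{k,p},B_{k,p}\} = \max_{k,p}\min\{\alpha-2A_{k,p},\,\alpha-2B_{k,p}\}$, the first term $\alpha - 2\max_{1\le m\le d+n}\frac{m-X(\mathbf{Q},k,m)}{2m}\alpha$ becomes $\min_{1\le m\le d+n}\big(\alpha - \frac{m-X(\mathbf{Q},k,m)}{m}\alpha\big) = \min_{1\le m\le d+n}\frac{X(\mathbf{Q},k,m)}{m}\alpha$, and the second term $\alpha - 2\big(\frac{\Gamma_p^{k-2}(\mathbf{Q})}{2}+\frac{d+n-\alpha}{2p}+\frac{\alpha+n-d}{4}\big)$ becomes $\alpha - \Gamma_p^{k-2}(\mathbf{Q}) - \frac{d+n-\alpha}{p} - \frac{\alpha+n-d}{2} = \frac{d+\alpha-n}{2} - \frac{d+n-\alpha}{p} - \Gamma_p^{k-2}(\mathbf{Q})$, exactly matching \eqref{t1000e2}. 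There is no real obstacle here; the only point requiring a line of care is the bookkeeping that $\min$ and $\max$ interchange correctly under the order-reversing affine map $s\mapsto \alpha-2s$, and that the ranges of $\alpha$ in \eqref{t1000e1} coincide with those in \eqref{t1e1} since the case boundaries are unchanged by the transformation.
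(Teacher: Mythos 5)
The proposal is correct and takes essentially the same route as the paper: the paper itself deduces Corollary~\ref{th00000} simply by combining Theorem~\ref{th1} with the identity (\ref{W1}), and your write-up just carries out the algebra of substituting $\beta=\alpha-2s$ into (\ref{t1e1}) and (\ref{t1e2}) (including the correct interchange of $\min$/$\max$ under the order-reversing map $s\mapsto\alpha-2s$). The sketch of why (\ref{W1}) holds for general $\mathbf{Q}$ is informal, but since the paper explicitly asserts that the Barron--Erdo\u{g}an--Harris proof carries over verbatim and takes (\ref{W1}) as given, your reliance on it is exactly what the paper intends.
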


    \subsubsection{Proof strategy for Theorem~\ref{th1}}\phantom{x}

    Among the three bounds in (\ref{t1e1}), the last one is trivial by the classical Agmon-H\"ormander inequality:
    \begin{equation}\label{AH ineq}
        \|E^{\mathbf{Q}}f\|_{L^2(B_R)}\lesssim R^{\frac{n}{2}}\|f\|_{L^2([0,1]^d)}. 
    \end{equation}
    And the second one immediately follows from the first one by a weighted H\"older-type inequality due to Shayya \cite{shayya21}. So it remains to consider the first bound in (\ref{t1e1}).
    
    We will  basically follow the strategy adopted by Shayya \cite{shayya21}, which combines the layer cake representation and Parseval's relation to reduce everything to the uniform Fourier decay of the surface measure (see (\ref{uniform decay e3}) below). Technically speaking, such decay is exploited by using an isotropic annulus decomposition, and fully determines the related range of $\alpha$.
    
    Our main contribution is obtaining a complete algebraic characterization of the sharp uniform decay rate for an arbitrary quadratic manifold of higher codimensions, which we now elaborate on. To begin with, we shall always consider smoothly truncated surface measures. This is important for us, because rough truncation may cause an unnecessary loss of Fourier decay along $(\xi,0)$. Thankfully, such a ``smoothing trick'' does not affect Shayya's argument.
    
	Let $\zeta=(\xi,\eta)$ with $\xi \in \mathbb{R}^d$ and $\eta \in \mathbb{R}^n$. When $n=1$ and $S_\mathbf{Q}$ is a paraboloid or a hyperbolic paraboloid, we have the following classical result
	\begin{equation}\label{unifrom decay e1}
		|\widehat{ \varphi d\mu_{\mathbf{Q}} }(\zeta)|\lesssim (1+|\zeta|)^{-\frac{d}{2}},  
	\end{equation}
	where $\varphi \in C_c^\infty([0,1]^d)$. However, the case for $n\geq 2$ becomes more complex. 
	Banner \cite{banner02} applied the stationary phase method to get the (nonuniform) Fourier decay estimate
	\begin{equation}\label{uniform decay e2}
		|\widehat{\varphi d\mu_{\mathbf{Q}}}(\zeta)| \lesssim \Pi_{j=1}^d(1+|\mu_j(\theta)||\zeta|)^{-\frac{1}{2}} , 
	\end{equation}
	where $\theta = \eta/|\eta| \in \mathbb{S}^{n-1}$, and $\{\mu_j(\theta)\}_{j=1}^d$ denote the eigenvalues of the matrix 
	$$\overline{Q}(\theta) \coloneqq \sum_{j=1}^n \theta_j\nabla_\xi^2 Q_j.\footnote{$\nabla_\xi^2 Q_j$ denotes the Hessian matrix of second derivatives of $Q_j$.}$$
	In particular, if $n=1$ and $S_\mathbf{Q}$ is a paraboloid or a hyperbolic paraboloid, we easily obtain $|\mu_j(\theta)|\sim 1$ for any $j=1,...,d$, and (\ref{uniform decay e2}) turns into (\ref{unifrom decay e1}) immediately. 
    
    However, Banner's estimate (\ref{uniform decay e2}) also has a few drawbacks. The major problem is the nonuniformity in $\theta$: In most higher-codimensional cases, $|\mu_j(\theta)|$ may vanish for some $j$ and $\theta$. In other words, when $n\geq 2$, the Fourier decay can be anisotropic along different normal directions. So the main difficulty is to quantify such directional singularity due to degenerate eigenvalues. We completely solved this problem by proving the optimal uniform Fourier decay estimate for quadratic manifolds of arbitrary codimensions (see Corollary~\ref{cor:recover_banner}):
	\begin{equation}\label{uniform decay e3}
		|\widehat{\varphi d\mu_{\mathbf{Q}}}(\zeta) | \lesssim (1+|\zeta|)^{-\frac{\mathfrak{d}_{d,1}(\mathbf{Q})}{2}} . 
	\end{equation}
    Our proof relies on two key steps that convert (\ref{uniform decay e2}) into (\ref{uniform decay e3}). Let $m(\theta)$ be the number of non-zero eigenvalues $\mu_j(\theta)$. The first step is to utilize the eigenvalue perturbation theory in matrix analysis to show the local stability of $m(\theta)$, which ensures the uniformity of the decay estimate and connects the uniform decay rate to $\min_{\theta} m(\theta)$. And the second step is to give a complete algebraic characterization of $\min_{\theta} m(\theta)$ by using $\mathfrak{d}_{d,1}(\mathbf{Q})$ (see Lemma~\ref{lem:alg_char_m}).

    Another awkward thing is that Banner's proof of (\ref{uniform decay e2}) relies on some black boxes in the theory of oscillatory integrals, whose details were skipped. More precisely, we need to follow the proof of Proposition 6 in Chapter VIII of Stein~\cite{stein93} instead of directly using any existing results. This motivates us to provide an elementary and self-contained alternative proof of Lemma 2.1.1 of \cite{banner02}, which does not depend on deep results for oscillatory integrals. The key point is to build one slightly stronger estimate for the extension operator smoothed by Gaussian functions (see Theorem~\ref{thm:uniform_decay}), which allows us to fully exploit the fact that $\mathbf{Q}$ is a quadratic form, and everything will come down to direct computations. Such a ``Gaussian smoothing trick'' was also used by Stein~\cite{stein93} and Mockenhaupt~\cite{mockenhaupt96}. \begin{remark}
        The results and techniques that we develop for proving {\rm (\ref{uniform decay e3})} will also play an important role in the study of relationships between nondegeneracy conditions {\rm(}see Theorem~\ref{thm:relation_diagram} in the next subsection{\rm)}, which constitutes the other main part of this paper.
    \end{remark}

	For the bound in (\ref{t1e2}), we will basically apply the strategy from Du and Zhang \cite{dz19}. When $n=1$ and $S_\mathbf{Q}$ is a paraboloid, they established a new type of estimate, which is called the fractal $L^2$ restriction estimate:
	\begin{equation}\label{fractal l2 restriction}
		\|E^{\mathbf{Q}}f\|_{L^2(X)}  \lesssim \gamma^{\frac{1}{d+1}} R^{\frac{\alpha}{2(d+1)}+\epsilon}\|f\|_{L^2([0,1]^d)}.
	\end{equation}
	Here $X$ is a union of lattice unit cubes in $B_R$, and $\gamma$ is a refined parameter that can measure the sparsity of $X$. Via the locally constant property, this estimate implies (\ref{s1e1}). The main method of Du and Zhang is to divide the original $(d+1)$-dimensional problem into the broad case and the narrow case ($d$-dimensional case). For the broad case, the multilinear restriction estimate can offer a good bound. For the narrow case, $d$-dimensional $\ell^2L^p$ decoupling and incidence argument can be of use to close the induction on scales. 
    
    Although considering such the broad-narrow analysis is enough in the parabolic case, when $n=1$ and $S_\mathbf{Q}$ is a hyperbolic paraboloid, things become more complicated. In fact, Barron, Erdo\u{g}an, and Harris \cite{beh21} considered the broad-narrow analysis with a general dividing dimension $k$ ($2\leq k\leq d+1$),  and used $k$-linear restriction estimates to obtain new weighted restriction results for intermediate $\alpha$'s.

	When $n \geq 2$, more difficulties prevent us from further promoting the argument in \cite{beh21}, such as the lack of $k$-linear restriction estimates for quadratic manifolds. In \cite{cmw24}, Wang, the first and third authors generalized the classical transversality condition for $n=1$ to higher-codimensional cases in a natural way (Definition~5.3 in \cite{cmw24}), and built the corresponding $k$-linear restriction estimates (Theorem~5.6 in \cite{cmw24}). However, such results have two shortcomings: Firstly, they does not apply when $d<n$; secondly, they require the specific expression of $\mathbf{Q}$ to determine the dividing dimension in the broad-narrow analysis. These drawbacks confine the authors of \cite{cmw24} to the case $d=3$ and $n=2$.
    
    Recently, more flexible $k$-linear restriction estimates ($2\leq k\leq d+1$) under another more robust type of transversality condition called the $\theta$-uniform condition (see  Definition~\ref{def:theta_uniform}) were built in \cite{gozzk23} when $k=d+1$ and \cite{ggo23} when $2\leq k\leq d+1$ independently. In this paper, we will perform the broad-narrow analysis with a general dividing dimension $k$ and adopt their $k$-linear restriction estimates to prove (\ref{t1e2}). Our argument can recover both the parabolic case in \cite{dz19} and the hyperbolic case in \cite{beh21}.

    \subsubsection{Sharpness of several estimates in Theorem \ref{th1}}\phantom{x}

    Now we briefly discuss the sharpness of the first and third bounds and associated ranges in (\ref{t1e1}).
    The first bound and its associated range in (\ref{t1e1}) are derived via the uniform Fourier decay estimate (\ref{uniform decay e3}), in which we only care about those directions with the worst Fourier decay rate. Though $s(\alpha,\mathbf{Q})=0$ is automatically sharp once established, an interesting question is: Is it possible to exploit more delicate anisotropic Fourier decay estimates or use a more refined argument to give a larger range of $\alpha$ for which $s(\alpha,\mathbf{Q})=0$? The reason why one might expect an affirmative answer is that there have been successful precedents for taking advantage of such anisotropy in the research on the Fourier restriction. For example, when $n=2$ and $\mathbf{Q}$ satisfies the (CM) condition (see Definition~\ref{def:CM}), Christ \cite{christ82} and Mockenhaupt \cite{mockenhaupt96} applied the interpolation of analytic families of operators to study $L^2$-based restriction estimates. In particular, they considered the family of analytic operators $T_z$ with the kernels 
    $$  K_z(\zeta):= \frac{1}{\Gamma(2+dz)} |\det(\overline{Q}(\theta))|^z \widehat{ \varphi d\mu_{\mathbf{Q}} }(\zeta),  $$
	where $\Gamma$ denotes the gamma function, and $z$ satisfies $  -2/d \leq \Re z \leq 1/2 $\footnote{$\Re z$ denotes the real part of $z$. }. Note that when $\Re z = 1/2 $,  the second factor in the kernel can exactly compensate for the directional singularity in (\ref{uniform decay e2}) caused by degenerate eigenvalues, which allows us to ignore those degenerate eigenvalues and regard $\widehat{\varphi d\mu_{\mathbf{Q}} }$ as owning uniform decay of order $d/2$ (which is the best possible) in the proof. 
    
    However, in the current setting of weighted restriction, we will take one example to demonstrate that $s(\alpha,\mathbf{Q})>0$ whenever $\alpha>\frac{\mathfrak{d}_{d,1}(\mathbf{Q})}{2} $, which means that the range $0<  \alpha \leq \frac{\mathfrak{d}_{d,1}(\mathbf{Q})}{2}$ of the first bound in (\ref{t1e1}) is optimal. This might be surprising at first glance, but should be acceptable with a little thinking. The main reason is that there is no presumed angular information in the definition of weights (\ref{def weight}), so we may not capture or eliminate the directional singularity in (\ref{uniform decay e2}) even if it only happens on a null set.

    As for the third bound and its associated range in (\ref{t1e1}), first note that the second bound in (\ref{t1e1}) will also reach $n/2$ when $\alpha=\frac{\mathfrak{d}_{d,1}(\mathbf{Q})}{2}+n$, which implies that $s(\alpha,\mathbf{Q})\leq n/2$ for $\frac{\mathfrak{d}_{d,1}(\mathbf{Q})}{2}+n \leq \alpha\leq d+n$. Unfortunately, this estimate is not optimal in many cases, such as when $S_\mathbf{Q}$ is a paraboloid. In fact, we will take one example to show the sharpness of the trivial bound $n/2$ in (\ref{t1e1}) in the range $d-\tilde{d}+n\leq   \alpha\leq d+n$. This generalizes the sharpness of the third bound in (\ref{his2}) where $S_\mathbf{Q}$ is a hyperbolic paraboloid. Furthermore, we will apply the Du-Zhang method to show that $s(\alpha,\mathbf{Q})< n/2$ whenever $\alpha< d-\tilde{d}+n$, which means that the range $d-\tilde{d}+n\leq \alpha\leq d+n$ of the third bound in (\ref{t1e1}) is the maximal possible.

	\subsection{Comparison of nondegeneracy conditions}\label{subsec:relations}\phantom{x}

    Recall that our Theorem~\ref{th1} relies on both uniform Fourier decay estimates (closely related to $\mathfrak{d}_{d,1}(\mathbf{Q})$) and $\ell^2L^p$ decoupling inequalities, while Corollary~\ref{th3} even relies on Stein-Tomas-type inequalities in some ranges. In short, in different ranges, different tools may dominate. A better understanding of how these tools are related to each other would be helpful in the study of weighted restriction, especially in higher-codimensional cases. 
    
    More precisely, when $n\geq2$, different tools usually correspond to different nondegeneracy conditions, and this is in contrast with  the $n=1$ case, where the Gaussian curvature almost decides everything. There have been various nondegeneracy conditions for quadratic manifolds in the literature, and different notions are introduced to describe different phenomena from different perspectives, including algebra ($\mathfrak{d}_{d',n'}(\mathbf{Q})$), geometry (Oberlin affine curvature) and analysis (Fourier restriction/decoupling). However, most of them have only been introduced and studied in the recent years, and it was unclear how they are related to each other. Sorting out these issues systematically is a natural question and will shed light on where we should go for future research. In other words, many intrinsic structures will be hidden if we only look at the $n=1$ case, so it is particularly interesting and even necessary to study various nondegeneracy conditions when $n\geq 2$. In fact, even the problem of finding the right nondegeneracy condition associated with a given tool itself is nontrivial. 
    
    In the second main theorem of this paper (Theorem~\ref{thm:relation_diagram} below), we will provide an almost complete diagram of the relationships between different kinds of ``maximal'' nondegeneracy  for quadratic manifolds, which is the most interesting case. Aside from our motivation for studying weighted restriction, this diagram also has its independent value and is instructive for many other topics in harmonic analysis related to curvature, which will be discussed in full detail later on.

    \subsubsection{Nondegeneracy conditions}\phantom{x}
    
    Before stating the theorem, we shall first introduce a bunch of nondegeneracy conditions, each of which owns a special place in the literature.
	
    \begin{definition}[\cite{christ82}, \cite{mockenhaupt96}]\label{def:CM}
        We say that $\mathbf{Q}$ satisfies the ${\rm (CM)}$ condition\footnote{If $n=1$, then by interpreting $d\sigma$ as the point mass on $\mathbb{S}^0 = \{-1,+1\}$, we can simply identify the (CM) condition as $\det(Q_1)\neq 0$, i.e., nonvanishing Gaussian curvature.} if
        \begin{equation}\label{eq:CM}
            \int_{\mathbb{S}^{n-1}}  |\det(\overline{Q}(\theta))|^{-\gamma}d\sigma(\theta) <\infty,   \quad \quad \forall\, 0<\gamma<\frac{n}{d},    
        \end{equation}
        and that $\mathbf{Q}$ satisfies the endpoint ${\rm (CM)}$ condition if 
        \begin{equation}\label{eq:CM_endpoint}
            \int_{\mathbb{S}^{n-1}}  |\det(\overline{Q}(\theta))|^{-\frac{n}{d}}d\sigma(\theta) <\infty,    
        \end{equation}
         where $\sigma$ is the standard surface measure on $\mathbb{S}^{n-1}$.
    \end{definition}
	
    \begin{definition}[\cite{gozzk23}]\label{def:str_non_degen}
        We say that $\mathbf{Q}$ is \textit{strongly nondegenerate} if
        \begin{equation}
            \mathfrak{d}_{d-m,n'}(\mathbf{Q}) \geq \frac{n'd}{n}-m,   
        \end{equation}
        for every $m$ with $0\leq m\leq d$ and every $n'$ with $0\leq n' \leq n$.
    \end{definition}
	
    \begin{definition}[\cite{gozzk23}]\label{def:non_degen}
        We say that $\mathbf{Q}$ is \textit{nondegenerate} if 
        \begin{align}
            \mathfrak{d}_{d-m,n'}(\mathbf{Q}) \geq \frac{n'd}{n} - 2m,
        \end{align}
        for every $m$ with $0 \leq m <d/2$ and every $n'$ with $0 \leq n' \leq n$.
    \end{definition}

    \begin{definition}[\cite{gozzk23}]\label{def:best_l2Lp_dec}
        We say that $\mathbf{Q}$ satisfies \textit{the best $\ell^2L^p$ decoupling} if
        \begin{equation}
            \Gamma_p^d(\mathbf{Q}) = \max\Big\{0, d\Big(\frac{1}{2} - \frac{1}{p}\Big) - \frac{2n}{p}\Big\}   
        \end{equation}
        for $2 \leq p < \infty$.
    \end{definition}

    \begin{definition}[\cite{gozzk23}]\label{def:best_lpLp_dec}
        We say that $\mathbf{Q}$ satisfies \textit{the best $\ell^pL^p$ decoupling} if
        \begin{equation}
            \Gamma_{p,p}^d(\mathbf{Q}) = \max\Big\{ d\Big(\frac{1}{2} - \frac{1}{p}\Big), 2d\Big(\frac{1}{2} - \frac{1}{p}\Big) - \frac{2n}{p} \Big\}   
        \end{equation}
        for $2 \leq p < \infty$.
    \end{definition}

    \begin{definition}[\cite{ggo23}]\label{def:good_mfd}
        We say 
        $$   \mathbf{Q}(\xi)=(a_1\xi_1^2 + \cdots + a_d\xi_d^2,b_1\xi_1^2 + \cdots + b_d\xi_d^2)  $$
        is ``\textit{good}'' if $a_i$'s are all positive and every two by two minor of the following matrix 
        \begin{equation}
            \begin{pmatrix}
                a_1 & a_2 &... &a_n \\
                b_1 & b_2 &... &b_n
            \end{pmatrix}    
        \end{equation}
        has rank two. In this case, we may also call $S_\mathbf{Q}$ a good manifold.
    \end{definition}

    Another key nondegeneracy notion is ``well-curved'', which was introduced by Gressman \cite{gressman19} and enjoys a rich background from geometric invariant theory. The precise definition, which is quite technical, will be left to Subsection~\ref{obe_aff_cur_con_sec2_1}. Here we only outline the key points to get the reader a rough sense of what it is about. Intuitively, we say $\mathbf{Q}$ is well-curved if a special measure associated with $S_\mathbf{Q}$, called the ``Oberlin affine measure'', is everywhere nonvanishing. The construction of this measure relies on the Kempf-Ness minimum vector calculations in geometric invariant theory (see Section~2 in \cite{gressman19}). It is worth mentioning that well-curvedness is closely related to the ``Oberlin affine curvature condition'' (or ``Oberlin condition'' for short, see  Definition~\ref{def:oberlin_condition}), which is kind of like a Knapp-type testing condition.

    We also record the definition of ``Salem'' here for the reader's convenience, which is closely related to the optimal uniform Fourier decay of the surface measure.
    \begin{definition}\label{def:salem}
        The Fourier dimension of a set $A \subset \R^{d+n}$ is defined to be
	\begin{align*}
		\dim_F A \coloneqq \sup\Big\{ s\leq d+n: \exists ~\mu \in \mathcal{M}(A) {\rm~ such ~that~ } |\widehat{\mu}(x)| \lesssim |x|^{-s/2} ,~\forall\, x\in\R^{d+n} \Big\}.
	\end{align*}
        We say that a set $A$ is a Salem set if $\dim_F A  = \dim_H A$ {\rm(}Hausdorff dimension{\rm)}, and $\mathbf{Q}$ is Salem if $S_\mathbf{Q}$ is a Salem set.
    \end{definition}

    \subsubsection{Main result on nondegeneracy conditions}\phantom{x}
    
    Our second main result is as follows.
    \begin{theorem}\label{thm:relation_diagram}
    For quadratic forms $\mathbf{Q}$, the following relation diagram holds:

    \begin{overpic}[width=1\textwidth]{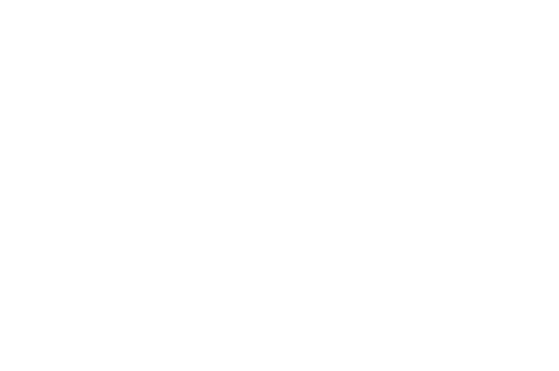} % 图片路径
    \put(6,33){
    $\begin{array}{ccccc}
        \rm{Salem}\phantom{xxx}  &   &   &   &  {\rm best~}\ell^2 L^p{~\rm decoupling} \\
        \displaystyle\left\Updownarrow\vphantom{\int}\right. \circled{\rm 11} &   &   &   & \displaystyle\left\Updownarrow\vphantom{\int}\right. \circled{\rm 4} \\
        \mathfrak{d}_{d,1}(\mathbf{Q})=d \phantom{xx} &   &  \rm{good}   &  \xRightarrow[d {\rm ~even,~} n=2]{\circled{\rm 3}} & {\rm strongly~ nondegenerate}\\
        \displaystyle\left\Downarrow\vphantom{\int}\right. \circled{\rm 10} &   &   &   & \displaystyle\left\Downarrow\vphantom{\int}\right. \circled{\rm 5} \\
        \widetilde{E}^{\mathbf{Q}}1 \in L^{\frac{2(d+n)}{d}+} & \xLeftrightarrow{\circled{\rm 1}} & {\rm (CM)} & \xRightarrow[\phantom{d {\rm ~even, ~} n=2}]{\circled{\rm 2}} & E^\mathbf{Q}: L^2 \rightarrow L^{\frac{2(d+2n)}{d}+} \\
        &   & \phantom{\displaystyle\left\Downarrow\vphantom{\int}\right.} & \fbox{\footnotesize	{$n=2$:\,\rm{equiv.}}} & \displaystyle\left\Downarrow\vphantom{\int}\right.\circled{\rm 6} \\
        &   & \circled{\rm 7}\phantom{x} \phantom{\Longrightarrow} & \xRightarrow{\phantom{d {\rm~ even,~ } n=2}} &  \text{\rm well-curved} \phantom{xx}\\
        &   &   &   & \displaystyle\left\Downarrow\vphantom{\int}\right. \circled{\rm 8} \\
        &   &   &   & {\rm nondegenerate} \phantom{xx} \\
        &   &   &   & \displaystyle\left\Updownarrow\vphantom{\int}\right. \circled{\rm 9} \\
        &   &   &   & {\rm best~} \ell^p L^p{~\rm decoupling}  
    \end{array}$
    }
      
    \put(38,25.16){    \tikzset{every picture/.style={line width=0.5pt}}    
    
    \begin{tikzpicture}[x=0.75pt,y=0.75pt,yscale=-1,xscale=1]
    \draw    (293.72,81.07) -- (293.85,137.03) ;
    \draw    (297.67,81.07) -- (297.67,134.18) ;
    \draw    (297.67,134.18) -- (339,134.18) ;
    \draw    (293.85,137.03) -- (338.76,137.03) ;
    
    \end{tikzpicture}
    }
    \end{overpic}

        Here $\widetilde{E}^\mathbf{Q}$ denotes the smoothed Fourier extension operator \begin{align}\label{eq:gaussian_integral}            
            \widetilde{E}^\mathbf{Q}f(x) \coloneqq \int_{\R^d} e^{i x\cdot(\xi,\mathbf{Q}(\xi))} f(\xi) \cdot e^{-|\xi|^2/2} d\xi, \qquad x\in\R^{d+n},
        \end{align}
        and by ``$L^{p+}$'' we mean that a bound holds with $L^{p+\epsilon}$ in place of $L^{p+}$ for any $\epsilon>0$. Moreover, 
        $\circled{{\rm 2}}\,\circled{\rm 6}\,\circled{\rm 7}$ are all ``$\Longleftrightarrow$'' when $n=2$, and all the one-sided implication relations above are strict {\rm(}i.e., the reverse implication is not true in general{\rm)} except possibly for $\circled{\rm 6}$. The thresholds $\frac{2(d+n)}{d}$ and $\frac{2(d+2n)}{d}$ are the best possible, in the sense that if they are replaced by something smaller, then the estimates cannot hold for any $\mathbf{Q}$.
    \end{theorem}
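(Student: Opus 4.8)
\textbf{Proof proposal for Theorem~\ref{thm:relation_diagram}.}

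The plan is to establish the diagram one arrow at a time, grouping the arrows by the type of tool they require. The backbone of the argument is the algebraic quantity $\mathfrak{d}_{d',n'}(\mathbf{Q})$: most of the nondegeneracy conditions on the right-hand column (strongly nondegenerate, nondegenerate, best $\ell^2L^p$ and $\ell^pL^p$ decoupling) are defined directly in terms of the $\mathfrak{d}_{d',n'}$, so the equivalences $\circled{4}$ and $\circled{9}$ and the implication $\circled{5}$, $\circled{8}$ should follow by purely algebraic manipulation: substitute the closed-form decoupling exponent (\ref{dec th1 00}) into Definitions~\ref{def:best_l2Lp_dec} and \ref{def:best_lpLp_dec}, and match terms against Definitions~\ref{def:str_non_degen} and \ref{def:non_degen}. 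The monotonicity properties of $\mathfrak{d}_{d',n'}$ in $d'$ and $n'$ (which follow from the definition (\ref{s2def2e1}) by composing with rank-lowering matrices) will be the main combinatorial lever here, together with the observation that the maximum in (\ref{dec th1 00}) is attained at specific $(d',n')$. For the strictness claims one exhibits, for each one-sided arrow, a single explicit $\mathbf{Q}$ (typically a direct sum of a highly curved piece and a flat piece, or a low-codimension example like $d=3,n=2$ from the classification in Lemma~\ref{addth2}) realizing the source condition but violating the target.

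Next I would handle the ``Fourier-analytic'' arrows on the left and bottom. Arrow $\circled{11}$ (Salem $\Leftrightarrow$ $\mathfrak{d}_{d,1}(\mathbf{Q})=d$) is essentially the content of the uniform Fourier decay estimate (\ref{uniform decay e3}) together with its sharpness: since $S_\mathbf{Q}$ has Hausdorff dimension $d$, and (\ref{uniform decay e3}) gives decay of order $\mathfrak{d}_{d,1}(\mathbf{Q})/2$ which by Lemma~\ref{lem:alg_char_m} is optimal, $\mu_\mathbf{Q}$ witnesses Fourier dimension $\mathfrak{d}_{d,1}(\mathbf{Q})$, and one checks no measure on $S_\mathbf{Q}$ can do better by a standard argument localizing to a neighborhood of a bad direction. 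Arrow $\circled{10}$ ($\mathfrak{d}_{d,1}(\mathbf{Q})=d \Rightarrow \widetilde{E}^\mathbf{Q}1 \in L^{2(d+n)/d+}$): expand $\|\widetilde{E}^\mathbf{Q}1\|_{L^{2p}}^{2p} $ as an integral, use the Gaussian-smoothed version of the stationary phase bound from Theorem~\ref{thm:uniform_decay} to get pointwise decay $(1+|\zeta|)^{-d/2}$ for $\widehat{\varphi\, d\mu_\mathbf{Q}}$, and integrate; the threshold $2(d+n)/d$ is exactly the Lebesgue exponent at which $(1+|\zeta|)^{-d/2}$ fails to be integrable in $\R^{d+n}$, giving both the positive result with $\epsilon$-loss and the sharpness of the threshold. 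Arrow $\circled{1}$ (CM $\Leftrightarrow \widetilde{E}^\mathbf{Q}1 \in L^{2(d+n)/d+}$) is the quantitative refinement: here one does not have uniform decay, but Banner's anisotropic bound (\ref{uniform decay e2}) gives $|\widehat{\varphi\, d\mu_\mathbf{Q}}(\zeta)| \lesssim |\det \overline{Q}(\theta)|^{-1/2}(1+|\zeta|)^{-d/2}$ for $|\zeta|$ large with $\theta=\eta/|\eta|$, and plugging this into the $L^{2p}$ integral, after integrating the radial variable, leaves a spherical integral of $|\det\overline{Q}(\theta)|^{-p}$ which is finite for all $p < (d+n)/d$ iff the (CM) condition (\ref{eq:CM}) holds — the exponent $\gamma = p < n/d$ range matches precisely after book-keeping, and the converse direction is seen by testing. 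Arrow $\circled{2}$ (CM $\Rightarrow E^\mathbf{Q}: L^2\to L^{2(d+2n)/d+}$) is the Christ--Mockenhaupt analytic-interpolation argument recalled in the introduction: interpolate the family $T_z$ with kernels $K_z$ between $\Re z = -2/d$ (where $L^\infty\to L^\infty$ type bounds or a cheap estimate holds) and $\Re z=1/2$ (where the kernel has uniform decay $d/2$ and a Stein--Tomas argument applies), and CM guarantees the needed integrability of $|\det\overline{Q}(\theta)|^{-n/d-}$ along the interpolation, with the threshold $2(d+2n)/d$ sharp by a Knapp example. When $n=2$ the reverse of $\circled{2}$ also holds because $\det\overline{Q}(\theta)$ is then a binary form whose non-integrability of negative powers can be read off from the restriction estimate failing.

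The ``geometric'' arrows $\circled{6}$, $\circled{7}$, $\circled{8}$ and the $n=2$ equivalences form the deepest part. Here I would invoke the description of well-curvedness via the Oberlin affine measure and the Oberlin (Knapp-type) testing condition from Subsection~\ref{obe_aff_cur_con_sec2_1}: $\circled{7}$ (CM $\Rightarrow$ well-curved, with equivalence when $n=2$) should follow because the non-vanishing of the Oberlin affine measure can be reduced, via the Kempf--Ness minimum-vector computation, to a genericity statement about $\det\overline{Q}(\theta)$ not vanishing identically with the right multiplicity; for $n=2$ the stronger statement that CM is actually equivalent to well-curvedness comes from the fact that in codimension two the relevant GIT stability is governed by a single discriminant. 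Arrow $\circled{8}$ (well-curved $\Rightarrow$ nondegenerate) is proved by showing that vanishing of the Oberlin affine measure along a subspace forces one of the inequalities $\mathfrak{d}_{d-m,n'}(\mathbf{Q}) \geq n'd/n - 2m$ to fail — one translates the geometric degeneration into a rank drop and counts. Arrow $\circled{6}$ (Stein--Tomas-type restriction $\Rightarrow$ well-curved, equivalence when $n=2$) is the one I expect to be the \textbf{main obstacle}: it requires deducing a sharp geometric Knapp condition from an $L^2$-based restriction estimate, which is the kind of converse that is typically subtle; the strategy would be to show that failure of well-curvedness produces an ``Oberlin-flat'' slab on $S_\mathbf{Q}$ of anomalously large measure, plug the corresponding Knapp wave packet into the restriction inequality, and derive a contradiction with the exponent $2(d+2n)/d$ — this is why the theorem only claims $\circled{6}$ \emph{might} be non-strict. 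Finally arrow $\circled{3}$ (good $\Rightarrow$ strongly nondegenerate for $d$ even, $n=2$) is a direct computation of $\mathfrak{d}_{d',n'}$ for the diagonal form in Definition~\ref{def:good_mfd} using the two-by-two minor rank hypothesis, and the equivalences $\circled{2}\,\circled{6}\,\circled{7}$ when $n=2$ are then packaged together by observing that for $n=2$ all of CM, well-curved, and the sharp restriction estimate are controlled by the single binary form $\det\overline{Q}(\theta)$, so a chain of implications closes into a cycle.
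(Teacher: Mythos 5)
Your proposal captures the correct high-level architecture (grouping arrows by tool type, working through them one by one) and correctly identifies several key mechanisms: the Knapp-wave-packet contradiction for $\circled{6}$, the decomposition of $\circled{7}$ into $\circled{2}+\circled{6}$, the intuition that the (CM) condition controls $\widetilde{E}^\mathbf{Q}1$ via Banner-type anisotropic decay for $\circled{1}$, and the interpolation argument for $\circled{2}$. However, there are several concrete gaps.

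First, you classify $\circled{5}$ (strongly nondegenerate $\Rightarrow$ best Stein--Tomas) as a ``purely algebraic manipulation.'' This is incorrect: while strong nondegeneracy and best $\ell^2L^p$ decoupling are matched algebraically via $\circled{4}$, passing from a decoupling inequality to an $L^2\to L^{q}$ Fourier \emph{extension} estimate is a genuine analytic step. The paper does this via the thickening lemma (Lemma~\ref{lem:thickening}), $L^1$--$L^2$ H\"older on each $\tau$-piece, and then an epsilon-removal argument (Corollary~\ref{cor:quad_ep_removal}); this last step in turn requires the uniform Fourier decay estimate from Section~\ref{sec2}, so the ``Fourier-analytic'' column and the ``decoupling'' column are entangled in a way your plan doesn't account for.

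Second, for $\circled{1}$'s converse (CM $\Rightarrow \widetilde{E}^\mathbf{Q}1 \in L^{2(d+n)/d+}$), the naive estimate you sketch --- integrate out the radial variable, leaving a spherical integral of a negative power of $|\det\overline{Q}(\theta)|$ --- lands you at the \emph{endpoint} (CM) condition (\ref{eq:CM_endpoint}), which is strictly stronger than (\ref{eq:CM}). The paper's proof needs an extra AM--GM step exploiting the uniform lower bound $\Lambda_1(\theta)\gtrsim 1$ from Theorem~\ref{thm:U_L_bound_eigen} to gain an $\epsilon$ in the exponent and close the gap. Similarly, for $\circled{6}$, the naive application of Proposition~\ref{prop:convex_test} and Theorem~\ref{thm:gressman_thm1} only gives the Oberlin condition with exponent $\alpha<d/D$, again missing the endpoint $\alpha=d/D$; the paper fixes this by arguing from failure of the Newton-polytope condition (\ref{eq:newton_convex_hull}) via a separating-hyperplane argument that yields a \emph{quantitative} gap. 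These endpoint issues are not small technicalities here --- they are the substance of the argument.

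Third, the strictness of $\circled{3}$ requires much more than ``a direct computation of $\mathfrak{d}_{d',n'}$'': the paper proves (Proposition~\ref{prop:diag_not_str}) that for \emph{odd} $d$, \emph{no} diagonal form is strongly nondegenerate, via an intermediate-value-principle argument on the signature of $Q_2 + \lambda Q_1$ as $\lambda$ varies. Your proposal does not address this, nor does it address the optimality of the thresholds $\frac{2(d+n)}{d}$ and $\frac{2(d+2n)}{d}$, which the paper handles in Subsection~\ref{subsec:best_proof} via a $p$-thin / Knapp-covering argument (Proposition~\ref{prop:opt_surf_meas_Lp_index}) and the Oberlin machinery (Proposition~\ref{prop:opt_ST_index}) respectively. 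Finally, for $\circled{11}$, the upper bound $\dim_F S_\mathbf{Q}\leq \mathfrak{d}_{d,1}(\mathbf{Q})$ requires ruling out \emph{all} measures on $S_\mathbf{Q}$, not just the surface measure; the paper's proof uses a translation-averaging (smoothing) trick special to TDI manifolds, which your ``standard localization'' sketch does not capture.
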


	Before proceeding, we provide a host of clarifications and remarks on the theorem itself.
    \begin{itemize}
        \item In the diagram, $\circled{2}$ is essentially\footnote{Here we say ``essentially'' because in the literature people only proved the desired bound for smoothed versions of $E^\mathbf{Q}$ instead of $E^\mathbf{Q}$ itself, so there are still minor technicalities that we need to deal with.} due to Mockenhaupt \cite{mockenhaupt96}, while $\circled{4}$ and $\circled{9}$ are due to Guo-Oh-Zhang-Zorin-Kranich \cite{gozzk23}. So our main contribution lies in the others\footnote{The $n=2$ case of $\circled{7}$ is due to Dendrinos-Mustata-Vitturi \cite{dmv22}.}: $\circled{1}\,\circled{5}\,\circled{10}\,\circled{11}$ rely on results or techniques from Section~\ref{sec2} (uniform Fourier decay); $\circled{6}\,\circled{7}\,\circled{8}$ rely on results or techniques from \cite{gressman19} (geometric invariant theory); while $\circled{3}$ basically comes down to combinatorics (the intermediate value principle).
        % \item Here we do not touch the issue of whether endpoint estimates should hold or not.
        \item All the conditions (except for ``good'') are the most nondegenerate in their own sense, although we did not write down all the underlying meanings in the diagram: $\mathfrak{d}_{d,1}(\mathbf{Q})=d$ represents the best uniform Fourier decay estimate\footnote{In fact, the condition $\mathfrak{d}_{d,1}(\mathbf{Q})=d$ enjoys much richer background, for which one may consult Appendix~\ref{appendix:par_d,1}. There we also record many interesting properties of $\mathfrak{d}_{d,1}(\mathbf{Q})$.}, $\mathbf{Q}$ Salem represents the largest Fourier dimension of $S_\mathbf{Q}$, $\widetilde{E}^{\mathbf{Q}}1 \in L^{\frac{2(d+n)}{d}+}$ represents the best $L^p$ bound of the Fourier transform of the surface measure, $E^\mathbf{Q}: L^2 \rightarrow L^{\frac{2(d+2n)}{d}+}$ represents the best Stein-Tomas-type inequality (or ``best Stein-Tomas'' for short), and well-curvedness represents the best Oberlin condition.
        \item Whenever we say ``best'', what we really mean is ``best possible'', which should be differentiated from ``sharp'': A quadratic form always satisfies  a sharp inequality/condition of the given form, which nevertheless is not necessarily ``best'': by ``best'' we require the exponent (once achieved) to be optimal among all quadratic forms. However, we should point out a caveat that for given $d$ and $n$, there is no guarantee that all nondegeneracy conditions in the diagram can always be achieved by some $\mathbf{Q}$:
        \begin{itemize}
            \item When $d\geq 3$ is odd, there is no $\mathbf{Q}$ with $\mathfrak{d}_{d,1}(\mathbf{Q})=d$ by the fundamental theorem of algebra, which by $\circled{11}$ implies that there is no Salem $\mathbf{Q}$;
            \item when $d=3$ and $n=2$, by (the proof of) Lemma~\ref{addth2}, there is no strongly nondegenerate $\mathbf{Q}$ (all $\mathbf{Q}$'s have $\mathfrak{d}_{2,1}(\mathbf{Q}) = 0$), which by $\circled{4}$ implies that there is no $\mathbf{Q}$ satisfying the best $\ell^2L^p$ decoupling;
            \item when $d=2$ and $n=3$, by Lemma~\ref{addth2}, the only quadratic form is $\mathbf{Q}(\xi) = (\xi_1^2,\xi_1\xi_2,\xi_2^2)$, which clearly does not satisfy the (CM) condition, and by $\circled{1}$ also fails to satisfy $\widetilde{E}^\mathbf{Q}1 \in L^{5+}$.
        \end{itemize}   On the other hand, by (the proof of) (3) of Theorem~\ref{thm:gressman_thm2} in \cite{gressman19} together with $\circled{8}$ and $\circled{9}$, we know that for any $d$ and $n$, there always exists $\mathbf{Q}$ that is well-curved, nondegenerate, and satisfies the best $\ell^pL^p$ decoupling. Also, one can easily check that for any $d$ and $n=2$, there always exists $\mathbf{Q}$ that is good.
        \item By temporarily ignoring the good condition, which is very special and only defined when $n=2$, we can provide a very nice illustration of how the whole diagram changes as $n$ grows. The $n=1$ case (when $\mathbf{Q}(\xi) = (Q(\xi))$) of Theorem~\ref{thm:relation_diagram} is well-known. In fact, all nondegeneracy conditions are equivalent to nonvanishing Gaussian curvature (i.e., $\det(\nabla_\xi^2 Q)\neq 0$), or ``$Q(\xi) \equiv$ paraboloids/hyperbolic paraboloids'', except for ``strongly nondegenerate'', which is equivalent to ``$Q(\xi) \equiv$ paraboloids''. When $n=2$, the cycle formed by $\circled{2}\,\circled{6}\,\circled{7}$ is still equivalent, but $\circled{8}$ and $\circled{10}$ become strict. And when $n \geq 3$, not only $\circled{8}$ and $\circled{10}$ but also $\circled{2}$ and $\circled{7}$ become strict.\footnote{Along the proof of Theorem~\ref{thm:relation_diagram} in Section~\ref{appendix:relationships}, we will explicitly construct all counterexamples.} This somehow indicates that the larger $n$ is, the more these nondegeneracy conditions diverge from each other. Such a phenomenon certifies the particularity of higher-codimensional cases and the necessity of introducing many different nondegeneracy conditions. Without loss of generality, we may only focus on the $n\geq 2$ case of Theorem~\ref{thm:relation_diagram}.
        \item Well-curvedness is the only condition in the diagram that already enjoys a well-established theory for general higher-codimensional submanifolds of degree $\geq 3$.
    \end{itemize}

    \subsubsection{Interesting corollaries}\phantom{x}
    
    Although we motivated Theorem~\ref{thm:relation_diagram} by the study of weighted restriction at the beginning, it also has its independent value and can lead to interesting corollaries. Here we only exhibit two of them.
    
    By tracing through $\circled{7}\,\circled{8}\,\circled{9}$ in Theorem~\ref{thm:relation_diagram}, we get the following corollary:
    \begin{corollary}\label{cor:(CM)_decoupling}
        If $\mathbf{Q}$ satisfies the {\rm (CM)} condition, then it also satisfies the best $\ell^pL^p$ decoupling.
    \end{corollary}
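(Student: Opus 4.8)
The plan is to obtain the corollary as an immediate consequence of Theorem~\ref{thm:relation_diagram}, simply by composing the three implications running down the right-hand column of the diagram that connect the $(\mathrm{CM})$ condition to the best $\ell^p L^p$ decoupling: namely $\circled{7}$ ($(\mathrm{CM})\Rightarrow$ well-curved), then $\circled{8}$ (well-curved $\Rightarrow$ nondegenerate), then $\circled{9}$ (nondegenerate $\Leftrightarrow$ best $\ell^p L^p$ decoupling). Since Theorem~\ref{thm:relation_diagram} is assumed in hand, the corollary is a one-line deduction; so the real content is lodged in these three constituent implications, which I discuss in increasing order of difficulty. One could in principle try to bypass well-curvedness altogether and verify the inequalities of Definition~\ref{def:non_degen} directly from the $(\mathrm{CM})$ integrability, but translating $\int_{\mathbb{S}^{n-1}}|\det\overline{Q}(\theta)|^{-\gamma}\,d\sigma(\theta)$ into lower bounds on the algebraic quantities $\mathfrak{d}_{d-m,n'}(\mathbf{Q})$ appears no easier than routing through the Oberlin affine measure.

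The implication $\circled{9}$ is the softest step: setting $q=p$ in the Guo--Oh--Zhang--Zorin-Kranich formula (\ref{dec th1 00}) writes $\Gamma_{p,p}^d(\mathbf{Q})$ as the maximum over $0\le d'\le d$, $0\le n'\le n$ of an explicit affine function of $\mathfrak{d}_{d',n'}(\mathbf{Q})$, and demanding that this maximum equal $\max\{d(\tfrac12-\tfrac1p),\,2d(\tfrac12-\tfrac1p)-\tfrac{2n}{p}\}$ for all $2\le p<\infty$ unravels, after optimizing in $p$, into exactly the family $\mathfrak{d}_{d-m,n'}(\mathbf{Q})\ge \tfrac{n'd}{n}-2m$ ($0\le m<d/2$) of Definition~\ref{def:non_degen}; this is \cite{gozzk23}. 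For $\circled{8}$ I would invoke the comparison, essentially contained in \cite{gressman19}, between the nonvanishing of the Oberlin affine measure and lower bounds on the $\mathfrak{d}_{d-m,n'}(\mathbf{Q})$: a Hilbert--Mumford/Kempf--Ness weight computation shows that well-curvedness forces, after any rank-$(d-m)$ restriction and for every $n'$, at least $\tfrac{n'd}{n}-2m$ ``nondegenerate variables'' to survive, which is precisely nondegeneracy; here I would lean on the geometric-invariant-theory input rather than reprove it.

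The main obstacle is $\circled{7}$, the implication $(\mathrm{CM})\Rightarrow$ well-curved. The task is to show that finiteness of $\int_{\mathbb{S}^{n-1}}|\det\overline{Q}(\theta)|^{-\gamma}\,d\sigma(\theta)$ for all $\gamma<n/d$ forces the Oberlin affine measure of $S_{\mathbf{Q}}$ to be nowhere vanishing. The approach I would take is contrapositive: unwind the Kempf--Ness minimum-vector construction of the Oberlin affine measure and show that vanishing of this measure at some point of $S_{\mathbf{Q}}$ manufactures a direction, or rather a positive-dimensional cone of directions, $\theta\in\mathbb{S}^{n-1}$ along which $\det\overline{Q}(\theta)$ vanishes to an order too large to be compatible with the critical integrability exponent $n/d$. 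The delicate part is the numerology: pinning down how the order of vanishing of the affine measure at a point controls the order of vanishing of $\det\overline{Q}(\theta)$ near the corresponding cone of directions, and matching that against the threshold $n/d$. A reasonable fallback is to first dispatch $n=2$ by citing Dendrinos--Mustata--Vitturi \cite{dmv22}, then extend to general $n$ by the same circle of ideas.
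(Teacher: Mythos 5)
Your top-level plan — deduce the corollary as a one-liner from Theorem~\ref{thm:relation_diagram} by chaining $\circled{7}$, $\circled{8}$, $\circled{9}$ — is exactly what the paper does, and you have correctly identified which arrows to compose. The divergence is in how you propose to establish the individual implications, and the discrepancy at $\circled{7}$ is worth flagging because the route you favor is substantially harder than the one the paper actually takes.

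For $\circled{7}$ you propose a direct Kempf--Ness/Hilbert--Mumford contrapositive: unwind the affine-measure construction and show its vanishing creates a cone of directions $\theta$ along which $\det\overline{Q}(\theta)$ vanishes too fast, with \cite{dmv22} as the $n=2$ fallback. The paper deliberately does \emph{not} do this. It proves $\circled{7}$ by chaining $\circled{2}$ (the (CM) condition implies best Stein--Tomas, via Mockenhaupt's complex interpolation of analytic families) and $\circled{6}$ (best Stein--Tomas implies well-curvedness, via a Knapp-type infinitesimal homogeneity argument routed through Proposition~\ref{prop:convex_test} and the Newton-polyhedra characterization Theorem~\ref{thm:gressman_thm6}). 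The paper explicitly remarks that the \cite{dmv22} approach leans on the fact that when $n=2$ the (CM) condition is equivalent to a vanishing-order bound for the binary form $\det\overline{Q}(\theta)$ on $\mathbb{RP}^1$, a reformulation with no clean analogue for $n\geq 3$, where the zero set of $\det\overline{Q}$ on $\mathbb{S}^{n-1}$ is a complicated variety and the singularity of $|\det\overline{Q}|^{-\gamma}$ near it is not controlled by any simple order-of-vanishing invariant (Remark~2.19 in \cite{mockenhaupt96} makes precisely this point). So the ``delicate numerology'' you correctly worry about is not merely delicate; nobody has carried it out for $n\geq 3$, and the paper's Stein--Tomas detour exists precisely to bypass it. If you want a self-contained argument that covers all $n$, you should follow $\circled{2}\circ\circled{6}$.

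On $\circled{8}$, you characterize the step as ``essentially contained in \cite{gressman19},'' but the paper in fact needs and proves a new variant, Theorem~\ref{thm:gressman_thm6_variant}, which upgrades the Newton-polyhedra test from $(O_1,O_2)\in\mathrm{O}(n,\R)\times\mathrm{O}(d,\R)$ to $(N,M)\in\mathrm{GL}(n,\R)\times\mathrm{GL}(d,\R)$; only after this upgrade does the test directly match the definition of $\mathfrak{d}_{d',n'}(\mathbf{Q})$, and the rest of $\circled{8}$ is a new combinatorial argument on the convex hull. Your description of $\circled{9}$ as unwinding (\ref{dec th1 00}) for $q=p$ is accurate.
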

    This may not be obvious if we try to prove it directly without resorting to the theory of well-curvedness established by Gressman \cite{gressman19}. We care about this because in practice the (CM) condition is the most elegant criterion in the diagram and naturally arises in many PDE problems, while decoupling inequalities are very convenient to use.
    
    Besides, by tracing through $\circled{10}\,\circled{1}\,\circled{7}\,\circled{8}$ in Theorem~\ref{thm:relation_diagram}, we get another interesting corollary:
    \begin{corollary}\label{cor:algebra_partial}
        If a quadratic form $\mathbf{Q}$ satisfies $\mathfrak{d}_{d,1}(\mathbf{Q}) = d$, then it must also satisfy $\mathfrak{d}_{d',n'}(\mathbf{Q}) \geq \frac{n'}{n}d - 2(d-d')$ for any $\frac{d}{2} \leq d' \leq d$ and $0\leq n' \leq n$ {\rm(}nondegenerate{\rm)}.
    \end{corollary}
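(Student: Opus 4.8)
The plan is to obtain Corollary~\ref{cor:algebra_partial} as a direct consequence of the relation diagram in Theorem~\ref{thm:relation_diagram}, by chaining the four implications along the route $\circled{10}\to\circled{1}\to\circled{7}\to\circled{8}$, and then unwinding the definition of ``nondegenerate''. Concretely, starting from the hypothesis $\mathfrak{d}_{d,1}(\mathbf{Q})=d$, the implication $\circled{10}$ gives $\widetilde{E}^{\mathbf{Q}}1\in L^{\frac{2(d+n)}{d}+}(\R^{d+n})$; the equivalence $\circled{1}$ then shows that this membership is the same as $\mathbf{Q}$ satisfying the (CM) condition of Definition~\ref{def:CM}; next $\circled{7}$ upgrades (CM) to well-curvedness of $\mathbf{Q}$; and finally $\circled{8}$ yields that $\mathbf{Q}$ is nondegenerate in the sense of Definition~\ref{def:non_degen}.

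The remaining step is purely bookkeeping: translate ``$\mathbf{Q}$ is nondegenerate'' into the stated algebraic inequality. By Definition~\ref{def:non_degen}, nondegeneracy of $\mathbf{Q}$ means $\mathfrak{d}_{d-m,n'}(\mathbf{Q})\geq \frac{n'd}{n}-2m$ for every $m$ with $0\leq m<d/2$ and every $n'$ with $0\leq n'\leq n$. Reindexing by $d'=d-m$ turns this into precisely $\mathfrak{d}_{d',n'}(\mathbf{Q})\geq \frac{n'}{n}d-2(d-d')$ for all $d/2<d'\leq d$ and $0\leq n'\leq n$. It only remains to include the borderline value $d'=d/2$ (which occurs only when $d$ is even): there the right-hand side equals $\frac{n'd}{n}-d\leq 0$ since $n'\leq n$, whereas $\mathfrak{d}_{d/2,n'}(\mathbf{Q})\geq 0$ always holds because it is an infimum of the nonnegative integer-valued quantity ${\rm NV}(\cdot)$; hence the inequality is trivially satisfied at $d'=d/2$. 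Combining the two cases covers the full range $\frac{d}{2}\leq d'\leq d$ claimed in the corollary.

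There is essentially no genuine obstacle in the argument itself once Theorem~\ref{thm:relation_diagram} is available; the real content is buried in the arrows being invoked, namely $\circled{10}$ and $\circled{1}$ (from the uniform Fourier decay analysis of Section~\ref{sec2}) and $\circled{7}$, $\circled{8}$ (from Gressman's geometric-invariant-theoretic theory of well-curvedness, and \cite{dmv22} in the $n=2$ instance of $\circled{7}$). The reason this statement is worth recording separately is exactly that a direct proof, manipulating the algebraic quantities $\mathfrak{d}_{d',n'}(\mathbf{Q})$ among themselves without leaving the combinatorial setting, is far from evident: the natural path detours through Fourier decay, the analytic (CM) integrability condition, and convex/invariant-theoretic geometry, which is the sort of cross-disciplinary linkage the diagram is designed to expose.
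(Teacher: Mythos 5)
Your proposal is correct and follows exactly the route the paper uses, namely chaining $\circled{10}\Rightarrow\circled{1}\Rightarrow\circled{7}\Rightarrow\circled{8}$ in Theorem~\ref{thm:relation_diagram} and then reindexing Definition~\ref{def:non_degen} via $d'=d-m$. Your explicit treatment of the boundary case $d'=d/2$ (where the right-hand side is $\leq 0$ and the inequality holds trivially since $\mathfrak{d}_{d/2,n'}(\mathbf{Q})\geq 0$) is a small bit of bookkeeping the paper leaves implicit, but it is the same argument.
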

    It is noteworthy that Corollary~\ref{cor:algebra_partial} is purely algebraic in itself, but our proof (if one unfolds all the ingredients) requires a combination of ideas from Fourier analysis, complex analysis, convex geometry, geometric invariant theory, and algebra in an essential way. We do not know of any direct proof that purely relies on algebra. 
    
    In general, finding properties of a single $\mathfrak{d}_{d',n'}(\mathbf{Q})$ is already a very hard algebraic problem, even if we only care about $\mathfrak{d}_{d,1}(\mathbf{Q})$ (see Appendix~\ref{appendix:par_d,1}). Conceivably, there are even fewer existing results on relationships between different $\mathfrak{d}_{d',n'}(\mathbf{Q})$, and probably the most well-known one is Claim~3.5 in \cite{gozzk23}. However, that claim only provides relationships when $n'=1$: it says that $\mathfrak{d}_{d,1}(\mathbf{Q}) - \mathfrak{d}_{d',1}(\mathbf{Q}) \leq 2(d-d')$ for any $\frac{d}{2} \leq d' \leq d$. So it is remarkable that our Corollary~\ref{cor:algebra_partial} allows us to deduce the information of $\mathfrak{d}_{d',n'}(\mathbf{Q})$ for all $d'$ and $n'$ simply from that of the single $\mathfrak{d}_{d,1}(\mathbf{Q})$. As we will see in Appendix~\ref{appendix:par_d,1}, the condition $\mathfrak{d}_{d,1}(\mathbf{Q}) = d$ is quite strong in the sense that for ``almost all'' pairs of $(d,n)$, there is no $\mathbf{Q}$ satisfying it. So the punchline of Corollary~\ref{cor:algebra_partial} is that if one imposes a strong constraint on some (or even a single) $\mathfrak{d}_{d',n'}(\mathbf{Q})$, then it is possible that many other $\mathfrak{d}_{d',n'}(\mathbf{Q})$ will automatically satisfy weak constraints (which, although may not be so strong, are still good enough in many applications, such as $\ell^pL^p$ decoupling). In this way, one can see that different $\mathfrak{d}_{d',n'}(\mathbf{Q})$ are indeed closely correlated with each other in a deep and unusual way.

    \subsubsection{The significance of Theorem~\ref{thm:relation_diagram}}\phantom{x}
    
    In the end, let us talk more about the importance of Theorem~\ref{thm:relation_diagram}, especially how it may facilitate the development of many other topics.

    Firstly, it might be helpful in the study of Fourier restriction for quadratic manifolds of arbitrary codimensions. For example, one can see Subsection~\ref{subsec:final_remarks} for how Theorem~\ref{thm:relation_diagram} may guide our future research of higher-codimensional Fourier restriction theory.
        
	Secondly, it might be helpful in clarifying the relationship between Fourier restriction and decoupling, especially in higher-codimensional cases. Although decoupling has long been a crucial tool for Fourier restriction, people never get tired of developing new interesting perspectives on how they are related to each other. Intuitively, the final column in Theorem~\ref{thm:relation_diagram} tells us that ``best Stein-Tomas lies strictly in between best $\ell^2L^p$ and best $\ell^pL^p$ decoupling''.
	
	Thirdly, it might be helpful in connecting Fourier restriction and Radon-like transforms, which are the two main topics in harmonic analysis related to curvature. One major difficulty, to start with, is the following discrepancies:
	\begin{itemize}
		\item The forms of operators: Fourier restriction, under wave packet decomposition, is a combination of both oscillation and geometric interference; while Radon-like transform, as a positive operator, only involves geometric interference, and the oscillation is purely implicit.
		\item The dominating conditions: The $L^p \rightarrow L^q$ range of Fourier extension is determined by both $L^p$ bound of $\widetilde{E}^\mathbf{Q}1$ and Knapp-type examples, while the $L^p \rightarrow L^q$ range of Radon-like transforms is purely determined by Knapp-type examples.
		\item The existing techniques: Fourier restriction relies on decoupling inequalities (see \cite{ww24}), while $L^p \rightarrow L^q$ improving properties of Radon-like transform relies on sublevel set estimates (see \cite{gressman24}).
	\end{itemize}
	Our Theorem~\ref{thm:relation_diagram} may help to understand the last two discrepancies, because Knapp-type examples correspond to the Oberlin condition, and sublevel set estimates are also closely related to ideas and techniques developed in the study of this condition. Intuitively, the final column in Theorem~\ref{thm:relation_diagram} tells us that ``well-curvedness/best Oberlin condition lies strictly in between the best $\ell^2L^p$ and the best $\ell^pL^p$ decoupling''.
	
	Finally, it has the potential of inspiring new problems. 
	For example, one might ask whether or not $\circled{6}$ is strict in general, i.e., whether or not well-curvedness implies best Stein-Tomas. Although we do not have a complete answer to this question, by noting that these two conditions are both determined by Knapp-type examples, we are naturally led to propose the following conjecture, for which we do not know of any counterexample:
	\begin{conjecture}\label{conj:S-T}
		$\mathbf{Q}$ satisfies the best Stein-Tomas if and only if it is well-curved.
	\end{conjecture}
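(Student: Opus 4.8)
\medskip

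\noindent\emph{Proposed approach.}
The ``only if'' direction is precisely $\circled{6}$ of Theorem~\ref{thm:relation_diagram}, and for $n\le 2$ the equivalence is already known: when $n=2$ the implications $\circled{2}$, $\circled{6}$, $\circled{7}$ are all equivalences, and when $n=1$ every condition reduces to nonvanishing Gaussian curvature. So the real content is ``well-curved $\Rightarrow$ best Stein-Tomas'' for $n\ge 3$. The plan is to rerun the analytic-family argument of Christ \cite{christ82} and Mockenhaupt \cite{mockenhaupt96} that underlies $\circled{2}$, but with the Hessian determinant $\det(\overline{Q}(\theta))$ appearing in their kernel replaced by a power of the density $\mathfrak{O}$ of Gressman's Oberlin affine measure attached to $S_{\mathbf{Q}}$ (Subsection~\ref{obe_aff_cur_con_sec2_1}, see also \cite{gressman19}), since $\mathfrak{O}$ --- rather than $\det(\overline{Q}(\theta))$ --- is the quantity genuinely adapted to well-curvedness, which says exactly that $\mathfrak{O}$ never vanishes.

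Concretely, one would introduce the analytic family $T_z$ with kernel
\begin{equation*}
K_z(\zeta)\coloneqq \frac{1}{\Gamma(c_0+c_1 z)}\,|\mathfrak{O}(\theta)|^{z}\,\widehat{\varphi d\mu_{\mathbf{Q}}}(\zeta),\qquad \zeta=(\xi,\eta),\quad \theta=\eta/|\eta|,
\end{equation*}
with the gamma normalization chosen to cancel the poles of $|\mathfrak{O}|^z$ and so that $T_z$ agrees with $E^{\mathbf{Q}}(E^{\mathbf{Q}})^{\ast}$ --- up to the harmless Gaussian smoothing already used for $\circled{2}$ --- at the value of $z$ of interest. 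One then runs a $TT^{\ast}$ argument with a dyadic decomposition of frequency space in $|\zeta|$: on each dyadic annulus, interpolate between the trivial $L^2\rightarrow L^2$ bound, which comes from the Agmon-H\"ormander inequality \eqref{AH ineq} and is uniform in $\theta$ because $\mathfrak{O}$ is bounded (indeed real-analytic) on its compact parameter space so that $|\mathfrak{O}|^{\Re z}$ stays bounded, and the $L^1\rightarrow L^\infty$ bound coming from Banner's anisotropic Fourier decay \eqref{uniform decay e2}, for which Section~\ref{sec2} gives a self-contained proof, multiplied by the weight $|\mathfrak{O}(\theta)|^{c}$ whose role is to absorb the directional degeneracy caused by the vanishing eigenvalues $\mu_j(\theta)$. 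Stein-interpolating in $z$ removes the weight, summing the dyadic contributions lands exactly at the Lebesgue exponent $\frac{2(d+2n)}{d}$ (up to $\epsilon$), and the Gaussian smoothing is then removed as for $\circled{2}$.

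The crux --- and the reason this remains only a conjecture --- is the $L^1\rightarrow L^\infty$ input: one must show that the Oberlin density $\mathfrak{O}$, whose nonvanishing is a Knapp / sublevel-set statement, really does compensate the oscillatory anisotropy of Banner's bound. This amounts to a quantitative comparison of the eigenvalue product $\prod_{j=1}^d|\mu_j(\theta)|$ governing \eqref{uniform decay e2} with a suitable power of $\mathfrak{O}$, equivalently to replacing the integrability hypothesis \eqref{eq:CM} by its Oberlin analogue $\int_{\mathbb{S}^{n-1}}|\mathfrak{O}(\theta)|^{-\gamma}\,d\sigma(\theta)<\infty$ for $\gamma$ up to the critical exponent. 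For $n=2$ this is automatic, since every relevant quantity collapses to the single scalar $\det(\overline{Q}(\theta))$ (exactly why $\circled{2}$, $\circled{6}$, $\circled{7}$ coincide there); but for $n\ge 3$ it is unclear whether mere pointwise nonvanishing of $\mathfrak{O}$ forces enough integrability, since a priori the eigenvalue weight could dominate every power of $\mathfrak{O}$ on a thin set of normal directions. A fallback would be to abandon the clean analytic interpolation and instead dyadically decompose the sphere of normal directions into level sets $\{\,|\mathfrak{O}(\theta)|\sim 2^{-k}\,\}$, prove a Stein-Tomas estimate on each with constants growing only polynomially in $k$ --- here well-curvedness would enter through the sublevel-set estimates accompanying the Oberlin condition, which control the size and geometry of these level sets --- and then sum over $k$ using the $\epsilon$ of room in ``$L^{\frac{2(d+2n)}{d}+}$''. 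Pushing either route through is the main obstacle; on the other hand, producing a well-curved $\mathbf{Q}$ for which $\widehat{\varphi d\mu_{\mathbf{Q}}}\notin L^{\frac{2(d+2n)}{d}+}$ would disprove the conjecture, and we do not know of any such example.
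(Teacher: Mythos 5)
This is stated as an open conjecture in the paper, not a theorem: only the necessity direction is established (via $\circled{6}$, for all $n\geq 2$) together with the full equivalence for $n=2$ (via the cycle $\circled{2}\,\circled{6}\,\circled{7}$). You correctly identify all of this and honestly flag that your sufficiency route for $n\geq 3$ leaves a gap, but the gap is more structural than you suggest: the object you propose to weight by does not have the form your argument requires. By Proposition~\ref{prop:TDI_affine_const} (equivalently, item (2) of Theorem~\ref{thm:gressman_thm2}), the Oberlin affine density of a quadratic --- indeed any TDI --- manifold is a \emph{constant}. It has no dependence on the normal direction $\theta$ whatsoever: $\mathfrak{O}$ is either identically zero (not well-curved) or a fixed positive number (well-curved). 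Consequently the proposed weight $|\mathfrak{O}(\theta)|^z$ is constant in $\theta$ and cannot absorb the directional anisotropy in Banner's estimate; the ``Oberlin analogue'' $\int_{\mathbb{S}^{n-1}}|\mathfrak{O}(\theta)|^{-\gamma}\,d\sigma(\theta)<\infty$ of the (CM) condition is vacuously true or false, never a quantitative integrability hypothesis; and the dyadic level sets $\{|\mathfrak{O}(\theta)|\sim 2^{-k}\}$ in your fallback route are empty for all but a single $k$, so there is nothing to sum.

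What the Christ--Mockenhaupt scheme really needs is a $\theta$-dependent scalar on $\mathbb{S}^{n-1}$ that plays the role of $\det(\overline{Q}(\theta))$ but whose relevant nonvanishing/integrability is equivalent to well-curvedness rather than to (CM). For $n=2$ these coincide ($\circled{7}$ is an equivalence, proved in \cite{dmv22}), which is precisely why the cycle closes there; but the paper's own counterexamples under $\circled{7}$ show that for $n\geq 3$ well-curvedness is strictly weaker than (CM), and the paper further recalls Remark~2.19 of \cite{mockenhaupt96} that already for $n=3$ no simple condition on $\det(\overline{Q}(\theta))$ characterizes best Stein--Tomas. So the missing ingredient is not a quantitative comparison of the eigenvalue product with $\mathfrak{O}$ --- that comparison is degenerate, since $\mathfrak{O}$ is a number, not a function --- it is the \emph{construction} of a genuinely directional algebraic quantity adapted to well-curvedness, which the paper identifies as the open problem behind the conjecture and does not attempt to resolve.
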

	Note that $\circled{6}$ in Theorem~\ref{thm:relation_diagram} proves the necessity part of Conjecture~\ref{conj:S-T} for all $n \geq 2$ as well as the sufficiency part for $n=2$. This conjecture, once verified for all $n \geq 3$, would provide a complete answer to a question implicitly raised in Remark~2.19 in \cite{mockenhaupt96}, where it is pointed out that even when $n=3$, one cannot expect a simple condition in terms of derivatives on $\det(\overline{Q}(\theta))$ ($\theta \in \mathbb{S}^{n-1}$) which characterizes the best Stein-Tomas. In contrast, the notion of well-curvedness in Conjecture~\ref{conj:S-T} appeals to a richer and deeper family of algebraic operations than merely the determinant.

	\vskip0.3cm
	
	\noindent \textbf{Outline of the paper.} In Section~\ref{secadd2}, we introduce some notations and basic tools. In Section~\ref{sec2}, we prove sharp uniform Fourier decay estimates. In Section~\ref{sec3}, we prove weighted restriction estimates in Theorem~\ref{th1}. In Section~\ref{sec4}, we give some examples and arguments showing the sharpness part of Theorem~\ref{th1}. In Section~\ref{sec5}, we apply Theorem~\ref{th1} to various examples of $\mathbf{Q}$. In Section~\ref{appendix:relationships}, we prove Theorem~\ref{thm:relation_diagram} and offer additional remarks on the higher-codimensional Fourier restriction theory. In Appendix~\ref{appendix:par_d,1}, we provide more historical background of $\mathfrak{d}_{d,1}(\mathbf{Q})$ and systematically record its properties.
	
	\vskip0.3cm
	
	\noindent \textbf{Notations.} If $X$ is a finite set, we use $\# X$ to denote its cardinality. If $X$ is a measurable set, we use $|X|$ to denote its Lebesgue measure. We use $B^N(c,r)$ to represent a closed ball centered at $c$ with radius $r$ in $\mathbb{R}^N$. We abbreviate $B^{N}(c,r)$ to $B(c,r)$ if $\mathbb{R}^{N}$ is clear in the content, and abbreviate $B(0,R)$ to $B_R$. We write $A\lesssim_\epsilon B$ to mean that there exists a constant $C$ depending on $\epsilon$ such that $A\leq CB$. Moreover, $A \sim B$ means $A \lesssim B$ and $A\gtrsim B$. Define $e(b):=e^{ i b}$ for each $b \in \mathbb{R}$. Let $p'$ denote the dual exponent of $p$, i.e., $1/p+1/p'=1$. For any $m_1,m_2\in\N^+$, let $\R^{m_1\times m_2}$ denote the space of all real $m_1\times m_2$ matrices. For any $m \in \N^+$, let ${\rm O}(m,\R)$ denote the real orthogonal group of order $m$, ${\rm GL}(m,\R)$ denote the real general linear group of order $m$, ${\rm SL} (m,\R)$ denote the real special linear group of order $m$, ${\rm S}(m,\R)$ denote the space of all real symmetric matrices of order $m$. Let $I$ be the identity matrix whose order is always adapted to the context.

    Suppose that $S_\psi \coloneqq \{ (\xi,\psi(\xi)): \xi \in U \}$ is a graph in $\mathbb{R}^{d+n}$, where $U$ is a bounded open/closed region of $\R^d$ and $\psi:U\rightarrow\R^n$ is a smooth function. Then we define 
    $$N_{\delta}(S_\psi|_U):=\big\{(\xi,\psi(\xi) + t): \xi\in U , ~|t|<\delta\big\}.$$
	And we abbreviated $N_{\delta}(S_\psi|_U)$ as $N_{\delta}(S_\psi)$ if $U=[0,1]^d$.
	
	We say that a set $Z \in \mathbb{R}^d$ is called a semi-algebraic set if it can be written as a finite union of sets of the form
	\begin{equation}\label{sg1}
		\{  x\in \mathbb{R}^d:~P_1(x) = 0,...,P_{l}(x)=0,P_{l+1}(x)>0,...,P_{l+k}(x)>0   \},  
	\end{equation}
	where $P_1$,..., $P_{l+k}$ are polynomials. Define the complexity of $Z$ to be the smallest sum
	of the degrees of the polynomials appearing in all possible descriptions (\ref{sg1}). Define the dimension of $Z$ to be the Hausdorff dimension of the set $Z$.

	For any ball $B=B(c_B,K)$ with $K\geq 1$, we define $w_B$ as
    \begin{equation}\label{notation_wb}
        w_B(x):=\Big(   1+ \frac{|x-c_B|}{K} \Big)^{-100(d+n)}.
    \end{equation}

    Let $\dot{P}_d^\kappa$ be the space of all real homogeneous polynomials of degree $\kappa$ in $d$ variables.

    Define the surface measure of $S_\mathbf{Q}$ to be $\mu_\mathbf{Q}^* \coloneqq \mu_\mathbf{Q}\chi_{[0,1]^d}$, i.e., $$d\mu_\mathbf{Q}^*(\xi,\eta) \coloneqq \chi_{[0,1]^d}(\xi) d\mu_\mathbf{Q}(\xi,\eta), \qquad \forall\,(\xi,\eta)\in\R^d\times\R^n.$$

	\section{Preliminaries}\label{secadd2}
	
	In this section, we will introduce some notations and basic tools. Readers only interested in Theorem~\ref{th1} may safely skip Subsection~\ref{obe_aff_cur_con_sec2_1}.

    \subsection{Several versions of decoupling}\label{dec_qua_man_sec2_2}\phantom{x}
	
	For $p\geq 2$, by Definition \ref{decouping def for high codim}, we have\footnote{For simplicity, we will suppress technicalities like ``for all $f$ supported on $[0,1]^d$, all dyadic $K\geq1$, and all $\epsilon>0$''.}
    \begin{equation}\label{eq:decoupling_extension_2}
        \|E^{\mathbf{Q}}f\|_{L^p(\R^{d+n})} \lesssim_\epsilon K^{\Gamma_{p}^d(\mathbf{Q})+\epsilon} \Big(   \sum_{\tau} \|E^{\mathbf{Q}}f_\tau\|_{L^p(\R^{d+n})}^2  \Big)^{\frac{1}{2}},
    \end{equation}
    where $\{\tau\}$ is the partition of $[0,1]^d$ into $K^{-1}$-cubes, and $f=\sum_\tau f_\tau =\sum_\tau f\chi_\tau$. 
    
    It is a kind of folklore that (\ref{eq:decoupling_extension_2}) has several essentially equivalent formulations, such as the local version:
    \begin{equation}\label{eq:decoupling_extension_3}
        \|E^{\mathbf{Q}}f\|_{L^p(B_{K^{2}})} \lesssim_\epsilon K^{\Gamma_{p}^d(\mathbf{Q})+\epsilon} \Big(   \sum_\tau \|E^{\mathbf{Q}}f_\tau\|_{L^p(w_{B_{K^{2}}})}^2  \Big)^{\frac{1}{2}},
    \end{equation}
    and the neighborhood version:
    \begin{align}\label{eq:decoupling_neighborhood}
        \| \widehat{F}\|_{L^p(\R^{d+n})} \lesssim_\epsilon K^{\Gamma_{p}^d(\mathbf{Q})+\epsilon} \Big(   \sum_\tau \big\|\widehat{F_\tau}\big\|_{L^p(\R^{d+n})}^2  \Big)^{\frac{1}{2}},
    \end{align}
    where $F$ is supported on $N_{K^{-2}}(S_{\mathbf{Q}})$, $F=\sum_\tau F_\tau$, and each $F_\tau$ is supported on $N_{K^{-2}}(S_{\mathbf{Q}}|_\tau)$. 
    Readers can see Proposition~9.15 in \cite{demeter2020} on the equivalence of the three versions.

	For future applications, we also need to introduce the lower-dimensional version of (\ref{eq:decoupling_extension_2}). To begin with, Gan, Guth and Oh \cite{ggo23} introduced the following transversality condition designed for the study of higher-codimensional Fourier restriction theory:
    
    \begin{definition}[\cite{ggo23}, $\theta$-uniform condition]\label{def:theta_uniform}
        Let $\theta \in (0,1]$, and $M,K\geq 1$. Let $  \{ X_m \}_{m=1}^{d+n} $ be nonnegative integers and $\{\tau_j\}_{j=1}^M$ be a collection of $K^{-1}$-cubes in $[0,1]^d$. We say that $\{\tau_j\}_{j=1}^M$ is $\theta$-uniform with the controlling sequence $  \{ X_m \}_{m=1}^{d+n} $ if for each $1\leq m\leq d+n$ and any subspace $V\subset \mathbb{R}^{d+n}$ with $\dim V=m$, there are at most $\theta M$ many $\tau_j$ intersecting 
        \begin{equation}\label{s3t1e1}
            \left\{  \xi \in \mathbb{R}^d:\dim (\pi_{V_\xi}(V)) <X_m   \right\}.
        \end{equation}
    \end{definition}
    
    This condition can lead to adaptive lower-dimensional $\ell^2 L^p$ decoupling:
    
    \begin{lemma}[\cite{ggo23}, Theorem 3.2]\label{dec low var}
        Let $0\leq k\leq d-1$, and $p \geq 2$. Given $E> 100$ and $\epsilon>0$, there exist constants $L=L(d,E)$, $\mu_0=\mu_0(d,E,\epsilon)$ and $c=c(d,E,\epsilon)$, such that the following holds true. Let $Z\subset \mathbb{R}^d$ be a $k$-dimensional semi-algebraic set  with complexity $\leq E$. For every $0<K^{-1}<\mu_0$, there exist	
        \begin{equation}\label{sec3 rel0}
            K^c \leq K_1 \leq K_2 \leq \cdots\leq K_L \leq K^{\frac{1}{2}}
        \end{equation}
        and collections of pairwise disjoint $1/K_j$-cubes $\mathcal{W}_j$ for  $j=1,2,...,L,$ such that 
        \begin{equation}\label{sec3 rel1}
            N_{1/K}(Z) \cap [0,1]^d \subset  \bigcup_{j=1}^L \bigcup_{W \in \mathcal{W}_j} W
        \end{equation}
        and
        \begin{equation}\label{dec low var e1}
            \Big\|\sum_{W \in \mathcal{W}_j} E^{\mathbf{Q}}f_W\Big\|_{L^p(\R^{d+n})} \leq C_{\epsilon,p,E} \cdot K_j^{\Gamma_{p}^k(\mathbf{Q})+\epsilon} \Big(   \sum_{W \in \mathcal{W}_j}  \|E^{\mathbf{Q}}f_W\|_{L^p(\R^{d+n})}^2\Big)^{\frac{1}{2}}.
        \end{equation}
    \end{lemma}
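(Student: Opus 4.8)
The statement is a ``decoupling over a $k$-dimensional variety'': although $f$ is a function on $[0,1]^d$, the caps $W$ that matter are constrained to a $1/K$-neighborhood of the $k$-dimensional set $Z$, so one expects the $k$-dimensional decoupling numerology $\Gamma_p^k(\mathbf{Q})$ rather than the full $\Gamma_p^d(\mathbf{Q})$ to govern. A natural plan is to combine two ingredients. First, a decomposition of $N_{1/K}(Z)\cap[0,1]^d$, coming from the bounded-complexity semi-algebraic structure of $Z$, into $L=L(d,E)$ families $\mathcal{W}_j$ of $1/K_j$-cubes at the nested scales $K^c\le K_1\le\cdots\le K_L\le K^{1/2}$, each family being ``$k$-dimensional and flat at its own scale $1/K_j$'', meaning that the cubes of $\mathcal{W}_j$ meeting $N_{1/K}(Z)$ lie (after a bounded linear change of variables, and possibly splitting into $O_{d,E}(1)$ subfamilies) in the $1/K_j$-neighborhood of a single $k$-plane $H\subset\R^d$. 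Second, the sharp $k$-dimensional $\ell^2L^p$ decoupling of \cite{gozzk23}, applied to each family after rescaling, together with the fact that restricting $\mathbf{Q}$ to a $k$-plane only improves the decoupling constant.

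Let me explain the second ingredient first, since it pins down why the exponent is $\Gamma_p^k(\mathbf{Q})$. Over the $1/K_j$-slab around $H$, write $\xi=\xi'+t$ with $\xi'\in H$ and $t$ transverse, $|t|\le 1/K_j$; the quadraticity of $\mathbf{Q}$ gives $\mathbf{Q}(\xi)=\mathbf{Q}(\xi')+O(1/K_j)$, so the relevant piece of $S_{\mathbf{Q}}$ is within $O(1/K_j)$ of the cylinder $\{(\xi',t,\mathbf{Q}(\xi')):\xi'\in[0,1]^k,\ |t|\le 1/K_j\}$ — a distortion harmless at the decoupling scale $1/K_j$ — and one decouples the cylinder for free in its $O(1)$ transverse caps, reducing to the $k$-dimensional decoupling for the quadratic form $\mathbf{Q}|_H$ in $k$ variables, which by \cite{gozzk23} costs $K_j^{\Gamma_p^k(\mathbf{Q}|_H)+\epsilon}$. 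Finally $\mathfrak{d}_{d',n'}(\mathbf{Q}|_H)\ge\mathfrak{d}_{d',n'}(\mathbf{Q})$ for every $d'\le k$ — any rank-$d'$ slice of $\mathbf{Q}|_H$ is, after zero-padding, a rank-$d'$ slice of $\mathbf{Q}$ — so by \eqref{dec th1 00} with $d$ replaced by $k$ and $p\ge 2$ one gets $\Gamma_p^k(\mathbf{Q}|_H)\le\Gamma_p^k(\mathbf{Q})$, which is the desired exponent. A trivial ($\ell^2$-triangle-inequality) decoupling inflates from the cubes literally covering $N_{1/K}(Z)$ to the full family of $\sim K_j^k$ caps over $N_{1/K_j}(H)$ (and absorbs the finitely many subfamilies), after which summing over the $O_{d,E}(1)$ values of $j$ finishes the proof.

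The content is therefore in the first ingredient, the geometric decomposition, which is where quantitative real algebraic geometry enters. The idea is to use a uniform $C^2$-parametrization of $Z$ — by $O_{d,E}(1)$ maps from the unit $k$-cube with $C^2$-norms bounded in terms of $d$ and $E$ (Gromov--Yomdin / cylindrical algebraic decomposition, together with Milnor--Thom-type bounds on the number of connected components) — since over any region where such a parametrization is nondegenerate at scale $\rho$, the image sits within $O(\rho^2)$ of its tangent affine $k$-plane, hence is flat at scale $1/K$ once $\rho$ is of order $K^{-1/2}$. Because the parametrization may distort scales (a $\rho$-subcube of its domain need not correspond to a $1/K_j$-cube of $\R^d$), one subdivides its domain dyadically and, at the $j$-th level, collects the subcubes whose image has stabilized to diameter $\sim 1/K_j$ and is flat there into $\mathcal{W}_j$, passing the rest — after a parabolic rescaling, which preserves the quadratic form $\mathbf{Q}$ up to admissible linear and constant phase errors — to the next level; the complexity bounds force termination in $L(d,E)$ levels and cap the number of $k$-planes per family, while $K_j\ge K^c$ records that one never descends below a complexity-dependent scale and $K_j\le K^{1/2}$ is the scale at which the $O(\rho^2)$ deviation drops below $1/K$.

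The step I expect to be the main obstacle is exactly this geometric decomposition: extracting from the bare hypothesis ``$Z$ is semi-algebraic of complexity $\le E$'' a genuinely \emph{uniform} (only $d,E$-dependent) bound on how a $k$-dimensional variety can fail to be affinely flat across intermediate scales, so that the dyadic peeling closes after $O_{d,E}(1)$ steps with precisely the scale chain $K^c\le K_1\le\cdots\le K_L\le K^{1/2}$. Two points need particular care: the affine and parabolic rescalings used to flatten the pieces must be compatible with the transformation $R_{N,M}$, so that the rescaled extension operator is still that of the same $\mathbf{Q}$ modulo the harmless phase errors; and the flatness one peels off at each stage must be trapping near an \emph{affine} $k$-plane rather than mere bounded curvature, since it is the former that the $k$-dimensional decoupling of \cite{gozzk23} consumes as input, and it is precisely the combination of this affine-flatness requirement with the scale distortion in the algebraic parametrization that forces the multi-scale structure rather than a single scale $K_j\sim K^{1/2}$.
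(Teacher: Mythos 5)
The paper's treatment of this lemma is not a proof at all: it cites Theorem~3.2 of \cite{ggo23} wholesale for the geometric decomposition and the resulting multi-scale decoupling inequality, and devotes its effort entirely to two bookkeeping translations — that the $\ell^pL^p$ version of \cite{ggo23} also holds for $\ell^2L^p$, and that the exponent ${\rm D}_p(\mathbf{Q}|_{L_k})$ appearing in \cite{ggo23} can be replaced by $\Gamma_p^k(\mathbf{Q})$ because $\mathfrak{d}_{d',n'}(\mathbf{Q})=\inf_{L_k}\mathfrak{d}_{d',n'}(\mathbf{Q}|_{L_k})$ combined with \eqref{dec th1 00} gives $\sup_{L_k}\Gamma_p^k(\mathbf{Q}|_{L_k})=\Gamma_p^k(\mathbf{Q})$. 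Your proposal, by contrast, attempts a self-contained sketch of the underlying theorem. That is a much more ambitious route, and your ``second ingredient'' — the observation that $\mathfrak{d}_{d',n'}(\mathbf{Q}|_H)\ge\mathfrak{d}_{d',n'}(\mathbf{Q})$ (zero-padding, a rank-$d'$ slice of $\mathbf{Q}|_H$ is a rank-$d'$ slice of $\mathbf{Q}$) and hence $\Gamma_p^k(\mathbf{Q}|_H)\le\Gamma_p^k(\mathbf{Q})$ — is exactly the algebraic translation the paper carries out, in the same way. Your ``first ingredient'' (the Gromov--Yomdin/Milnor--Thom-flavored multi-scale peeling of $N_{1/K}(Z)$ into affinely flat families at scales $K^c\le K_1\le\cdots\le K_L\le K^{1/2}$) corresponds to the part of \cite{ggo23} the paper explicitly declines to reprove, and you correctly flag it as the hard part. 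So the structures agree, but the comparison of effort is skewed: the paper buys the decomposition at the price of a citation and spends its real work on the $\Gamma_p^k$-bookkeeping, while you propose to spend real work on the decomposition. Both are valid; neither is wrong, but be aware that if the standard you are being held to is ``reproduce the paper's argument for this lemma,'' the expected answer is just the citation plus the $\sup_{L_k}\Gamma_p^k(\mathbf{Q}|_{L_k})=\Gamma_p^k(\mathbf{Q})$ identity, not a reproof of the decomposition theorem. One small omission in your write-up: you never mention that \cite{ggo23}'s Theorem~3.2 is stated for $\ell^pL^p$ rather than $\ell^2L^p$ decoupling; the paper notes this explicitly and says the same proof gives any $\ell^qL^p$ with $q\le p$. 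In your direct-proof framing this comes for free because you would invoke the $\ell^2L^p$ version of the \cite{gozzk23} decoupling at each stage, but it is worth saying so.
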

    
    Since the formulation of Lemma~\ref{dec low var} is not exactly the same as that of Theorem~3.2 in \cite{ggo23}, we will provide some clarifications below.
    
    Firstly, Theorem~3.2 of \cite{ggo23} is $\ell^pL^p$ decoupling, rather than $\ell^2L^p$ decoupling. However, the same proof for the current version also works (even holds for all $\ell^qL^p$ decoupling with $q\leq p$), and we do not repeat the arguments here. 
    
    Secondly, in Theorem~3.2 of \cite{ggo23}, the power of $K_j$ in $\text{(\ref{dec low var e1})}$ is actually ${\rm D}_{p}(\mathbf{Q}|_{L_k})$, which denotes the optimal power of $\ell^2L^p$ decoupling constant for all functions $f$ supported on $N_{K_j^{-2}}(L_k)$ with $L_k$ being a $k$-dimensional linear subspace. To bridge this discordance, note that $S_{\mathbf{Q}}|_{L_k}$ is a $k$-dimensional manifold of codimension $n$, so we can repeat the proof of Theorem 2.2 in \cite{gzk20} (or Corollary 5.4 in \cite{gozzk23}) to obtain
    $$   \text{D}_{p}(\mathbf{Q}|_{L_k}) \leq \sup_{L_k} \Gamma_{p}^k(\mathbf{Q}|_{L_k})+\epsilon = \Gamma_{p}^k(\mathbf{Q})+\epsilon, $$
    where the second equality can be shown by combining (\ref{dec th1 00}) with the observation that
    $$   \mathfrak{d}_{d',n'}(\mathbf{Q}) = \inf_{L_k} \mathfrak{d}_{d',n'}(\mathbf{Q}|_{L_k})   $$
    for any $d'\leq k$ and $n'\leq n$. Thus we get Lemma~\ref{dec low var}. 
    
    Finally, we point out that (\ref{dec low var e1}) is also equivalent to its local version 
    \begin{equation}\label{dec low var e2}
        \Big\|\sum_{W \in \mathcal{W}_j} E^{\mathbf{Q}}f_W\Big\|_{L^p(B_{K_j^2})} \leq C_{\epsilon,p,E} \cdot K_j^{\Gamma_{p}^k(\mathbf{Q})+\epsilon} \Big(   \sum_{W \in \mathcal{W}_j}  \|E^{\mathbf{Q}}f_W\|_{L^p(w_{B_{K_j^2}})}^2\Big)^{\frac{1}{2}}.
    \end{equation}

    \subsection{Well-curvedness}\label{obe_aff_cur_con_sec2_1}\phantom{x}

    As mentioned before, a core condition in Theorem~\ref{thm:relation_diagram} that we shall elaborate on is well-curvedness. However, the scope of this subsection is well beyond explaining the concepts: Firstly, it includes a brief exposition of the main tools from \cite{gressman19} that will be used in the proof of $\circled{6}$, $\circled{7}$, $\circled{8}$ in Theorem~\ref{thm:relation_diagram}; secondly, it also contains a bunch of new results that were not proved in \cite{gressman19}, because we need to remould some of the tools for our own purposes.

    \subsubsection{Well-curvedness and the Oberlin condition}\phantom{x}
    
    To state the definition of well-curvedness, we need to first introduce the Oberlin condition. It was introduced by Oberlin \cite{oberlin00} for hypersurfaces and systematically developed by Gressman \cite{gressman19} for higher-codimensional submanifolds.
	
    \begin{definition}[\cite{gressman19}, the Oberlin condition]\label{def:oberlin_condition}
        A Borel measure $\mu$ in $\R^{N}$ will be said to satisfy the Oberlin condition with exponent $\alpha \geq 0$\footnote{When $\alpha=0$, the condition simply tells us $\mu$ is finite. We include this trivial case purely for some uniformity of exposition.} when there exists a finite positive constant $C$ such that, for any convex body\footnote{By ``convex body'' we mean any compact convex set with nonempty interior. In \cite{gressman19} it is only assumed that $K$ is compact and convex, but this really does not matter: Any convex set $K$ with empty interior must be contained in a hyperplane, so when $\alpha>0$, (\ref{eq: oberlin_condition}) simply tells us that $\mu(K)=0$; however, this can also be derived by applying (\ref{eq: oberlin_condition}) to a sequence of convex bodies $K_n$ shrinking to $K$.} $K$ in $\R^{N}$, we have
        \begin{align}\label{eq: oberlin_condition}
            \mu(K) \leq C|K|^\alpha.
        \end{align}
    \end{definition}

    This is a Knapp-type testing condition and corresponds to the Frostman condition when $K$ is restricted to balls. However, by extending from balls to 
    convex bodies, we are able to capture not only dimension but also curvature, which will become clear later on. In fact, we can alternatively define the Oberlin condition by testing the measure $\mu$ on a much smaller category of geometric objects than convex bodies. This is the content of the following lemma, which should be a folklore but not formally presented anywhere in the literature:
	\begin{lemma}\label{lem:Oberlin_equiv}
		Let $\alpha\geq 0$, and $\mu$ be any Borel measure in $\R^{N}$. Then the following are equivalent to each other:

        \noindent $(i)$ There exists a finite positive constant $C$ such that, for any convex body $K$ in $\R^N$, we have $\mu(K) \leq C|K|^\alpha$. {\rm(}Definition~{\rm\ref{def:oberlin_condition})}

        \noindent $(ii)$ There exists a finite positive constant $C$ such that, for any ellipsoid\footnote{By ``ellipsoid'' we mean the image of $B(0,1)$ under a nonsingular affine transformation.} $J$ in $\R^N$, we have $\mu(J) \leq C|J|^\alpha$.

        \noindent $(iii)$ There exists a finite positive constant $C$ such that, for any rectangular box\footnote{By ``rectangular box'' we mean the image of $\prod_{i=1}^N [-c_i,c_i]$ ($c_i>0$) under an isometry.} $T$ in $\R^N$, we have $\mu(T) \leq C|T|^\alpha$.
	\end{lemma}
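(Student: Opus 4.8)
The plan is to establish the implications $(i) \Rightarrow (ii) \Rightarrow (iii) \Rightarrow (i)$, exploiting the fact that each smaller class of test sets is ``comparable'' to the next via a bounded number of affine copies with controlled volume. The implications $(i) \Rightarrow (ii)$ and $(ii) \Rightarrow (iii)$ are immediate, since ellipsoids are convex bodies and rectangular boxes are (affine images of, hence) ellipsoids up to constants: indeed every rectangular box $T = \phi(\prod_i [-c_i,c_i])$ is contained in the ellipsoid $J = \phi(B(0,\sqrt{N}))$ (taking $\phi$ an isometry composed with the diagonal scaling), and $|J| \sim_N |T|$, so $\mu(T) \leq \mu(J) \leq C|J|^\alpha \lesssim_N |T|^\alpha$. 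The content is therefore in $(iii) \Rightarrow (i)$, i.e., upgrading the box estimate to an estimate for arbitrary convex bodies.

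For $(iii) \Rightarrow (i)$, the key input is John's theorem from convex geometry: for any convex body $K$ in $\R^N$ there is an ellipsoid $\mathcal{E}$ (the John ellipsoid) with $\mathcal{E} \subset K \subset N\mathcal{E}$, where $N\mathcal{E}$ denotes the dilate of $\mathcal{E}$ about its center by a factor $N$. Writing $\mathcal{E} = \phi(B(0,1))$ for a nonsingular affine map $\phi$, the dilate $N\mathcal{E}$ is an ellipsoid with $|N\mathcal{E}| = N^N |\mathcal{E}| \leq N^N |K|$. So it suffices to prove the estimate for ellipsoids, i.e., to bootstrap $(iii)$ to $(ii)$. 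To do this, cover an ellipsoid $J = \psi(B(0,1))$ by rectangular boxes: pull back by $\psi$, tile $B(0,1) \subset [-1,1]^N$ by the single box $Q = [-1,1]^N$, and push forward to get $J \subset \psi(Q)$. But $\psi(Q)$ is a parallelepiped, not necessarily a rectangular box (i.e., its edges need not be mutually orthogonal). Here I would instead use the singular value decomposition of the linear part of $\psi$: write the linear part as $U \Sigma V$ with $U, V$ orthogonal and $\Sigma$ diagonal; then $J$ is an isometric image ($U$, after translation) of the axis-parallel box-like region $\Sigma V(B(0,1)) \subset \Sigma(B(0,1)) = $ the axis-parallel ellipsoid with semi-axes $\sigma_1,\ldots,\sigma_N$, which is contained in the rectangular box $\prod_i [-\sigma_i,\sigma_i]$. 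Thus $J$ is contained in a rectangular box $T$ with $|T| = 2^N \prod_i \sigma_i = \tfrac{2^N}{|B(0,1)|}|J| \sim_N |J|$, and $(iii)$ gives $\mu(J) \leq \mu(T) \leq C|T|^\alpha \lesssim_N |J|^\alpha$.

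The main (and only real) obstacle is the passage from a general convex body to a controlled family of boxes; this is precisely where John's theorem is needed, and the loss in the constant depends only on $N$, which is harmless. Everything else is a routine chain of elementary containments together with the observation that all the relevant volume ratios are bounded by dimensional constants. No hypothesis on $\mu$ beyond being a Borel measure is used, and the argument is uniform in $\alpha \geq 0$ (when $\alpha = 0$ all three statements merely assert $\mu(\R^N) < \infty$ via any single test set of positive finite measure, so the equivalence is trivial in that case).
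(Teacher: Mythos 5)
Your proof is correct and takes essentially the same approach as the paper: the trivial containment for $(i)\Rightarrow(ii)$, John's ellipsoid theorem to pass between convex bodies and ellipsoids, and a singular-value decomposition of the linear part of an affine map to pass between ellipsoids and rectangular boxes, all with volume ratios controlled by dimensional constants. The only cosmetic differences are that you use the inscribed form of John's theorem ($\mathcal{E}\subset K\subset N\mathcal{E}$) while the paper uses the circumscribed form ($c+N^{-1}(J-c)\subset K\subset J$), and you organize the implications cyclically as $(i)\Rightarrow(ii)\Rightarrow(iii)\Rightarrow(i)$ (with the last step passing implicitly through $(iii)\Rightarrow(ii)\Rightarrow(i)$) rather than proving the two biconditionals $(i)\Leftrightarrow(ii)$ and $(ii)\Leftrightarrow(iii)$ separately; neither difference is substantive.
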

	\begin{proof}
		$(i) \Rightarrow (ii)$: This is trivial because any ellipsoid is a convex body.
		
		$(ii) \Rightarrow (i)$: By John’s ellipsoid theorem (Theorem III in \cite{john2014extremum}), for any convex body $K$ in $\R^N$, there is an ellipsoid $J$ so that if $c$ is the center of $J$, then $c+N^{-1}(J-c)\footnote{This is the dilation of $J$ by a factor of $N^{-1}$ with center $c$, i.e., $\{c+N^{-1}(x-c): x\in J\}$.} \subset K \subset J$. Note that $$|J| = N^N |c+N^{-1}(J-c)| \leq N^N|K|,$$ so we have $$\mu(K) \leq \mu(J) \leq C|J|^\alpha \leq CN^{N\alpha} |K|^\alpha,$$ as desired. 
		
		$(ii) \Rightarrow (iii)$: For any rectangular box $T$, there exists a nonsingular affine transformation $L$ of $\R^N$ such that $L([-\frac{1}{\sqrt{N}}, \frac{1}{\sqrt{N}}]^N) = T$. Let $J \coloneqq L(B(0,1))$, then $[-\frac{1}{\sqrt{N}}, \frac{1}{\sqrt{N}}]^N \subset B(0,1)$ implies $T \subset J$, and $|J| = C_N|T|$ for some constant $C_N = |B(0,1)|/(\frac{2}{\sqrt{N}})^N$. Therefore, we have $$\mu(T) \leq \mu(J) \leq C|J|^\alpha \leq CC_N^\alpha |T|^\alpha,$$ as desired.
		
		$(iii) \Rightarrow (ii)$: For any ellipsoid $J$, there exists a nonsingular affine transformation $L$ of $\R^N$ such that $L(B(0,1)) = J$. Let $L(x) = A\cdot x + b$, where $A \in {\rm GL}(N, \R)$ is the linear part of $L$ and $b \in \R^N$ is the translation part of $L$. By the singular value decomposition, there exists $D \in {\rm GL}(N,\R)$ diagonal with positive entries $\{c_i\}_{i=1}^N$, $O_1,O_2 \in {\rm O}(N,\R)$ such that $A = O_1 D O_2$. Let $X \coloneqq O_2^{-1}([-1,1]^N)$, then $$L(X) = A(X) + b = O_1 D O_2(X) + b = O_1 D([-1,1]^N) +b$$ is a rectangular box, since it is the image of $D([-1,1]^N)$ under an isometry. Also, $B(0,1) \subset X$ implies $J \subset T$, and $|T| = C_N|J|$ for some constant $C_N = |X|/|B(0,1)| = 2^N/|B(0,1)|$. Therefore, we have $$\mu(J) \leq \mu(T) \leq C|T|^\alpha \leq CC_N^\alpha |J|^\alpha,$$ as desired.
	\end{proof}
    Sometimes, the equivalent formulations of the Oberlin condition are more convenient to use, such as in the proof of Proposition~\ref{prop:convex_test}. Also, all statements in Lemma~\ref{lem:Oberlin_equiv} trivially implies that $\mu$ is $\sigma$-finite.

	Our Definition~\ref{def:oberlin_condition} is actually more general than that in \cite{gressman19}, where $\mu$ is supported on a $d$-dimensional immersed submanifold of $\R^{d+n}$\footnote{This will also be our main focus, but the definition itself surely makes sense for general Borel measures.}. Under this additional assumption, Gressman \cite{gressman19} examined the Oberlin condition in the case of maximal nondegeneracy by combining aspects of geometric invariant theory, convex geometry, and frame theory. In particular, he canonically constructed an equiaffine-invariant measure which is essentially the largest possible measure satisfying the Oberlin condition for the largest nontrivial choice of $\alpha$. To say that $\alpha$ is nontrivial means that there is a nonzero measure $\mu$ satisfying (\ref{eq: oberlin_condition}) for this $\alpha$ on some immersed
	submanifold of dimension $d$ and codimension $n$. 
    
    To state Gressman's main results, we also need the concept of \textit{homogeneous dimension} $D = D(d,n)$, which is defined to be the smallest positive integer which equals the sum of the degrees of some collection of $d+n$ distinct, nonconstant monomials in $d$ variables. For example, for nontrivial quadratic manifolds, we always have $n \leq \frac{d(d+1)}{2}$, which implies $D = d+2n$.
	
    \begin{theorem}[{\cite{gressman19}, Theorem 1}]\label{thm:gressman_thm1}
        Suppose that $\Sigma$ is an immersed $d$-dimensional submanifold of $\R^{d+n}$. To any such $\Sigma$, one may associate a nonnegative Borel measure $\mu_{\mathcal{A}}$ on $\R^{d+n}$, constructed in Section {\rm2} of \cite{gressman19}, which is supported on $\Sigma$. Then the following are true:
    
        \noindent {\rm (1)} If $\mu$ is any nonnegative Borel measure supported on $\Sigma$ which satisfies {\rm(\ref{eq: oberlin_condition})} with exponent $\alpha > d/D$, then $\mu$ is the zero measure.
    
        \noindent {\rm (2)} If $\mu$ is any nonnegative Borel measure supported on $\Sigma$ which satisfies {\rm(\ref{eq: oberlin_condition})} with exponent $\alpha = d/D$, then $\mu \lesssim \mu_{\mathcal{A}}$.
    
        \noindent {\rm (3)} If $\Sigma$ is the image of an immersion $f: \Omega \rightarrow \R^{d+n}$, where $\Omega \subset \R^d$ is open with compact closure $\overline{\Omega}$ and $f$ extends to be real analytic on a neighborhood of $\overline{\Omega}$, then the measure $\mu_{\mathcal{A}}$ satisfies {\rm(\ref{eq: oberlin_condition})} with exponent $\alpha = d/D$ and is consequently the largest such measure, up to a multiplicative constant.
        
        We call $\mu_{\mathcal{A}}$ the {\rm(}Oberlin{\rm)} affine measure of $\Sigma$. 
        Furthermore, the correspondence sending $\Sigma$ to $\mu_{\mathcal{A}}$ is equiaffine invariant, meaning that if $\Sigma$ and $\Sigma_0$ are immersed submanifolds such that $\Sigma_0$ is the image of $\Sigma$ under some equiaffine transformation\footnote{Affine transformations that preserve volumes.} $T$ of $\R^{d+n}$, then the affine measure $\mu'_{\mathcal{A}}$ of $\Sigma_0$ and the affine measure $\mu_{\mathcal{A}}$ of $\Sigma$ satisfy $\mu'_{\mathcal{A}}(T(E)) = \mu_{\mathcal{A}}(E)$ for all Borel sets $E$.
    \end{theorem}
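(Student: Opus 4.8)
Since the theorem is quoted verbatim from \cite{gressman19}, I will sketch the strategy one would use to reprove it. The plan starts with the construction of $\mu_{\mathcal A}$. Fix a real-analytic parametrization $f:\Omega\to\R^{d+n}$ of $\Sigma$ and, to each $x\in\Omega$, attach the tuple of jets $\bigl(\partial^\beta f(x)\bigr)_{1\le|\beta|\le D}$, which I view as a vector in a \emph{graded} representation $V$ of a reductive group $G$ containing ${\rm SL}(d,\R)\times{\rm SL}(d+n,\R)$ together with a torus recording the degree grading: the ${\rm SL}(d,\R)$ factor records linear reparametrizations of the source, the ${\rm SL}(d+n,\R)$ factor records equiaffine maps of the target, and translations act trivially since we only use derivatives of order $\ge1$. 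Equipping $V$ with a $K$-invariant Hermitian norm and invoking the Kempf--Ness theory of minimum vectors, one obtains a continuous, $G$-relatively-invariant function on $V$; raising its modulus to the power forced by the grading weights produces a density $\omega_{\mathcal A}(x)\,dx$ whose weight is exactly the one making $\mu_{\mathcal A}:=f_*(\omega_{\mathcal A}\,dx)$ transform as an invariant under ${\rm SL}(d+n,\R)$ (plus translations), and making it independent of the choice of $f$. The homogeneous dimension $D$ enters here precisely because the minimal total degree of $d+n$ distinct nonconstant monomials in $d$ variables is what pins down the grading weights. The equiaffine invariance claimed at the end of the theorem then follows immediately from this equivariant construction.

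For part (1) the plan is purely metric-geometric. The key lemma is: for each $x_0\in\Omega$ there are a neighborhood $U\ni x_0$ and a constant $C$ so that for all $x\in U$ and all small $\delta$, the set $f\bigl([x-\delta,x+\delta]^d\bigr)$ is contained in a convex body of volume $\le C\delta^D$. I would prove this by Taylor-expanding $f$ to order $D$ about $x$, greedily selecting a maximal linearly independent subset of $\{\partial^\beta f(x)\}$ going through the multi-indices $\beta$ in order of increasing degree, and enclosing the image in a rectangular box whose side lengths are $\sim\delta^{|\beta_i|}$ along each selected direction and $\sim\delta^{D+1}$ (the Taylor error) in the complementary directions. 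Since the selected degrees satisfy $\sum_i|\beta_i|\ge\sum_i\delta_i$ where $\delta_1\le\delta_2\le\cdots$ is the degree sequence of \emph{all} nonconstant monomials, and $\delta_{d+n}\le D$, a short bookkeeping argument gives that the box has volume $\lesssim\delta^D$. Then, covering a neighborhood of any $p\in\operatorname{supp}\mu$ in $\Sigma$ by $\lesssim\delta^{-d}$ such convex bodies and using countable subadditivity together with $(\ref{eq: oberlin_condition})$, one gets $\mu(\text{nbhd of }p)\lesssim\delta^{-d}(\delta^D)^\alpha=\delta^{D\alpha-d}\to0$ whenever $\alpha>d/D$; hence $\mu$ vanishes near every point of its support, i.e.\ $\mu\equiv0$.

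For part (2), suppose $\mu$ is supported on $\Sigma$ and satisfies $(\ref{eq: oberlin_condition})$ with the critical $\alpha=d/D$; the plan is to show $\mu=f_*(\rho\,dx)$ with $\rho\lesssim\omega_{\mathcal A}$ pointwise. At each $x_0$ one singles out the \emph{optimal} curvilinear box $K_\delta(x_0)$ from the lemma above, i.e.\ the convex body with $|K_\delta(x_0)|\sim\delta^D$; by the Kempf--Ness minimality underlying the construction, $\mu_{\mathcal A}\bigl(K_\delta(x_0)\bigr)\sim\omega_{\mathcal A}(x_0)\,\delta^{d}$, whereas $(\ref{eq: oberlin_condition})$ gives $\mu\bigl(K_\delta(x_0)\bigr)\le C\delta^{d}$. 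A Besicovitch/Lebesgue differentiation argument along the nested family $\{K_\delta(x_0)\}_{\delta>0}$ then extracts the density bound $\rho(x_0)\lesssim\omega_{\mathcal A}(x_0)$; the only subtlety is that these are affinely distorted boxes rather than balls, so one needs their bounded eccentricity (after the normalizing linear map) to run a Vitali-type covering. This yields $\mu\lesssim\mu_{\mathcal A}$.

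Part (3) is the hard part and is where real-analyticity is essential, so I expect it to be the main obstacle. By Lemma~\ref{lem:Oberlin_equiv} it suffices to prove $\mu_{\mathcal A}(T)\le C|T|^{d/D}$ for rectangular boxes $T$ with $C$ uniform. Writing $\mu_{\mathcal A}(T)=\int_{\{x:\,f(x)\in T\}}\omega_{\mathcal A}(x)\,dx$, one bounds $\omega_{\mathcal A}$ on the parameter sub-region using the Kempf--Ness inequality controlling it by $\max\bigl|\det(\partial^{\beta_1}f,\dots,\partial^{\beta_{d+n}}f)\bigr|^{2/D}$ over the minimal-degree monomial tuples. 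Each such determinant is a real-analytic function on $\Omega$ that is either identically zero (in which case $\mu_{\mathcal A}=0$ and there is nothing to prove) or has sublevel sets of controlled measure by a van der Corput / Łojasiewicz-type estimate — and it is exactly here that real-analyticity (via the Łojasiewicz inequality or resolution of singularities) is indispensable, since a merely $C^\infty$ profile could have sublevel sets that are too large to give a uniform constant. Integrating $|\det|^{2/D}$ over a dyadic decomposition of $\{x:f(x)\in T\}$ adapted to the eccentricities of $T$, and matching the exponent $d/D$, closes the estimate. The real work, and the place I expect difficulties, is making these sublevel-set bounds uniform over all boxes $T$ and gluing the resulting local estimates into a single global constant.
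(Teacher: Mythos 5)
This theorem is quoted verbatim from \cite{gressman19} (Theorem 1 there) and is invoked in the paper purely as a black box; the paper supplies no proof of it, so there is no in-paper argument to compare against. With that caveat, your sketch is a faithful reconstruction of the broad strategy of Gressman's original proof: the Kempf--Ness / geometric-invariant-theory construction of $\mu_{\mathcal A}$ and its equiaffine equivariance, the covering argument for part (1) anchored on the lemma that $f\bigl([x-\delta,x+\delta]^d\bigr)$ lies in a convex body of volume $\lesssim\delta^D$ (with $D$ entering exactly through minimality of the degree sum), the differentiation argument for part (2), and the essential use of real-analyticity in part (3) via sublevel-set estimates for the relevant determinant functionals (\L{}ojasiewicz / resolution of singularities), which is indeed where a merely smooth hypothesis would break down. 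Two small points worth flagging. First, in part (2) you write $\mu=f_*(\rho\,dx)$, but a measure supported on $\Sigma$ and satisfying the Oberlin condition at the critical exponent $d/D$ is not a priori absolutely continuous on the parameter domain; the comparison $\mu\lesssim\mu_{\mathcal A}$ must be proved directly at the level of Borel sets, with part (1) and the same covering lemma used to rule out concentration where $\omega_{\mathcal A}$ degenerates. Second, in the degree bookkeeping for part (1) you should check explicitly that when $k\geq 1$ complementary directions only contribute the Taylor remainder $\sim\delta^{D+1}$, the total exponent $k(D+1)+\sum_{i\le d+n-k}|\beta_i|$ still dominates $D$; it does, trivially since $k(D+1)>D$, but the sentence as written suggests only the $k=0$ case was considered.
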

	
	Suppose $\Sigma$ is parametrized by $f: \Omega \rightarrow \R^{d+n}$ ($\Omega \subset \R^d$) with $f(t) = (t, f_1(t),\dots, f_n(t))$, then the affine measure $\mu_{\mathcal{A}}$ is defined as the pushforward via $f$ of the measure $\nu_{\mathcal{A}}$ on $\R^d$ whose density is generated by certain ``affine curvature tensor''. One can see (14) in \cite{gressman19} for the precise definition of $\nu_{\mathcal{A}}$, which is inspired by geometric invariant theory. From this construction, the uniqueness of $\mu_{\mathcal{A}}$ and $\nu_{\mathcal{A}}$ follows immediately. However, we do not know beforehand how regular $\mu_{\mathcal{A}}$ or $\nu_{\mathcal{A}}$ is.
	
	To certify the nontriviality of Theorem~\ref{thm:gressman_thm1}, i.e., to show that $\mu_{\mathcal{A}}$ is not simply the zero measure for all possible choices of $\Sigma$, Gressman \cite{gressman19} considered $f$ of the form
	\begin{align}\label{canonical_poly_form}
		f(t) \coloneqq ((t^\gamma)_{1\leq|\gamma|<\kappa}, p_1(t), \dots, p_m(t)),
	\end{align}
	where $p_1,\dots,p_m$ are linearly independent elements in $\dot{P}_d^\kappa$, and $m$ and $\kappa$ are chosen (as functions of $n$ and $d$ only) so that the right-hand side of (\ref{canonical_poly_form}) is an element of $\R^{d+n}$. Let $P_{j\ell}(t) \coloneqq \partial_j p_\ell(t)$ for $j = 1,\dots,d$, $\ell = 1,\dots,m$. Such an embedding $f$ will be called a \textit{model form} when there exist real numbers $c$ and $c'$ such that
	\begin{align}\label{model_poly_form}
		\sum_{j=1}^d P_{j\ell}(\partial) P_{j\ell'}(t)|_{t=0} = c\delta_{\ell,\ell'}  \quad \text{and} \quad \sum_{\ell=1}^m P_{j\ell}(\partial) P_{j'\ell}(t)|_{t=0} = c'\delta_{j,j'}
	\end{align}
	where $\delta$ is the Kronecker delta. The main result for model forms in \cite{gressman19} is:
    \begin{theorem}[\cite{gressman19}, Theorem 2]\label{thm:gressman_thm2}
        The following are true for embeddings $f$ of the form {\rm(\ref{canonical_poly_form})}.
    
        \noindent {\rm (1)} The closure of the orbit $\{Nf(M^Tt)\}_{N\in {\rm SL}(d+n,\R), M \in {\rm SL}(d,\R)}$ in the space of $(d+n)$-tuples of polynomials of degree at most $\kappa$ always contains an embedding of the form {\rm(\ref{canonical_poly_form})} which is a model form {\rm(\ref{model_poly_form})}. If any embedding in the closure of the orbit is degenerate, then all are degenerate {\rm(}i.e., $\mu_{\mathcal{A}} = 0$ for each embedding in the orbit closure or $\mu_{\mathcal{A}} \neq 0$ for each embedding{\rm)}.
    
        \noindent {\rm (2)} For any $p \coloneqq (p_1,\dots,p_m)$ satisfying {\rm(\ref{model_poly_form})}, the affine measure $\mu_{\mathcal{A}}$ of the submanifold of $\R^{d+n}$ parameterized by {\rm(\ref{canonical_poly_form})} is a nonzero constant times the pushforward of Lebesgue measure via $f$ if and only if $c$ and $c'$ are nonzero.
    
        \noindent {\rm (3)} For any pair $(d,n)$, there is some $p \coloneqq (p_1,\dots,p_m)$ satisfying {\rm(\ref{model_poly_form})} for nonzero $c$ and $c'$. Consequently, in any dimension and codimension, there is a submanifold $\Sigma$ whose affine measure $\mu_{\mathcal{A}}$ is everywhere nonzero on $\Sigma$. In this case, we say that $\Sigma$ is ``well-curved''.
    \end{theorem}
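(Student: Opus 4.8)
\textbf{Proof proposal for Theorem~\ref{thm:gressman_thm2}.}

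The plan is to reconstruct the geometric-invariant-theory argument behind the construction of $\mu_{\mathcal A}$ in Section~2 of \cite{gressman19}: realize the affine curvature density as a positive power of a Kempf--Ness minimal norm, and then read off the three assertions from the structure theory of orbit closures over $\R$ together with the frame-theoretic description of minimal vectors. \emph{Set-up.} For $f$ of the form (\ref{canonical_poly_form}) the lower-order block $((t^\gamma)_{1\le|\gamma|<\kappa})$ only records the osculating flag and is invisible to $\nu_{\mathcal A}$, so the curvature content at the origin is carried by the array $P=(P_{j\ell})$ with $P_{j\ell}=\partial_j p_\ell\in\dot{P}_d^{\kappa-1}$. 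Equipping $\dot{P}_d^{\kappa-1}$ with the apolar inner product, $P$ becomes a vector in a real representation $W$ of the reductive group $H\coloneqq{\rm SL}(d,\R)\times{\rm SL}(m,\R)$, where ${\rm SL}(d,\R)$ substitutes the variables $t$ and ${\rm SL}(m,\R)$ recombines $p_1,\dots,p_m$, and the apolar norm is invariant under the maximal compact ${\rm O}(d,\R)\times{\rm O}(m,\R)$. The construction of \cite{gressman19} makes $\nu_{\mathcal A}$ at a point a universal constant times a positive power of the Kempf--Ness minimal norm $\inf_{h\in H}\|h\cdot P\|$; in particular $\mu_{\mathcal A}\neq 0$ iff $P$ lies outside the nullcone of $W$. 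Moreover, for an embedding of the form (\ref{canonical_poly_form}) with homogeneous $p_\ell$, translating the submanifold (an equiaffine operation, hence $\nu_{\mathcal A}$-preserving by Theorem~\ref{thm:gressman_thm1}) and discarding the resulting lower-order Taylor terms reproduces the same array $P$, since the degree-$\kappa$ part of $p_\ell(\,\cdot+t_0)$ is again $p_\ell$; thus $\nu_{\mathcal A}$ is translation-invariant, i.e. a constant multiple of Lebesgue measure.

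For part (1), acting on $f$ by $M\in{\rm SL}(d,\R)$ (a reparametrization of Jacobian one) and by the part of ${\rm SL}(d+n,\R)$ recombining the $p_\ell$ and absorbing lower-order terms keeps $f$ of the form (\ref{canonical_poly_form}) and induces the $H$-action on $P$. By the real Kempf--Ness theory (every orbit closure contains a unique closed orbit, together with the Hilbert--Mumford criterion over $\R$; Richardson--Slodowy, Birkes), $\overline{H\cdot P}$ either lies in the nullcone --- the degenerate case --- or contains a nonzero critical point $P_0$ of $h\mapsto\|h\cdot P\|$. The vanishing of the $\mathfrak{sl}(d,\R)$- and $\mathfrak{sl}(m,\R)$-components of the associated moment map says precisely that the Gram matrices $\big(\sum_j\langle P_{j\ell},P_{j\ell'}\rangle\big)_{\ell,\ell'}$ and $\big(\sum_\ell\langle P_{j\ell},P_{j'\ell}\rangle\big)_{j,j'}$ are scalar, which is exactly (\ref{model_poly_form}) --- the statement that minimal vectors are ``tight frames''. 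Lifting $P_0$ back gives, inside the closure of the orbit, an embedding of the form (\ref{canonical_poly_form}) that is a model form. The dichotomy follows because $\{\nu_{\mathcal A}=0\}$ is cut out by the vanishing of the homogeneous $H$-invariant polynomials on $W$, which --- being continuous and constant on $H$-orbits --- are constant on orbit closures; hence $\nu_{\mathcal A}\equiv 0$ for one embedding in the closure iff for all.

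For part (2), if $p$ satisfies (\ref{model_poly_form}) then $P$ is already balanced, hence critical, so $\inf_{h\in H}\|h\cdot P\|=\|P\|$, an explicit positive power of a monomial in $c$ and $c'$ (from $\sum_j\langle P_{j\ell},P_{j\ell'}\rangle=c\delta_{\ell,\ell'}$ and $\sum_\ell\langle P_{j\ell},P_{j'\ell}\rangle=c'\delta_{j,j'}$); since apolar pairings are positive semidefinite, $c,c'\ge 0$, with $c=0$ forcing all $p_\ell\equiv 0$ and $c'=0$ forcing every $\partial_j p_\ell\equiv 0$ (a cylinder), so in either degenerate case $P$ lies in the nullcone. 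Combining with the constancy of $\nu_{\mathcal A}$ from the set-up, $\mu_{\mathcal A}$ is a nonzero constant times the pushforward of Lebesgue measure via $f$ precisely when $c\neq 0$ and $c'\neq 0$. For part (3), by part (1) and the dichotomy it suffices to exhibit, for each $(d,n)$, one linearly independent tuple $p\subset\dot{P}_d^\kappa$ whose array $P$ is semistable; its orbit closure then contains a model form, necessarily with $c,c'\neq 0$, yielding a well-curved $\Sigma$. In the quadratic range $\kappa=2$, $n\le\binom{d+1}{2}$ --- the only case needed in this paper --- one takes $p_\ell$ with symmetric coefficient matrices $A^{(\ell)}$ chosen so that $\{A^{(\ell)}\}$ is orthogonal for the trace form and $\sum_\ell(A^{(\ell)})^2\propto I$ (e.g. a suitably balanced piece of an orthonormal family in ${\rm S}(d,\R)$) and verifies the two Gram matrices are nonsingular; general $\kappa$ is handled by the analogous Veronese-type construction.

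The main obstacle is part (1): identifying the precise representation $W$ of $H$ so that the Section~2 construction of \cite{gressman19} genuinely is the Kempf--Ness minimal-norm functional, deploying GIT over $\R$ (closed orbits in orbit closures, minimal vectors, Hilbert--Mumford), and matching the moment-map/criticality equations with the explicit balance conditions in (\ref{model_poly_form}) --- along with the bookkeeping of which subgroup of ${\rm SL}(d+n,\R)\times{\rm SL}(d,\R)$ acts effectively on the curvature array and why the relevant limit stays an embedding of the canonical form.
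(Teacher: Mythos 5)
The paper does not prove this statement: Theorem~\ref{thm:gressman_thm2} is quoted verbatim from \cite{gressman19} (Theorem~2 there) and used as a black box, so there is no internal argument to compare against. Your reconstruction is in the spirit of what the surrounding text attributes to Gressman --- Kempf--Ness minimum-vector calculations, a moment-map interpretation of the balance equations (\ref{model_poly_form}), and a nullcone dichotomy for degeneracy --- and the one fragment of (2) that the paper does re-derive, namely that $\nu_{\mathcal{A}}$ is a constant multiple of Lebesgue measure (Proposition~\ref{prop:TDI_affine_const}), is proved there by essentially the same equiaffine/translation-invariance observation you sketch in your Set-up paragraph.

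As a review of the mathematics, two steps in your proposal remain genuinely open. First, the passage from the orbit $\{Nf(M^Tt)\}_{N\in{\rm SL}(d+n,\R),\,M\in{\rm SL}(d,\R)}$ to a bona fide $H$-representation on the curvature array $P=(\partial_j p_\ell)$ with $H={\rm SL}(d,\R)\times{\rm SL}(m,\R)$ is precisely the bookkeeping you concede is missing at the end; without it one cannot invoke real Kempf--Ness/Richardson--Slodowy theory, identify the criticality equations with (\ref{model_poly_form}), or justify that the closed-orbit limit stays in the canonical form (\ref{canonical_poly_form}). Second, part (3) is not closed: the existence of $n$ trace-orthonormal $A^{(\ell)}\in{\rm S}(d,\R)$ with $\sum_\ell(A^{(\ell)})^2\propto I$ holds for the full orthonormal basis (where $\sum_\ell(A^{(\ell)})^2=\tfrac{d+1}{2}I$) but is not automatic for a proper sub-tuple, so ``a suitably balanced piece of an orthonormal family'' is an assertion that requires an actual construction for arbitrary $n\le\binom{d+1}{2}$, and for general $\kappa$ you give none. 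Since the paper simply cites \cite{gressman19}, these are gaps relative to the external source rather than a divergence from anything the paper does.
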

    For simplicity, we say that a quadratic form $\mathbf{Q}$ is well-curved if $S_\mathbf{Q}$ is well-curved.

    The proof of (2) in \cite{gressman19} relies on a direct computation of $\nu_{\mathcal{A}}$ (Lemma 4 there) via the so-called first fundamental theorem of invariant theory (Lemma 5 there), which yields an explicit expression of $\nu_{\mathcal{A}}$. However, if we only care about the ``constant'' property, then there is a much simpler proof, which we now present. We choose to do so not only because it is elegant and yields more general results (Proposition~\ref{prop:TDI_affine_const}), but also because it contains geometric insights that also appear in the proof of $\circled{11}$ in Theorem~\ref{thm:relation_diagram}.
    \begin{definition}\label{def:TDI_mfd}
        We say that $\Sigma$ is a {\rm TDI} manifold {\rm(}translation-dilation-invariant manifold{\rm)} if each $f_j$ in the parametrization of $\Sigma$ is a homogeneous polynomial whose all partial derivatives are linear combinations of $f_1,\dots,f_n$.
    \end{definition}
    Historically, TDI manifolds play an important role in decoupling theory, since they enjoy a large group of symmetries that resembles the classical ``parabolic rescaling'' for paraboloids. For example, all quadratic manifolds are TDI. In fact, one can easily check that any parametrization (\ref{canonical_poly_form}) induces a TDI manifold, so the following proposition recovers the ``constant'' property in (2) of Theorem~\ref{thm:gressman_thm2} from a very different perspective.
    \begin{proposition}\label{prop:TDI_affine_const}
        Suppose $\Sigma$ is a {\rm TDI} manifold, then $\nu_{\mathcal{A}}$ is a constant {\rm(}might be zero{\rm)} times the Lebesgue measure on $\R^d$.
    \end{proposition}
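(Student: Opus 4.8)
The plan is to exploit the large symmetry group of a TDI manifold $\Sigma$ together with the equiaffine invariance of the affine measure from Theorem~\ref{thm:gressman_thm1}. Write the parametrization as $f(t) = (t, f_1(t),\dots,f_n(t))$, where each $f_j$ is homogeneous of a common degree $\kappa$ (for quadratic manifolds $\kappa = 2$; the general homogeneous-degree case is analogous, and if the $f_j$'s have different degrees one works weight by weight). The key observation is that for every $\lambda>0$ there is an \emph{anisotropic dilation} $\delta_\lambda$ of $\R^{d+n}$, acting by $t \mapsto \lambda t$ on the base variables and by $f_j \mapsto \lambda^{\kappa} f_j$ on the fiber variables, which maps $\Sigma$ to itself; moreover, because $\Sigma$ is TDI, for each $t_0 \in \R^d$ there is an affine map of $\R^{d+n}$ (a ``parabolic translation'' by $t_0$) which translates the base by $t_0$ and sends $\Sigma$ to itself — the fact that all partial derivatives of the $f_j$ are linear combinations of the $f_j$ is exactly what makes these translations affine and volume-distorting by a computable constant Jacobian.

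First I would normalize these symmetries so as to make them equiaffine (volume-preserving): $\delta_\lambda$ has Jacobian $\lambda^{d + n\kappa}$ on $\R^{d+n}$, so after composing with an ordinary scalar dilation we obtain an equiaffine $\widetilde{\delta}_\lambda \in \mathrm{SL}(d+n,\R)$ mapping $\Sigma$ to itself; similarly the parabolic translations are already affine and, being unipotent in suitable coordinates, are automatically volume-preserving, hence equiaffine. By the equiaffine invariance clause of Theorem~\ref{thm:gressman_thm1}, $\mu_{\mathcal{A}}(T(E)) = \mu_{\mathcal{A}}(E)$ for every equiaffine $T$ preserving $\Sigma$ and every Borel set $E$. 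Pushing this back to the base via $f$, the density $\nu_{\mathcal{A}}$ on $\R^d$ must be invariant (up to the known constant Jacobian factors) under all base translations $t \mapsto t + t_0$ and all base dilations $t \mapsto \lambda t$. Translation invariance already forces $\nu_{\mathcal{A}} = c\, dt$ for some constant $c \geq 0$ (a translation-invariant Radon measure on $\R^d$ is a multiple of Lebesgue measure, and the affine curvature density is locally bounded by the real-analyticity in part (3) of Theorem~\ref{thm:gressman_thm1}, so there is no issue of infinite mass on compacts). The dilation invariance is then automatically consistent and gives no new information. This completes the argument.

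The main obstacle, and the only place where one actually has to compute, is verifying that the anisotropic dilations and parabolic translations genuinely preserve $\Sigma$ \emph{and} act on the affine-curvature density $\nu_{\mathcal{A}}$ by a \emph{constant} Jacobian — i.e., unwinding the definition of $\nu_{\mathcal{A}}$ from (14) of \cite{gressman19} enough to see that it transforms covariantly under these particular maps without picking up a non-constant factor. For the dilations this is a homogeneity bookkeeping exercise; the affine curvature tensor built from the $P_{j\ell}$'s scales by a fixed power of $\lambda$, and one checks the power cancels against the Jacobian of $\widetilde\delta_\lambda$. For the parabolic translations the point is that the TDI property means $\partial^\gamma f_j$ is again a linear combination of the $f_j$'s with constant coefficients, so the translated parametrization differs from the original by a fixed linear map in the fiber — hence the curvature density is literally unchanged. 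Once these two covariance statements are in hand, the conclusion is immediate from equiaffine invariance. I would present this by first recording the two families of symmetries as a short lemma, then invoking Theorem~\ref{thm:gressman_thm1} and the structure theorem for translation-invariant measures.
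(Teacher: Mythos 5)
Your proof takes essentially the same route as the paper's: identify the parabolic translations as volume-preserving affine maps of $\R^{d+n}$ that preserve $\Sigma$ (and hence, by the equiaffine invariance in Theorem~\ref{thm:gressman_thm1}, leave $\mu_{\mathcal{A}}$ invariant), pull this back to translation invariance of $\nu_{\mathcal{A}}$ on $\R^d$, and conclude that a translation-invariant measure is a constant multiple of Lebesgue. The only substantive difference is that you cite the structure theorem for translation-invariant Radon measures (and invoke part~(3) of Theorem~\ref{thm:gressman_thm1} to secure local finiteness), whereas the paper reproves that step self-containedly by a mollification argument ($\nu_{\mathcal{A}} * \varphi = \nu_{\mathcal{A}}$ forces $\nu_{\mathcal{A}}$ to be a smooth translation-invariant density, hence constant); your additional discussion of anisotropic dilations is correct but, as you note yourself, redundant, since translation invariance alone already pins down the measure. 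Both routes are valid; the paper's is marginally more elementary since it avoids appealing to any named classification result.
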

    \begin{proof}
        The key point is the equiaffine invariance of $\mu_{\mathcal{A}}$. Note that for any Borel set $E \subset \R^d$ and $a\in\R^d$, there exists some $T_a \in {\rm SL}(d+n,\R)$ such that $\Sigma$ is invariant under $T_a$ and $T_a(f(E)) = f(E+a)$ (``parabolic rescaling'' for TDI manifolds). By taking $\Sigma_0 = \Sigma$ and $T=T_a$ in Theorem~\ref{thm:gressman_thm1}, we have $\mu'_{\mathcal{A}}=\mu_{\mathcal{A}}$ by the uniqueness of $\mu_{\mathcal{A}}$, and 
        $$\nu_{\mathcal{A}}(E) = \mu_{\mathcal{A}}(f(E)) = \mu_{\mathcal{A}}(T_a(f(E))) = \mu_{\mathcal{A}}(f(E+a)) = \nu_{\mathcal{A}}(E+a).$$
        In other words, $\nu_{\mathcal{A}}$ is translation invariant, so for any $\varphi \in C_c^\infty(\R^d)$ with $\int \varphi = 1$ and $\varphi\geq 0$, we have $\nu_{\mathcal{A}} * \varphi = \nu_{\mathcal{A}}$. This is because for any Borel set $E \subset \R^d$, by Tonelli's theorem we have
        \begin{align*}
            \nu_{\mathcal{A}} * \varphi (E) &= \int_{\R^d}\int_{\R^d} \varphi(\xi-\eta)d\nu_{\mathcal{A}}(\eta) \chi_E(\xi) d\xi\\
            &= \int_{\R^d}\int_{\R^d} \varphi(\xi) \chi_E(\xi+\eta) d\xi d\nu_{\mathcal{A}}(\eta)\\
            &= \int_{\R^d}\int_{\R^d} \chi_{E-\xi}(\eta) d\nu_{\mathcal{A}}(\eta) \varphi(\xi) d\xi\\
            &= \int_{\R^d} \nu_{\mathcal{A}}(E) \varphi(\xi) d\xi = \nu_{\mathcal{A}}(E).
        \end{align*}
        Thus $\nu_{\mathcal{A}} = \nu_{\mathcal{A}} * \varphi$ must be smooth. But any translation invariant smooth function must be a constant, so we are done.
    \end{proof}

    \subsubsection{Newton-type polyhedra characterizations}\phantom{x}
	
	Recall that the original definition of the Oberlin condition (Definition~\ref{def:oberlin_condition}) only involves concepts from convex geometry. So it is remarkable that Gressman \cite{gressman19} obtained many other characterizations in different flavors besides convex geometry, such as algebra, geometric invariant theory, sublevel set estimates, and Newton-type polyhedra. However, for our purposes, we shall only need the Newton-type polyhedra characterization, which will be presented below.
    
    Let us first fix some notations. For $p=(p_1,\dots,p_m)$ being an $m$-tuple of real homogeneous polynomials in $d$ variables, we define $R_{N,M} p \coloneqq (p\circ M)\cdot N$\footnote{We rewrite the definition of $R_{N,M}$ in \cite{gressman19} to better match the definition of $\mathfrak{d}_{d',n'}(\mathbf{Q})$ in the literature, which will be convenient for later discussions. For example, when $p = \mathbf{Q}$, this agrees with our previous definition of $R_{N,M}$ in (\ref{def:R_{M',M}}). Such a modification does not change the content of the theorems in \cite{gressman19}.} for any $(N,M) \in {\rm GL}(m,\R) \times {\rm GL}(d,\R)$. Also, let $\1_u \coloneqq (1,\dots,1) \in [0,\infty)^u$ for any $u \in \N^+$. Then we have:
    
    \begin{theorem}[\cite{gressman19}, Theorem 6]\label{thm:gressman_thm6}
        Let $\{e_1,\dots,e_m\}$ be the standard basis of $\R^m$. For any $p = (p_1,\dots,p_m)$ being an $m$-tuple of real homogeneous polynomials of degree $\kappa$ in $d$ variables, let $\mathcal{N}(p) \subset [0,\infty)^m \times [0,\infty)^d$ be the convex hull of all points $(e_j, \gamma)$ such that 
        \begin{align*}
            \partial^\gamma p_j(t)|_{t=0} \neq 0.
        \end{align*}
        Then the associated submanifold defined by {\rm(\ref{canonical_poly_form})} is well-curved if and only if 
        \begin{align}\label{eq:newton_convex_hull}
            \left( \frac{1}{m} \1_m, \frac{\kappa}{d} \1_d \right) \in \mathcal{N}(R_{O_1,O_2} p)
        \end{align}
        for all $(O_1,O_2) \in {\rm O}(m, \R) \times {\rm O}(d, \R)$.
    \end{theorem}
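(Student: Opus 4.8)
The plan is to reduce well-curvedness to a semistability statement in real geometric invariant theory, and then to translate that semistability, via the Hilbert--Mumford numerical criterion, into the convex-geometry condition (\ref{eq:newton_convex_hull}). First I would recall from the construction behind Theorem~\ref{thm:gressman_thm1} (Section~2 of \cite{gressman19}) that the density of $\nu_{\mathcal A}$ at a point is a fixed nonzero power of the norm of the Kempf--Ness minimum vector of the orbit of the $\kappa$-jet datum of the parametrization under the reductive group $G = {\rm SL}(d,\R)\times{\rm SL}(m,\R)$, where the first factor acts by change of variables and the second by taking linear combinations of the top-degree polynomials; for the form (\ref{canonical_poly_form}) this datum is, after a fixed identification of $G$-representations, the tuple $p=(p_1,\dots,p_m)$ itself, regarded as an element of $\R^m\otimes{\rm Sym}^\kappa(\R^d)$ (and the model-form equations (\ref{model_poly_form}) are exactly the moment-map-zero / balanced-frame conditions characterizing candidate minimum vectors). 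Since (\ref{canonical_poly_form}) defines a TDI manifold, Proposition~\ref{prop:TDI_affine_const} tells us $\nu_{\mathcal A}$ is a constant times Lebesgue measure, so well-curvedness is equivalent to that constant being nonzero, hence to the minimum vector being nonzero, hence to the orbit closure $\overline{G\cdot p}$ not containing the origin (semistability of $p$).

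Next I would unwind semistability through real GIT. By the real Kempf--Ness theory of Richardson--Slodowy and Birkes, writing $G = K\exp(\mathfrak p)$ with $K = {\rm SO}(d,\R)\times{\rm SO}(m,\R)$ and using the geodesic convexity of the Kempf--Ness function, $0\notin\overline{G\cdot p}$ holds if and only if for every $k\in K$ and every rational one-parameter subgroup $\lambda$ into the standard diagonal maximal torus $T$ the Hilbert--Mumford weight satisfies $\mu(k\cdot p,\lambda)\le 0$ (in the appropriate sign convention). A one-parameter subgroup into $T$ is a pair of integer weight vectors $a=(a_1,\dots,a_d)$ and $b=(b_1,\dots,b_m)$ with $\sum_i a_i=\sum_\ell b_\ell=0$, and the component of $p$ recording the $t^\gamma$-coefficient of $p_\ell$ carries $\lambda$-weight $\langle a,\gamma\rangle+b_\ell$ measured from the $G$-fixed barycentre. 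By the separating-hyperplane theorem, $\mu(p,\lambda)\le 0$ for all such $\lambda$ is then equivalent to that barycentre lying in the convex hull of $\{(e_\ell,\gamma):\partial^\gamma p_\ell(0)\ne 0\}$; since each such $\gamma$ has $|\gamma|=\kappa$ and the $\R^m$-block of every generator is a standard basis vector, the correctly normalized $G$-fixed barycentre is precisely $\bigl(\frac1m\1_m,\frac\kappa d\1_d\bigr)$, so this reads $\bigl(\frac1m\1_m,\frac\kappa d\1_d\bigr)\in\mathcal N(p)$.

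Finally I would assemble the pieces: conjugating $p$ by $k\in K$ replaces $p$ by the tuple of rotated polynomials, whose support $\{(\ell,\gamma):\partial^\gamma p_\ell(0)\ne 0\}$ is exactly that of $R_{O_1,O_2}p$ for the corresponding $(O_1,O_2)\in{\rm O}(m,\R)\times{\rm O}(d,\R)$, and passing from ${\rm SO}$ to ${\rm O}$ introduces only coordinate sign flips, under which both $\mathcal N(\cdot)$ and the point $\bigl(\frac1m\1_m,\frac\kappa d\1_d\bigr)$ are invariant. Hence $p$ is semistable if and only if $\bigl(\frac1m\1_m,\frac\kappa d\1_d\bigr)\in\mathcal N(R_{O_1,O_2}p)$ for all $(O_1,O_2)$, which combined with the first paragraph is exactly (\ref{eq:newton_convex_hull}); both implications come at once since the numerical criterion is itself an equivalence. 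I expect the main obstacle to be the real-GIT input: over $\R$, the chain ``density $\ne 0$ $\iff$ minimum vector $\ne 0$ $\iff$ semistable'', and the reduction of ``test all maximal tori'' to ``test the standard torus after conjugating by the maximal compact'', are genuinely more subtle than over $\C$, and it is precisely this reduction --- together with the fact that $K$-conjugation really does change which coefficients $\partial^\gamma p_\ell(0)$ vanish --- that forces the quantifier over all $(O_1,O_2)$ rather than a single orbit representative. A secondary technical point is matching Gressman's explicit Kempf--Ness formula for $\nu_{\mathcal A}$, which is built from the affine fundamental form $(P_{j\ell})=(\partial_j p_\ell)$, with the degree-$\kappa$ Newton polytope $\mathcal N(p)$; this is the source of the ``fixed identification of $G$-representations'' mentioned above.
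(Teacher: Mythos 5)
The paper does not prove this theorem itself; it is imported verbatim as Theorem~6 of \cite{gressman19}, which is proved there by exactly the Kempf--Ness/Hilbert--Mumford route you sketch. Your reconstruction is substantively correct, including the key structural point that testing one-parameter subgroups of the split torus after conjugating by the maximal compact is what forces the quantifier over all $(O_1,O_2)\in{\rm O}(m,\R)\times{\rm O}(d,\R)$ rather than a single representative, and that well-curvedness reduces (via Lemma~\ref{lem:nu_simple_express} and the chain ``$\nu_{\mathcal{A}}\neq 0$ $\iff$ Kempf--Ness minimum vector nonzero $\iff$ $0\notin\overline{G\cdot p}$'') to real semistability of $p$ in $\R^m\otimes\mathrm{Sym}^\kappa(\R^d)$.
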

	
	Although we have introduced the full versions of the theorems in \cite{gressman19} for completeness and for the reader's convenience, in this paper we will only care about their applications to quadratic manifolds. An unusual benefit that we enjoy by restricting ourselves to this special (but still quite general compared with paraboloids/hyperbolic paraboloids) class of manifolds is that now the form (\ref{canonical_poly_form}) comes for free, so Theorem~\ref{thm:gressman_thm2} and \ref{thm:gressman_thm6} are directly applicable (take $m=n$, $\kappa=2$, $p = \mathbf{Q}$, and $f(\xi) = (\xi, \mathbf{Q}(\xi))$). Theorem~\ref{thm:gressman_thm6} is vital in the proof of $\circled{6}$ in Theorem~\ref{thm:relation_diagram}.
    % \begin{corollary}\label{cor:gressman_thm6}
    %     Let $\{e_1,\dots,e_n\}$ be the standard basis of $\R^n$. For any $\mathbf{Q} = (Q_1,\dots,Q_n)$ being an $n$-tuple of quadratic forms in $d$ variables, let $\mathcal{N}(\mathbf{Q}) \subset [0,\infty)^n \times [0,\infty)^d$ be the convex hull of all points $(e_j, \gamma)$ such that 
    %     \begin{align*}
    %         \partial^\gamma Q_j(\xi)|_{\xi=0} \neq 0.
    %     \end{align*}
    %     Then $\mathbf{Q}$ is well-curved if and only if 
    %     \begin{align}\label{cond:Newton_poly_quad}
    %         \left( \frac{1}{n} \1_n, \frac{2}{d} \1_d \right) \in \mathcal{N}(R_{O_1,O_2} \mathbf{Q})
    %     \end{align}
    %     for all $(O_1,O_2) \in {\rm O}(n, \R) \times {\rm O}(d, \R)$.
    % \end{corollary}

    On the other hand, in the proof of $\circled{8}$ in Theorem~\ref{thm:relation_diagram}, we need to connect well-curvedness with algebraic quantities $\mathfrak{d}_{d',n'}(\mathbf{Q})$. For this purpose, we will prove the following variant of Theorem~\ref{thm:gressman_thm6}:
    \begin{theorem}\label{thm:gressman_thm6_variant}
        Let $\{e_1,\dots,e_m\}$ be the standard basis of $\R^m$. For any $p = (p_1,\dots,p_m)$ being an $m$-tuple of real homogeneous polynomials of degree $\kappa$ in $d$ variables, let $\mathcal{N}(p) \subset [0,\infty)^m \times [0,\infty)^d$ be the convex hull of all points $(e_j, \gamma)$ such that 
        \begin{align*}
            \partial^\gamma p_j(t)|_{t=0} \neq 0.
        \end{align*}
        Then the associated submanifold defined by {\rm(\ref{canonical_poly_form})} is well-curved if and only if 
        \begin{align*}
            \left( \frac{1}{m} \1_m, \frac{\kappa}{d} \1_d \right) \in \mathcal{N}(R_{N,M} p)
        \end{align*}
        for all $(N,M) \in {\rm GL}(m, \R) \times {\rm GL}(d, \R)$.
    \end{theorem}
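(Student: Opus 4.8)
The plan is to reduce Theorem~\ref{thm:gressman_thm6_variant} to Theorem~\ref{thm:gressman_thm6}, so that the only thing to settle is the replacement of the orthogonal groups by the general linear groups. One inclusion is immediate: since ${\rm O}(m,\R)\times{\rm O}(d,\R)\subset{\rm GL}(m,\R)\times{\rm GL}(d,\R)$, membership of $\left(\frac{1}{m}\1_m,\frac{\kappa}{d}\1_d\right)$ in $\mathcal{N}(R_{N,M}p)$ for all $(N,M)\in{\rm GL}(m,\R)\times{\rm GL}(d,\R)$ forces the same membership for all $(O_1,O_2)\in{\rm O}(m,\R)\times{\rm O}(d,\R)$, which by Theorem~\ref{thm:gressman_thm6} means the associated manifold is well-curved. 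For the converse the crucial ingredient is the following invariance lemma: \emph{if the submanifold associated to $p$ via {\rm(\ref{canonical_poly_form})} is well-curved, then so is the submanifold associated to $R_{N,M}p$ via {\rm(\ref{canonical_poly_form})}, for every $(N,M)\in{\rm GL}(m,\R)\times{\rm GL}(d,\R)$.} Granting the lemma, fix $(N,M)$ and apply Theorem~\ref{thm:gressman_thm6} to the well-curved tuple $R_{N,M}p$; using $R_{O_1,O_2}(R_{N,M}p)=R_{NO_1,MO_2}p$ and specializing to $O_1=I$, $O_2=I$ gives $\left(\frac{1}{m}\1_m,\frac{\kappa}{d}\1_d\right)\in\mathcal{N}(R_{N,M}p)$, which is exactly what is needed.

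To prove the invariance lemma I would first check that the submanifold $\Sigma'$ associated to $R_{N,M}p$ is a nonsingular linear image of the submanifold $\Sigma$ associated to $p$. Write $v(t)\coloneqq(t^\gamma)_{1\le|\gamma|<\kappa}$, so $\Sigma$ is parametrized by $t\mapsto(v(t),p(t))$ and $\Sigma'$ by $t\mapsto\bigl(v(t),(p(Mt))N\bigr)$; reparametrizing the latter by $t\mapsto M^{-1}t$ presents it as $t\mapsto\bigl(v(M^{-1}t),(p(t))N\bigr)$. Each entry of $v(M^{-1}t)$ is a homogeneous polynomial in $t$ of the same degree as the matching entry of $v(t)$, so $v(M^{-1}t)=A\,v(t)$ for a fixed invertible matrix $A$ — the direct sum over degrees of the symmetric powers of $M^{-1}$, which is invertible because $M$ is — while $(p(t))N$ is an invertible recombination of $p(t)$. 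Hence $\Sigma'=\Phi(\Sigma)$ for the block-diagonal $\Phi\in{\rm GL}(d+n,\R)$ assembled from $A$ and $N$. Note also that, as $N$ and $M$ are invertible, $R_{N,M}p$ is again an $m$-tuple of linearly independent homogeneous polynomials of degree $\kappa$, so $\Sigma'$ is itself a submanifold of the form (\ref{canonical_poly_form}); in particular both $\Sigma$ and $\Sigma'$ are images of polynomial (hence real-analytic) immersions of bounded domains, and Theorem~\ref{thm:gressman_thm1} applies to each with the exponent $\alpha=d/D$, $D=D(d,n)$ being the homogeneous dimension, which depends only on $d$ and $n$.

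It remains to show that a nonsingular linear image preserves well-curvedness, and here I would use the maximality built into Theorem~\ref{thm:gressman_thm1} rather than a bare transformation law. Let $\mu_{\mathcal{A}}$, $\mu'_{\mathcal{A}}$ be the affine measures of $\Sigma$, $\Sigma'=\Phi(\Sigma)$. Since $\Phi$ maps convex bodies to convex bodies and scales their volumes by the fixed factor $|\det\Phi|$, the Oberlin condition with exponent $d/D$ is covariant under $\Phi$; hence $(\Phi)_*\mu_{\mathcal{A}}$ satisfies the Oberlin condition with exponent $d/D$ on $\Sigma'$, and so $(\Phi)_*\mu_{\mathcal{A}}\lesssim\mu'_{\mathcal{A}}$ by Theorem~\ref{thm:gressman_thm1}(2)--(3). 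Running the same argument with $\Phi^{-1}$ gives $\mu'_{\mathcal{A}}\lesssim(\Phi)_*\mu_{\mathcal{A}}$, so $\mu'_{\mathcal{A}}\sim(\Phi)_*\mu_{\mathcal{A}}$; in particular $\mu'_{\mathcal{A}}$ is the zero measure if and only if $\mu_{\mathcal{A}}$ is. Finally, since $\Sigma$ and $\Sigma'$ are both of the form (\ref{canonical_poly_form}) and hence TDI, Proposition~\ref{prop:TDI_affine_const} shows their affine densities are constant multiples of Lebesgue measure, so ``well-curved'' reduces to the non-vanishing of these constants, i.e., to $\mu_{\mathcal{A}}\not\equiv0$ (resp.\ $\mu'_{\mathcal{A}}\not\equiv0$). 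Thus $\Sigma'$ is well-curved if and only if $\Sigma$ is, which proves the lemma and the theorem.

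The hard part will be this last step — upgrading Gressman's \emph{equiaffine} invariance of the affine measure to invariance, up to comparability, under the full general linear group — since it forces one to go through the maximality characterization of $\mu_{\mathcal{A}}$ and to track how the Oberlin exponent interacts with volume scaling. A smaller point that must be handled carefully is the identification $\Sigma'=\Phi(\Sigma)$: it relies on recognizing that precomposition with $M$ acts on the lower-degree ``monomial block'' of (\ref{canonical_poly_form}) through the invertible symmetric-power representation, and on commuting the right multiplication by $N$ past this composition.
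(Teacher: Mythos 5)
Your proposal is correct, but it takes a genuinely different route from the paper. The paper proves the ``only if'' direction by invoking Lemma~\ref{lem:nu_simple_express} (Gressman's Lemma~4), which gives an explicit closed-form expression for $d\nu_{\mathcal{A}}/dt$ as an infimum of a weighted Sobolev norm over $\mathrm{SL}(m,\R)\times\mathrm{SL}(d,\R)$; it then observes directly from this formula that replacing $p$ by $R_{N',M'}p$ multiplies the density by the positive constant $(ab^\kappa)^{md/D}$ (after factoring $N'=aN''$, $M'=bM''$ with $|\det N''|=|\det M''|=1$), which immediately yields ${\rm GL}$-invariance of well-curvedness. You instead avoid the explicit formula entirely and argue from the extremal characterization of $\mu_{\mathcal{A}}$: you identify $\Sigma'$ as a nonsingular linear image $\Phi(\Sigma)$ (block-diagonal $\Phi$ built from symmetric powers of $M^{-1}$ on the low-degree monomial block and $N^T$ on the top-degree block), use the elementary covariance of the Oberlin condition with exponent $d/D$ under ${\rm GL}(d+n,\R)$, and apply parts (2) and (3) of Theorem~\ref{thm:gressman_thm1} in both directions to get $\mu'_{\mathcal{A}}\sim(\Phi)_*\mu_{\mathcal{A}}$, after which Proposition~\ref{prop:TDI_affine_const} finishes. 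This is a nice conceptual alternative that foregrounds the geometric meaning of the affine measure rather than its formula; the paper's approach is shorter once one has Lemma~\ref{lem:nu_simple_express} in hand, but your argument would survive even without that explicit expression.

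One small but genuine gap you should patch: the identity $\Sigma'=\Phi(\Sigma)$ is not literally true because of the domain. After the reparametrization $t\mapsto M^{-1}t$, you get $\Sigma'=f'(\Omega)=\Phi\big(f(M\Omega)\big)$, not $\Phi\big(f(\Omega)\big)$; the two agree only if $\Omega=M\Omega$. Since Theorem~\ref{thm:gressman_thm1}(2) compares measures \emph{supported on the same submanifold}, you need to compare $\mu'_{\mathcal{A}}$ with the affine measure of $f(M\Omega)$ rather than of $f(\Omega)$, and then use Proposition~\ref{prop:TDI_affine_const} once more to transfer the conclusion back: because $\nu_{\mathcal{A}}$ for the parametrization $f$ is a constant multiple of Lebesgue, the affine measure of $f(M\Omega)$ is everywhere nonzero exactly when that of $f(\Omega)$ is, so well-curvedness is insensitive to the choice of bounded domain. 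You hint at this by citing Proposition~\ref{prop:TDI_affine_const}, but the two-step transfer (first compare across $\Phi$, then normalize the domain) should be made explicit, as without it the application of Theorem~\ref{thm:gressman_thm1}(2) is not quite legal.
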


    Compared with Theorem~\ref{thm:gressman_thm6}, which only tests (\ref{eq:newton_convex_hull}) over $(N,M) \in {\rm O}(n,\R) \times {\rm O}(d,\R)$, here in Theorem~\ref{thm:gressman_thm6_variant} we are testing over $(N,M) \in {\rm GL}(n,\R) \times {\rm GL}(d,\R)$, which is much larger. Therefore, if we want to prove the well-curvedness of some $\mathbf{Q}$ via the Newton-type polyhedra, then there is no benefit of resorting to Theorem~\ref{thm:gressman_thm6_variant}. However, if we aim to show that some $\mathbf{Q}$ is not well-curved, then Theorem~\ref{thm:gressman_thm6_variant} is much more convenient to use and automatically matches the definition of $\mathfrak{d}_{d',n'}(\mathbf{Q})$. This is the punchline behind the proof of $\circled{8}$ in Theorem~\ref{thm:relation_diagram}.
    
    Note that the action of $\{R_{N,M}\}_{(N,M) \in {\rm GL}(n,\R) \times {\rm GL}(d,\R)}$ forms a group, so we have the following by-product of Theorem~\ref{thm:gressman_thm6_variant} (take $m=n$, $\kappa=2$, $p = \mathbf{Q}$, and $f(\xi) = (\xi, \mathbf{Q}(\xi))$), which is a reflection of the intuition that the ${\rm GL}$-action does not change the ``shape'' of a surface:
    \begin{corollary}\label{cor:quad_equiv_curved}
        For $\mathbf{Q} \equiv \mathbf{Q}'$, $\mathbf{Q}$ is well-curved if and only if $\mathbf{Q}'$ is well-curved.
    \end{corollary}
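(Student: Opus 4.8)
The plan is to reduce the statement entirely to Theorem~\ref{thm:gressman_thm6_variant}, whose enlargement of the test group from ${\rm O}(n,\R)\times{\rm O}(d,\R)$ to ${\rm GL}(n,\R)\times{\rm GL}(d,\R)$ is precisely the feature encoding ${\rm GL}$-invariance. Since $\equiv$ is a symmetric relation (if $\mathbf{Q}'=R_{N,M}\mathbf{Q}$ then $\mathbf{Q}=R_{N^{-1},M^{-1}}\mathbf{Q}'$ by the group structure of $\{R_{N,M}\}$), it suffices to show: if $\mathbf{Q}$ is well-curved and $\mathbf{Q}'=R_{N_0,M_0}\mathbf{Q}$ for some $(N_0,M_0)\in{\rm GL}(n,\R)\times{\rm GL}(d,\R)$, then $\mathbf{Q}'$ is well-curved.

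First I would apply Theorem~\ref{thm:gressman_thm6_variant} with $m=n$, $\kappa=2$, $p=\mathbf{Q}'$, and $f(\xi)=(\xi,\mathbf{Q}'(\xi))$ (this parametrization comes for free since $\mathbf{Q}'$ is quadratic): $S_{\mathbf{Q}'}$ is well-curved if and only if
\[
\left(\frac1n\1_n,\ \frac2d\1_d\right)\in\mathcal{N}\bigl(R_{N,M}\mathbf{Q}'\bigr)\qquad\text{for all }(N,M)\in{\rm GL}(n,\R)\times{\rm GL}(d,\R).
\]
Next I would invoke the composition rule $R_{N,M}\circ R_{N',M'}=R_{N'N,\,M'M}$ (a one-line check, which also exhibits $\{R_{N,M}\}$ as a group, as noted just before the statement), giving
\[
R_{N,M}\mathbf{Q}'=R_{N,M}\bigl(R_{N_0,M_0}\mathbf{Q}\bigr)=R_{N_0N,\,M_0M}\mathbf{Q}.
\]
Because left multiplication by the fixed invertible matrices $N_0$ and $M_0$ is a bijection of ${\rm GL}(n,\R)$ and of ${\rm GL}(d,\R)$ onto themselves, the pair $(N_0N,\,M_0M)$ ranges over all of ${\rm GL}(n,\R)\times{\rm GL}(d,\R)$ as $(N,M)$ does.

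Hence the family of membership conditions above for $\mathbf{Q}'$ is, as a set of statements, literally the same as the family
\[
\left(\frac1n\1_n,\ \frac2d\1_d\right)\in\mathcal{N}\bigl(R_{\widetilde N,\widetilde M}\mathbf{Q}\bigr)\qquad\text{for all }(\widetilde N,\widetilde M)\in{\rm GL}(n,\R)\times{\rm GL}(d,\R),
\]
which, by Theorem~\ref{thm:gressman_thm6_variant} applied to $\mathbf{Q}$, holds precisely because $\mathbf{Q}$ is well-curved. Therefore $S_{\mathbf{Q}'}$ is well-curved, completing the argument.

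I do not foresee any genuine difficulty: all the content is packaged into Theorem~\ref{thm:gressman_thm6_variant}, and the only step deserving a moment of care is the bookkeeping of the (anti-)homomorphism $R_{N,M}\circ R_{N',M'}=R_{N'N,\,M'M}$, which is what makes ``composing on one side with a fixed group element permutes the group''. An alternative route through the equiaffine invariance of $\mu_{\mathcal{A}}$ in Theorem~\ref{thm:gressman_thm1} is conceivable, but a general $R_{N_0,M_0}$ corresponds to an affine map of $\R^{d+n}$ that need not preserve volume, so one would have to carry along a multiplicative normalization of the affine measure; the Newton-polyhedron criterion sidesteps this entirely.
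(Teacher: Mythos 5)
Your proof is correct and follows precisely the route the paper indicates: it reduces everything to Theorem~\ref{thm:gressman_thm6_variant} and exploits the group law $R_{N,M}\circ R_{N',M'}=R_{N'N,M'M}$ so that left-translation by the fixed $(N_0,M_0)$ permutes ${\rm GL}(n,\R)\times{\rm GL}(d,\R)$. The paper leaves exactly this argument implicit under the remark ``the action of $\{R_{N,M}\}$ forms a group,'' so you have simply spelled it out, including the correct observation that the equiaffine-invariance route via Theorem~\ref{thm:gressman_thm1} would require tracking a volume normalization.
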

	
	Now we turn to the proof of Theorem~\ref{thm:gressman_thm6_variant}, which is very similar to that of (1) in Theorem~\ref{thm:gressman_thm2}. The key ingredient is Lemma 4 in \cite{gressman19}, for which we shall further introduce some notations. We equip $\dot{P}_d^\kappa$ with the inner product $\langle\cdot,\cdot\rangle_\kappa$ given by $\langle q, r \rangle_\kappa = \kappa! q(\partial)r(t)|_{t=0}$ for any $q, r \in \dot{P}_d^\kappa$, and let $\norm{\cdot}_\kappa$ be the norm induced by $\langle\cdot,\cdot\rangle_\kappa$. Then we have:
    
    \begin{lemma}[\cite{gressman19}, Lemma 4]\label{lem:nu_simple_express}
        Let $f$ have the form {\rm(\ref{canonical_poly_form})}. Let $p \coloneqq (p_1,\dots,p_m) \in (\dot{P}_d^\kappa)^m$ be the $m$-tuple of highest order parts of $f$. Then $\nu_{\mathcal{A}}$ is a constant times Lebesgue measure on $\R^d$, and
        \begin{align}\label{eq:nu_simple_express}
            \frac{d \nu_{\mathcal{A}}}{d t} = C \Big[\inf_{\substack{N\in {\rm SL}(m,\R)\\ M\in {\rm SL}(d,\R)}} \sum_{j=1}^m \norm{(R_{N,M}p)_j}_\kappa^2\Big]^{\frac{md}{2D}}
        \end{align}
        for some constant $C = C(d,n)$. Here $D = D(d,n)$ is the homogeneous dimension.
    \end{lemma}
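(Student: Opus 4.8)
The plan is to recognize Lemma~\ref{lem:nu_simple_express} as Lemma~4 of \cite{gressman19} transcribed into our notation, so that the work is mostly a matter of matching definitions; but the first assertion---that $\nu_{\mathcal{A}}$ is a constant multiple of Lebesgue measure---admits a cleaner self-contained route. Indeed, every $f$ of the form (\ref{canonical_poly_form}) induces a TDI manifold (the derivative of a monomial is a monomial of one lower degree, and the homogeneous top parts $p_j$ have polynomial derivatives, so the whole coordinate collection is closed under the translation--dilation symmetries), whence Proposition~\ref{prop:TDI_affine_const} already gives $\nu_{\mathcal{A}} = c_0\cdot\mathrm{Leb}$ for some $c_0\ge 0$. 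By translation invariance it then suffices to evaluate the density $c_0 = d\nu_{\mathcal{A}}/dt$ at $t=0$.

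To evaluate it, I would unwind the definition of $\nu_{\mathcal{A}}$ from (14) of \cite{gressman19}. At a point, Gressman's \emph{affine curvature tensor} is obtained from the jet of $f$ by quotienting out the span of the lower-order derivatives (the ``affine part''); at $t=0$ for $f$ as in (\ref{canonical_poly_form}), each monomial coordinate $t^\gamma$ with $1\le|\gamma|<\kappa$ contributes only to this affine part and is discarded, so the curvature tensor reduces to the tuple $p=(p_1,\dots,p_m)\in(\dot{P}_d^\kappa)^m$ of top-degree parts. The reductive group governing the construction is $\mathrm{SL}(d,\R)\times\mathrm{SL}(m,\R)$: the first factor reparametrizes the domain ($p\mapsto p\circ M$) and the second acts linearly on the last $m$ ambient coordinates ($p\mapsto p\cdot N$), so jointly $p\mapsto R_{N,M}p$. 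The invariant norm is the one induced coordinatewise by the apolar inner product $\langle q,r\rangle_\kappa=\kappa!\,q(\partial)r(t)|_{t=0}$, i.e. $\sum_{j=1}^m\norm{p_j}_\kappa^2$; hence the Kempf--Ness minimum-vector norm over the $\mathrm{SL}(d,\R)\times\mathrm{SL}(m,\R)$-orbit of $p$ is exactly $\inf_{N,M}\sum_{j=1}^m\norm{(R_{N,M}p)_j}_\kappa^2$. Finally, the density is this quantity raised to a fixed power, and that power is forced by the requirements that the pushforward $\mu_{\mathcal{A}}$ of $\nu_{\mathcal{A}}$ via $f$ be equiaffine invariant and satisfy the Oberlin condition (\ref{eq: oberlin_condition}) at the critical exponent $\alpha=d/D$ of Theorem~\ref{thm:gressman_thm1}: tracking the homogeneities of $p$, of the orbit minimum, and of Lebesgue measure under the dilation symmetry of the TDI manifold---and using that the total degree of the $d+n$ coordinate monomials is the homogeneous dimension $D$---pins the power down to $\frac{md}{2D}$. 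This is the same homogeneity bookkeeping that underlies (2) of Theorem~\ref{thm:gressman_thm2}, which \cite{gressman19} carries out via the first fundamental theorem of invariant theory.

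The main obstacle---indeed essentially the only non-formal step---is the identification at $t=0$ of Gressman's affine curvature tensor with the top-part tuple $p$: one must check that quotienting by the span of the lower-order derivatives of (\ref{canonical_poly_form}) is precisely the passage from the full degree-$\le\kappa$ jet to its leading homogeneous component, so that no $t^\gamma$ with $|\gamma|<\kappa$ leaks into the curvature data. Everything afterwards is a translation of notation (the group $\mathrm{SL}(d,\R)\times\mathrm{SL}(m,\R)$, the apolar inner product, the exponent $\frac{md}{2D}$), together with the trivial remark that the infimum defining the density always exists---possibly equal to $0$, which corresponds to a degenerate embedding. Since all of this is done in Section~2 of \cite{gressman19}, in practice I would present Lemma~\ref{lem:nu_simple_express} with a short proof citing Lemma~4 there, additionally pointing out that the ``constant times Lebesgue'' half of the statement also follows from Proposition~\ref{prop:TDI_affine_const}.
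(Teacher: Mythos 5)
Your proposal is correct and matches the paper's treatment: the paper states this result as a direct import of Lemma~4 of \cite{gressman19} (no independent proof is given), which is exactly the citation you land on, and your additional remark that the ``constant times Lebesgue'' half also follows from Proposition~\ref{prop:TDI_affine_const} agrees with the paper's own observation that any parametrization of the form (\ref{canonical_poly_form}) is TDI. Your heuristic sketch of Gressman's internal argument (Kempf--Ness minimum vectors, the apolar inner product, and the homogeneity bookkeeping fixing the exponent $\frac{md}{2D}$) is consistent with how the paper describes that construction, so nothing further is needed.
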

    \begin{proof}[Proof of Theorem~\ref{thm:gressman_thm6_variant}]
        The ``if'' part is trivial: If the Newton-type polyhedra condition holds for all $(N,M) \in {\rm GL}(n,\R) \times {\rm GL}(d,\R)$, then it in particular holds for all $(N,M) \in {\rm O}(n,\R) \times {\rm O}(d,\R)$, and by Theorem~\ref{thm:gressman_thm6}, we get well-curvedness. So it remains to prove the ``only if'' part.
        
        By Lemma~\ref{lem:nu_simple_express}, we have
        \begin{align*}
            \frac{d \nu_{\mathcal{A}}}{d t} = C \Big[\inf_{\substack{N\in {\rm SL}(m,\R)\\ M\in {\rm SL}(d,\R)}} \sum_{j=1}^m \norm{(R_{N,M}p)_j}_\kappa^2\Big]^{\frac{md}{2D}}.
        \end{align*}
        By noting that interchanging columns of $N$ or rows of $M$ does not affect the value of the expression, we can replace the infimum above by $\inf_{|\det(N)|=|\det(M)|=1}$. The key point here is that if we replace $p$ with $R_{N',M'}p$ for some $(N',M') \in {\rm GL}(n,\R) \times {\rm GL}(d,\R)$, then the corresponding $d\nu'_{\mathcal{A}}/d t$ differs from $d\nu_{\mathcal{A}}/d t$ by a factor $C'\in \R^+$. This is because we can always write $N' = aN''$ and $M' = bM''$ for some $a, b \in\R^+$ and $|\det(N'')|=|\det(M'')|=1$, under which $$R_{N,M}(R_{N',M'}p) = R_{N,M}\left( R_{N'',M''} (R_{aI,bI}p) \right) = (R_{N,M}\circ R_{N'',M''})(ab^\kappa\cdot p) = ab^\kappa R_{N''N,M''M}p,$$ and then by Lemma~\ref{lem:nu_simple_express} with $p = R_{N',M'}p$ and $f(t) = ((t^\gamma)_{1\leq |\gamma|<\kappa},R_{N',M'}p(t))$ we have
        \begin{align*}
            \frac{d \nu'_{\mathcal{A}}}{d t} &= C \Big[\inf_{\substack{|\det(N)|=1 \\ |\det(M)|=1}} \sum_{j=1}^m \norm{\left(R_{N,M}(R_{N',M'}p)\right)_j}_\kappa^2\Big]^{\frac{md}{2D}}\\
            &= C \Big[\inf_{\substack{|\det(N)|=1\\ |\det(M)|=1}} \sum_{j=1}^m \norm{(ab^\kappa R_{N''N,M''M}p)_j}_\kappa^2\Big]^{\frac{md}{2D}}\\
            &= C (ab^\kappa)^{\frac{md}{D}} \cdot \Big[\inf_{\substack{|\det(N)|=1 \\ |\det(M)|=1}} \sum_{j=1}^m \norm{(R_{N,M}p)_j}_\kappa^2\Big]^{\frac{md}{2D}}\\
            &= (ab^\kappa)^{\frac{md}{D}} \cdot \frac{d \nu_{\mathcal{A}}}{d t}.
        \end{align*}
        
        Therefore, if we assume $\{((t^\gamma)_{1\leq |\gamma|<\kappa}, p(t))\}$ is well-curved, then so is the modified $\{((t^\gamma)_{1\leq |\gamma|<\kappa},R_{N',M'}p(t))\}$ for any $(N',M') \in {\rm GL}(m,\R) \times {\rm GL}(d,\R)$. By applying Theorem~\ref{thm:gressman_thm6} to $R_{N',M'}p$, we know that 
        $$\left( \frac{1}{m} \1_m, \frac{\kappa}{d} \1_d \right) \in \mathcal{N}(R_{O_1,O_2} (R_{N',M'}p))$$ for all $(O_1,O_2) \in {\rm O}(m, \R) \times {\rm O}(d, \R)$. In particular, by taking $O_1$ and $O_2$ to be identity matrices, we get $$\left( \frac{1}{m} \1_m, \frac{\kappa}{d} \1_d \right) \in \mathcal{N}(R_{N',M'}p)$$ for all $(N',M') \in {\rm GL}(m,\R) \times {\rm GL}(d,\R)$, as desired.
    \end{proof}

    \subsection{Several versions of the Fourier extension operator}\label{subsec:E^Q_versions}\phantom{x}

    \subsubsection{Most general version}\phantom{x}
    
    The most general formulation of Fourier extension is with respect to Borel measures. For any Borel measure $\mu$ in $\R^{N}$, we define its associated Fourier extension operator as
    \begin{align*}
        E^\mu f (x) \coloneqq \int_{\R^N} e^{ix\cdot \xi} f(\xi) d\mu(\xi), \qquad x\in \R^N.
    \end{align*}
        
    In this general setting, we can connect the Fourier restriction to the Oberlin condition (see  Definition~\ref{def:oberlin_condition})\footnote{This was mentioned but not proved in \cite{gressman19}.} by the following proposition, which will be of use in the proof of $\circled{6}$ in Theorem~\ref{thm:relation_diagram}:
    \begin{proposition}\label{prop:convex_test}
        Let $p \in (1,\infty]$, $q \in [1,\infty]$, and $\mu$ be any Borel measure in $\R^{N}$. 
        Suppose $E^\mu$ is bounded from $L^p(\mu)$ to $L^q(\R^{N})$, then $\mu$ satisfies the Oberlin condition with exponent $p'/q$.
    \end{proposition}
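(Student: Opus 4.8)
The plan is to test the boundedness hypothesis $\|E^\mu f\|_{L^q(\R^N)} \lesssim \|f\|_{L^p(\mu)}$ against a suitable family of inputs and extract the Oberlin inequality $\mu(K) \lesssim |K|^{p'/q}$. By Lemma~\ref{lem:Oberlin_equiv}, it suffices to verify the inequality for rectangular boxes $T$ (or ellipsoids), since these generate the same condition as convex bodies. So fix a rectangular box $T \subset \R^N$; after an isometry we may assume $T = \prod_{i=1}^N [-c_i, c_i]$ with $c_i > 0$, and $|T| = 2^N \prod_i c_i$. The natural test function is $f = \chi_T$ (or, to be safe with any measurability/finiteness issues, $f = \chi_{T \cap B(0,R)}$ and then let $R \to \infty$ by monotone convergence, which is legitimate since $\mu$ is $\sigma$-finite by Lemma~\ref{lem:Oberlin_equiv}). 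Then $\|f\|_{L^p(\mu)} = \mu(T)^{1/p}$ (interpreting $1/p = 0$ when $p = \infty$), so the hypothesis gives
\begin{equation}\label{eq:proposal_main}
    \|E^\mu(\chi_T)\|_{L^q(\R^N)} \lesssim \mu(T)^{1/p}.
\end{equation}

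The heart of the argument is a lower bound on the left-hand side of \eqref{eq:proposal_main}. We have $E^\mu(\chi_T)(x) = \int_T e^{ix\cdot\xi} d\mu(\xi)$, so $|E^\mu(\chi_T)(0)| = \mu(T)$, and more importantly $|E^\mu(\chi_T)(x)|$ stays comparable to $\mu(T)$ on a whole dual box. Precisely, let $T^* \coloneqq \prod_{i=1}^N [-\delta/c_i, \delta/c_i]$ for a small absolute constant $\delta = \delta(N) > 0$: for $x \in T^*$ and $\xi \in T$ we have $|x\cdot\xi| \leq \sum_i |x_i||\xi_i| \leq N\delta$, so choosing $\delta$ small makes $\Re(e^{ix\cdot\xi}) \geq 1/2$, whence $|E^\mu(\chi_T)(x)| \geq \Re E^\mu(\chi_T)(x) \geq \tfrac12 \mu(T)$ for all $x \in T^*$. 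Since $|T^*| = (2\delta)^N \prod_i c_i^{-1} = (2\delta)^N \cdot 2^N / |T| \sim |T|^{-1}$, we obtain
\begin{equation}\label{eq:proposal_lower}
    \|E^\mu(\chi_T)\|_{L^q(\R^N)} \geq \Big(\int_{T^*} |E^\mu(\chi_T)(x)|^q\, dx\Big)^{1/q} \gtrsim \mu(T) \cdot |T^*|^{1/q} \sim \mu(T)\, |T|^{-1/q}
\end{equation}
(with the obvious modification $\|E^\mu(\chi_T)\|_{L^\infty} \gtrsim \mu(T)$ when $q = \infty$). Combining \eqref{eq:proposal_main} and \eqref{eq:proposal_lower} yields $\mu(T)\,|T|^{-1/q} \lesssim \mu(T)^{1/p}$, i.e. $\mu(T)^{1 - 1/p} \lesssim |T|^{1/q}$, i.e. $\mu(T)^{1/p'} \lesssim |T|^{1/q}$, which is exactly $\mu(T) \lesssim |T|^{p'/q}$. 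By Lemma~\ref{lem:Oberlin_equiv}, this shows $\mu$ satisfies the Oberlin condition with exponent $p'/q$.

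I do not expect a serious obstacle here; the argument is a standard duality/Knapp-type test. The only points requiring a little care are (i) justifying $\|\chi_T\|_{L^p(\mu)} < \infty$ and the use of monotone convergence — handled by the $\sigma$-finiteness of $\mu$ from Lemma~\ref{lem:Oberlin_equiv}, or one can note that if $\mu(T) = \infty$ for some box then \eqref{eq:proposal_main} forces a contradiction and the Oberlin bound holds vacuously; (ii) the endpoint cases $p = \infty$ (so $p' = 1$, and $\|f\|_{L^p(\mu)} = 1$, giving $\mu(T)|T|^{-1/q} \lesssim 1$, i.e. $\mu(T) \lesssim |T|^{1/q} = |T|^{p'/q}$) and $q = \infty$, which are dealt with by the parenthetical modifications above; and (iii) the reduction to boxes, which is precisely what Lemma~\ref{lem:Oberlin_equiv} provides. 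One could equally run the whole argument directly on ellipsoids $J = L(B(0,1))$ using the dual ellipsoid $\delta \cdot (L^{-1})^T(B(0,1))$, but the box formulation makes the phase estimate $|x\cdot\xi| \lesssim 1$ most transparent.
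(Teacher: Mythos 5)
Your proof is correct, but it takes a genuinely different route from the paper's. The paper first passes to the dual (restriction) formulation $\|\widehat{F}\|_{L^{p'}(\mu)} \lesssim \|F\|_{L^{q'}(\R^N)}$, then tests with $F = (\Phi_T)^\vee$ where $\Phi_T \in C_c^\infty$ is a smooth bump equal to $1$ on $T$; the lower bound $\|\Phi_T\|_{L^{p'}(\mu)} \geq \mu(T)^{1/p'}$ is then immediate from $\Phi_T \geq \chi_T$, while the upper bound $\|(\Phi_T)^\vee\|_{L^{q'}} \sim |T|^{1/q}$ is a scaling computation. You instead stay on the extension side, test with $f = \chi_T$ directly, and obtain the needed lower bound $\|E^\mu\chi_T\|_{L^q} \gtrsim \mu(T)|T|^{-1/q}$ via the dual box $T^*$ on which the phase stays in the right half-plane — a Knapp-type estimate. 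The reduction to boxes through Lemma~\ref{lem:Oberlin_equiv} is shared. Your approach is more hands-on and avoids both the duality step and the Fourier transform of a bump function, at the price of the dual-box phase argument (which, in turn, is exactly what the smooth bump $\Phi_T$ sidesteps: $\Phi_T \geq \chi_T$ needs no phase control). Both are standard, and both reach the same conclusion; neither is clearly simpler, though the paper's version dovetails naturally with the restriction perspective used elsewhere in the section. One small caveat: the $\sigma$-finiteness you cite from Lemma~\ref{lem:Oberlin_equiv} is a consequence of the Oberlin condition (the conclusion you are trying to prove), so you cannot assume it a priori; the cleaner fix is your alternative remark — show the estimate $\mu(E) \lesssim |T|^{p'/q}$ for every Borel $E \subset T$ of finite measure and conclude $\mu(T) < \infty$ directly from this — rather than invoking $\sigma$-finiteness.
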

    
    \begin{proof}
        By the duality of $L^p$-spaces, for any $F \in \mathcal{S}(\R^N)$, we have
        \begin{align*}
            \big\Vert\widehat{F}\big\Vert_{L^{p'}(\mu)} &= \sup_{\norm{g}_{L^p(\mu)}=1} \abs{\int_{\R^N} \widehat{F}g d\mu }\\
            & = \sup_{\norm{g}_{L^p(\mu)}=1} \abs{\int_{\R^N} F \widetilde{E^\mu g}}\\
            & \leq \sup_{\norm{g}_{L^p(\mu)}=1} \norm{F}_{L^{q'}(\R^N)} \norm{E^\mu g}_{L^{q}(\R^N)} \\
            & \lesssim \sup_{\norm{g}_{L^p(\mu)}=1} \norm{F}_{L^{q'}(\R^N)} \norm{g}_{L^p(\mu)} = \norm{F}_{L^{q'}(\R^N)},
        \end{align*}
        which is the ``restriction'' formulation of $E^\mu$. Here $\widetilde{E^\mu g}(x) \coloneqq E^\mu g (-x)$.
        
        For any rectangular box $T$ in $\R^N$, let $T = L(\prod_{i=1}^N[-c_i,c_i])$ for some isometry $L$, then $|T| \sim \prod_{i=1}^N c_i$. Fix a $\phi \in C_c^\infty(\R)$ with $\phi \equiv 1$ on $[-1,1]$. Let  $\Phi(\xi) \coloneqq \prod_{i=1}^N \phi(\xi_i/c_i)$, then $\Phi \in C_c^\infty(\R^N)$ with $\Phi \equiv 1$ on $\prod_{i=1}^N[-c_i,c_i]$, so $\Phi_T \coloneqq \Phi \circ L^{-1} \in C_c^\infty(\R^N)$ satisfies $\Phi_T \equiv 1$ on $T$. Plugging $F = (\Phi_T)^\vee$ into the restriction estimate above, we get $$\norm{\Phi_T}_{L^{p'}(\mu)} \lesssim \norm{(\Phi_T)^\vee}_{L^{q'}(\R^N)}.$$ 
        Furthermore, we have 
        $$\norm{(\Phi_T)^\vee}_{L^{q'}(\R^N)} = \norm{(\Phi)^\vee}_{L^{q'}(\R^N)} = \prod_{i=1}^N \norm{(\phi(\cdot/c_i))^\vee}_{L^{q'}(\R)}  \sim_\phi \prod_{i=1}^N c_i^{1/q} \sim |T|^{1/q}.$$ 
        However, note that $$\norm{\Phi_T}_{L^{p'}(\mu)} \geq \norm{\chi_T}_{L^{p'}(\mu)} = \mu(T)^{1/p'}.$$  
        Therefore, we get 
        $$\mu(T) \lesssim |T|^{p'/q}$$
        for any rectangular box $T$ in $\R^N$, with the constant independent of $T$. Finally, by Lemma~\ref{lem:Oberlin_equiv}, we conclude that $\mu$ satisfies the Oberlin condition with exponent $p'/q$, as desired.
    \end{proof}
    \begin{remark}
        Proposition~{\rm\ref{prop:convex_test}} does not include the case when $p=1$, which is actually trivial when $\mu$ is $\sigma$-finite {\rm(}which is always the case in practice{\rm)}. The proof above mostly carries through when $p=1$, with the duality justified by the fact that $\mu$ is $\sigma$-finite. The only difference is that when we arrive at $\norm{\Phi_T}_{L^{p'}(\mu)} \geq \norm{\chi_T}_{L^{p'}(\mu)}$, we may without loss of generality assume $\mu(T)>0$ {\rm(}we can always pick such a $T$ as long as $\mu$ is not trivial{\rm)}, under which we have $\norm{\chi_T}_{L^{p'}(\mu)} = 1$. Therefore, we get $1 \lesssim |T|^{1/q}$. Note that we can choose $T$ such that $\mu(T)>0$ and $|T|$ is arbitrarily small by subdividing the original $T$, so $1 \lesssim |T|^{1/q}$ cannot be true unless $q=\infty$. On the other hand, if $p=1$ and $q=\infty$, then one trivially has the boundedness of $E^\mu$ from $L^1(\mu)$ to $L^\infty(\R^N)$.
    \end{remark}
    
    \subsubsection{Fourier extension for submanifolds and its neighborhood version}\phantom{x}

    From now on, we restrict ourselves to the case when the Borel measure $\mu$ in $E^\mu$ is a surface measure. 
    
    Suppose that $S_\psi \coloneqq \{ (\xi,\psi(\xi)): \xi \in U \}$ is a graph in $\mathbb{R}^{d+n}$, where $U$ is a bounded open/closed region of $\R^d$ and $\psi:U\rightarrow\R^n$ is a smooth function. Then we define the Fourier extension operator    \begin{align}\label{def:general_ext_op}
        E_U^\psi f(x) \coloneqq \int_U e^{ix\cdot(\xi,{\psi(\xi)})} f(\xi) d\xi, \qquad x\in\R^{d+n},
    \end{align}
    as well as the (bump) smoothed Fourier extension operator
    \begin{align}\label{def:bump_extension}
        \widetilde{E}_\varphi^\psi f(x) \coloneqq \int_{\R^{d}} e^{ix\cdot(\xi,{\psi(\xi)})} f(\xi) \cdot\varphi(\xi) d\xi, \qquad x\in\R^{d+n},
    \end{align}
    where $\varphi \in C_c^\infty(\R^d)$ with $\varphi \not\equiv 0$ on $U$. Also, when $U = [0,1]^d$, we abbreviate $E_U^\psi$ to $E^\psi$. This matches our definition of $E^\mathbf{Q}$ in (\ref{s1e0}).

    Now we introduce the thickening lemma, which basically says that the $L^p \rightarrow L^q$ estimates of $E_U^\psi$ are equivalent to those of its neighborhood version. As this is well-known in the Fourier restriction community but hard to find a directly applicable version in the literature, we will largely omit the proofs and only state the version that is the most useful to us. 
    
    \begin{lemma}[Thickening]\label{lem:thickening}
        For any $\alpha\geq 0$, the following are equivalent:
    
        \noindent ${(i)}$ $$\norm{E_U^\psi f}_{L^q(B_R)} \lesssim R^\alpha \norm{f}_{L^p(U)}$$
            holds for any $f\in L^p(U)$ and $R\geq 1$.
    
        \noindent ${(ii)}$ $$\big\Vert\widehat{F}\big\Vert_{L^q(B_R)} \lesssim R^{\alpha-\frac{n}{p'}} \norm{F}_{L^p(N_{R^{-1}}(S_\psi))}$$
            holds for any $F \in L^p(N_{R^{-1}}(S_\psi))$ and $R\geq 1$.
    \end{lemma}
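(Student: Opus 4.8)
\textbf{Proof proposal for Lemma~\ref{lem:thickening}.}
The plan is to move between the two estimates by slicing the tube $N_{R^{-1}}(S_\psi)$ along the fiber variable and exploiting the shear change of coordinates $(\xi,t)\mapsto(\xi,\psi(\xi)+t)$, which maps $U\times B^n(0,R^{-1})$ bijectively onto $N_{R^{-1}}(S_\psi)$ with Jacobian identically $1$; this makes all the $L^p$-bookkeeping transparent, since $\|F\|_{L^p(N_{R^{-1}}(S_\psi))}^p=\int_{|t|<R^{-1}}\|F(\,\cdot\,,\psi(\cdot)+t)\|_{L^p(U)}^p\,dt$. Before starting I would fix once and for all a profile $\phi\in C_c^\infty(B^n(0,1))$ with $\int\phi=1$, so that $\widehat{\phi}(0)=1$ and hence $|\widehat{\phi}|\geq c_0>0$ on some ball $B^n(0,\varepsilon_0)$ by continuity.

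For the implication $(ii)\Rightarrow(i)$, given $f\in L^p(U)$ and $R\geq1$ I would set $R'\coloneqq R/\varepsilon_0$ (assuming $R'\geq1$, as the bound is trivial otherwise), let $g(t)\coloneqq (R')^{n}\phi(R't)$, and test $(ii)$ at scale $R'$ against $F(\xi,\psi(\xi)+t)\coloneqq f(\xi)g(t)$, which is supported in $N_{(R')^{-1}}(S_\psi)$. Writing $x=(x',x'')\in\R^d\times\R^n$, a direct computation gives $\widehat{F}(x)=\widehat{g}(x'')\,E_U^\psi f(x)=\widehat{\phi}(x''/R')\,E_U^\psi f(x)$, so $|\widehat{F}(x)|\gtrsim|E_U^\psi f(x)|$ for $x\in B_R$, since then $|x''|\leq R=\varepsilon_0R'$. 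Computing $\|F\|_{L^p}\sim(R')^{n/p'}\|f\|_{L^p(U)}$ by the unit-Jacobian change of variables, applying $(ii)$ would yield $\|E_U^\psi f\|_{L^q(B_R)}\lesssim(R')^{\alpha-n/p'}\|F\|_{L^p}\sim(R')^{\alpha}\|f\|_{L^p(U)}\lesssim R^\alpha\|f\|_{L^p(U)}$.

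For the converse $(i)\Rightarrow(ii)$, given $F$ supported on $N_{R^{-1}}(S_\psi)$ I would write $F^t(\xi)\coloneqq F(\xi,\psi(\xi)+t)$ and record the identity $\widehat{F}(x)=\int_{|t|<R^{-1}}e^{ix''\cdot t}\,E_U^\psi[F^t](x)\,dt$. Then Minkowski's inequality in $t$, followed by $(i)$ and by Hölder's inequality in $t$ over the set $\{|t|<R^{-1}\}$ of measure $\sim R^{-n}$, gives
\[
\|\widehat{F}\|_{L^q(B_R)}\le\int_{|t|<R^{-1}}\|E_U^\psi[F^t]\|_{L^q(B_R)}\,dt\lesssim R^\alpha\int_{|t|<R^{-1}}\|F^t\|_{L^p(U)}\,dt\lesssim R^{\alpha-n/p'}\|F\|_{L^p(N_{R^{-1}}(S_\psi))},
\]
where the last step again uses that the tube is the measure-preserving image of a product; the measurability of $t\mapsto F^t$ needed to invoke $(i)$ is supplied by Fubini's theorem. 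Combining the two implications gives the asserted equivalence.

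I do not expect a genuine obstacle: this is a folklore fact in the Fourier restriction community, and the only two points that require care are routine. The first is the normalization of the smoothing profile $g$, which must be supported in the ball of radius $R^{-1}$ while $\widehat{g}$ must be bounded below on the frequency ball of radius $R$; this is exactly what forces the harmless rescaling $R\mapsto R/\varepsilon_0$. The second is keeping track of the volume-preserving shear identifying $N_{R^{-1}}(S_\psi)$ with a product set, which is what makes the $L^p$-norms of $F$ and of its slices $F^t$ relate cleanly, after which both directions reduce to the elementary Minkowski/Hölder manipulations above.
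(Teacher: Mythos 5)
Your proof is correct and takes essentially the same route the paper intends: the paper simply cites Proposition~1.27 of Demeter's book for the $n=1$ case and notes the identical argument works here, and your slicing of $N_{R^{-1}}(S_\psi)$ along the fiber variable via the unit-Jacobian shear, together with the smooth profile $g$ for one direction and Minkowski/H\"older in $t$ for the other, is precisely that argument in codimension $n$. The only cosmetic remark is that the rescaling $R'=R/\varepsilon_0$ always satisfies $R'\geq 1$ once one fixes $\varepsilon_0\leq 1$, so the parenthetical caveat is unnecessary.
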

    The proof of Lemma~\ref{lem:thickening} is exactly the same as that of Proposition~1.27 in \cite{demeter2020}, which handles the $n=1$ case.

    \subsubsection{Smoothed versions for quadratic forms}\phantom{x}

    From now on, we will further restrict ourselves to the case when $U = [0,1]^d$, $\psi = \mathbf{Q}$, and $S_\psi = S_\mathbf{Q}$. Our main goal here is to prove the equivalence between $E^\mathbf{Q}$ and its smoothed versions in terms of (weighted) restriction theory. Before that, we need to first introduce some notations.
    
    Let $x = (x',x'') \in\R^{d+n}$ with $x' = (x_1,\dots,x_d)\in\R^d$ and $x'' = (x_{d+1},\dots,x_{d+n})\in \R^n$, $\xi = (\xi_1,\dots,\xi_d) \in \R^d$, and $\mathbf{Q}(\xi) = (Q_1(\xi),\dots,Q_n(\xi))$. Then the Hessian matrix $\nabla_\xi^2 Q_j$ of each quadratic form $Q_j(\xi)$ satisfies
      $$\quad Q_j(\xi) = \frac{1}{2}\xi^T\cdot \nabla_\xi^2 Q_j \cdot \xi.$$ Thus if we define the $d \times d$ matrix 
    \begin{equation}\label{098765}
        \overline{Q}(x'') \coloneqq \sum_{j=1}^n x_{d+j}\nabla_\xi^2Q_j,
    \end{equation}
    then we can rewrite the Fourier extension operator in (\ref{s1e0}) as
    \begin{align*}
        E^\mathbf{Q}f(x) = \int_{[0,1]^{d}} e^{i(x'\cdot\xi + \xi^T\cdot \overline{Q}(x'')\cdot\xi/2)} f(\xi) d\xi, \quad \quad x = (x',x'') \in \R^{d+n},
    \end{align*}
    and the (Gaussian) smoothed Fourier extension operator in (\ref{eq:gaussian_integral}) as
    \begin{align}
        \widetilde{E}^\mathbf{Q}f(x) = \int_{\R^d} e^{i(x'\cdot\xi + \xi^T\cdot \overline{Q}(x'')\cdot\xi/2)} f(\xi) \cdot e^{-|\xi|^2/2} d\xi, \quad\quad x = (x',x'') \in \R^{d+n}.
    \end{align}
    
    It would also be convenient to formally introduce affine transformations preserving $\mu_\mathbf{Q}$ here. For any $\xi_0 \in \R^d$, we define the affine transformation $L_{\xi_0}$ on $\R^{d+n}$ as
    \begin{align}\label{def:affine_trans}
        L_{\xi_0}(\xi,\eta) \coloneqq (\xi - \xi_0, \eta - \nabla_\xi\mathbf{Q}(\xi_0)\cdot\xi - \mathbf{Q}(\xi_0)), \qquad \forall~(\xi,\eta) \in\R^d \times \R^n. 
    \end{align}
    The effect of $L_{\xi_0}$ is to translate the data function on $\R^d$ by $-\xi_0$ on the frequency side. Correspondingly, on the physical side, we will distort everything by the adjoint transformation $L_{\xi_0}^* \coloneqq (A^T)^{-1}$, where $A \in {\rm SL}(d+n,\R)$ is the linear part of $L_{\xi_0}$. Both $L_{\xi_0}$ and $L_{\xi_0}^*$ preserve volumes in $\R^{d+n}$, and $L_{\xi_0}$ also preserves $\mu_\mathbf{Q}$.

    Now we show that weighted restriction theories of $E^\mathbf{Q}$ and $\widetilde{E}^\mathbf{Q}$ are equivalent, which justifies our usage of the smoothing trick in Shayya's argument (Subsection~\ref{subsec:shayya}):
    \begin{proposition}\label{prop:smooth_equiv}
        Let $p,q \in [1,\infty]$ and $\alpha\in (0,d+n]$. Then for any $\beta >0$ and $\eta \geq 0$, the following two statements are equivalent: 
        
        \noindent $(i)$ We have \begin{equation*}\label{2211}
                \norm{E^\mathbf{Q}f}_{L^q(B_R,H)} \lesssim A_\alpha(H)^\beta R^\eta \norm{f}_{L^p([0,1]^d)}
            \end{equation*}
            for any $\alpha$-dimensional weight $H$, $f \in L^p([0,1]^d)$, and $R\geq 1$.
    
        \noindent $(ii)$ We have \begin{equation*}\label{2212}
                \norm{\widetilde{E}^\mathbf{Q}f}_{L^q(B_R,H)} \lesssim A_\alpha(H)^\beta R^\eta \norm{f}_{L^p(\R^d)}  
            \end{equation*}    
            for any $\alpha$-dimensional weight $H$, $f \in L^p(\mathbb{R}^d)$, and $R\geq 1$. 
    \end{proposition}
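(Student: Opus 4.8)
The plan is to prove $(i) \Leftrightarrow (ii)$ by exploiting two facts: first, that the Gaussian weight $e^{-|\xi|^2/2}$ is essentially a Schwartz function whose ``tail'' contributes negligibly, so the main contribution comes from a fixed compact set; and second, that on a fixed compact set, the Gaussian-smoothed operator and the rough-truncated operator differ by a smooth rapidly-decaying kernel convolution, which is harmless for $L^q(B_R, H)$ norms. I would exploit the affine invariance encoded in $L_{\xi_0}$ and $L_{\xi_0}^*$ (see \eqref{def:affine_trans}) only as a bookkeeping device, since it lets us relocate any fixed unit cube to $[0,1]^d$ without changing the relevant norms.

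The direction $(ii) \Rightarrow (i)$ is the easier one. Given $f \in L^p([0,1]^d)$, write $f = g \cdot e^{-|\xi|^2/2}$ with $g(\xi) \coloneqq f(\xi) e^{|\xi|^2/2}$, which is supported on $[0,1]^d$ and satisfies $\|g\|_{L^p(\R^d)} \sim \|f\|_{L^p([0,1]^d)}$ since $e^{|\xi|^2/2}$ is bounded above and below by positive constants on $[0,1]^d$. Then $E^\mathbf{Q}f = \widetilde{E}^\mathbf{Q}g$ exactly, and applying $(ii)$ to $g$ gives $(i)$ immediately. For the reverse direction $(i) \Rightarrow (ii)$, I would first reduce to $f$ supported on a large but fixed cube $[-N,N]^d$: decompose $f = \sum_{k} f_k$ where $f_k$ is the restriction of $f$ to the unit cube $Q_k$ centered at a lattice point $k \in \Z^d$, apply the triangle inequality in $L^q(B_R,H)$ (or use that for $q \ge 1$ this is a norm), and observe that each piece $\widetilde{E}^\mathbf{Q}f_k$ can, after the affine change of variables $L_k$ (which preserves $\mu_\mathbf{Q}$, volumes, and the class of $\alpha$-dimensional weights up to a bounded constant in $A_\alpha$), be handled by the $k=0$ case with the Gaussian replaced by $e^{-|\xi - k|^2/2}$; the Gaussian factor $e^{-|k|^2/2}$ gives a summable gain in $k$, so it suffices to treat $f$ supported on a single fixed cube.

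With $f$ supported on $[-1,1]^d$ (say), the remaining task is to compare $\widetilde{E}^\mathbf{Q}f$ with $E_{[-1,1]^d}^\mathbf{Q} (f \cdot e^{-|\cdot|^2/2})$, and then to relate $E_{[-1,1]^d}^\mathbf{Q}$ back to $E^\mathbf{Q} = E_{[0,1]^d}^\mathbf{Q}$. For the first comparison: write $e^{-|\xi|^2/2} = \chi(\xi) e^{-|\xi|^2/2} + (1-\chi(\xi)) e^{-|\xi|^2/2}$ where $\chi \in C_c^\infty(\R^d)$ equals $1$ on $[-1,1]^d$; the first term is a bump-smoothed extension operator and the second has a Schwartz tail that, after summing over the dyadic annuli $|\xi| \sim 2^j$ as above, contributes a convergent series. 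To pass from a bump-smoothed operator $\widetilde{E}_\varphi^\mathbf{Q}$ (with $\varphi \in C_c^\infty$, $\varphi \equiv 1$ on $[0,1]^d$) to $E^\mathbf{Q}$ itself, note $\widehat{\varphi d\mu_\mathbf{Q}^*}$ differs from $\widehat{\varphi_0 d\mu_\mathbf{Q}^*}$ for two admissible bumps by the Fourier transform of a smooth compactly supported function supported away from $[0,1]^d$, again giving rapid decay; and the weighted $L^q(B_R,H)$ norm of a convolution of $E^\mathbf{Q}f$ with a Schwartz kernel is controlled by that of $E^\mathbf{Q}f$ itself, because $\int_{B(x',r)} H \lesssim A_\alpha(H) r^\alpha$ lets us absorb the mollification (one splits the kernel into its restriction to $B(0,\log R)$ and its tail, the former handled by translation-invariance of the estimate up to a $(\log R)^{O(1)} = R^{o(1)}$ loss, the latter negligible). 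I expect \textbf{the main obstacle} to be the careful handling of this last point — verifying that convolving with a fixed Schwartz function does not worsen the weighted $L^q(B_R, H)$ bound, uniformly over all $\alpha$-dimensional weights $H$ — since $H$ is only assumed measurable with the ball-growth bound and has no regularity; one must use only the integral condition \eqref{def weight} and Minkowski/Young-type inequalities adapted to the weight, being mildly careful when $q < \infty$ versus $q = \infty$.
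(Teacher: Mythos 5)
Your direction $(ii)\Rightarrow(i)$ is correct and matches the paper: writing $f = g\,e^{-|\xi|^2/2}$ with $g$ supported on $[0,1]^d$ and noting $e^{-|\xi|^2/2}\sim 1$ there is exactly what the paper does.

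For $(i)\Rightarrow(ii)$, your opening move --- decompose into unit cubes, translate each piece to $[0,1]^d$ by $L_k$ and $L_k^*$, and use the Gaussian $e^{-|k|^2/2}$ to make the series summable --- is precisely the paper's argument, and it already \emph{completes} the proof. However, you misstate a key technical point: you claim $L_k$ preserves $A_\alpha(H)$ ``up to a bounded constant.'' This is false. The linear part of $L_k$ is a shear by $\nabla_\xi\mathbf{Q}(k)$, whose operator norm grows like $|k|$; consequently $L_k^*$ maps a ball of radius $r$ into a ball of radius $\sim r(1+|k|)$, and $A_\alpha(L_k^*H)$ and the new spatial scale $\widetilde{R}$ are only controlled by a \emph{polynomial} factor $(1+|k|)^C$, not a constant. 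This is not fatal --- the paper tracks the $(1+|k|)^C$ and absorbs it into the Gaussian, then sums via H\"older --- but it is a genuine gap that you must patch, not a cosmetic imprecision.

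The larger problem is the second half of your proposal, which is both unnecessary and internally confused. Once $f$ is supported on $[-1,1]^d$, the quantities $\widetilde{E}^\mathbf{Q}f$ and $E_{[-1,1]^d}^\mathbf{Q}(f\,e^{-|\cdot|^2/2})$ are \emph{identically equal} --- there is nothing to compare --- and the second term of your cutoff decomposition $e^{-|\xi|^2/2} = \chi\,e^{-|\xi|^2/2} + (1-\chi)\,e^{-|\xi|^2/2}$ annihilates $f$ since $\chi\equiv 1$ on $\mathrm{supp}\,f$. Furthermore, the passage from $E_{[-1,1]^d}^\mathbf{Q}$ to $E_{[0,1]^d}^\mathbf{Q}$ needs no convolution against Schwartz kernels: $[-1,1]^d$ is a finite union of unit cubes, and each is taken to $[0,1]^d$ by the same $L_k$/$L_k^*$ device you already invoked in the reduction step. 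In short, the ``main obstacle'' you identify --- convolving $E^\mathbf{Q}f$ with a Schwartz kernel in $L^q(B_R,H)$ --- simply does not appear in the correct proof. (Incidentally, even if it did, your proposed split at radius $\log R$ with a $(\log R)^{O(1)} = R^{o(1)}$ loss would not give the stated result, which requires exactly $R^\eta$; a clean Minkowski argument using the translation-invariance of $A_\alpha$ is what one would actually need.) You should therefore stop after the cube decomposition, fix the polynomial tracking of the affine distortion, and finish with H\"older against the Gaussian-weighted coefficients, exactly as the paper does.
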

    \begin{proof}
        $(ii)\Rightarrow(i)$: This is immediate by only considering those $f\in L^p([0,1]^d)$ and noting that $e^{-|\xi|^2/2} \sim 1$ over $[0,1]^d$.
        
        $(i)\Rightarrow(ii)$: Partition $\R^d$ into unit cubes $\{\nn + [0,1]^d\}_{\nn\in \Z^d}$. We claim $(i)$ implies
        $$       \norm{E^\mathbf{Q}f}_{L^q(B_R,H)} \lesssim A_\alpha(H)^\beta R^\eta (1+|\nn|)^C\norm{f}_{L^p(\nn +[0,1]^d)},     $$
        for some constant $C$. In fact, note that $L_\nn$ translates the data function on $\R^d$ from $\nn + [0,1]^d$ to $[0,1]^d$ on the frequency side, while $L_\nn^*$ preserves volumes and distorts everything on the physical side by a factor polynomially bounded by $\nn$. Thus, if $\widetilde{H} \coloneqq L_\nn^* (H)$ and $B_{\widetilde{R}} $ is the smallest ball containing $L_\nn(B_R)$, then $A_\alpha(\widetilde{H})$ and $\widetilde{R}$ will differ from $A_\alpha(H)$ and $R$ at most by a factor of $(1+|\nn|)^C$ respectively for some constant $C$. The claim is now self-evident.
        
        Let $f_\nn \coloneqq f\chi_{\nn+[0,1]^d}$, then
        \begin{align*}
            \norm{\widetilde{E}^\mathbf{Q}f}_{L^q(B_R,H)}
            \leq & \sum_{\nn\in\Z^d} \norm{\widetilde{E}^\mathbf{Q}f_\nn}_{L^q(B_R,H)}\\
            \lesssim & A_\alpha(H)^\beta R^\eta \sum_{\nn\in\Z^d} (1+|\nn|)^C \norm{f_\nn e^{-|\cdot|^2/2}}_{L^p(\R^d)}\\
            \lesssim& A_\alpha(H)^\beta R^\eta \sum_{\nn\in\Z^d} (1+|\nn|)^C e^{-|\nn|^2/2} \norm{f_\nn}_{L^p(\R^d)}\\
            \leq & A_\alpha(H)^\beta R^\eta \left( \sum_{\nn\in\Z^d} \Big((1+|\nn|)^C e^{-|\nn|^2/2} \Big)^{p'} \right)^{1/p'} \left( 
            \sum_{\nn\in\Z^d} \norm{f_\nn}_{L^p(\R^d)}^p \right)^{1/p}\\
            \lesssim & A_\alpha(H)^\beta R^\eta \norm{f}_{L^p(\R^d)}.
        \end{align*}
        And the proof is completed.
    \end{proof}

    Next, we show that restriction estimates of $E^\mathbf{Q}$, $\widetilde{E}^\mathbf{Q}$, and $\widetilde{E}_\varphi^\mathbf{Q}$ are all equivalent to each other. Besides justifying smoothing tricks, this also saves us from worrying about compatibility when we cite results in the literature later on, which may be stated for any of the three versions. In particular, the following proposition will be of use in the proof of $\circled{2}$ and $\circled{5}$ in Theorem~\ref{thm:relation_diagram}:
    \begin{proposition}\label{prop:restriction_equiv}
        Let $p,q \in [1,\infty]$, and $\varphi \in C_c^\infty(\R^d)$ with $\varphi\not\equiv0$. The following three estimates are equivalent to each other:
    
        \noindent $(i)$ $\norm{E^\mathbf{Q}f}_{L^q(\R^{d+n})} \lesssim \norm{f}_{L^p([0,1]^d)}, \,\forall f \in L^p([0,1]^d).$
    
        \noindent $(ii)$ $\norm{\widetilde{E}^\mathbf{Q} f}_{L^q(\R^{d+n})} \lesssim \norm{f}_{L^p(\R^d)}, \,\forall f \in L^p(\R^d).$
    
        \noindent $(iii)$ $\norm{\widetilde{E}_{\varphi}^\mathbf{Q} f}_{L^q(\R^{d+n})} \lesssim \norm{f}_{L^p(\R^d)}, \,\forall f \in L^p(\R^d).$
    \end{proposition}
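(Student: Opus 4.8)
The plan is to run the cycle of implications $(i)\Rightarrow(ii)\Rightarrow(iii)\Rightarrow(i)$; only the first one carries genuine content, and the other two reduce to manipulating the Gaussian and bump weights. Throughout, for $g$ supported on a cube $\nn+[0,1]^d$ we interpret $E^\mathbf{Q}g$ as the extension operator $E_{\nn+[0,1]^d}^\mathbf{Q}$ of (\ref{def:general_ext_op}), and the workhorse is the affine symmetry $L_{\xi_0}$ from (\ref{def:affine_trans}): since $L_{\xi_0}$ preserves $\mu_\mathbf{Q}$ while its physical-side adjoint $L_{\xi_0}^*$ preserves Lebesgue measure on $\R^{d+n}$, translating the frequency variable of the data by $\xi_0$ leaves $\|E^\mathbf{Q}g\|_{L^q(\R^{d+n})}$ \emph{exactly} unchanged. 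This is the feature that makes the unweighted statement cleaner than the weighted Proposition~\ref{prop:smooth_equiv}: there is no $(1+|\xi_0|)^C$ loss, so $(i)$ at once upgrades to the uniform bound
\begin{equation*}
    \|E^\mathbf{Q}g\|_{L^q(\R^{d+n})} \lesssim \|g\|_{L^p(\nn+[0,1]^d)},\qquad g\in L^p(\nn+[0,1]^d),
\end{equation*}
with implicit constant independent of $\nn\in\Z^d$. Verifying that this transfer is genuinely loss-free is the one place where care is needed; once it is in hand the rest is bookkeeping.

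For $(i)\Rightarrow(ii)$: given $f\in L^p(\R^d)$, put $g\coloneqq f\,e^{-|\cdot|^2/2}$ and split $g=\sum_{\nn\in\Z^d} g\chi_{\nn+[0,1]^d}$, so that $\widetilde{E}^\mathbf{Q}f=\sum_\nn E_{\nn+[0,1]^d}^\mathbf{Q}(g\chi_{\nn+[0,1]^d})$. The triangle inequality in $L^q$ followed by the uniform bound above gives $\|\widetilde{E}^\mathbf{Q}f\|_{L^q}\lesssim\sum_\nn\|g\chi_{\nn+[0,1]^d}\|_{L^p}$. Since $|g|\lesssim|f|\,e^{-c|\nn|^2}$ on $\nn+[0,1]^d$, Hölder's inequality (pairing $\ell^{p'}$ against $\ell^p$ and using the disjointness of the cubes) yields $\|\widetilde{E}^\mathbf{Q}f\|_{L^q}\lesssim\bigl(\sum_\nn e^{-cp'|\nn|^2}\bigr)^{1/p'}\|f\|_{L^p(\R^d)}\lesssim\|f\|_{L^p(\R^d)}$, with the obvious modification when $p=\infty$.

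For $(ii)\Rightarrow(iii)$: this is immediate. For $f\in L^p(\R^d)$, set $g\coloneqq f\varphi\,e^{|\cdot|^2/2}$; then $\widetilde{E}_\varphi^\mathbf{Q}f=\widetilde{E}^\mathbf{Q}g$, and since $\varphi\,e^{|\cdot|^2/2}$ is bounded with compact support, $\|g\|_{L^p(\R^d)}\lesssim_\varphi\|f\|_{L^p(\R^d)}$, so $(ii)$ applies.

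For $(iii)\Rightarrow(i)$: choose $\xi_0$ with $\varphi(\xi_0)\neq0$; by continuity there is a small axis-aligned cube $Q_0$ about $\xi_0$ on which $|\varphi|\geq c>0$, so for $h$ supported on $Q_0$ we may apply $(iii)$ to $f\coloneqq h/\varphi$ (extended by zero) to get $\|E_{Q_0}^\mathbf{Q}h\|_{L^q(\R^{d+n})}\lesssim\|h\|_{L^p(Q_0)}$; by the translation symmetry $L_{\xi_0}$ the same estimate holds with $Q_0$ replaced by any translate of it. Covering $[0,1]^d$ by $O_\varphi(1)$ such translates and splitting a general $f\in L^p([0,1]^d)$ subordinate to this cover, the triangle inequality together with Hölder's inequality gives $(i)$. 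The only subtlety here — the reason one first localizes to $Q_0$ rather than dividing by $\varphi$ directly — is that $\varphi$ may vanish somewhere on $[0,1]^d$; apart from this the argument is routine, so I do not expect any serious obstacle beyond the loss-free transfer noted above.
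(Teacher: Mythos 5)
Your proof is correct. The steps $(i)\Rightarrow(ii)$ and $(iii)\Rightarrow(i)$ follow essentially the same route as the paper: tile by translates of a fixed cube, use the affine symmetry $L_{\xi_0}$ (which preserves $\mu_\mathbf{Q}$ on the frequency side and Lebesgue measure on the physical side) to transfer the estimate loss-free across cubes, and sum — via Hölder against the Gaussian decay in $(i)\Rightarrow(ii)$, and over a finite cover in $(iii)\Rightarrow(i)$.

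Where you genuinely diverge from the paper is in $(ii)\Rightarrow(iii)$. The paper writes $\widetilde{E}_\varphi^\mathbf{Q}f$ as a convolution $\widetilde{\Phi}^\vee * \widetilde{E}^\mathbf{Q}f$ with $\widetilde{\Phi}\in C_c^\infty(\R^{d+n})$ a bump adapted to a neighborhood of the graph (containing the compensating factor $e^{|\zeta'|^2/2}$), and then applies Young's inequality using $\widetilde{\Phi}^\vee\in\mathcal{S}\subset L^1$. You instead observe the exact operator identity $\widetilde{E}_\varphi^\mathbf{Q}f = \widetilde{E}^\mathbf{Q}\bigl(f\varphi e^{|\cdot|^2/2}\bigr)$ and simply note that $\varphi e^{|\cdot|^2/2}$ is a bounded multiplier (being continuous and compactly supported), so $\|f\varphi e^{|\cdot|^2/2}\|_{L^p}\lesssim_\varphi\|f\|_{L^p}$. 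This is cleaner and more elementary: it replaces a Fourier-side mollification with pointwise domination of weights, and works for exactly the same reason — a compactly supported bump is dominated by a Gaussian up to a constant — without passing through the physical-side convolution. Both arguments are valid; yours is the shorter one for this direction. (The paper's convolution lemma is not wasted, though: essentially the same $\widetilde{\Phi}^\vee*(\cdot)$ identity is reused verbatim in the proof of Corollary~\ref{cor:recover_banner}, where a pointwise rather than $L^q$ bound is needed and your multiplier trick would not directly apply.)
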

    \begin{proof}
        The spirit is very similar to that of Proposition~\ref{prop:smooth_equiv}, so we only briefly sketch the proof here.
        
        $(i) \Rightarrow (ii)$: Partition $\R^d$ into unit cubes $\{\nn + [0,1]^d\}_{\nn\in \Z^d}$. By applying $L_\nn$ to $f$ and $L_\nn^*$ to $E_{\nn+[0,1]^d}^\mathbf{Q}f$, we see that $(i)$ implies
        % \footnote{Here we slightly abuse $E^\mathbf{Q}$, so that the integral domain is $\nn+[0,1]^d$ (or $\cup_{f}\supp f$) instead of $[0,1]^d$. We will always choose to do this when we think the context is clear and there is no potential confusion.}
        $$       \norm{E_{\nn+[0,1]^d}^\mathbf{Q}f}_{L^q(\R^{d+n})} \lesssim \norm{f}_{L^p(\nn +[0,1]^d)} $$
        with constant uniform in $\nn\in\Z^d$. By exploiting the rapid decay property of $e^{-|\xi|^2/2}$, we can sum the estimates on each $\nn + [0,1]^d$ up in a well-controlled way as we did in Proposition~\ref{prop:smooth_equiv}, which yields the desired estimate for $\widetilde{E}^\mathbf{Q}$.
        
        $(ii) \Rightarrow (iii)$: Let $\zeta = (\zeta',\zeta'') \in  \R^{d+n}$ with $\zeta' \in\R^d$ and $\zeta'' \in \R^n$. Let $\Phi(\zeta) \coloneqq \varphi(\zeta')\psi(\zeta'')$ for some fixed $\psi \in C_c^\infty(\R^n)$, such that $\Phi(\xi,\mathbf{Q}(\xi)) = \varphi(\xi)$ for all $\xi \in \R^d$, and $\widetilde{\Phi}(\zeta) \coloneqq \Phi(\zeta)e^{|\zeta'|^2/2}$. Then $\Phi, \widetilde{\Phi} \in C_c^\infty(\R^{d+n})$, and we have
        \begin{align*}
            \abs{\widetilde{E}_\varphi^\mathbf{Q}f} = \abs{(\Phi fd\mu_\mathbf{Q})^\vee} &= \abs{\left( \widetilde{\Phi}(\zeta) f(\zeta')e^{-|\zeta'|^2/2} 
                d\mu_\mathbf{Q} \right)^\vee}\\
            &= \abs{\left( \widetilde{\Phi}(\zeta)\right)^\vee * \left( f(\zeta') e^{-|\zeta'|^2/2} d\mu_\mathbf{Q} \right)^\vee}\\
            &= \abs{\left( \widetilde{\Phi}(\zeta)\right)^\vee * \widetilde{E}^\mathbf{Q}f}.
        \end{align*}
        Since $\widetilde{\Phi}  ^\vee \in \mathcal{S}(\R^{d+n}) \subset L^1(\R^{d+n})$, by Young's convolution inequality we have 
        \begin{align*}
            \norm{\widetilde{E}_\varphi^\mathbf{Q}f}_{L^q(\R^{d+n})} = \norm{\left( \widetilde{\Phi}(\zeta)\right)^\vee * \widetilde{E}^\mathbf{Q}f}_{L^q(\R^{d+n})} \lesssim \norm{\widetilde{E}^\mathbf{Q}f}_{L^q(\R^{d+n})} \lesssim \norm{f}_{L^p(\R^d)},
        \end{align*}
        as desired.
        
        $(iii) \Rightarrow (i)$: According to our assumption, there exists $\xi_0 \in \R^d$ such that $|\varphi(\xi_0)| = \max_{\xi\in\R^d}|\varphi(\xi)| \neq 0$. Without loss of generality, we may assume that $\varphi(\xi_0) > 0$. By continuity, we can then fix a dyadic $0 < c_0 < 1$ such that $\varphi> \varphi(\xi_0)/2$ on $\xi_0 + [0,c_0]^d$. Only considering those $f\in L^p(\xi_0 + [0,c_0]^d)$ and noting that $\varphi \sim 1$ over $\xi_0 + [0,c_0]^d$, we see that $(iii)$ implies  $$\norm{E_{\xi_0 + [0,c_0]^d}^\mathbf{Q}f}_{L^q(\R^{d+n})} \lesssim \norm{f}_{L^p(\xi_0 + [0,c_0]^d)}.$$ 
        Partition $[0,1]^d$ into $\{c_0\nn + [0,c_0]^d \}_{\nn\in\Z^d\cap[0, c_0^{-1})^d}$. By applying $L_{\xi_0 - c_0\nn}$ to $f$ and $L_{\xi_0 - c_0\nn}^*$ to $E^\mathbf{Q}f$, we see that $E^\mathbf{Q}: L^p(c_0\nn + [0,c_0]^d) \rightarrow L^q(\R^{d+n})$ uniformly holds for any $\nn \in \Z^d\cap [0,c_0^{-1})^d$. Since there are only finitely many pieces, by summing them up we get the desired boundedness of $E^\mathbf{Q}$ over the whole $[0,1]^d$.
    \end{proof}
    \begin{remark}\label{rmk:restriction_equiv}
        Firstly, one can see that the proof of Proposition~{\rm\ref{prop:restriction_equiv}} actually works for all {\rm TDI} manifolds {\rm(}Definition~{\rm\ref{def:TDI_mfd})}, not necessarily quadratic manifolds. Secondly, an immediate corollary of Proposition~{\rm\ref{prop:restriction_equiv}} is that for any two $\varphi_1, \varphi_2 \in C_c^\infty(\R^d)$ with $\varphi_1,\varphi_2\not\equiv0$, $\widetilde{E}_{\varphi_1}^\mathbf{Q}$ and $\widetilde{E}_{\varphi_2}^\mathbf{Q}$ have the same $L^p \rightarrow L^q$ boundedness property. Finally, there is no special role played by $[0,1]^d$ in ${\rm(}i{\rm)}$, and our proof works equally well when $E^{\mathbf{Q}}$ is replaced by  $E_B^{\mathbf{Q}}$ for any cube $B$ in $\R^d$ {\rm(}not necessarily of unit side length or parallel to the coordinate axes{\rm)}. In particular, since the form of ${\rm(}ii{\rm)}$ is symmetric and remains the same, we know that $E_B^{\mathbf{Q}}$ and ${E}^{\mathbf{Q}}$ have the same $L^p \rightarrow L^q$ boundedness property.
    \end{remark}

	\section{Uniform Fourier decay for quadratic manifolds}\label{sec2}
	
	In this section, we will provide a complete characterization of the uniform Fourier decay for quadratic manifolds of arbitrary codimensions, i.e., (\ref{thm:uniform_decay_eq}). As we have pointed out, this estimate is not only used in the proof of weighted restriction estimates in Section \ref{sec3}, but also plays a role in establishing relationships between various nondegeneracy conditions in Section~\ref{appendix:relationships}. Note that, when $n=1$, the Fourier decay of surface measure is essential and plays a role in many problems, such as Stein-Tomas estimates, Stein's spherical maximal estimates, Falconer's distance set problem, Strichartz estimates for the Schrödinger equation and so on. So we believe that (\ref{thm:uniform_decay_eq}), as one generation of the case $n=1$, will have further applications beyond those presented in this paper.

	In view of the equivalence between smooth and nonsmooth versions of Fourier extension operators (Proposition~\ref{prop:smooth_equiv}), we may without loss of generality work with $\widetilde{E}^\mathbf{Q}$ instead of $E^\mathbf{Q}$. Our main result in this subsection is as follows:
	\begin{theorem}[Uniform Fourier decay]\label{thm:uniform_decay}
		We have
		\begin{equation}\label{thm:uniform_decay_eq}
			|\widetilde{E}^\mathbf{Q}1(x)| \lesssim_\mathbf{Q} (1+|x|)^{-\frac{\mathfrak{d}_{d,1}(\mathbf{Q})}{2}}, \quad \quad x\in \R^{d+n}.
		\end{equation}
		Moreover, this bound is optimal in the sense that for any $ \gamma > \frac{\mathfrak{d}_{d,1}(\mathbf{Q})}{2}$, we cannot have $|\widetilde{E}^\mathbf{Q}f(x)| \lesssim_\mathbf{Q} (1+|x|)^{-\gamma}$ uniformly for all $x\in \R^{d+n}$.
	\end{theorem}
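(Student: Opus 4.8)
The plan is to reduce the whole statement, via the smoothing equivalence of Proposition~\ref{prop:smooth_equiv}, to an \emph{exact} evaluation of a Gaussian integral, and then to extract the sharp uniform decay rate from that formula using eigenvalue perturbation theory together with the algebraic identification $\mathfrak{d}_{d,1}(\mathbf{Q})=\min_{\theta}\rank\overline{Q}(\theta)$ supplied by Lemma~\ref{lem:alg_char_m}.

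\textbf{Step 1: the Gaussian identity.} Writing $x=(x',x'')$ and $A:=\overline{Q}(x'')$, we have
\begin{equation*}
    \widetilde{E}^{\mathbf{Q}}1(x)=\int_{\R^d}e^{-\frac12\xi^{T}(I-iA)\xi+ix'\cdot\xi}\,d\xi,
\end{equation*}
a convergent Gaussian integral because $\Re(I-iA)=I\succ0$. Diagonalizing $A=O\Lambda O^{T}$ with $O=O(x'')$ orthogonal and $\Lambda=\mathrm{diag}(\lambda_1(x''),\dots,\lambda_d(x''))$, and separating variables into one-dimensional Fresnel integrals $\int_{\R}e^{-\frac12(1-i\lambda)t^2+iyt}\,dt=\sqrt{2\pi/(1-i\lambda)}\,e^{-y^2/(2(1-i\lambda))}$, a direct computation of the modulus gives the exact identity
\begin{equation*}
    |\widetilde{E}^{\mathbf{Q}}1(x)|=(2\pi)^{d/2}\prod_{j=1}^{d}\bigl(1+\lambda_j(x'')^2\bigr)^{-1/4}\exp\Bigl(-\frac12\sum_{j=1}^{d}\frac{y_j^2}{1+\lambda_j(x'')^2}\Bigr),\qquad y=O(x'')^{T}x'.
\end{equation*}
Here $|y|=|x'|$ and $\lambda_j(x'')=|x''|\,\mu_j(x''/|x''|)$. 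This explicit formula is the elementary, oscillatory-integral-free replacement for Banner's Lemma~2.1.1 promised in the introduction: dropping the Gaussian factor already produces an anisotropic bound of Banner type, but keeping it is what lets us do everything by hand.

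\textbf{Step 2: uniformity and the upper bound.} Let $m(\theta):=\#\{j:\mu_j(\theta)\neq0\}$ and $m_0:=\min_{\theta\in\mathbb{S}^{n-1}}m(\theta)$. If $m_0=0$ the claimed bound is trivial (its right-hand side is a constant and $|\widetilde{E}^{\mathbf{Q}}1|\leq(2\pi)^{d/2}$), so assume $m_0\geq1$, i.e.\ $\overline{Q}(\theta)\neq0$ for all $\theta$. By continuity of eigenvalues (eigenvalue perturbation theory), the map sending $\theta$ to the $m_0$-th largest of $|\mu_1(\theta)|,\dots,|\mu_d(\theta)|$ is continuous and strictly positive on the compact sphere, hence $\geq c>0$ there; also $\|\overline{Q}(\theta)\|\leq C$ uniformly. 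Therefore, for every $x''\neq0$, at least $m_0$ of the $|\lambda_j(x'')|$ lie in $[c|x''|,C|x''|]$. Bounding those $m_0$ ``large'' factors by $(1+c^2|x''|^2)^{-1/4}$ (and their Gaussian parts by $1$), every remaining factor by $\exp(-y_j^2/(2(1+C^2|x''|^2)))$, and summing the Gaussian exponents, the identity of Step~1 yields
\begin{equation*}
    |\widetilde{E}^{\mathbf{Q}}1(x)|\lesssim_{\mathbf{Q}}(1+|x''|)^{-m_0/2}\exp\Bigl(-\frac{c'|x'|^2}{1+|x''|^2}\Bigr)
\end{equation*}
for some $c'>0$. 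A short case split finishes it: if $|x''|\gtrsim|x|$ the first factor is already $\lesssim(1+|x|)^{-m_0/2}$; if $|x''|\ll|x'|$ then $|x|\sim|x'|$ and the Gaussian factor is super-polynomially small in $|x'|$, which dominates $(1+|x|)^{-m_0/2}$. Invoking Lemma~\ref{lem:alg_char_m} to replace $m_0$ by $\mathfrak{d}_{d,1}(\mathbf{Q})$ gives (\ref{thm:uniform_decay_eq}).

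\textbf{Step 3: optimality, and the main difficulty.} Since $m$ takes finitely many integer values it attains its minimum, so fix $\theta_0$ with $m(\theta_0)=m_0$ and test along the ray $x=R(0,\theta_0)$ as $R\to\infty$. Then $x'=0$, so the Gaussian factor equals $1$, and exactly $m_0$ of the $\mu_j(\theta_0)$ are nonzero, whence
\begin{equation*}
    |\widetilde{E}^{\mathbf{Q}}1(x)|=(2\pi)^{d/2}\prod_{j:\,\mu_j(\theta_0)\neq0}\bigl(1+R^2\mu_j(\theta_0)^2\bigr)^{-1/4}\sim_{\mathbf{Q}}R^{-m_0/2}=|x|^{-m_0/2},
\end{equation*}
so no exponent $\gamma>m_0/2=\mathfrak{d}_{d,1}(\mathbf{Q})/2$ is admissible. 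The step I expect to be the real obstacle is the balancing in Step~2: a careless factorization of the product loses a logarithmic power of $|x|$ precisely in the regime where $|x''|$ is small relative to, but not negligible compared with, $|x'|$, so one must keep the Gaussian factor in exactly the flat and small-eigenvalue directions rather than converting it to polynomial decay. The two supporting ingredients that also need care are the quantitative perturbation statement for the $m_0$-th largest eigenvalue of $\overline{Q}(\theta)$ and the algebraic content of Lemma~\ref{lem:alg_char_m}.
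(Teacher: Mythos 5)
Your overall strategy — an exact Gaussian evaluation, eigenvalue perturbation to get uniform two-sided bounds on the nonvanishing eigenvalues, Lemma~\ref{lem:alg_char_m} to identify $m_0=\mathfrak{d}_{d,1}(\mathbf{Q})$, and testing along $x=(0,R\theta_0)$ for optimality — is exactly the paper's (Propositions~\ref{prop:direct_compute}, \ref{prop:recover_banner} plus Theorem~\ref{thm:U_L_bound_eigen}). Steps~1 and~3 are fine. Step~2, however, contains two genuine errors in the places you yourself flagged as delicate.

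First, the derivation of the intermediate bound is wrong. You drop the Gaussian factors for the $m_0$ large-eigenvalue directions and keep only $\sum_{j\,\mathrm{small}}y_j^2/(1+C^2|x''|^2)$, then claim this equals $c'|x'|^2/(1+|x''|^2)$. But $\sum_{j\,\mathrm{small}}y_j^2$ can be as small as $0$ (take $x'$ in the large-eigenvalue eigenspace of $\overline{Q}(x'')$), so this step produces no Gaussian decay at all. The diagnosis at the end of your write-up — ``keep the Gaussian in exactly the flat and small-eigenvalue directions'' — is therefore exactly backwards: you must keep the Gaussian for \emph{every} direction. The correct move, which is what the paper does, is simply to use $1+\lambda_j^2\le 1+C^2|x''|^2$ uniformly in $j$, giving $\exp(-\tfrac{1}{2}\sum_j y_j^2/(1+\lambda_j^2))\le\exp(-\tfrac{|x'|^2}{2(1+C^2|x''|^2)})$ since $\sum_j y_j^2=|x'|^2$. (Your claimed intermediate bound is in fact true, just not by your argument.)

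Second, the case split ``$|x''|\gtrsim|x|$ or $|x''|\ll|x'|$'' does not close. If $|x''|=\delta|x'|$ for a fixed small $\delta>0$ and both are large, then $|x''|\not\gtrsim|x|$ for small enough implied constant, yet the Gaussian exponent $|x'|^2/(1+|x''|^2)\to 1/\delta^2$, a constant — there is no super-polynomial decay, contrary to what you assert. The correct dichotomy (paper's Proposition~\ref{prop:recover_banner}) is on whether $|x''|$ is bounded or not: if $|x''|\le 1$ the Gaussian gives $\exp(-c''|x'|^2)$ and $|x'|\sim|x|$; if $|x''|\ge 1$, substitute $u=|x'|/|x''|$, so that $(1+|x''|)^{-m_0/2}=(1+|x|)^{-m_0/2}(1+u^2)^{m_0/4}$ up to constants, and the Gaussian factor $\le\exp(-c'u^2/2)$ kills the polynomial blow-up $(1+u^2)^{m_0/4}$. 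The balancing you anticipated as the real obstacle is exactly this: the Gaussian does not win by being small on its own; it wins by exactly compensating the loss in passing from $|x''|$ to $|x|$.
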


	Before proving Theorem~\ref{thm:uniform_decay}, we first collect some preparatory results in matrix analysis.
	
	Given any quadratic form $\mathbf{Q}$, define $\rho_\mathbf{Q}$ to be its ``uniform spectral radius'', i.e.,
	\begin{align*}
		\rho_\mathbf{Q} \coloneqq \sup_{\theta\in\mathbb{S}^{n-1}} \rho({\overline{Q}(\theta)}),
	\end{align*}
	where $\rho(A) \coloneqq \{|\lambda|: \lambda \in \C \text{ is the largest eigenvalue of } A\}$ is the classical spectral radius for any $A \in \R^{d\times d}$.
	In other words, $\rho_\mathbf{Q}$ is the supremum of the spectral radii of all (normalized) linear combinations of Hessian matrices $\{\nabla_\xi^2 Q_j\}_{j=1}^n$ associated with $\mathbf{Q}$. However, it is not clear a priori whether $\rho_\mathbf{Q}$ is finite or not, as well as how it depends on $\mathbf{Q}$. The following results answer these questions.
	
	\begin{lemma}[\cite{hj12matrix}, Corollary 6.1.5]\label{lem:single_spectral}
		If $A = [a_{ij}] \in \R^{d\times d}$, then 
		\begin{align*}
			\rho(A) \leq \min\Big\{\max_i \sum_{j=1}^d |a_{ij}|, \max_j \sum_{i=1}^d |a_{ij}|\Big\}.
		\end{align*}
	\end{lemma}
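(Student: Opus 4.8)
The plan is to establish the single inequality
\[
\rho(A) \le \max_{1\le i\le d} \sum_{j=1}^d |a_{ij}|
\]
(the bound by the maximal absolute row sum), and then to deduce the bound by the maximal absolute column sum for free: applying the row-sum bound to the transpose $A^T$ gives $\rho(A^T) \le \max_j \sum_i |a_{ij}|$, and since $A$ and $A^T$ have the same characteristic polynomial $\det(\lambda I - A) = \det(\lambda I - A^T)$, they have the same spectrum, so $\rho(A) = \rho(A^T)$. Taking the minimum of the two resulting bounds yields the statement of the lemma.

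To prove the row-sum bound, I would regard $A$ as a matrix over $\C$ and let $\lambda \in \C$ be an eigenvalue realizing $|\lambda| = \rho(A)$, with associated eigenvector $v = (v_1,\dots,v_d) \in \C^d \setminus \{0\}$. Picking an index $k$ with $|v_k| = \max_{1\le i\le d}|v_i| > 0$ and reading off the $k$-th coordinate of $Av = \lambda v$ gives $\lambda v_k = \sum_{j=1}^d a_{kj} v_j$, so by the triangle inequality
\[
|\lambda|\,|v_k| \;\le\; \sum_{j=1}^d |a_{kj}|\,|v_j| \;\le\; |v_k| \sum_{j=1}^d |a_{kj}| \;\le\; |v_k| \, \max_{1\le i\le d}\sum_{j=1}^d |a_{ij}|.
\]
Dividing by $|v_k| > 0$ yields $\rho(A) = |\lambda| \le \max_i \sum_j |a_{ij}|$, as desired. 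This is precisely the one-line argument underlying Gershgorin's disc theorem, and the stated lemma is a coarse quantitative corollary of it.

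An alternative route, closer to how Horn--Johnson phrase it, is to observe that $\max_i\sum_j|a_{ij}|$ and $\max_j\sum_i|a_{ij}|$ are exactly the induced operator norms $\|A\|_{\ell^\infty\to\ell^\infty}$ and $\|A\|_{\ell^1\to\ell^1}$ (the extremizing input for the former is a vector of signs $\pm 1$; the latter is the former applied to $A^T$), together with the general fact that every submultiplicative induced norm dominates the spectral radius, since $Av = \lambda v$ with $v\neq 0$ forces $|\lambda|\,\|v\| = \|Av\| \le \|A\|\,\|v\|$. Either argument is completely elementary and requires no deep input. The only point that demands even minimal care --- and the closest thing to an ``obstacle'' --- is remembering that although $\rho(A)$ is defined for a \emph{real} matrix $A$, the extremal eigenvalue and its eigenvector generally live over $\C$, so the estimates above must be carried out in $\C^d$; once this is kept in mind, the proof is immediate.
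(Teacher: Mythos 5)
Your proof is correct: the maximal-modulus-coordinate argument (carried out over $\C$, as you rightly note) gives the row-sum bound, and passing to $A^T$ gives the column-sum bound, which is exactly the standard Gershgorin-type argument. The paper does not prove this lemma at all — it cites it as Corollary 6.1.5 of Horn--Johnson and remarks that it is an immediate corollary of the Gershgorin circle theorem — so your write-up simply supplies the standard proof behind that citation.
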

	
	Note that Lemma~\ref{lem:single_spectral} is quantitative and works for any square matrices, not just for real symmetric matrices like $\overline{Q}(\theta)$. It is an immediate corollary of the famous Gershgorin circle theorem (one may consult Theorem 6.1.1 of \cite{hj12matrix}). By using this lemma, one can easily see that:
	\begin{corollary}\label{cor:uniform_spectral}
		We have $\rho_\mathbf{Q} \leq C_\mathbf{Q}$ for some constant $C_\mathbf{Q}$ depending only on $\mathbf{Q}$.
	\end{corollary}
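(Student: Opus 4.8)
The plan is to combine Lemma~\ref{lem:single_spectral} with the observation that $\overline{Q}(\theta)$ depends linearly on $\theta$, so its entries are uniformly bounded over the compact sphere. First I would write out the matrix entries explicitly: for $\theta = (\theta_1,\dots,\theta_n)\in\mathbb{S}^{n-1}$, the $(i,j)$-entry of $\overline{Q}(\theta)=\sum_{k=1}^n\theta_k\nabla_\xi^2 Q_k$ is $\sum_{k=1}^n \theta_k\,(\nabla_\xi^2 Q_k)_{ij}$, and since $|\theta_k|\le 1$ this is bounded in absolute value by $\sum_{k=1}^n |(\nabla_\xi^2 Q_k)_{ij}|$, a quantity that depends only on the Hessians of $Q_1,\dots,Q_n$ and not on $\theta$.

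Next I would apply Lemma~\ref{lem:single_spectral} to $A = \overline{Q}(\theta)$, which gives
\begin{equation*}
\rho(\overline{Q}(\theta)) \leq \max_{1\le i\le d}\sum_{j=1}^d |(\overline{Q}(\theta))_{ij}| \leq \max_{1\le i\le d}\sum_{j=1}^d\sum_{k=1}^n |(\nabla_\xi^2 Q_k)_{ij}| =: C_\mathbf{Q}.
\end{equation*}
Since the right-hand side does not depend on $\theta$, taking the supremum over $\theta\in\mathbb{S}^{n-1}$ yields $\rho_\mathbf{Q}\le C_\mathbf{Q}$, with $C_\mathbf{Q}$ depending only on $\mathbf{Q}$, as claimed. (One could sharpen the constant slightly using $\sum_k|\theta_k|\le\sqrt n$ by Cauchy--Schwarz, or instead bound $\rho$ by an operator or Frobenius norm, but none of this is needed.)

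There is essentially no genuine obstacle in this argument; the only point that warrants a moment's care is that the estimate coming out of Lemma~\ref{lem:single_spectral} is truly \emph{uniform} in $\theta$, which is automatic here precisely because the entrywise majorant $\sum_{k=1}^n |(\nabla_\xi^2 Q_k)_{ij}|$ was obtained independently of $\theta$. In particular this also shows $\theta\mapsto\rho(\overline{Q}(\theta))$ is bounded, so the supremum defining $\rho_\mathbf{Q}$ is finite and is in fact attained by continuity of the spectral radius and compactness of $\mathbb{S}^{n-1}$, though finiteness is all that is asserted.
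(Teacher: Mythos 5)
Your proof is correct and follows essentially the same route as the paper: both apply Lemma~\ref{lem:single_spectral} to $\overline{Q}(\theta)$ and bound the entries uniformly over $\theta\in\mathbb{S}^{n-1}$ (the paper uses Cauchy--Schwarz $\bigl|\sum_j\theta_j b_j\bigr|\le|\theta||b|$ where you use the triangle inequality with $|\theta_k|\le 1$, but this is a negligible difference).
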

	\begin{proof}
		Combine the definition of $\rho_\mathbf{Q}$, Lemma~\ref{lem:single_spectral}, and the following simple fact
		$$  \Big|\sum_{j=1}^n \theta_j b_j\Big| \leq |\theta||b| = |b|,   $$
		for any $\theta \in \mathbb{S}^{n-1} $ and $b \in \R^n$.
	\end{proof}
	By carrying out the proof carefully, it is easy to write out the upper bound $C_\mathbf{Q}$ explicitly. For later convenience, we may always assume $C_\mathbf{Q} \geq 1$.
	
	The following classical result in matrix analysis is vital for us to obtain uniform bounds:
	\begin{lemma}\label{lem:perturb}
		For any matrix in ${\rm S}(d,\mathbb{R})$, its eigenvalues depend continuously on its elements, in the sense that if $A = [a_{ij}], B = [b_{ij}] \in {\rm S}(d,\mathbb{R})$, $\lambda_1 \leq \cdots \leq \lambda_d$ are eigenvalues of $A$, $\widehat{\lambda_1} \leq \cdots \leq \widehat{\lambda_d}$ are eigenvalues of $A+B$, then for any $i,j$, there is
		$$ \sum_{k=1}^d |\widehat{\lambda_k} - \lambda_k| \rightarrow 0,  \quad \quad\quad\text{~as~~~}~~ b_{ij} \rightarrow 0.$$ 
	\end{lemma}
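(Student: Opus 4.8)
The plan is to reduce the statement to the classical fact that the eigenvalues of a real symmetric matrix, once listed in increasing order, are $1$-Lipschitz in the operator norm, and then to observe that the operator norm of the perturbation $B$ is dominated by an entrywise quantity that goes to $0$ as each $b_{ij}\to 0$.

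First I would invoke the Courant--Fischer min-max characterization: for any $M\in{\rm S}(d,\R)$ with eigenvalues $\mu_1\le\cdots\le\mu_d$, one has
$$\mu_k=\min_{\substack{V\subseteq\R^d\\ \dim V=k}}\ \max_{\substack{x\in V\\ x\neq 0}}\frac{x^T M x}{x^T x},\qquad 1\le k\le d.$$
Applying this to $M=A$ and $M=A+B$, and using the elementary bound $\abs{x^T B x}\le\norm{B}_{\mathrm{op}}\,\abs{x}^2$ together with $\norm{-B}_{\mathrm{op}}=\norm{B}_{\mathrm{op}}$, I would conclude $\abs{\widehat{\lambda_k}-\lambda_k}\le\norm{B}_{\mathrm{op}}$ for every $k$ (alternatively, one may simply quote Weyl's perturbation inequality, see e.g.\ \cite{hj12matrix}). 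Note that, because the eigenvalues on both sides are sorted in increasing order, this comparison is valid term by term even when there are repeated eigenvalues.

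Next I would estimate the operator norm crudely by the matrix entries, using
$$\norm{B}_{\mathrm{op}}^2\le\norm{B}_F^2=\sum_{i,j=1}^d b_{ij}^2\le\Big(\sum_{i,j=1}^d\abs{b_{ij}}\Big)^2,$$
so that $\norm{B}_{\mathrm{op}}\le\sum_{i,j}\abs{b_{ij}}$. Summing the termwise bound over the finitely many indices $k=1,\dots,d$ then gives
$$\sum_{k=1}^d\abs{\widehat{\lambda_k}-\lambda_k}\le d\,\norm{B}_{\mathrm{op}}\le d\sum_{i,j=1}^d\abs{b_{ij}},$$
and the right-hand side manifestly tends to $0$ once every entry $b_{ij}$ tends to $0$; quantitatively, for a given $\epsilon>0$ it suffices to require $\abs{b_{ij}}<\epsilon/d^3$ for all $i,j$. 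This is exactly the asserted continuity.

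There is essentially no serious obstacle here: the argument is entirely standard and self-contained once Courant--Fischer is in hand. The only mild points of care are (i) matching eigenvalues in sorted order so that the min-max comparison holds term by term even in the presence of multiplicities, and (ii) if one wishes to avoid citing Weyl's inequality, carrying out the two-line min-max derivation of $\abs{\widehat{\lambda_k}-\lambda_k}\le\norm{B}_{\mathrm{op}}$ explicitly as above.
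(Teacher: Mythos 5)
Your proof is correct, but it takes a genuinely different route from the paper's. The paper argues via the characteristic polynomial: the coefficients of $f_{A+B}(\lambda) = \det(\lambda I - A - B)$ depend continuously on the entries $b_{ij}$, and Rouché's theorem then shows that the roots of $f_{A+B}$ converge to those of $f_A$ (with multiplicity) as $B \to 0$. This is a soft complex-analytic argument that does not rely on the symmetry of $A$ and $B$ in any essential way (symmetry only serves to guarantee that the eigenvalues are real, so the sorted statement makes sense). Your argument, by contrast, uses Weyl's perturbation inequality (derived from Courant--Fischer min-max), which \emph{does} use symmetry in an essential way, but in return delivers the much stronger quantitative Lipschitz bound $\sum_k |\widehat{\lambda_k}-\lambda_k| \le d\,\norm{B}_{\mathrm{op}}$, as opposed to mere qualitative continuity. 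Both are standard; for the application in the paper (establishing Corollary~\ref{cor:k_continuity}), qualitative continuity suffices, but your quantitative version is cleaner, more elementary in flavor (no Rouché), and would compose more easily with further estimates if they were ever needed.
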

	\begin{proof}
		Although by linear algebra, all eigenvalues of any matrix in ${\rm S}(d,\mathbb{R})$ are real numbers, it is the easiest to work in the field of complex numbers. For any $M \in \R^{d\times d}$, let $f_M(\lambda) \coloneqq \det(\lambda I - M)$ be its characteristic polynomial. Then by our assumption, $\lambda_1 \leq \cdots \leq \lambda_d$ are the roots of 
		$f_A$, while $\widehat{\lambda_1} \leq \cdots \leq \widehat{\lambda_d}$ are the roots of $f_{A+B}$. Note that the coefficients of $f_{A+B}(\lambda)$ depend continuously on $b_{ij}$. Thus by Rouché's theorem, if $f_A(\lambda_0) = 0$ for some $\lambda_0 \in \{\lambda_1, \dots, \lambda_d\}$, then for any small $\epsilon > 0$ such that $\lambda_0$ is the unique zero in $B_\epsilon(\lambda_0)$, there exists $\delta>0$ such that $f_{A+B}$ and $f_A$ have the same number of zeros inside $B_\epsilon(\lambda_0)$ (each zero is counted as many times as its multiplicity) whenever $|b_{ij}| \leq \delta, \forall \,i,j$. This just tells us that $|\widehat{\lambda_k} - \lambda_k| \rightarrow 0$ for each $k$ as $b_{ij} \rightarrow 0, \forall\, i,j$, so the proof is completed. 
	\end{proof}
	% \begin{remark}
		%     For more general and quantitative versions of Lemma~\ref{lem:perturb} (with other proof strategies), one may consult Section 6.3 of \cite{hj12matrix}. \note{may remove?}
		% \end{remark}
	
	For any $\theta\in\mathbb{S}^{n-1}$, let $\{\mu_j(\theta)\}_{j=1}^d$ be the $d$ real eigenvalues of $\overline{Q}(\theta)$ and $\Lambda_k(\theta)$ be the $k$-th largest element (if there are multiple choices, then take any one of them) in $\{|\mu_j(\theta)|\}_{j=1}^d$. Then we have:
	\begin{corollary}\label{cor:k_continuity}
		For any $k$, $\Lambda_k(\theta)$ is a continuous function on $\mathbb{S}^{n-1}$.
	\end{corollary}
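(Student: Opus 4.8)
The plan is to reduce the statement to the eigenvalue perturbation result already established in Lemma~\ref{lem:perturb}, combined with the elementary fact that an order statistic of finitely many continuous functions is again continuous. First I would observe that $\overline{Q}(\theta) = \sum_{j=1}^n \theta_j \nabla_\xi^2 Q_j$ depends linearly — hence Lipschitz continuously — on $\theta$; in particular, for $\theta,\theta' \in \mathbb{S}^{n-1}$, writing $\overline{Q}(\theta') = \overline{Q}(\theta) + B_{\theta,\theta'}$ with $B_{\theta,\theta'} \coloneqq \overline{Q}(\theta') - \overline{Q}(\theta) \in {\rm S}(d,\R)$, every entry of $B_{\theta,\theta'}$ tends to $0$ as $\theta' \to \theta$. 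Applying Lemma~\ref{lem:perturb} with $A = \overline{Q}(\theta)$ and $A+B = \overline{Q}(\theta')$, and listing the $d$ real eigenvalues of $\overline{Q}(\theta)$ in increasing order as $\lambda_1(\theta) \leq \cdots \leq \lambda_d(\theta)$, we obtain $\sum_{i=1}^d |\lambda_i(\theta') - \lambda_i(\theta)| \to 0$ as $\theta' \to \theta$; that is, each map $\theta \mapsto \lambda_i(\theta)$, and therefore each $\theta \mapsto |\lambda_i(\theta)|$, is continuous on $\mathbb{S}^{n-1}$.

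The second step is to express $\Lambda_k$ in terms of these ordered eigenvalues. By definition $\Lambda_k(\theta)$ is the $k$-th largest element of the multiset $\{|\mu_j(\theta)|\}_{j=1}^d$, which coincides as a multiset with $\{|\lambda_i(\theta)|\}_{i=1}^d$; hence $\Lambda_k(\theta)$ is the $k$-th largest of $|\lambda_1(\theta)|,\dots,|\lambda_d(\theta)|$. Using the standard representation of the $k$-th order statistic as a maximum of minima,
\[
    \Lambda_k(\theta) = \max_{\substack{S \subset \{1,\dots,d\} \\ \# S = k}} \, \min_{i \in S} |\lambda_i(\theta)|,
\]
we have written $\Lambda_k$ as a finite maximum of finite minima of continuous functions, so $\Lambda_k$ is continuous on $\mathbb{S}^{n-1}$, as claimed.

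I do not expect a genuine obstacle here. The one point that needs care — and the reason the argument must be routed through the sorted eigenvalues $\lambda_i(\theta)$ rather than through individually labeled ones — is that a fixed labeling $\mu_j(\theta)$ of the eigenvalues of $\overline{Q}(\theta)$ need not be continuous in $\theta$ at points where eigenvalues collide, whereas the non-decreasing rearrangement $\lambda_1(\theta) \leq \cdots \leq \lambda_d(\theta)$ always is, by Lemma~\ref{lem:perturb}. Everything else is elementary point-set topology.
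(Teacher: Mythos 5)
Your proposal is correct and matches the route the paper intends: the corollary is stated immediately after Lemma~\ref{lem:perturb} with no written proof, the implicit argument being exactly the one you give — continuous dependence of $\overline{Q}(\theta)$ on $\theta$, then Lemma~\ref{lem:perturb} for continuity of the sorted eigenvalues, then the elementary fact that order statistics of continuous functions are continuous. Your remark about routing through the sorted eigenvalues rather than a fixed labeling is a legitimate point of care and consistent with how the paper sets things up.
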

	In particular, by taking $k=1$ in Corollary~\ref{cor:k_continuity} and using the compactness of $\mathbb{S}^{n-1}$, we recover Corollary~\ref{cor:uniform_spectral} qualitatively. However, the full power of Corollary~\ref{cor:k_continuity} is that we can get a strictly positive lower bound of all eigenvalues, once we throw away those degenerate ones. Let 
	$$m \coloneqq \inf_{\theta\in \mathbb{S}^{n-1}} \#\Big\{j\in \{1,\dots,d\}: \mu_j(\theta)\neq0\Big\},$$ 
	then $\Lambda_1(\theta), \dots, \Lambda_m(\theta)$ are all strictly positive continuous functions on $\mathbb{S}^{n-1}$. Thus, by the compactness of $\mathbb{S}^{n-1}$, we get $\Lambda_k(\theta) \geq c_\mathbf{Q}$ on $\mathbb{S}^{n-1}$ for any $k = 1,\dots,m$, where $c_\mathbf{Q}$ is a strictly positive constant depending only on $\mathbf{Q}$. 
	
	We can summarize what we have got so far as:
	\begin{theorem}\label{thm:U_L_bound_eigen}
		For any $\theta \in \mathbb{S}^{n-1}$ and $k=1,...,m$, we have
		$$0<c_\mathbf{Q} \leq \Lambda_k(\theta) \leq C_\mathbf{Q} <\infty.$$ 
	\end{theorem}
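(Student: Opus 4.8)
The plan is to assemble both inequalities directly from the continuity and compactness statements already proved, since the theorem is essentially a bookkeeping summary of the preceding discussion.

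For the upper bound, I would note that for each $\theta\in\mathbb{S}^{n-1}$ the quantity $\Lambda_1(\theta)$ is by definition the largest of $\{|\mu_j(\theta)|\}_{j=1}^d$, i.e., it equals the spectral radius $\rho(\overline{Q}(\theta))$. Hence $\Lambda_k(\theta)\le\Lambda_1(\theta)=\rho(\overline{Q}(\theta))\le\rho_\mathbf{Q}$ for every $k$ and every $\theta$, and Corollary~\ref{cor:uniform_spectral} bounds $\rho_\mathbf{Q}\le C_\mathbf{Q}<\infty$. (Alternatively, one could combine Corollary~\ref{cor:k_continuity} with the compactness of $\mathbb{S}^{n-1}$, but invoking Corollary~\ref{cor:uniform_spectral} is quantitative and cleaner, and it makes explicit that $C_\mathbf{Q}$ depends only on $\mathbf{Q}$.)

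For the lower bound, I would observe that each of $\Lambda_1,\dots,\Lambda_m$ is a strictly positive continuous function on the compact set $\mathbb{S}^{n-1}$. Continuity is precisely Corollary~\ref{cor:k_continuity}. Strict positivity comes from the definition of $m=\inf_{\theta\in\mathbb{S}^{n-1}}\#\{j:\mu_j(\theta)\ne0\}$: for \emph{every} $\theta$ there are at least $m$ indices $j$ with $\mu_j(\theta)\ne0$, so the $m$ largest elements of $\{|\mu_j(\theta)|\}_{j=1}^d$ --- namely $\Lambda_1(\theta),\dots,\Lambda_m(\theta)$ --- are all strictly positive. A continuous positive function on a compact set attains a positive minimum, so one may take $c_\mathbf{Q}:=\min_{1\le k\le m}\min_{\theta\in\mathbb{S}^{n-1}}\Lambda_k(\theta)>0$, which depends only on $\mathbf{Q}$. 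If $m=0$ the claimed range of $k$ is empty and there is nothing to prove.

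Since both $c_\mathbf{Q}$ and $C_\mathbf{Q}$ are extracted from $\overline{Q}(\theta)=\sum_{j=1}^n\theta_j\nabla_\xi^2 Q_j$, which depends only on $\mathbf{Q}$, the dependence claims hold as well. The only genuinely subtle point --- but it lies behind Corollary~\ref{cor:k_continuity}, not behind this theorem --- is that $\theta\mapsto\Lambda_k(\theta)$ is well-defined and continuous despite eigenvalue crossings; this is handled by first noting that the unordered multiset $\{|\mu_j(\theta)|\}_j$ varies continuously (via Lemma~\ref{lem:perturb}, ultimately Rouch\'e's theorem) and then that taking the $k$-th largest element of a continuously varying real multiset is a continuous operation. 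Given Corollary~\ref{cor:k_continuity}, the rest is pure compactness, so I do not anticipate any real obstacle in the present statement.
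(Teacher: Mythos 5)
Your proof is correct and follows the paper's own argument almost verbatim: the upper bound comes from $\Lambda_k(\theta)\le\Lambda_1(\theta)=\rho(\overline{Q}(\theta))\le\rho_\mathbf{Q}\le C_\mathbf{Q}$ via Corollary~\ref{cor:uniform_spectral}, and the lower bound from the strict positivity and continuity of $\Lambda_1,\dots,\Lambda_m$ (Corollary~\ref{cor:k_continuity}) together with compactness of $\mathbb{S}^{n-1}$. The added remark that the case $m=0$ is vacuous is a harmless clarification; no gap.
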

	
	Before proceeding, let us first point out that $m$ actually enjoys a clean algebraic characterization by language of $\mathfrak{d}_{d',n'}(\mathbf{Q})$: 
	\begin{lemma}\label{lem:alg_char_m}
		$m = \mathfrak{d}_{d,1}(\mathbf{Q})$. 
	\end{lemma}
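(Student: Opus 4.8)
The plan is to rewrite both sides of the claimed identity purely in terms of ranks of the symmetric matrices $\overline{Q}(\theta)$. Since $\overline{Q}(\theta)$ is symmetric, its number of nonzero eigenvalues equals its rank, so $m = \inf_{\theta\in\mathbb{S}^{n-1}}\rank(\overline{Q}(\theta))$. It therefore suffices to prove
\begin{equation*}
    \mathfrak{d}_{d,1}(\mathbf{Q}) = \inf_{\theta\in\mathbb{S}^{n-1}}\rank(\overline{Q}(\theta)).
\end{equation*}

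First I would unwind the definition of $\mathfrak{d}_{d,1}(\mathbf{Q})$ in this special case. Here $N\in\R^{n\times 1}$ with $\rank(N)=1$ is simply a nonzero column vector, and $M\in\R^{d\times d}$ with $\rank(M)=d$ is an invertible matrix. Then $R_{N,M}\mathbf{Q}$ is the single quadratic form $\xi\mapsto\sum_{j=1}^n N_j Q_j(M\xi)$, whose Hessian matrix is $M^T\overline{Q}(N)M$, where $\overline{Q}(N)=\sum_{j=1}^n N_j\nabla_\xi^2 Q_j$ (consistent with (\ref{098765})). Recall that for a quadratic form $q_B(\xi)=\tfrac12\xi^TB\xi$ with $B\in{\rm S}(d,\R)$, the variable $\xi_{d'}$ appears in $q_B$ (i.e. $\partial_{\xi_{d'}}q_B\not\equiv 0$) precisely when the $d'$-th row of $B$ is nonzero, so ${\rm NV}(q_B)$ equals the number of nonzero rows of $B$. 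Hence, swapping the two infima,
\begin{equation*}
    \mathfrak{d}_{d,1}(\mathbf{Q}) = \inf_{0\neq N\in\R^n}\ \inf_{M\in{\rm GL}(d,\R)}\ \#\{\,d' : \text{row } d' \text{ of } M^T\overline{Q}(N)M \text{ is nonzero}\,\}.
\end{equation*}

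The crux is the auxiliary fact that for any $A\in{\rm S}(d,\R)$,
\begin{equation*}
    \inf_{M\in{\rm GL}(d,\R)}\ \#\{\,d' : \text{row } d' \text{ of } M^TAM \text{ is nonzero}\,\} = \rank(A).
\end{equation*}
For the lower bound: a symmetric matrix with $k$ nonzero rows has rank at most $k$, and $\rank(M^TAM)=\rank(A)$ since $M$ is invertible, so the count is always $\geq\rank(A)$. For the upper bound: by congruence diagonalization (Sylvester's law of inertia) there is an invertible $M$ with $M^TAM$ diagonal having exactly $\rank(A)$ nonzero diagonal entries, hence exactly $\rank(A)$ nonzero rows. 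Applying this with $A=\overline{Q}(N)$, the inner infimum over $M$ equals $\rank(\overline{Q}(N))$, so $\mathfrak{d}_{d,1}(\mathbf{Q})=\inf_{0\neq N\in\R^n}\rank(\overline{Q}(N))$. Finally, $\overline{Q}(\lambda N)=\lambda\overline{Q}(N)$ for $\lambda\neq 0$ gives $\rank(\overline{Q}(\lambda N))=\rank(\overline{Q}(N))$, so the infimum over nonzero $N\in\R^n$ equals the infimum over $N\in\mathbb{S}^{n-1}$, which is exactly $m$. The only real (though still elementary) obstacle is the congruence-diagonalization step together with keeping straight the convention that ${\rm NV}$ of a single quadratic form counts the variables genuinely appearing, i.e. the nonzero rows of its Hessian; the rest is bookkeeping.
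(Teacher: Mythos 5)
Your proof is correct and follows the same chain of reductions as the paper's: unwind the definition of $\mathfrak{d}_{d,1}$, identify the rank-one $N$ with a nonzero vector, reduce to $\inf_{\theta\in\mathbb{S}^{n-1}}\rank(\overline{Q}(\theta))$, and use the fact that for a symmetric matrix the rank equals the number of nonzero eigenvalues. The only distinction is that where the paper cites Lemma A.2 of \cite{ggo23} as a black box for the key identity $\inf_{M\in{\rm GL}(d,\R)}{\rm NV}(\overline{Q}(x'')\circ M)=\rank(\overline{Q}(x''))$, you prove it directly via Sylvester's law of inertia, which makes the argument self-contained but does not change the underlying route.
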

	\begin{proof} 
		Set $x''=|x''| \theta$. Then
		\begin{align*}
			\mathfrak{d}_{d,1}(\mathbf{Q}) & = \inf_{\substack{M\in \R^{d\times d}\\ \rank(M) = d}} \inf_{\substack{M'\in \R^{n\times 1}\\ \rank(M) = 1}} {\rm NV}((\mathbf{Q}\circ M) \cdot M')\\
			& = \inf_{x'' \in \R^n\setminus\{0\}}
			\inf_{\substack{M\in \R^{d\times d}\\ \rank(M) = d}} {\rm NV}(\overline{Q}(x'') \circ M)\\
			& = 
			\inf_{x'' \in \R^n\setminus\{0\}} \rank(\overline{Q}(x''))\\
			& = \inf_{\theta \in \mathbb{S}^{n-1}} \rank(\overline{Q}(\theta))\\
			& = \inf_{\theta \in \mathbb{S}^{n-1}} \#\{j\in \{1,\dots,d\}: \mu_j(\theta)\neq0\}\\
			& = m,
		\end{align*}
		where in the third line we used Lemma A.2 in \cite{ggo23}.
	\end{proof}
    Although the proof of Lemma~\ref{lem:alg_char_m} is fairly straightforward, it is the key bridge relating Fourier decay to algebraic quantities in the literature.
    
	Now we begin the proof of Theorem~\ref{thm:uniform_decay}, which is divided into several steps.
	\begin{proposition}\label{prop:direct_compute}
		The following bound holds uniformly for all $x \in \R^{d+n}$:
		\begin{align*}
			\abs{\widetilde{E}^\mathbf{Q}1 (x)} \lesssim e^{-\frac{1}{2}\frac{|x'|^2}{1+\rho_\mathbf{Q}^2|x''|^2}} \cdot \prod_{j=1}^d (1+|x''||\mu_j(\theta)|)^{-\frac{1}{2}},
		\end{align*}
		where $\theta = x''/|x''| \in \mathbb{S}^{n-1}$. 
	\end{proposition}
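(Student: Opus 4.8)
The plan is to take advantage of the fact that, since $\mathbf{Q}$ is quadratic, $\widetilde{E}^\mathbf{Q}1(x)$ is literally a Gaussian integral and can be evaluated in closed form. Using the form of $\widetilde{E}^\mathbf{Q}$ recorded in Subsection~\ref{subsec:E^Q_versions}, we have
\[
    \widetilde{E}^\mathbf{Q}1(x) = \int_{\R^d} e^{i(x'\cdot\xi + \xi^T\overline{Q}(x'')\xi/2)}\,e^{-|\xi|^2/2}\,d\xi = \int_{\R^d} e^{-\frac12\xi^T(I - i\overline{Q}(x''))\xi + ix'\cdot\xi}\,d\xi,
\]
where $\overline{Q}(x'') = \sum_{j=1}^n x_{d+j}\nabla_\xi^2 Q_j$ is real symmetric. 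First I would diagonalize $\overline{Q}(x'') = O^T\Lambda O$ with $O \in {\rm O}(d,\R)$ and $\Lambda = \mathrm{diag}(\lambda_1,\dots,\lambda_d)$; since $\overline{Q}(x'') = |x''|\,\overline{Q}(\theta)$ with $\theta = x''/|x''|$, we have $\lambda_j = |x''|\,\mu_j(\theta)$. (The degenerate case $x''=0$ is immediate and can be handled separately; there the claimed product equals $1$ and $\widetilde{E}^\mathbf{Q}1(x) = (2\pi)^{d/2}e^{-|x'|^2/2}$.) The orthogonal substitution $\xi = O^Tu$ has Jacobian $1$ and decouples the integral into a product of one-dimensional Gaussian integrals,
\[
    \widetilde{E}^\mathbf{Q}1(x) = \prod_{j=1}^d \int_\R e^{-\frac12(1 - i\lambda_j)u_j^2 + iy_j u_j}\,du_j, \qquad y := Ox'.
\]

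Next I would apply the standard one-dimensional identity $\int_\R e^{-\frac{a}{2}t^2 + ibt}\,dt = \sqrt{2\pi/a}\,e^{-b^2/(2a)}$, valid for any $a \in \C$ with $\Re a > 0$ (principal branch of the square root), which follows from completing the square and shifting the contour of integration, i.e.\ by analytic continuation from $a > 0$. Taking $a = 1 - i\lambda_j$ (so $\Re a = 1 > 0$) and $b = y_j$ gives the exact formula
\[
    \widetilde{E}^\mathbf{Q}1(x) = (2\pi)^{d/2}\prod_{j=1}^d (1 - i\lambda_j)^{-1/2}\,e^{-\frac{y_j^2}{2(1 - i\lambda_j)}},
\]
and taking absolute values, using $|1 - i\lambda_j| = (1+\lambda_j^2)^{1/2}$ and $\Re\!\big( (1 - i\lambda_j)^{-1}\big) = (1+\lambda_j^2)^{-1}$, yields
\[
    |\widetilde{E}^\mathbf{Q}1(x)| = (2\pi)^{d/2}\prod_{j=1}^d (1+\lambda_j^2)^{-1/4}\prod_{j=1}^d e^{-\frac{y_j^2}{2(1+\lambda_j^2)}}.
\]

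It then remains to estimate the two products, which is elementary. For the algebraic factor, the pointwise inequality $(1+t)^2 \le 2(1+t^2)$ for $t \ge 0$ gives $(1+\lambda_j^2)^{-1/4} \le 2^{1/4}(1 + |\lambda_j|)^{-1/2} = 2^{1/4}(1 + |x''||\mu_j(\theta)|)^{-1/2}$, which produces the claimed product. For the exponential factor, since $O$ is orthogonal we have $|y| = |x'|$, while $\lambda_j^2 \le \max_k \lambda_k^2 = |x''|^2\rho(\overline{Q}(\theta))^2 \le \rho_\mathbf{Q}^2|x''|^2$, the last quantity being finite by Corollary~\ref{cor:uniform_spectral}; hence $\sum_{j=1}^d \frac{y_j^2}{1+\lambda_j^2} \ge \frac{|x'|^2}{1 + \rho_\mathbf{Q}^2|x''|^2}$ and $\prod_j e^{-y_j^2/(2(1+\lambda_j^2))} \le e^{-\frac12|x'|^2/(1+\rho_\mathbf{Q}^2|x''|^2)}$. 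Combining these two estimates with the exact formula above gives the proposition. There is no serious obstacle here — as the name ``direct computation'' indicates, the only steps deserving a line of care are the justification of the complex Gaussian identity (a routine contour shift) and the harmless degenerate case $x'' = 0$.
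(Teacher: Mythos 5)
Your proposal is correct and takes essentially the same approach as the paper: evaluate the complex Gaussian integral exactly, then bound the determinant factor and the exponential factor separately using $(1+t)^2 \leq 2(1+t^2)$ and $\lambda_j^2 \leq \rho_\mathbf{Q}^2|x''|^2$ respectively. The only cosmetic difference is that you diagonalize $\overline{Q}(x'')$ at the outset and use the one-dimensional complex Gaussian formula, whereas the paper invokes Lemma~2.4 of \cite{mockenhaupt96} for the multidimensional formula and then diagonalizes when treating the exponential factor; the substance is identical.
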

	\begin{proof}
		We perform some direct computations, which is inspired by \cite{mockenhaupt96}:
		\begin{align}\label{eq:tilde_E_pointwise}
			\abs{\widetilde{E}^\mathbf{Q}1(x)} 
			& = \abs{\int_{\R^d} e^{i(x'\cdot\xi + \xi^T\cdot \overline{Q}(x'')\cdot\xi/2)} \cdot e^{-|\xi|^2/2} d\xi}\nonumber\\
			& = \abs{\int_{\R^d} e^{i x'\cdot\xi} e^{- \xi^T\cdot (I - i\overline{Q}(x'')) \cdot\xi/2} d\xi}\nonumber\\
			& = \abs{(2\pi)^{\frac{d}{2}} \cdot \det(I - i\overline{Q}(x''))^{-\frac{1}{2}} \cdot e^{-{x'}^T\cdot (I - i\overline{Q}(x''))^{-1} \cdot x'/2}} \nonumber\\
			& \sim \abs{\det(I - i\overline{Q}(x''))}^{-\frac{1}{2}} \cdot \abs{e^{-{x'}^T\cdot (I - i\overline{Q}(x''))^{-1} \cdot x'/2}},
		\end{align}
		where in the second to last line we used Lemma 2.4 in \cite{mockenhaupt96}.
		
		We estimate the two factors above separately. Firstly, by simply noting that $\{1 - i|x''|\mu_j(\theta)\}_{j=1}^d$ are eigenvalues of $I - i\overline{Q}(x'')$, we get
		\begin{align}\label{eq:det_eigen_express}
			\abs{\det(I - i\overline{Q}(x''))}^{-\frac{1}{2}} 
			=& \abs{\prod_{j=1}^d (1-i|x''|\mu_j(\theta))}^{-\frac{1}{2}} \nonumber\\
			=& \prod_{j=1}^d (1 + |x''|^2|\mu_j(\theta)|^2)^{-\frac{1}{4}} \nonumber\\
			\sim & \prod_{j=1}^d (1 + |x''||\mu_j(\theta)|)^{-\frac{1}{2}}.
		\end{align}
		Secondly, by linear algebra, there exists $O \in {\rm O}(d,\mathbb{R})$ such that
		\begin{align*}
			\overline{Q}(x'') = |x''|\cdot O^T
			\begin{bmatrix}
				\mu_1(\theta) & & \\
				& \ddots & \\
				& & \mu_d(\theta)
			\end{bmatrix} O.
		\end{align*}
		Thus
		\begin{align*}
			I - i \overline{Q}(x'') & = O^T
			\begin{bmatrix}
				1 - i|x''|\mu_1(\theta) & & \\
				& \ddots & \\
				& & 1 - i|x''|\mu_d(\theta)
			\end{bmatrix} O,\\
		\end{align*}
		and then
		\begin{align*}
			(I - i \overline{Q}(x''))^{-1} & = 
			O^T
			\begin{bmatrix}
				\frac{1}{1 - i|x''|\mu_1(\theta)} & & \\
				& \ddots & \\
				& & \frac{1}{1 - i|x''|\mu_d(\theta)}
			\end{bmatrix} O\\
			& = O^T
			\begin{bmatrix}
				\frac{1 + i|x''|\mu_1(\theta)}{1 + |x''|^2|\mu_1(\theta)|^2} & & \\
				& \ddots & \\
				& & \frac{1 + i|x''|\mu_d(\theta)}{1 + |x''|^2|\mu_d(\theta)|^2}
			\end{bmatrix} O,
		\end{align*}
		which implies
		\begin{align*}
			\abs{e^{-{x'}^T\cdot (I - i\overline{Q}(x''))^{-1} \cdot x'/2)}} & = 
			\abs{ \exp\left(- \frac{1}{2} (Ox')^T
				\begin{bmatrix}
					\frac{1 + i|x''|\mu_1(\theta)}{1 + |x''|^2|\mu_1(\theta)|^2} & & \\
					& \ddots & \\
					& & \frac{1 + i|x''|\mu_d(\theta)}{1 + |x''|^2|\mu_d(\theta)|^2}
				\end{bmatrix} (Ox') \right) }\\
			& = \exp\left(- \frac{1}{2} (Ox')^T
			\begin{bmatrix}
				\frac{1}{1 + |x''|^2|\mu_1(\theta)|^2} & & \\
				& \ddots & \\
				& & \frac{1}{1 + |x''|^2|\mu_d(\theta)|^2}
			\end{bmatrix} (Ox') \right) \\
			& \leq \exp\left(- \frac{1}{2} (Ox')^T
			\begin{bmatrix}
				\frac{1}{1 + |x''|^2\rho_\mathbf{Q}^2} & & \\
				& \ddots & \\
				& & \frac{1}{1 + |x''|^2\rho_\mathbf{Q}^2}
			\end{bmatrix} (Ox') \right)\\
			& = e^{-\frac{1}{2}\frac{|Ox'|^2}{1 + |x''|^2\rho_\mathbf{Q}^2}} \\
			& = e^{-\frac{1}{2}\frac{|x'|^2}{1 + |x''|^2\rho_\mathbf{Q}^2}}.
		\end{align*}
		And the proof is completed.
	\end{proof}
	
	Proposition~\ref{prop:direct_compute} immediately implies the following Fourier decay estimate, which is essentially Lemma 2.1.1 of \cite{banner02}:
	\begin{proposition}\label{prop:recover_banner}
		The following bound holds uniformly for all $x \in \R^{d+n}$:
		\begin{align*}
			\abs{\widetilde{E}^\mathbf{Q}1 (x)} \lesssim \prod_{j=1}^d (1+|x||\mu_j(\theta)|)^{-\frac{1}{2}},
		\end{align*}
		where $\theta = x''/|x''| \in \mathbb{S}^{n-1}$. 
	\end{proposition}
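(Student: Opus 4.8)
The plan is to derive Banner's pointwise bound directly from Proposition~\ref{prop:direct_compute} by trading the Gaussian factor $e^{-\frac{1}{2}|x'|^2/(1+\rho_\mathbf{Q}^2|x''|^2)}$ for the missing powers of $|x'|$. The key observation is that Proposition~\ref{prop:direct_compute} already gives the desired decay in the $x''$-direction, since $\prod_{j=1}^d(1+|x''||\mu_j(\theta)|)^{-1/2} \le \prod_{j=1}^d(1+|x||\mu_j(\theta)|)^{-1/2}$ is false in general (it goes the wrong way!), so one has to be a bit more careful. Let me restate: we want to replace $|x''|$ by $|x|$ inside each factor, and $|x|^2 = |x'|^2 + |x''|^2$, so the point is to use the Gaussian in $|x'|$ to absorb the discrepancy between $|x|$ and $|x''|$.

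First I would split into two regimes. In the regime $|x'| \le 10(1+\rho_\mathbf{Q}|x''|)$, we have $|x| \sim 1+|x''|$ up to a constant depending only on $\mathbf{Q}$ (using $|x'| \lesssim_\mathbf{Q} 1+|x''|$ and $|x|^2 = |x'|^2+|x''|^2$), so $1+|x||\mu_j(\theta)| \lesssim_\mathbf{Q} 1+|x''||\mu_j(\theta)|$ after using Theorem~\ref{thm:U_L_bound_eigen} to control $|\mu_j(\theta)| \le \rho_\mathbf{Q} \le C_\mathbf{Q}$; then the bound from Proposition~\ref{prop:direct_compute} (even discarding the Gaussian) already dominates $\prod_{j=1}^d(1+|x||\mu_j(\theta)|)^{-1/2}$. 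In the complementary regime $|x'| > 10(1+\rho_\mathbf{Q}|x''|)$, we have $\frac{|x'|^2}{1+\rho_\mathbf{Q}^2|x''|^2} \gtrsim |x'| \gtrsim |x|$ (since $|x''| \le |x'|$ here forces $|x| \sim |x'|$), so the Gaussian factor is bounded by $e^{-c_\mathbf{Q}|x|}$, which decays faster than any polynomial in $|x|$; discarding the product factor entirely (it is $\le 1$) gives $|\widetilde E^\mathbf{Q}1(x)| \lesssim e^{-c_\mathbf{Q}|x|} \lesssim_{\mathbf{Q},d}(1+|x|)^{-d/2} \le \prod_{j=1}^d(1+|x||\mu_j(\theta)|)^{-1/2}\cdot(1+|x|)^{d/2}/\prod(\cdots)$ — here I should instead just say $e^{-c_\mathbf{Q}|x|}\prod_{j=1}^d(1+|x||\mu_j(\theta)|)^{1/2} \lesssim_\mathbf{Q} e^{-c_\mathbf{Q}|x|}(1+C_\mathbf{Q}|x|)^{d/2}\lesssim_\mathbf{Q} 1$, which rearranges to the claim.

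Combining the two regimes yields the stated bound uniformly in $x \in \R^{d+n}$. I expect the only mild subtlety is bookkeeping the constants $c_\mathbf{Q}, C_\mathbf{Q}, \rho_\mathbf{Q}$ consistently and making sure the threshold $10(1+\rho_\mathbf{Q}|x''|)$ is chosen so that both $|x|\sim 1+|x''|$ in the first regime and $|x|\sim|x'|$ with the Gaussian exponent $\gtrsim|x|$ in the second; none of this is deep, and everything reduces to elementary comparisons of $|x'|, |x''|, |x|$ together with the two-sided eigenvalue bounds from Theorem~\ref{thm:U_L_bound_eigen}. So there is no real obstacle here — the content was already in Proposition~\ref{prop:direct_compute}, and this proposition is just the cosmetic step of converting an anisotropic-plus-Gaussian estimate into Banner's homogeneous-looking product form.
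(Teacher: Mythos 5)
Your Regime~1 argument is fine, but there is a genuine gap in Regime~2. The step ``$\frac{|x'|^2}{1+\rho_\mathbf{Q}^2|x''|^2} \gtrsim |x'|$'' is false under the hypothesis $|x'| > 10(1+\rho_\mathbf{Q}|x''|)$. For a counterexample, take $\rho_\mathbf{Q}|x''| = |x'|/20$ with $|x'| \to \infty$: this satisfies the Regime~2 condition as soon as $|x'| > 20$, yet
\[
\frac{|x'|^2}{1+\rho_\mathbf{Q}^2|x''|^2} = \frac{|x'|^2}{1 + |x'|^2/400} \longrightarrow 400,
\]
a bounded quantity rather than something $\gtrsim |x'| \gtrsim |x|$. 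More generally, your Regime~2 threshold only forces $1+\rho_\mathbf{Q}^2|x''|^2 \lesssim |x'|^2$, hence the Gaussian exponent is merely $\gtrsim 1$, not $\gtrsim |x|$. In fact the quantity $\frac{|x'|^2}{1+\rho_\mathbf{Q}^2|x''|^2}$ is (for large $|x''|$) a function of the ratio $u = |x'|/|x''|$, not of $|x|$, and your regime split does not constrain $u$ to be large. Consequently, you cannot conclude $e^{-c_\mathbf{Q}|x|}$ from the Gaussian factor, and the subsequent absorption of the polynomial factor $(1+C_\mathbf{Q}|x|)^{d/2}$ collapses.

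The paper's proof handles this by splitting on $|x''|\leq 1$ versus $|x''| \geq 1$ (with a preliminary reduction to $|x| \geq 10$). For $|x''| \leq 1$ one truly has $|x'| \gtrsim |x|$ and the Gaussian exponent is $\gtrsim |x|^2$, so your intuition works there. For $|x''| \geq 1$ the argument is different in kind: setting $u = |x'|/|x''|$, one factors out $\frac{|x''|}{|x|}$ from each term of the product and observes that the resulting multiplicative loss is exactly $(|x|/|x''|)^{d/2} = (1+u^2)^{d/4}$, which is beaten by the Gaussian contribution $e^{-cu^2}$ because the same $u$ appears in both. This is a trade of polynomial-in-$u$ against exponential-in-$u^2$, not polynomial-in-$|x|$ against exponential-in-$|x|$, and it is precisely the step your proposal is missing. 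To repair your argument you would need to reorganize Regime~2 around $u$ rather than around $|x|$ (or copy the paper's split on $|x''|$), and track the $(1+u^2)^{d/4}$ loss explicitly.
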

	\begin{proof}
		If $|x| \leq 10$, note $|x||\mu_j(\theta)| \leq 10C_\mathbf{Q}$, then 
		\begin{align*}
			\abs{\widetilde{E}^\mathbf{Q}1 (x)} \lesssim 1 &\lesssim_\mathbf{Q} \prod_{j=1}^d(1+10C_\mathbf{Q})^{-\frac{1}{2}}\leq \prod_{j=1}^d(1+|x||\mu_j(\theta)|)^{-\frac{1}{2}}.
		\end{align*}
		So without loss of generality, we may assume $|x| \geq 10$.
		
		By Proposition~\ref{prop:direct_compute} and Corollary~\ref{cor:uniform_spectral}, we have
		\begin{align*}
			\abs{\widetilde{E}^\mathbf{Q}1 (x)} \lesssim e^{-\frac{1}{2}\frac{|x'|^2}{1+C_\mathbf{Q}^2|x''|^2}} \cdot \prod_{j=1}^d (1+|x''||\mu_j(\theta)|)^{-\frac{1}{2}}.
		\end{align*}
		If $|x''| \leq 1$, then $|x'|\geq |x|/2$ as $|x|\geq 10$. Thus
		\begin{align*}
			\abs{\widetilde{E}^\mathbf{Q}1 (x)} & \lesssim e^{-\frac{1}{2}\frac{|x'|^2}{1+C_\mathbf{Q}^2|x''|^2}}\\ & \leq e^{-\frac{1}{8}\frac{|x|^2}{1+C_\mathbf{Q}^2}}\\
			& \lesssim_\mathbf{Q} \prod_{j=1}^d (1+C_\mathbf{Q}|x|)^{-\frac{1}{2}}\\
			& \leq \prod_{j=1}^d(1+|x||\mu_j(\theta)|)^{-\frac{1}{2}},
		\end{align*}
		where in the last line we used $|\mu_{j}(\theta)| \leq C_\mathbf{Q}$. 
		If $|x''|\geq 1$, then let $u \coloneqq |x'|/|x''| \in (0,\infty)$. We have
		\begin{align*}
			\abs{\widetilde{E}^\mathbf{Q}1 (x)} & \lesssim e^{-\frac{1}{4}\frac{|x'|^2}{C_\mathbf{Q}^2|x''|^2}} \cdot \prod_{j=1}^d \left(\frac{|x''|}{|x|} + \frac{|x''|}{|x|}|x||\mu_j(\theta)|\right)^{-\frac{1}{2}}\\
			& = \left( 
			\frac{|x|}{|x''|} \right)^{\frac{d}{2}}e^{-\frac{1}{4}\frac{|x'|^2}{C_\mathbf{Q}^2|x''|^2}} \cdot \prod_{j=1}^d \left(1 + |x||\mu_j(\theta)|\right)^{-\frac{1}{2}}\\
			& = \left( 
			1 + u^2 \right)^{\frac{d}{4}}e^{-\frac{1}{4}\frac{u^2}{C_\mathbf{Q}^2}} \cdot \prod_{j=1}^d \left(1 + |x||\mu_j(\theta)|\right)^{-\frac{1}{2}}\\
			& \lesssim_\mathbf{Q} \prod_{j=1}^d \left(1 + |x||\mu_j(\theta)|\right)^{-\frac{1}{2}}.
		\end{align*}
		So in any case, the proof is completed.
	\end{proof}
	
	Now we are finally ready to prove our main result.
	\begin{proof}[Proof of Theorem~\ref{thm:uniform_decay}] 
		Starting from Proposition~\ref{prop:recover_banner}, we have 
		\begin{align*}
			\abs{\widetilde{E}^\mathbf{Q}1 (x)} & \lesssim \prod_{j=1}^d (1+|x||\mu_j(\theta)|)^{-\frac{1}{2}}\\
			& \leq \prod_{k=1}^m (1 + \Lambda_k(\theta)|x|)^{-\frac{1}{2}}\\
			& \leq \prod_{k=1}^m (1 + c_\mathbf{Q}|x|)^{-\frac{1}{2}}\\
			& \lesssim_\mathbf{Q} (1 + |x|)^{-\frac{m}{2}}\\
			& = (1 + |x|)^{-\frac{\mathfrak{d}_{d,1}(\mathbf{Q})}{2}}
		\end{align*}
		as desired. Here in the third line we used \text{Theorem }\ref{thm:U_L_bound_eigen}, and in the last line we used \text{Lemma }\ref{lem:alg_char_m}.
		
		To see the optimality of our estimate, take $\theta_0 \in \mathbb{S}^{n-1}$ such that $m = \# \{j\in\{1,\dots,d\}: \mu_j(\theta_0)\neq 0\}$, and consider $x =(x',x'')=(0,|x''|\theta_0)$. By computations in the proof of Proposition~\ref{prop:direct_compute}, we have
		\begin{align*}
			\abs{\widetilde{E}^\mathbf{Q}f(x)} & = \abs{(2\pi)^{\frac{d}{2}} \cdot \det(I - i\overline{Q}(x''))^{-\frac{1}{2}}}\\
			& \sim \prod_{j=1}^d(1 + |x''||\mu_j(\theta_0)|)^{-\frac{1}{2}}\\
			& = \prod_{j=1}^m (1 + |x| \Lambda_j(\theta_0))^{-\frac{1}{2}}\\
			& \geq \prod_{j=1}^m (1 + |x|  C_\mathbf{Q})^{-\frac{1}{2}}\\
			& \gtrsim_\mathbf{Q} (1 + |x|)^{-\frac{m}{2}}\\
			& = (1 + |x|)^{-\frac{\mathfrak{d}_{d,1}(\mathbf{Q})}{2}},
		\end{align*}
		where in the fourth line we used \text{Theorem }\ref{thm:U_L_bound_eigen}, and in the last line we used \text{Lemma }\ref{lem:alg_char_m}.
		Suppose that we have $|\widetilde{E}^\mathbf{Q}f(x)| \lesssim_\mathbf{Q} (1+|x|)^{-\gamma}$ for some $\gamma > \frac{\mathfrak{d}_{d,1}(\mathbf{Q})}{2}$, then $(1 + |x|)^{-\frac{\mathfrak{d}_{d,1}(\mathbf{Q})}{2}} \lesssim (1+|x|)^{-\gamma}$, which fails when $|x| \gg_\mathbf{Q} 1$, a contradiction. Thus the optimality is proved. 
	\end{proof}
	
	Although Theorem~\ref{thm:uniform_decay} is already sufficient for our applications, Gaussian smoothing factors are uncommon in the literature of oscillatory integrals. For example, Lemma 2.1.1 of \cite{banner02} uses compactly supported smooth bumps ($C_c^\infty(\R^d)$) in place of Gaussian functions. By taking $\psi = \mathbf{Q}$ and $U = \supp \varphi$ in (\ref{def:bump_extension}) and using the notation (\ref{098765}), we are led to   \begin{align}\label{eq:oscillatory_integral}
		\widetilde{E}_\varphi^\mathbf{Q}f(x) \coloneqq \int_{\R^d} e^{i(x'\cdot\xi + \xi^T\cdot \overline{Q}(x'')\cdot\xi/2)} f(\xi) \cdot \varphi(\xi) d\xi, \quad\quad  x = (x',x'') \in \R^{d+n},
    \end{align}
    where $\varphi \in C_c^\infty(\R^d)$ with $\varphi \not\equiv 0$. The next result tells us that $\widetilde{E}_\varphi^\mathbf{Q}$ satisfies the same uniform Fourier decay estimate as $\widetilde{E}^\mathbf{Q}$.
	\begin{corollary}\label{cor:recover_banner}
		For any $\varphi \in C_c^\infty(\R^d)$ with $\varphi\not\equiv0$, we have 
		\begin{align*}
			\abs{\widetilde{E}_\varphi^\mathbf{Q}1 (x)} \lesssim_\mathbf{Q} (1+|x|)^{-\frac{\mathfrak{d}_{d,1}(\mathbf{Q})}{2}}, \quad \quad x\in \R^{d+n}.
		\end{align*}
		Moreover, this bound is optimal in the sense that for any $ \gamma > \frac{\mathfrak{d}_{d,1}(\mathbf{Q})}{2}$, we cannot have $|\widetilde{E}_\varphi^\mathbf{Q}f(x)| \lesssim_\mathbf{Q} (1+|x|)^{-\gamma}$ uniformly for all $x\in \R^{d+n}$.
	\end{corollary}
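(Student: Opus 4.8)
The plan is to bootstrap from the Gaussian-smoothed version, which is already handled by Theorem~\ref{thm:uniform_decay}, using that $\widetilde{E}_\varphi^\mathbf{Q}$ and $\widetilde{E}^\mathbf{Q}$ differ by convolution against a Schwartz kernel. For the upper bound: exactly as in the proof of Proposition~\ref{prop:restriction_equiv} (the step $(ii)\Rightarrow(iii)$), one writes $\widetilde{E}_\varphi^\mathbf{Q}1 = \widetilde{\Phi}^\vee * \widetilde{E}^\mathbf{Q}1$, where $\Phi(\zeta',\zeta'') := \varphi(\zeta')\psi(\zeta'')$ with $\psi \in C_c^\infty(\R^n)$ chosen so that $\Phi|_{S_\mathbf{Q}} = \varphi$, and $\widetilde{\Phi}(\zeta) := \Phi(\zeta) e^{|\zeta'|^2/2} \in C_c^\infty(\R^{d+n})$, so that $\widetilde{\Phi}^\vee \in \mathcal{S}(\R^{d+n})$. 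Theorem~\ref{thm:uniform_decay} gives $|\widetilde{E}^\mathbf{Q}1(y)| \lesssim_\mathbf{Q} (1+|y|)^{-\mathfrak{d}_{d,1}(\mathbf{Q})/2}$, and I would conclude by the elementary fact that convolving a rapidly decaying function with one decaying like $(1+|\cdot|)^{-a}$, $a\geq 0$, again produces decay $(1+|\cdot|)^{-a}$ (split the integral into $|y|\leq|x|/2$ and $|y|>|x|/2$). This yields $|\widetilde{E}_\varphi^\mathbf{Q}1(x)| \lesssim_\mathbf{Q} (1+|x|)^{-\mathfrak{d}_{d,1}(\mathbf{Q})/2}$.

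The optimality half is the delicate part, because for a cleverly rigged $\varphi$ there can be cancellation, so one cannot simply test a single point $x$ for arbitrary $\varphi$; instead I would first establish the claim for one convenient non-negative bump and then transfer. Set $m := \mathfrak{d}_{d,1}(\mathbf{Q})$, which by Lemma~\ref{lem:alg_char_m} equals $\inf_{\theta \in \mathbb{S}^{n-1}} \#\{j: \mu_j(\theta)\neq 0\}$; fix $\theta_0$ realizing this infimum and diagonalize $\overline{Q}(\theta_0) = O^T\,\mathrm{diag}(\mu_1,\dots,\mu_m,0,\dots,0)\,O$ with all $\mu_j\neq 0$. Given any ball $B(\xi_*,\rho)$, build $\varphi_0 \in C_c^\infty(B(\xi_*,\rho))$ of product form $\varphi_0(\xi) = \chi_m\big((O(\xi-\xi_*))'\big)\, e^{-|(O(\xi-\xi_*))'|^2/2}\, b\big((O(\xi-\xi_*))''\big)$, where $\chi_m \in C_c^\infty(\R^m)$ equals $1$ near the origin and $b\in C_c^\infty(\R^{d-m})$ is non-negative and non-trivial. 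Evaluating at $x_t := (-t\,\overline{Q}(\theta_0)\xi_*,\, t\theta_0)$ and completing the square in the quadratic phase $x_t \cdot (\xi,\mathbf{Q}(\xi)) = x_t'\cdot\xi + \tfrac{t}{2}\xi^T\overline{Q}(\theta_0)\xi$, the integral factors as a unimodular constant times $\big(\int b\big) \cdot \int_{\R^m} e^{\frac{it}{2}\sum_j \mu_j \zeta_j^2}\chi_m(\zeta')\, e^{-|\zeta'|^2/2}\, d\zeta'$. The last integral equals the exact Gaussian value $(2\pi)^{m/2}\prod_{j=1}^m (1-it\mu_j)^{-1/2}$ (of modulus $\sim t^{-m/2}$ and never zero, as in the computation in the proof of Proposition~\ref{prop:direct_compute}) minus the tail $\int e^{\frac{it}{2}\sum_j\mu_j\zeta_j^2}e^{-|\zeta'|^2/2}(1-\chi_m(\zeta'))\,d\zeta'$, which is $O_N(t^{-N})$ for all $N$ by non-stationary phase, since on $\supp(1-\chi_m)$ the phase gradient has size $\gtrsim t$ and one may integrate by parts repeatedly. (Equivalently, this is just the method of stationary phase applied to a quadratic phase degenerate exactly along $\xi_* + \ker\overline{Q}(\theta_0)$.) Hence $|\widetilde{E}_{\varphi_0}^\mathbf{Q}1(x_t)| \sim t^{-m/2} \sim (1+|x_t|)^{-\mathfrak{d}_{d,1}(\mathbf{Q})/2}$ for all large $t$, which already precludes a bound $(1+|x|)^{-\gamma}$ with $\gamma > \mathfrak{d}_{d,1}(\mathbf{Q})/2$ for this $\varphi_0$.

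Finally, for an arbitrary $\varphi\in C_c^\infty(\R^d)$ with $\varphi\not\equiv 0$, choose $\xi_*,\rho$ so that $\varphi$ is non-vanishing on $B(\xi_*,\rho)$, and take $\varphi_0$ as above supported in $B(\xi_*,\rho)$. Then there is $\Psi \in C_c^\infty(\R^{d+n})$ with $\Psi(\xi,\mathbf{Q}(\xi))\varphi(\xi) = \varphi_0(\xi)$ for every $\xi$: just as in the proof of Proposition~\ref{prop:restriction_equiv}, set $\Psi(\zeta',\zeta'') := (\varphi_0/\varphi)(\zeta')\,\chi_*(\zeta')\,\psi_*(\zeta'')$ with $\chi_*\in C_c^\infty(B(\xi_*,\rho))$ equal to $1$ on $\supp\varphi_0$ (so $\varphi_0/\varphi$ is smooth on $\supp\chi_*$) and $\psi_*$ a cutoff equal to $1$ on $\mathbf{Q}(\supp\chi_*)$. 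Then $\widetilde{E}_{\varphi_0}^\mathbf{Q}1 = \Psi^\vee * \widetilde{E}_\varphi^\mathbf{Q}1$ with $\Psi^\vee\in\mathcal{S}(\R^{d+n})$, so if $|\widetilde{E}_\varphi^\mathbf{Q}1(x)| \lesssim_\mathbf{Q} (1+|x|)^{-\gamma}$ held for some $\gamma > \mathfrak{d}_{d,1}(\mathbf{Q})/2$, the convolution fact from the upper-bound step would force the same bound on $\widetilde{E}_{\varphi_0}^\mathbf{Q}1$, contradicting the previous paragraph. The main obstacle is exactly this lower bound for cancellation-prone $\varphi$, which is why the two-stage structure — construct a good non-negative $\varphi_0$, then transfer via the $C_c^\infty$ multiplier $\Psi$ — is needed; once this is in place the rest is routine.
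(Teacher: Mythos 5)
Your proof is correct, and while the overall architecture (upper bound by writing $\widetilde{E}_\varphi^\mathbf{Q}1 = \widetilde{\Phi}^\vee * \widetilde{E}^\mathbf{Q}1$; lower bound by constructing a good test bump $\varphi_0$ and transferring to arbitrary $\varphi$ via compact-support division and Schwartz convolution) is the same as the paper's, the way you establish the lower bound for $\varphi_0$ is genuinely different. You smuggle the Gaussian factor $e^{-|\zeta'|^2/2}$ into $\varphi_0$ itself, so that the leading contribution to $\widetilde{E}_{\varphi_0}^\mathbf{Q}1(x_t)$ is an \emph{exact} Gaussian integral $(2\pi)^{m/2}\prod_{j=1}^m(1-it\mu_j)^{-1/2}$ of modulus $\sim t^{-m/2}$, and you kill the compactly-truncated remainder by repeated integration by parts on the non-stationary set $\supp(1-\chi_m)$; you also work directly at $x_t = (-t\overline{Q}(\theta_0)\xi_*, t\theta_0)$ rather than first translating $\varphi$ so that it does not vanish at the origin. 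This gives a clean two-sided asymptotic $|\widetilde{E}_{\varphi_0}^\mathbf{Q}1(x_t)| \sim (1+|x_t|)^{-\mathfrak{d}_{d,1}(\mathbf{Q})/2}$.

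The paper instead picks $\varphi_0$ as a tensor product of real even one-variable bumps $\phi$, factors $\widetilde{E}_{\varphi_0(O\cdot)}^\mathbf{Q}1(0,|x|\theta_0)$ into $m$ copies of $I(\lambda) = \int e^{i\lambda u^2}\phi(u)\,du$, and then derives a contradiction from $|I(\lambda)| \lesssim (1+|\lambda|)^{-\gamma'}$ with $\gamma'>\tfrac12$ by the substitution $u\mapsto\sqrt v$ and Plancherel: that would force $\chi_{[0,\infty)}\phi(\sqrt v)/\sqrt v \in L^2(\R)$, which fails at $v=0$. The paper's Plancherel route is deliberately chosen to keep the proof free of oscillatory-integral machinery (one of its stated goals is to avoid even mild black boxes like non-stationary phase), whereas your route is more quantitative — you get the actual $t^{-m/2}$ asymptotics rather than merely refuting faster decay — at the modest cost of invoking repeated integration by parts. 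Both are elementary and self-contained; the trade-off is explicit asymptotics versus a purely Fourier-theoretic argument.

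One small remark on presentation: when you choose $B(\xi_*,\rho)$ with $\varphi$ non-vanishing there, you should also say explicitly that you shrink $\supp\chi_m$ and $\supp b$ so that $\supp\varphi_0 \Subset B(\xi_*,\rho)$, which is what makes $\varphi_0/\varphi$ extend to a $C_c^\infty$ function after multiplying by the cutoff $\chi_*$. You do say this implicitly, but it is the one place a reader might pause.
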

	\begin{proof}
		
		Define $\Phi$ and $\widetilde{\Phi}$ as in the proof of $(ii) \Rightarrow (iii)$ in Proposition~\ref{prop:restriction_equiv}. Then 
        \begin{align*}
            \abs{\widetilde{E}_\varphi^\mathbf{Q}1} = \abs{(\Phi d\mu_\mathbf{Q})^\vee} = \abs{\left( \widetilde{\Phi}(\zeta) e^{-|\zeta'|^2/2} 
                d\mu_\mathbf{Q} \right)^\vee} = \abs{\left( \widetilde{\Phi}(\zeta)\right)^\vee * \left( e^{-|\zeta'|^2/2} d\mu_\mathbf{Q} \right)^\vee}.
        \end{align*}
		Since $\widetilde{\Phi}  ^\vee \in \mathcal{S}(\R^{d+n})$, we have $\abs{\widetilde{\Phi}  ^\vee}  \lesssim_N (1 + |\cdot|)^{-N}$ for any $N \in \N^+$. Also note that $( e^{-|x'|^2/2} d\mu_\mathbf{Q})^\vee = \widetilde{E}^\mathbf{Q}1$. Therefore, we can apply \text{Theorem }\ref{thm:uniform_decay} to get
		\begin{align*}
			\abs{\widetilde{E}_\varphi^\mathbf{Q}1} & \leq \abs{\widetilde{\Phi}^\vee} * \abs{\widetilde{E}^\mathbf{Q}1} \lesssim_{\mathbf{Q}, N} (1 + |\cdot|)^{-N} * (1 + |\cdot|)^{-\frac{\mathfrak{d}_{d,1}(\mathbf{Q})}{2}} \lesssim_\mathbf{Q} (1 + |\cdot|)^{-\frac{\mathfrak{d}_{d,1}(\mathbf{Q})}{2}},
		\end{align*}
		where in the last step we used the properties of convolutions in Appendix B.1 of \cite{grafakos2014modern}.
		
		To prove the optimality of the estimate, it is not easy to compute everything directly as in the proof of Theorem~\ref{thm:uniform_decay}, as the bump $\varphi$ is arbitrary. To address this issue, we need to first do some reductions.
		
		Our strategy is to argue by contradiction. Suppose $|\widetilde{E}_\varphi^\mathbf{Q}1(x)| \lesssim_\mathbf{Q} (1+|x|)^{-\gamma}$ for some $\gamma > \frac{\mathfrak{d}_{d,1}(\mathbf{Q})}{2}$. Since $\varphi \not \equiv 0$, there exists $\xi_0 \in \R^d$ such that $\varphi(\xi_0) \neq 0$. By change of variables $\xi \mapsto \xi_0 + \xi$, we have
		\begin{align*}
			\abs{\widetilde{E}_\varphi^\mathbf{Q}1(x)} & = \abs{ \int_{\R^d} e^{i\left(x'\cdot(\xi_0 + \xi) + (\xi_0 + \xi)^T\cdot \overline{Q}(x'')\cdot(\xi_0 + \xi)/2\right)} \varphi(\xi_0 + \xi) d\xi } \\
			& = \abs{ \int_{\R^d} e^{i\left( (x' + \xi_0^T \cdot \overline{Q}(x'') )\cdot \xi + \xi^T\cdot \overline{Q}(x'')\cdot \xi/2\right)} \varphi(\xi_0 + \xi) d\xi } \\
			& \eqqcolon \abs{ \int_{\R^d} e^{i\left( \tilde{x}' \cdot \xi + \xi^T\cdot \overline{Q}(\tilde{x}'')\cdot \xi/2\right)} \tilde{\varphi}(\xi) d\xi } = \abs{\widetilde{E}_{\tilde{\varphi}}^\mathbf{Q}1(\tilde{x})},
		\end{align*}
		where $\tilde{x}' \coloneqq x' + \xi_0^T \cdot \overline{Q}(x'')$, $\tilde{x}'' \coloneqq x''$, $\tilde{x} = (\tilde{x}', \tilde{x}'')$, and $\tilde{\varphi}(\xi) \coloneqq \varphi(\xi_0 + \xi)$. Thus by our hypothesis, we have
		\begin{align*}
			\abs{\widetilde{E}_{\tilde{\varphi}}^\mathbf{Q}1(\tilde{x})} 
			& \lesssim_\mathbf{Q} (1+|x|)^{-\gamma}\\
			& = \left( 1 + \sqrt{|x'|^2 + |x''|^2} \right)^{-\gamma}\\
			& = \left( 1 + \sqrt{|\tilde{x}' - \xi_0^T \cdot \overline{Q}(\tilde{x}'')|^2 + |\tilde{x}''|^2} \right)^{-\gamma}.
		\end{align*}
		Note that there exists some constant $C \geq 1$ depending on $\xi_0$ and $\mathbf{Q}$, such that $|\xi_0^T \cdot \overline{Q}(\tilde{x}'')| \leq C |\tilde{x}''|$. Therefore, if $|\tilde{x}'| \leq 2C |\tilde{x}''|$, then $|\tilde{x}| \sim |\tilde{x}''|$, and $\abs{\widetilde{E}_{\tilde{\varphi}}^\mathbf{Q}1(\tilde{x})} \lesssim_\mathbf{Q} (1 + |\tilde{x}''|)^{-\gamma} \sim (1 + |\tilde{x}|)^{-\gamma}$; while if $|\tilde{x}'| > 2C |\tilde{x}''|$, then $|\tilde{x}' - \xi_0^T \cdot \overline{Q}(\tilde{x}'')| > |\tilde{x}'|/2$, and still $\abs{\widetilde{E}_{\tilde{\varphi}}^\mathbf{Q}1(\tilde{x})} \lesssim (1 + |\tilde{x}|)^{-\gamma}$. So we get $\abs{\widetilde{E}_{\tilde{\varphi}}^\mathbf{Q}1(\tilde{x})} \lesssim (1 + |\tilde{x}|)^{-\gamma}$ for all $\tilde{x} \in \R^{d+n}$. This means without loss of generality, we may assume $\varphi(0) \neq 0$.
		
		By Lemma~\ref{lem:alg_char_m}, there exists $\theta_0 \in \mathbb{S}^{n-1}$ such that $\mathfrak{d}_{d,1}(\mathbf{Q}) = \rank (\overline{Q}(\theta_0)) = m$. Also, by linear algebra, there exists $O \in {\rm O}(d,\R)$ such that 
		\begin{align*}
			\overline{Q}(\theta_0) = 
			O^T
			\begin{bmatrix}
				\mu_1(\theta_0) & & \\
				& \ddots & \\
				& & \mu_d(\theta_0)
			\end{bmatrix} O.
		\end{align*}
		Without loss of generality, we may assume $\mu_j(\theta_0) \neq 0$ for $j\leq m$ and $\mu_j(\theta_0) = 0$ for $j>m$. Since we assume $\varphi(0) \neq 0$, there exists $r > 0$ such that $\varphi(O^T\cdot) \neq 0$ on $[-r,r]^d$. Fix any nonnegative even function $\phi$ on $\R$ such that $\phi(0)>0$ and $\supp\phi \subset [-c_\mathbf{Q}^{1/2}r,c_\mathbf{Q}^{1/2}r]$, where $c_\mathbf{Q}$ comes from Theorem~\ref{thm:U_L_bound_eigen}. Let 
		$$\varphi_0(\xi) \coloneqq \prod_{j=1}^m \phi\Big(|\mu_j(\theta_0)|^{\frac{1}{2}}\xi_j\Big) \cdot \prod_{j=m+1}^d \phi\Big(c_\mathbf{Q}^{\frac{1}{2}}\xi_j\Big).$$ 
		Then $\supp\varphi_0 \subset [-r,r]^d$, and by further truncating $\widetilde{E}_\varphi^\mathbf{Q}1$ with respect to $\varphi_0(O\cdot)/\varphi \in C_c^\infty(\R^d)$ as in the first part of the proof, we know that 
		$$\abs{\widetilde{E}_{\varphi_0(O\cdot)}^\mathbf{Q}1(x)} \lesssim_\mathbf{Q} (1+|x|)^{-\gamma}, \quad \quad \forall ~\gamma > \frac{\mathfrak{d}_{d,1}(\mathbf{Q})}{2} = \frac{m}{2}.$$ 
		Consider $x = (x',x'') = (0, |x''|\theta_0)$, then 
	\begin{align*}
			\widetilde{E}_{\varphi_0(O\cdot)}^\mathbf{Q}1(x) & = \int_{\R^d} e^{i|x|(\xi^T\cdot \overline{Q}(\theta_0)\cdot\xi/2)} \varphi_0(O\xi) d\xi\\
			& = \int_{\R^d} e^{i\frac{|x|}{2} \sum_{j=1}^m \mu_j(\theta_0) (O\xi)_j^2} \varphi_0(O\xi) d\xi\\
			& = \int_{\R^d} e^{i\frac{|x|}{2} \sum_{j=1}^m \mu_j(\theta_0) \xi_j^2} \varphi_0(\xi) d\xi\\
			& = \prod_{j=1}^m \int_{\R} e^{i\frac{|x|}{2}\mu_j(\theta_0)\xi_j^2} \phi\Big(|\mu_j(\theta_0)|^{\frac{1}{2}} \xi_j\Big) d\xi_j \cdot \prod_{j=m+1}^d \int_{\R} \phi\Big(c_\mathbf{Q}^{\frac{1}{2}}\xi_j\Big)d\xi_j\\
			& = \Big(c_\mathbf{Q}^{-\frac{1}{2}}\int\phi\Big)^{d-m} \prod_{j=1}^m 
			|\mu_j(\theta_0)|^{-\frac{1}{2}} \cdot \prod_{j=1}^m \int_{\R} e^{\pm i\frac{|x|}{2}\xi_j^2} \phi(\xi_j) d\xi_j \\
			& \sim_{\phi,\mathbf{Q}} \prod_{j=1}^m \int_{\R} e^{\pm i\frac{|x|}{2}\xi_j^2} \phi(\xi_j) d\xi_j.
	\end{align*}
		As $\phi$ is real-valued, the ``$\pm$'' in the exponential does not affect the absolute value. So we have
		\begin{align*}
			\abs{\widetilde{E}_{\varphi_0(O\cdot)}^\mathbf{Q}1(x)} \sim_{\phi,\mathbf{Q}} \prod_{j=1}^m \abs{\int_{\R} e^{ i\frac{|x|}{2}\xi_j^2} \phi(\xi_j) d\xi_j} = \abs{\int_{\R} e^{ i\frac{|x|}{2}u^2} \phi(u) du}^m \eqqcolon \Big|I\Big( \frac{|x|}{2}\Big)\Big|^m,
		\end{align*}
		where 
		$$I(\lambda) \coloneqq \int_{\R} e^{ i\lambda u^2} \phi(u) du,    \quad\quad \forall~ \lambda \in \R.$$
		Plugging this into 
		$$\abs{\widetilde{E}_{\varphi_0(O\cdot)}^\mathbf{Q}1(x)} \lesssim_\mathbf{Q} (1+|x|)^{-\gamma}, \quad \quad \gamma > \frac{m}{2},$$ 
		we get 
		$$\Big|I\Big( \frac{|x|}{2}\Big)\Big| \lesssim_{\phi,\mathbf{Q}} (1 + |x|)^{- \frac{\gamma}{m}}, \quad \quad \forall~ |x|>0,$$ i.e., there exists $\gamma'>\frac{1}{2}$ such that 
		$$\abs{I(\lambda)} \lesssim_{\phi,\mathbf{Q}} (1 + \lambda)^{-\gamma'},\quad \quad  \forall~ \lambda>0.$$ 
		Since $\phi$ is real-valued, we know that $|I(\lambda)|$ is even, which means
		\begin{align}\label{eq:one_dim_oscillatory}
			\abs{I(\lambda)} \lesssim_{\phi,\mathbf{Q}} (1 + |\lambda|)^{-\gamma'}, \quad \forall\, \lambda\in\R.
		\end{align}
		On the other hand, by the fact that $\phi$ is even and change of variables $u \mapsto \sqrt{v}$, we get $$I(\lambda) = 2\int_0^\infty e^{ i\lambda u^2} \phi(u) du = \int_0^\infty e^{ i\lambda v} \frac{\phi(\sqrt{v})}{\sqrt{v}} dv, \quad\quad \forall\, \lambda\in\R.$$ 
		If we let 
		$$\tilde{\phi}(v) \coloneqq \chi_{[0,\infty)}(v) \cdot   \frac{\phi(\sqrt{v})}{\sqrt{v}},$$ 
		then (\ref{eq:one_dim_oscillatory}) together with $\gamma'>\frac{1}{2}$ tells us that $I(\lambda) = (\tilde{\phi})^\vee(\lambda)$ lies in $L^2(\R)$. By Plancherel's identity, we must have $\tilde{\phi} \in L^2(\R)$. However, this is impossible, as $\phi(\sqrt{v})^2/v \sim 1/v$ near $0$ (recall $\phi(0)>0$), and $1/v$ is not integrable. Thus our proof by contradiction is completed, and the Fourier decay is optimal.
	\end{proof}
	\begin{remark}
		Corollary~{\rm \ref{cor:recover_banner}} can be interpreted as a certain stability/uniformity result for the oscillatory integral {\rm (\ref{eq:oscillatory_integral})} when $f=1$, which can be rewritten as
		\begin{align*}
			I(\lambda, t) = \int_{\R^d} e^{i \lambda \phi(\xi,t)} \varphi(\xi) d\xi,
		\end{align*}
		where $\lambda = |x|$, $t = (t',t'') \in \R^d \times \R^n $ with $t' = x'/|x|, t'' = x''/|x|$, and $\phi(\xi,t) = t'\cdot\xi + \xi^T\cdot \overline{Q}(t'')\cdot\xi/2$. Corollary~{\rm \ref{cor:recover_banner}} tells us that 
		$$I(\lambda,t) \lesssim \lambda^{-\frac{\mathfrak{d}_{d,1}(\mathbf{Q})}{2}}, \quad \quad \text{uniform~in~}t\in \mathbb{S}^{d+n-1}.$$
		Our setting here closely follows Chapter {\rm 8} of \cite{gilula2016real}, which also pointed out that nondegeneracy may help us obtain stability. This is indeed the case for us: Although $\overline{Q}(\theta)$ may be more degenerate for some $\theta$ than the others, if we only care about the ``common'' degree {\rm(}$m = \mathfrak{d}_{d,1}(\mathbf{Q})${\rm)} of nondegeneracy, then we can still have a uniform estimate. However, our method relies heavily on linear algebra for quadratic forms, so does not work for manifolds of higher degrees. Also, our stability is regarding $\{\phi(\xi,t): t\in \mathbb{S}^{d+n-1}\}$, while a more widely studied kind of stability is regarding the family of phase functions whose derivatives are uniformly lower bounded. The latter is in general much harder, especially in multidimensional cases. One may consult \cite{ccw99} or \cite{pss01} for more historical remarks and results on the stability of oscillatory integrals.
	\end{remark}

	\section{The proof of Theorem \ref{th1}}\label{sec3}

	 In this section, we mainly concern the proofs of (\ref{t1e1}) and (\ref{t1e2}) in Theorem \ref{th1}.

     \subsection{Shayya's argument}\label{subsec:shayya}\phantom{x}
	
   	This subsection is devoted to prove (\ref{t1e1}). Since the last bound in (\ref{t1e1}) is the classical Agmon-H\"ormander inequality (\ref{AH ineq}), it suffices to prove the first and second bounds in (\ref{t1e1}). Our proof mainly follows Shayya's argument \cite{shayya21}. In fact, compared with (\ref{s1e1}),  Shayya studied stronger weighted estimates, which are global and include accurate information on the weight $H$, i.e.,
	\begin{equation}\label{s2e1}
		\|E^{\mathbf{Q}}f\|_{L^p(\mathbb{R}^{d+n},H)}\lesssim  A_\alpha(H)^{\frac{1}{p}}\|f\|_{L^2([0,1]^d)}.
	\end{equation}
	For $0<\alpha\leq d+n$ and $1\leq p\leq \infty$, define $P(\alpha,\mathbf{Q})$ to be the infimum of all numbers $p$ such that (\ref{s2e1}) holds for all functions $f \in L^2([0,1]^d)$ and all $\alpha$-dimensional weights $H$ on $\mathbb{R}^{d+n}$. 
	
	When $S_{\mathbf{Q}}$ is a paraboloid or a hyperbolic paraboloid with $n=1$, Shayya proved 
    		\begin{align}\label{shayya_result}
			~P(\alpha,\mathbf{Q})\leq 
			\begin{cases}
				~	2, \quad \quad\quad \quad    & 0<  \alpha < \frac{d}{2},   \\
				~	\frac{4\alpha}{d}, \quad \quad \quad\quad &\alpha\geq \frac{d}{2}. 
			\end{cases}
		\end{align}
    We will generalize this result to all quadratic manifolds as follows:
        \begin{align}\label{shayya_result2}
			~P(\alpha,\mathbf{Q})\leq 
			\begin{cases}
				~	2, \quad \quad\quad \quad    & 0<\alpha< \frac{\mathfrak{d}_{d,1}(\mathbf{Q})}{2},   \\
				~	\frac{4\alpha}{\mathfrak{d}_{d,1}(\mathbf{Q})}, \quad \quad \quad\quad &\alpha\geq \frac{\mathfrak{d}_{d,1}(\mathbf{Q})}{2}. 
			\end{cases}
		\end{align}

    We first use Theorem~\ref{thm:uniform_decay} to prove the first bound in (\ref{shayya_result2}). Its proof is similar to that in Section 6 in \cite{shayya21}, with some small modifications. For completeness, we briefly sketch the proof here.
	
	By Proposition~\ref{prop:smooth_equiv}, it suffices to show\footnote{For the optimal uniform Fourier decay rate $\frac{\mathfrak{d}_{d,1}(\mathbf{Q})}{2}$ to hold, it is important to replace the roughly truncated surface measure with a suitably smoothed one. Such a technical issue is not taken care of by Shayya \cite{shayya21}. There is no such problem if we were dealing with the sphere, which is a closed manifold.} 
	\begin{equation}\label{s2ef2}
		\|\widetilde{E}^{\mathbf{Q}}f\|_{L^p(\mathbb{R}^{d+n},H)}\lesssim  A_\alpha(H)^{\frac{1}{p}}\|f\|_{L^2([0,1]^d)}
	\end{equation}
	for all $p>2$, when $0<\alpha< \frac{\mathfrak{d}_{d,1}(\mathbf{Q})}{2}$. Set $dw(x) \coloneqq H(x)dx$, then by the layer cake representation, we have
	\begin{equation}\label{s2ef3}
		\|\widetilde{E}^{\mathbf{Q}}f\|_{L^p(\mathbb{R}^{d+n},H)}^p =p\int_0^{\|f\|_{L^1}} \lambda^{p-1} w(\{  |\widetilde{E}^{\mathbf{Q}}f|\geq \lambda  \}) d\lambda,
	\end{equation}
	where the upper limit of integration arises from a trivial estimate
	$$    \|\widetilde{E}^{\mathbf{Q}}f\|_{L^\infty(\R^{d+n})} \leq \|f\|_{L^1([0,1]^d)}.   $$
	By dividing $\widetilde{E}^{\mathbf{Q}}f$ into positive part, negative part, real part, and imaginary part, combined with the  distribution inequality, it is enough to consider the set $\{ \Re (\widetilde{E}^{\mathbf{Q}}f )\geq \lambda/4  \}$, which we denote by $G$. So we only need to estimate $w(G)$. On the one hand, by Parseval’s relation and H\"older's inequality, we have 
	\begin{align*}
		\frac{\lambda}{4} w(G) &\leq \int_G \Re (\widetilde{E}^{\mathbf{Q}}f) dw    \\
		&= \Re \int_{\R^{d+n}} \chi_G \widehat{fd\tilde{\mu}_{\mathbf{Q}}} dw     \\
		& =\Re \int_{\R^{d+n}} \widehat{\chi_G dw}  fd\tilde{\mu}_{\mathbf{Q}} \\ 
		& \leq \|\widehat{\chi_G dw}\|_{L^2(d\tilde{\mu}_{\mathbf{Q}})} \|f\|_{L^2(d\tilde{\mu}_{\mathbf{Q}})}, 
	\end{align*}
	where $f$ is equivalently regarded as a function on the surface $S_{\mathbf{Q}}$, and $d\tilde{\mu}_{\mathbf{Q}}:=e^{-|\xi|^2/2}d\mu_{\mathbf{Q}}$. On the other hand, we use Parseval’s relation again to get
	$$   \|\widehat{\chi_G dw}\|_{L^2(d\tilde{\mu}_{\mathbf{Q}})}^2= \int_{\R^{d+n}} \widehat{\chi_G dw}   \overline{\widehat{\chi_G dw}} d\tilde{\mu}_{\mathbf{Q}} = \int_{\R^{d+n}} \left( (\chi_G dw)\ast  \widehat{d\tilde{\mu}_{\mathbf{Q}}}  \right)  \chi_G dw.  $$
	Combining the above two estimates, one concludes
	\begin{equation}\label{s2ef4}
		\lambda^2 w(G)^2 \lesssim \int_{\R^{d+n}} \left( (\chi_G dw)\ast  \widehat{d\tilde{\mu}_{\mathbf{Q}}}  \right)  \chi_G dw \cdot \|f\|_{L^2([0,1]^d)}^2.
	\end{equation}

	To estimate the integral in (\ref{s2ef4}) above, we use annulus decomposition. Let $\varphi_0$ be a smooth bump function in $\mathbb{R}^{d+n}$ satisfying $\varphi_0=1$ on $B(0,1)$, and $\varphi_0=0$ outside $B(0,2)$. Also, for $l\in \mathbb{N}$, we define $\varphi_l(x)=\varphi_0(x/2^l)-\varphi_0(x/2^{l-1})$. By choosing $\varphi_0$ suitably, we can have
	$$    \sum_{l=0}^{\infty}  \varphi_l(x)\equiv 1,\quad \quad x\in \mathbb{R}^{d+n}.  $$
	Thus we obtain
	\begin{align*}
		\left| (\chi_G dw)\ast  \widehat{d\tilde{\mu}_{\mathbf{Q}}}(x) \right| &\leq \sum_{l=0}^{\infty}  \left| (\chi_G dw)\ast  (\varphi_l\widehat{d\tilde{\mu}_{\mathbf{Q}}})(x) \right| \\
		& \leq \sum_{l=0}^{\infty} \int_{\R^{d+n}} | \varphi_l(x-y)\widehat{d\tilde{\mu}_{\mathbf{Q}}}(x-y) | \chi_G(y) dw(y) \\
		& \lesssim \sum_{l=0}^{\infty}  2^{-\frac{l \cdot\mathfrak{d}_{d,1}(\mathbf{Q}) }{2}} \int_{\R^{d+n}} \chi_{B(x,2^{l+1})}(y)\chi_G(y)dw(y) \\
		& \leq \sum_{l=0}^{\infty}  2^{-\frac{l \cdot\mathfrak{d}_{d,1}(\mathbf{Q}) }{2}} w(B(x,2^{l+1})) \\
		& \lesssim   \sum_{l=0}^{\infty}  2^{-\frac{l (\mathfrak{d}_{d,1}(\mathbf{Q})-2\alpha) }{2}} A_\alpha(H) \\
		& \lesssim  A_\alpha(H).
	\end{align*}
	Here in the third line we used Theorem~\ref{thm:uniform_decay}, in the fourth line we used the definition of $\alpha$-dimensional weight $H$, and in the last line we used the assumption $0<\alpha< \frac{\mathfrak{d}_{d,1}(\mathbf{Q})}{2}$. Plugging this estimate into (\ref{s2ef4}), we get
	$$  w(G) \lesssim \lambda^{-2} A_{\alpha}(H) \|f\|_{L^2([0,1]^d)}^2.   $$
	And then (\ref{s2ef3}) becomes
	$$     \|\widetilde{E}^{\mathbf{Q}}f\|_{L^p(\mathbb{R}^{d+n},H)}^p \lesssim  \int_0^{\|f\|_{L^1}} \lambda^{p-3} d\lambda \cdot A_{\alpha}(H) \|f\|_{L^2([0,1]^d)}^2\lesssim A_{\alpha}(H) \|f\|_{L^2([0,1]^d)}^p,  $$
	provided $p>2$. This finishes the proof of (\ref{s2ef2}).

	Next we prove the second bound in (\ref{shayya_result2}). Our main tool is the weighted H\"older-type inequality built by Shayya: For any Lebesgue measurable function $F:~\mathbb{R}^{d+n}\rightarrow [0,\infty)$, define
	$$ M_{\alpha}F =\sup_{  \substack{  H: \\ 0<A_\alpha(H)<\infty  }}  \left(\int_{\R^{d+n}} F(x)^\alpha\cdot  \frac{H(x)}{A_\alpha(H)} dx \right)^{\frac{1}{\alpha}}.  $$
	Then for $0<\beta \leq \alpha\leq d+n$, there is
	\begin{equation}\label{s2ef5}
		M_\alpha F \leq M_\beta F.
	\end{equation}
	Based on (\ref{s2ef5}), Shayya showed an interesting monotone property on $P(\alpha,\mathbf{Q})$: For $0<\beta \leq \alpha\leq d+n$, there is 
	\begin{equation}\label{s2ef6}
		\frac{P(\alpha,\mathbf{Q})}{\alpha}\leq \frac{P(\beta,\mathbf{Q})}{\beta}.
	\end{equation}
	Now, if we take $0<\beta <\frac{\mathfrak{d}_{d,1}(\mathbf{Q})}{2} \leq \alpha \leq d+n$, then the first bound in (\ref{shayya_result2}) and (\ref{s2ef6}) imply
	$$     \frac{P(\alpha,\mathbf{Q})}{\alpha}\leq \frac{P(\beta,\mathbf{Q})}{\beta}\leq \frac{2}{\beta}.  $$
	Letting $\beta \rightarrow \frac{\mathfrak{d}_{d,1}(\mathbf{Q})}{2}$, we get
	\begin{equation}\label{s2ef7}
		P(\alpha,\mathbf{Q}) \leq \frac{4\alpha}{\mathfrak{d}_{d,1}(\mathbf{Q})}.
	\end{equation}
	We have finished the proof of (\ref{shayya_result2}).

	After considering the global estimate (\ref{s2e1}), we go back to  the local estimate (\ref{s1e1}). Firstly, using the first bound in (\ref{shayya_result2}), we can get
    \begin{equation}
		s(\alpha,\mathbf{Q})=0 ,\quad \quad  0<\alpha< \frac{\mathfrak{d}_{d,1}(\mathbf{Q})}{2},
	\end{equation}
	which is just the first bound in (\ref{t1e1}). Then we consider the case $\alpha\geq \mathfrak{d}_{d,1}(\mathbf{Q})/2$. Set $q=4\alpha/\mathfrak{d}_{d,1}(\mathbf{Q})$. Combining (\ref{s2ef7}) and H\"older's inequality, we obtain
	\begin{align*}
		\|E^{\mathbf{Q}}f\|_{L^2(B_R,H)}&  \leq \|E^{\mathbf{Q}}f\|_{L^{q}(B_R,H)}  \left(\int_{B_R} H(x)dx\right)^{\frac{1}{2}-\frac{1}{q}}  \\
		& \lesssim R^\epsilon A_\alpha(H)^{\frac{1}{q}} \|f\|_{L^2([0,1]^d)} \cdot (A_\alpha(H) R^\alpha)^{\frac{1}{2}-\frac{1}{q}} \\
		& = A_\alpha(H)^{\frac{1}{2}}  R^{\frac{\alpha}{2}-\frac{\alpha}{q}+\epsilon}  \|f\|_{L^2([0,1]^d)} \\
		&=  A_\alpha(H)^{\frac{1}{2}}  R^{\frac{2\alpha-\mathfrak{d}_{d,1}(\mathbf{Q})}{4}+\epsilon}  \|f\|_{L^2([0,1]^d)} .
	\end{align*}
	This means 
	\begin{equation}
		s(\alpha,\mathbf{Q})\leq \frac{2\alpha-\mathfrak{d}_{d,1}(\mathbf{Q})}{4} ,\quad \quad  \alpha\geq  \frac{\mathfrak{d}_{d,1}(\mathbf{Q})}{2},
	\end{equation}
	which is just the second bound in (\ref{t1e1}).

	\begin{remark}
		For the parabolic case, in addition to the approaches presented above, Shayya \cite{shayya21} also applied other methods, which incorporate Du-Zhang's weighted restriction estimates to further improve the upper bound of $P(\alpha,\mathbf{Q})$ in {\rm(\ref{shayya_result})} for $\alpha>\frac{d+1}{2}$. It is worth mentioning that by combining Theorem {\rm \ref{th1}}, Theorem~{\rm \ref{thm:uniform_decay}} in the last section, and the proof strategy of Shayya, we can also obtain improved bounds for quadratic manifolds in {\rm(\ref{shayya_result2})} when $\alpha>\frac{\mathfrak{d}_{d,1}(\mathbf{Q})+1}{2}$, whose details are left to interested readers.

	\end{remark}

	\subsection{The Du-Zhang method}\label{du_zhang_method}\phantom{x}
	
	This subsection is devoted to prove (\ref{t1e2}). As we have mentioned in the introduction, we will adopt the strategy of Du and Zhang \cite{dz19}. Compared with the Du-Zhang method in \cite{dz19}, apart from differences in the mode of the broad-narrow analysis, we also need to adopt the induction at the new scale for matching associated $\ell^2L^p$ decoupling for general quadratic forms in the narrow part.

	Via a similar argument as in \cite{dz19}, the estimate in (\ref{t1e2}) can be derived by the following inductive proposition. We say that a collection is essentially a dyadic constant if all the quantities in this collection are in a common dyadic interval of the form $[2^j,2^{j+1}]$, where $j$ is an integer.

	\begin{proposition}\label{main_prop:du_zhang}
		Let $d,n \geq 1$, and $\mathbf{Q}=(Q_1,...,Q_n)$ be an $n$-tuple of real quadratic forms in $d$ variables. Let $p\geq 2$, and $2\leq k\leq d+1$ be an integer. For any $\epsilon >0$, there exist constants $C_\epsilon$ and $\delta=\delta(\epsilon)$ such that the following fact holds for any $R \geq 1$ and every $f$ with ${\rm supp}f \subset [0,1]^d$. Let  $Y=\cup_{i=1}^M B_i$ be a union of lattice $K^2$-cubes in $B^{d+n}(0,R)$ with $K=R^\delta$. Suppose that 
		\begin{equation}\label{assump:dyadic_constant}
			\|E^{\mathbf{Q}} f\|_{L^p(B_i)}  \text{~is~essentially~a ~dyadic~constant~in~}   i=1,2,...,M. 
		\end{equation}
		Let $1\leq \alpha\leq d+n$ and 
		$$  \gamma:=\max_{\substack{  B(x',r)\subset B(0,R) \\ x'\in \mathbb{R}^{d+n},~r\geq K^2 }} \frac{\#\{ B_i:B_i \subset B(x',r)\}}{r^\alpha}.  $$
		Then we have
		\begin{equation}\label{eq:main_estimate}
			\|E^{\mathbf{Q}}f\|_{L^p(Y)}\leq C_\epsilon M^{\frac{1}{p}-\frac{1}{2}} \gamma^{\frac{1}{2}-\frac{1}{p}}R^{w(p,k,\alpha,\mathbf{Q})+\epsilon}\|f\|_{L^2([0,1]^d)},
		\end{equation}
		where $w(p,k,\alpha,\mathbf{Q})$ is given by 
		\begin{equation}\label{eq:key_exponent}
			w(p,k,\alpha,\mathbf{Q})= \max\left\{   \max_{1\leq m\leq d+n} \frac{m-X(\mathbf{Q},k,m)}{2m}\cdot\alpha ,~\frac{\Gamma_p^{k-2}(\mathbf{Q})}{2}+\frac{d+n-\alpha}{2p}+\frac{\alpha+n-d}{4} \right\}.
		\end{equation}   
	\end{proposition}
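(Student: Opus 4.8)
## Proof strategy for Proposition~\ref{main_prop:du_zhang}

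The plan is to run a broad–narrow dichotomy on a ball $B$ of radius $K^2$ with dividing dimension $k$, exactly in the spirit of Du–Zhang~\cite{dz19} and Barron–Erdo\u{g}an–Harris~\cite{beh21}, but with the broad case handled by the $k$-linear restriction estimate under the $\theta$-uniform condition from~\cite{ggo23} and the narrow case handled by the adaptive lower-dimensional $\ell^2L^p$ decoupling of Lemma~\ref{dec low var}. First I would normalize and set up an induction on scales: we prove \eqref{eq:main_estimate} with a constant $C_\epsilon$ allowing an $R^\epsilon$ loss, and we may assume the conclusion holds for all radii $\leq R/K$ (or with a slightly larger exponent, absorbed by the usual $\epsilon$-removal bookkeeping). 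On each $K^2$-ball $B$, partition $[0,1]^d$ into $K^{-1}$-caps $\tau$ and decompose $\|E^{\mathbf Q}f\|_{L^p(B)}$ according to whether a bounded number of ``significant'' caps are $\theta$-uniform (broad) or concentrate inside a low-dimensional semialgebraic set (narrow); here the controlling sequence is $\{X_m\}_{m=1}^{d+n}$ with $X_m = X(\mathbf Q,k,m)$, so that by Definition~\ref{s1d2} the narrow caps lie in a set of dimension $\leq k-2$.

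In the broad case, the $k$-linear restriction estimate from~\cite{ggo23} gives
$$
\|E^{\mathbf Q}f\|_{L^p(B)}^{\text{broad}} \lesssim_\epsilon K^{\epsilon}\Big(\prod_{j=1}^{k}\|E^{\mathbf Q}f_{\tau_j}\|_{L^p(w_B)}\Big)^{1/k}
$$
for a suitable range of $p$ (and for all $p\geq 2$ after interpolation with trivial $L^2$ and $L^\infty$ bounds and H\"older), which when combined with the $L^2$ orthogonality on caps and the weighted $L^2$ estimate $\|E^{\mathbf Q}f\|_{L^2(B_R,H)}\lesssim R^{n/2}$-type input contributes the term $\max_{1\le m\le d+n}\frac{m-X(\mathbf Q,k,m)}{2m}\alpha$ to the exponent: the refined parameter $\gamma$ enters exactly because a $k$-linear/transversal configuration forces the relevant wave packets to spread over an $m$-dimensional subspace, and a ball-counting argument using the definition of $\gamma$ produces the power $\alpha/(2m)$ weighted by the transversality deficit $1 - X(\mathbf Q,k,m)/m$. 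In the narrow case, Lemma~\ref{dec low var} applied to the $(k-2)$-dimensional semialgebraic set containing the narrow caps yields, at some intermediate scale $K_j$ between $K^c$ and $K^{1/2}$, an $\ell^2L^p$ decoupling with exponent $\Gamma_p^{k-2}(\mathbf Q)+\epsilon$; iterating the decoupling down to the $1/K$-caps, applying the locally constant property and the dyadic-constant hypothesis \eqref{assump:dyadic_constant} to convert local $L^p$ norms into counting $M$ and $\gamma$, and then feeding each piece back into the inductive hypothesis at scale $R/K_j^{2}$ (rescaling by parabolic rescaling for the TDI manifold $S_{\mathbf Q}$), one closes the induction with the second term $\frac{\Gamma_p^{k-2}(\mathbf Q)}{2}+\frac{d+n-\alpha}{2p}+\frac{\alpha+n-d}{4}$ appearing in \eqref{eq:key_exponent}. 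The bookkeeping of $M^{1/p-1/2}\gamma^{1/2-1/p}$ is tracked through both cases by the standard pigeonholing on the number of caps and the essentially-constant hypothesis.

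The main obstacle, I expect, is the narrow case: making the induction-on-scales close requires matching the lower-dimensional decoupling exponent $\Gamma_p^{k-2}(\mathbf Q)$ (for the manifold $S_{\mathbf Q}$ restricted to a $(k-2)$-dimensional subspace, of codimension $n$) with the quantity $w(p,k-?,\alpha',\mathbf Q)$ arising at the rescaled scale, and verifying that the arithmetic of the exponents — the affine function of $1/p$, the shift of $\alpha$ by $d+n-\alpha$ under passing to neighborhoods, and the gain from rescaling by $K_j$ rather than $K$ — actually balances. This is where one must use the identity $\Gamma_p^{k-2}(\mathbf Q) = \sup_{L_{k-2}}\Gamma_p^{k-2}(\mathbf Q|_{L_{k-2}})$ and $\mathfrak d_{d',n'}(\mathbf Q) = \inf_{L}\mathfrak d_{d',n'}(\mathbf Q|_L)$ from Subsection~\ref{dec_qua_man_sec2_2}, together with careful $\epsilon$-management so that the finitely many intermediate scales $K_1\leq\cdots\leq K_L$ from Lemma~\ref{dec low var} contribute only an acceptable $R^{O(\epsilon)}$ loss. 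A secondary technical point is ensuring the $\theta$-uniform hypothesis for the broad case is genuinely satisfied after pigeonholing — i.e., that failing to be broad really does force the significant caps into a semialgebraic set of dimension $\leq k-2$ with bounded complexity — which is precisely the content of Definition~\ref{s1d2} of $X(\mathbf Q,k,m)$ and is the reason the same $X(\mathbf Q,k,m)$ shows up in both terms of $w(p,k,\alpha,\mathbf Q)$.
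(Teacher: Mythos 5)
Your proposal captures the paper's own strategy: a broad--narrow dichotomy at dividing dimension $k$, with the broad case handled by the $\theta$-uniform $k$-linear restriction estimate (Theorem~\ref{thm:k-linear} from \cite{ggo23}) and the narrow case handled by the adaptive lower-dimensional $\ell^2L^p$ decoupling of Lemma~\ref{dec low var}, followed by parabolic rescaling at the intermediate scale $K_j$ (rather than $K$) and induction on scales. You have also correctly identified the crucial bookkeeping: the paper derives the first term of $w(p,k,\alpha,\mathbf{Q})$ from the endpoint $q=\max_{1\leq m\leq d+n}2m/X(\mathbf{Q},k,m)$ of the $k$-linear estimate together with the trivial relations $M\leq\gamma R^\alpha$ and $\gamma\geq K^{-2\alpha}$, and closes the narrow-case induction precisely when $w$ dominates the second term.
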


	\begin{proof} This proposition will be demonstrated by induction on scales. If $R\lesssim 1$, we can take a sufficiently large $C_\epsilon$, and then (\ref{eq:main_estimate}) holds trivially. From now on, we assume that Proposition \ref{main_prop:du_zhang} is true for $R$ replaced by $R/2$.
		
		Decompose $[0,1]^d$ into disjoint $K^{-1}$-cubes $\tau$, and write $f=\sum_{\tau} f_\tau$, where $f_\tau=f \chi_\tau$. For every $K^2$-cube $B$ in $Y$, we define its significant set as
		$$    \mathcal{S}_0(B) :=\Big\{ \tau: \| E^{\mathbf{Q}} f _\tau\|_{L^p(B)} \geq \frac{1}{\# \{\tau\} } \max_{\tau'}\|  E^{\mathbf{Q}} f_{\tau'}  \|_{L^p(B)} \Big\}.  $$
		We run the following algorithm. Let $\iota\geq 0$. If $\mathcal{S}_\iota(B)=\varnothing$ or $\mathcal{S}_\iota(B)$ is $\epsilon$-uniform with the controlling sequence $\{X(\mathbf{Q},k,m)\}_{m=1}^{d+n}$, then we set $\mathcal{S}_\iota(B):=\mathcal{T}(B)$ and terminate. Otherwise, there is an $m$-dimensional 
		subspace $V$ such that there are more than $\epsilon|\mathcal{S}_\iota|$ many $\tau \in \mathcal{S}_\iota$ intersecting 
		$$   Z_V:=\left\{   \xi \in \mathbb{R}^d:\dim (\pi_{V_\xi}(V)) <X(\mathbf{Q},k,m)   \right\}.   $$
		By the definition of $X(\mathbf{Q},k,m)$, we have $\dim Z_V\leq k-2$. Applying Lemma \ref{dec low var}, one gets
		$$   N_{1/K}(Z_V) \cap [0,1]^d \subset  \bigcup_{j=1}^L \bigcup_{W \in \mathcal{W}_{\iota,j}(B)} W,  $$
		where each $\mathcal{W}_{\iota,j}(B)$ denotes the collection of pairwise disjoint $1/K_j$-cubes. Then we run the algorithm to
		$$   \mathcal{S}_{\iota+1}(B)  := \mathcal{S}_\iota(B) \backslash \bigcup_{j=1}^L \bigcup_{W \in \mathcal{W}_{\iota,j}(B)} \mathcal{P}(W,K^{-1}), $$
		where $\mathcal{P}(W,K^{-1})$ denotes the partition of $W$ into disjoint $K^{-1}$-cubes $\tau$. Note this algorithm will terminate after at most $O(\log K)$ steps, which implies $\iota \lesssim \log K$. Finally, this algorithm produces the following two sets:
		$$  \mathcal{T}(B)\quad \text{~~~~~and~~~~}\quad   \bigcup_{\iota} \bigcup_{j=1}^L \mathcal{W}_{\iota,j}(B)     ,   $$
		and each $\tau \in \mathcal{S}_0(B)$ must be covered by exactly one set.
		Then by the triangle inequality, one has
		\begin{equation}\label{met1 eq2}
			\|E^{\mathbf{Q}}f\|_{L^p(B)} \leq \Big\|\sum_{\tau \in \mathcal{T}(B)} E^{\mathbf{Q}}f_\tau\Big\|_{L^p(B)}+\Big\|  \sum_{\tau \in \cup_\iota \cup_j \mathcal{W}_{\iota,j}(B) } E^{\mathbf{Q}}f_\tau \Big\|_{L^p(B)} +\Big\|   \sum_{\tau \notin\mathcal{S}_0(B)  }  E^{\mathbf{Q}}f_\tau\Big\|_{L^p(B)}.
		\end{equation}
		We say $B$ is broad if the first term dominates, otherwise we say $B$ is narrow. Denote the union of broad cubes $B $ in $ Y$ by $Y_{\text{broad}}$ and the union of narrow cubes $B $ in $ Y$ by $Y_{\text{narrow}}$. We say that we are in the broad case if $Y_{\text{broad}}$ contains $\geq M/2$ many $K^2$-cubes, and the narrow case otherwise. 
		
		\vskip0.5cm
		
		\noindent \textbf{Broad case.} For each broad cube $B$, we have
		\begin{equation*}
			\|E^{\mathbf{Q}} f \|_{L^p(B)} \lesssim \Big\|\sum_{\tau \in \mathcal{T}(B)} E^{\mathbf{Q}}f_\tau\Big\|_{L^p(B)} \lesssim K^d \max_{\tau \in \mathcal{T}(B)} \|E^{\mathbf{Q}}f_\tau\|_{L^p(B)} .
		\end{equation*}
		Set $\mathcal{T}(B)=\{ \tau_1,...,\tau_J\}$. It follows from the definition of $\mathcal{S}_0(B)$ that 
		$$    K^d \max_{\tau \in \mathcal{T}(B)} \|E^{\mathbf{Q}}f_\tau\|_{L^p(B)} \lesssim K^{2d} \prod_{j=1}^J \|E^{\mathbf{Q}}f_{\tau_j}\|^{\frac{1}{J}}_{L^p(B)} .$$
		We want to use reverse H\"older to bound the above geometric average of $L^p$-integrals by an $L^p$-integral of the geometric average. Although $E^{\mathbf{Q}}f_{\tau_j}$ does not satisfy the locally constant property on each $K^2$-cube $B$, this process still works up to a $K^{O(1)}$ loss via an additional translation argument. Readers may consult Section 3 in \cite{dz19} for details. Thus, we conclude
		\begin{align*}
			\|E^{\mathbf{Q}} f \|_{L^p(B)}  \lesssim K^{2d} \prod_{j=1}^J \|E^{\mathbf{Q}}f_{\tau_j}\|^{\frac{1}{J}}_{L^p(B)} \lesssim K^{O(1)} \Bigg\|     \prod_{j=1}^{J} |E^{\mathbf{Q}}f_{\tau_j,v_j}|^{\frac{1}{J}} \Bigg\|_{L^p(B(x_B,2))}, 
		\end{align*}
		where $x_B$ is the center of $B$, and
		$$  v_j \in B(0,K^2) \cap \mathbb{Z}^{d+n},\quad f_{\tau_j,v_j}(\xi):= f_{\tau_j}(\xi)  e^{ iv_j\cdot(\xi, \mathbf{Q}(\xi))}. $$
		We point out that the parameters $\tau_j,v_j$ and $J$ may depend on the choices of $B$. However, such dependence is in fact harmless. Since there are only $K^{O(1)}$ choices for $\tau_j,v_j$ and $J$, by pigeonholing, there exist $\tilde{\tau}_j,\tilde{v}_j,\tilde{J}$ such that the above inequality holds for at least $\geq K^{-C}M$ many broad $K^2$-cubes $B$. In the rest of the proof of the broad case, we fix $\tilde{\tau}_j,\tilde{v}_j$ and $\tilde{J}$.  Write $f_j=f_{\tilde{\tau}_j,\tilde{v}_j}$ for brevity, and denote by $\mathcal{B}$ the collection of the broad $K^2$-cubes $B$ associated with $\tilde{\tau}_j,\tilde{v}_j$ and $\tilde{J}$. Relying on these pigeonholing and (\ref{assump:dyadic_constant}), we can sum over all $B\in \mathcal{B}$ and get
		\begin{equation}
			\|E^{\mathbf{Q}} f \|_{L^p(Y)}  \lesssim K^{O(1)} \Bigg\| \prod_{j=1}^{\tilde{J}} |E^{\mathbf{Q}} f_j |^{\frac{1}{\tilde{J}}}  \Bigg\|_{L^p(\cup_{B\in \mathcal{B} } B(x_B,2))}.   
		\end{equation}
		
		To exploit the transverse condition, i.e., $\mathcal{T}(B)$ is $\epsilon$-uniform with the controlling sequence $\{X(\mathbf{Q},k,m)\}_{m=1}^{d+n}$, we apply the following $k$-linear restriction estimates for general quadratic forms.

		\begin{theorem}[\cite{ggo23}, Theorem 2.1]\label{thm:k-linear} 
			Let $\theta,J,K>0$, and $\{\tau_j \}_{j=1}^J$ be a collection of $K^{-1}$cubes in $[0,1]^d$ which is $\theta$-uniform with the controlling sequence $\{X_m\}_{m=1}^{d+n}$. Then for each $2 \leq q\leq 2J$ and $0<\epsilon\ll 1$, we have
			\begin{equation}\label{eq:k-linear}
				\Bigg\|   \prod_{j=1}^{J}   |E^{\mathbf{Q}}f_{\tau_j}|^{\frac{1}{J}} \Bigg\|_{L^q(B_R)} \leq C_{\epsilon,\theta,q,K} R^{\epsilon+\frac{(d+n)\theta}{2}}\max_{1\leq m\leq d+n}R^{\frac{m}{q}-\frac{X_m}{2}} \prod_{j=1}^{J}\|f_{\tau_j}\|^{\frac{1}{J}}_{L^2([0,1]^d)}.
			\end{equation}
		\end{theorem}
		
		Set $\theta=\epsilon$, $J=\tilde{J}$, and $X_m=X(\mathbf{Q},k,m)$ in above theorem. Via some basic calculations, we can take
		$$     q= \max_{1\leq m\leq d+n} \frac{2m}{X(\mathbf{Q},k,m)}  ,$$
		which is the endpoint of  (\ref{eq:k-linear}) in Theorem~\ref{thm:k-linear}. From the definition of $X(\mathbf{Q},k,m)$, we have
		\begin{equation}\label{xqkmrela}
			0\leq X(\mathbf{Q},k,m)\leq m,
		\end{equation}
		which means $2\leq q \leq \infty$. Although the situation $q>2\tilde{J}$ may happen, it is harmless. Since the $\theta$-uniform condition says that $\{\tau_j\}_{j=1}^{\tilde{J}}$ cannot be clustered near a low dimensional set (\ref{s3t1e1}), this condition is effective only when $\tilde{J} \gg 1$. In the current environment, $\tilde{J}\sim K^{c}$ for some constant $c$ in general. By (\ref{xqkmrela}), we get the case $q>2\tilde{J}$ will happen only when $X(\mathbf{Q},k,m)=0$ for some $1\leq m\leq d+n$, and then $q=\infty$. In this case, the estimate (\ref{eq:k-linear}) holds trivially.
		
		There are two cases according to whether $p\geq q$ or not. We first assume that $p\geq q$. Then we sort $B \in \mathcal{B}$ by the value of $ \|\prod_{j=1}^{\tilde{J}} |E^{\mathbf{Q}} f _j|^{\frac{1}{\tilde{J}}}\|_{L^\infty(B(x_B,2))}$: For each dyadic number $A$, define
		$$ \mathbb{Y}_A:=\Bigg\{B \in \mathcal{B}: \Bigg\|\prod_{j=1}^{\tilde{J}} |E^{\mathbf{Q}} f _j|^{\frac{1}{\tilde{J}}}\Bigg\|_{L^\infty(B(x_B,2))} \sim A\Bigg\} .  $$
		Let $Y_{A}$ be the union of the $K^2$-cubes $B$ in $\mathbb{Y}_{A}$. Without loss of generality, assume that $\|f\|_{L^2([0,1]^d)}=1$. And we can further assume that $R^{-C} \leq A \leq 1$ for some constant $C$, because those $\mathbb{Y}_A$'s with $A \leq R^{-C}$ only make a negligible contribution for sufficiently large $C$. So there are only $O(\log R) \leq O(K)$ choices on $A$. By dyadic pigeonholing, there exists a constant $\tilde{A}$ such that 
		\begin{equation*}
			\# \{  B: B\subset Y_{\tilde{A}}  \} \gtrsim (\log R)^{-1} \#  \mathcal{B}. 
		\end{equation*}
		Now we fix $\tilde{A}$, and denote the collection of $K^2$-cubes $B$ in $Y_{\tilde{A}}$ by $\mathcal{B}'$. This uniformization procedure allows us to use reverse H\"older to pass from $L^p$ to $L^q$. Thus, we get  
		\begin{align*}
			\|E^{\mathbf{Q}} f \|_{L^p(Y)}  &\lesssim K^{O(1)} \Bigg\| \prod_{j=1}^{\tilde{J}} |E^{\mathbf{Q}} f_j |^{\frac{1}{\tilde{J}}}  \Bigg\|_{L^p(\cup_{B\in \mathcal{B} } B(x_B,2))} \\ 
			& \leq K^{O(1)} \Bigg\| \prod_{j=1}^{\tilde{J}} |E^{\mathbf{Q}} f_j |^{\frac{1}{\tilde{J}}}  \Bigg\|_{L^p(\cup_{B\in \mathcal{B}'} B(x_B,2))}  \\
			& \sim K^{O(1)} M^{\frac{1}{p}-\frac{1}{q}} \Bigg\| \prod_{j=1}^{\tilde{J}} |E^{\mathbf{Q}}f _j|^{\frac{1}{\tilde{J}}}  \Bigg\|_{L^{q}(\cup_{B\in \mathcal{B}'} B(x_B,2))}     \\
			& \leq   K^{O(1)} M^{\frac{1}{p}-\frac{1}{q}}  \Bigg\| \prod_{j=1}^{\tilde{J}} |E^{\mathbf{Q}} f _j|^{\frac{1}{\tilde{J}}}  \Bigg\|_{L^{q}(B_R)}   \\
			& \lesssim    K^{O(1)} M^{\frac{1}{p}-\frac{1}{q}}  R^\epsilon\|f\|_{L^2([0,1]^d)},   
		\end{align*}
		where we used (\ref{assump:dyadic_constant}) in the second line, and Theorem \ref{thm:k-linear} in the last line. On the other hand, by testing two extreme values $r=K^2$ and $r=R$ in the definition of $\gamma$, one can easily see that
		$$   \gamma \geq K^{-2\alpha},\quad   M\leq \gamma R^\alpha.    $$
		Combining these two estimates, we then have
		$$   M^{\frac{1}{p}-\frac{1}{q}}  \leq  M^{\frac{1}{p}-\frac{1}{2}}  \gamma^{\frac{1}{2}-\frac{1}{q}} R^{(\frac{1}{2}-\frac{1}{q})\alpha} \leq K^{O(1)} M^{\frac{1}{p}-\frac{1}{2}}  \gamma^{\frac{1}{2}-\frac{1}{p}} R^{(\frac{1}{2}-\frac{1}{q})\alpha},  $$
		where we also used $q\geq 2$ and $p\geq q$. It follows that
		$$     \|E^{\mathbf{Q}} f \|_{L^p(Y)}   \lesssim    K^{O(1)}M^{\frac{1}{p}-\frac{1}{2}}  \gamma^{\frac{1}{2}-\frac{1}{p}} R^{(\frac{1}{2}-\frac{1}{q})\alpha}\|f\|_{L^2([0,1]^d)}. $$
		Since  
		$$    \left(\frac{1}{2}-\frac{1}{q}\right)\alpha=\max_{1\leq m\leq d+n} \frac{m-X(\mathbf{Q},k,m)}{2m}\cdot\alpha,  $$
		we have obtained the first quantity of $w(p,k,\alpha,\mathbf{Q})$ in (\ref{eq:key_exponent}).
		
		It remains to consider case $2\leq p\leq q$. Note that when $p=2$, our aim (\ref{eq:main_estimate}) becomes
		\begin{equation*}
			\|E^{\mathbf{Q}}f\|_{L^2(Y)}\leq C_\epsilon R^{w(2,k,\alpha,\mathbf{Q})+\epsilon}\|f\|_{L^2([0,1]^d)},
		\end{equation*}
		where
		\begin{equation*}
			w(2,k,\alpha,\mathbf{Q})= \max\left\{   \max_{1\leq m\leq d+n} \frac{m-X(\mathbf{Q},k,m)}{2m}\cdot\alpha ,~\frac{n}{2} \right\}\geq \frac{n}{2}.
		\end{equation*}   
		So it holds trivially by the classical Agmon-H\"ormander inequality (\ref{AH ineq}). Then the estimates (\ref{eq:main_estimate}) for $2<p<q$ can be derived via interpolating between the estimates for $p=2$ and $p=q$.

		\vskip0.5cm

		\noindent \textbf{Narrow case.}  For each narrow cube $B$, we have
		\begin{equation*}
			\|E^{\mathbf{Q}}f\|_{L^p(B)} \lesssim \Big\|  \sum_{\tau \in \cup_\iota \cup_j \mathcal{W}_{\iota,j}(B) } E^{\mathbf{Q}}f_\tau \Big\|_{L^p(B)} +\Big\|   \sum_{\tau \notin\mathcal{S}_0(B)  }  E^{\mathbf{Q}}f_\tau\Big\|_{L^p(B)}.
		\end{equation*}
		If the second term dominates, by the definition of $\mathcal{S}_0(B)$, one gets
		$$\Big\|   \sum_{\tau \notin\mathcal{S}_0(B)  }  E^{\mathbf{Q}}f_\tau\Big\|_{L^p(B)}  \leq \sum_{\tau \notin\mathcal{S}_0(B)  }   \|   E^{\mathbf{Q}}f_\tau\|_{L^p(B)} \leq \max_{\tau'}\|  E^{\mathbf{Q}} f_{\tau'}  \|_{L^p(B)} ,  $$
		which can be reduced to the first term. From now on, we will only consider the first term. By the triangle inequality, one gets
		\begin{equation}\label{add}
			\|E^{\mathbf{Q}}f\|_{L^p(B)} \lesssim \Big\|  \sum_{\tau \in \cup_\iota \cup_j W_{\iota,j}(B) } E^{\mathbf{Q}}f_\tau \Big\|_{L^p(B)} \leq  \sum_{\iota} \sum_{j=1}^{L}\Big\|  \sum_{\tau \in \mathcal{W}_{\iota,j}(B) } E^{\mathbf{Q}}f_\tau \Big\|_{L^p(B)}  .
		\end{equation}
		Recall that $\iota \lesssim \log K$ and $L=L(d)$. Without loss of generality, we assume that the contribution from $\mathcal{W}_{1,1}(B) $ dominates in (\ref{add}) for a $\gtrsim (\log R)^{-1}$ fraction of all $B 
		\subset Y_{\text{narrow}}$. Denote the union of such $B$'s by $Y_1$. Thus for each $B \subset  Y_1$, one has
		\begin{equation}\label{s3equ11}
			\|E^{\mathbf{Q}}f\|_{L^p(B)} \lesssim  \log K \Big\|  \sum_{\tau \in \mathcal{W}_{1,1}(B) } E^{\mathbf{Q}}f_\tau \Big\|_{L^p(B)} .
		\end{equation}
		
		We plan to study (\ref{add}) through the same steps as in Section 3 in \cite{dz19}: do some dyadic pigeonholing, apply lower-dimensional $\ell^2L^p$ decoupling, and use rescaling and induction on scale $R/K^2$. However, since $\mathcal{W}_{1,1}(B) $ is a collection of $1/K_1$-cubes in our environment, we will use induction on scale $R/K_1^2$ rather than $R/K^2$. More precisely, we rewrite (\ref{s3equ11}) as
		\begin{equation}\label{s3equ1}
			\|E^{\mathbf{Q}}f\|_{L^p(B)} \lesssim   \log K \Big\|  \sum_{W \in \mathcal{W}_{1,1}(B) } E^{\mathbf{Q}}f_W \Big\|_{L^p(B)}  ,
		\end{equation}
		where  each $W$ is a $1/K_1$-cube with center $c(W)$. Break $B^d(0,R)$ in the physical space into $R/K_1$-cubes $D$ with center $c(D)$, and write 
		$$\sum_{W \in \mathcal{W}_{1,1}(B) } f_W=\sum_{W,D} f_{W,D},$$ 
		where each $f_{W,D}$ is Fourier supported on $W \in \mathcal{W}_{1,1}(B) $, and essentially supported on $D$. Then  
		$$ \sum_{W \in \mathcal{W}_{1,1}(B) } E^{\mathbf{Q}}f_W=\sum_{W,D} E^{\mathbf{Q}} f_{W,D}.$$ 
		Through the stationary phase method \cite{bll17}, we can see that each $E^{\mathbf{Q}} f_{W,D}$ restricted in $B_R$ is essentially supported on 
		$$\Box_{W,D}:=\Big\{    (x,y) \in B_R  : \Big|x+\sum_{j=1}^n y_j \nabla Q_j(c(W))-c(D)    \Big|\leq R/K_1 \Big\}.$$
		Since a given $B$ lies in exactly one box $\Box_{W,D}$ for each $W$, it follows from (\ref{dec low var e2}) and $K_1\leq K$ that
		\begin{equation}\label{eeeeeeee}
			\Big\|  \sum_{W \in \mathcal{W}_{1,1}(B) } E^{\mathbf{Q}}f_W \Big\|_{L^p(B)} \lesssim  K_1^{\Gamma_p^{k-2}(\mathbf{Q})+\epsilon^2} \Big(\sum_{ \Box}  \left\| E^{\mathbf{Q}}f_\Box \right\|^2_{L^p(w_B)}\Big)^{\frac{1}{2}} , 
		\end{equation}	
		where $w_B$ is given by (\ref{notation_wb}), and $\Box:=\Box_{W,D}$ for brevity.

		Set $\tilde{R}=R/K_1^2$ and $\tilde{K}=\tilde{R}^\delta$. Tile $\Box$ by the
		rectangle boxes $S$ with $d$ short sides of length $K_1\tilde{K}^2$ pointing in the same directions as the short sides of $\Box$, and $n$ long sides of length 
		$K_1^2\tilde{K}^2$ pointing in complementary directions. In what follows, we will perform dyadic pigeonholing   to $S$ and $\Box$: 
		
		\noindent (1) For each $\Box$, sort $S \subset \Box$ that intersect $Y_{1}$ according to the value of $\|E^{\mathbf{Q}} f_{\Box}\|_{L^p(S)}$ and the number of $K^2$-cubes in $Y_{1}$ contained in it: For dyadic numbers $\eta,\beta_1$, define
		$$    \mathbb{S}_{\Box,\eta,\beta_1}:=\Big\{ S\subset \Box:S~\text{contains}\sim\eta~\text{many}~K^2\text{-cubes~in~}Y_{1},~ \|E^{\mathbf{Q}} f_{\Box}\|_{L^p(S)}\sim\beta_1  \Big\}.    $$
		Let $Y_{\Box,\eta,\beta_1}$ be the union of the rectangular boxes $S$ in $\mathbb{S}_{\Box,\eta,\beta_1}$.  	
		
		\noindent (2) For fixed $\eta$ and $\beta_1$, sort $\Box$ according to the value of $\|f_\Box\|_{L^2([0,1]^d)}$ and the number $\#\mathbb{S}_{\Box,\eta,\beta_1}$: For dyadic numbers $\beta_2,M_1$, define
		$$   \mathbb{B}_{\eta,\beta_1,\beta_2,M_1}:=\Big\{ \Box: \|f_\Box\|_{L^2([0,1]^d)}\sim \beta_2,~ \# \mathbb{S}_{\Box,\eta,\beta_1}\sim M_1\Big\}.  $$

		For each $B\subset Y_{1}$, we have
		$$\sum_{W \in \mathcal{W}_{1,1}(B) } E^{\mathbf{Q}}f_W =\sum_{\eta,\beta_1,\beta_2,M_1}\left( \sum_{\substack{\Box\in \mathbb{B}_{\eta,\beta_1,\beta_2,M_1}  \\ B\subset Y_{\Box,\eta,\beta_1}}} E^{\mathbf{Q}} f_\Box  \right). $$
		Under the hypothesis $\|f\|_{L^2([0,1]^d)}=1$, we can further assume that
		\begin{align*}
			1\leq \eta \leq K^{O(1)}, \quad R^{-C} \leq \beta_1 \leq K^{O(1)}, \quad R^{-C} \leq \beta_2 \leq 1,    \quad 1 \leq M_1 \leq R^{O(1)}
		\end{align*}
		for some constant $C$. Thus there are only $O(\log R)$ choices for each dyadic number. By dyadic pigeonholing, these exist $\eta,\beta_1,\beta_2,M_1$ depending on $B$ such that (\ref{s3equ1}) becomes
		\begin{equation}\label{absd}
			\|E^{\mathbf{Q}}f\|_{L^p(B)} \lesssim  (\log R)^5     \Bigg\| \sum_{\substack{\Box\in \mathbb{B}_{\eta,\beta_1,\beta_2,M_1}  \\ B\subset Y_{\Box,\eta,\beta_1}}} E^{\mathbf{Q}}f_\Box  \cdot   \chi_{Y_{\Box,\eta,\beta_1}}  \Bigg\|_{L^p(B)}  .
		\end{equation}
		
		Finally, we sort $B \subset  Y_{1}$. Since there are only $O(\log R)$ choices on $\eta,\beta_1,\beta_2,M_1$, by pigeonholing, we can find $\tilde{\eta},\tilde{\beta}_1,\tilde{\beta}_2,\tilde{M}_1$ such that (\ref{absd}) holds for a $\gtrsim (\log R)^{-4}$ fraction of all $B\subset Y_{1}$. Denote the union of such $B$ by $ Y'$. From now on, we abbreviate $Y_{\Box,\tilde{\eta},\tilde{\beta}_1}$ and $\mathbb{B}_{\tilde{\eta},\tilde{\beta}_1,\tilde{\beta}_2,\tilde{M}_1}$ to $Y_\Box$ and $\mathbb{B}$, respectively. Next we further sort $B \subset  Y'$ by the number $\# \{ \Box \in \mathbb{B}: B \subset Y_\Box  \}$: For each dyadic number $\mu$, define
		$$  \mathbb{Y}_\mu :=\Big\{  B \subset Y':  \#  \{ \Box \in \mathbb{B}: B\subset Y_\Box \}\sim\mu \Big\}.  $$
		Let $Y_{\mu}$ be the union of $B$'s in $\mathbb{Y}_\mu$. Using dyadic pigeonholing again, we can choose $\tilde{\mu}$ such that
		\begin{equation}\label{flr eq7}
			\#\{ B : B \subset Y_{\tilde{\mu}} \} \gtrsim (\log R)^{-1} \#\{B : B \subset Y' \}. 
		\end{equation}
		From now on, we fix $\tilde{\mu}$, and denote $Y_{\tilde{\mu}}$ by $Y''$. 
		
		For each $B\subset Y''$, it follows from (\ref{eeeeeeee}) and (\ref{absd}) that
		\begin{align*}
			\| E^{\mathbf{Q}} f \|_{L^p(B)} \lesssim&  (\log R)^5 \Big\|  \sum_{\Box \in\mathbb{B} : B \subset Y_\Box}E^{\mathbf{Q}} f_\Box \cdot \chi_{Y_{\Box}}  \Big\|_{L^p(B)}  \\
			\lesssim  & (\log R)^5  K_1^{\Gamma_p^{k-2}(\mathbf{Q})+\epsilon^2} \Big( \sum_{
				\Box \in \mathbb{B}: B \subset Y_\Box} \|E^{\mathbf{Q}} f_\Box \|^2_{L^p(w_B)}  \Big)^{\frac{1}{2}}  \\
			\lesssim   &  (\log R)^5 K_1^{\Gamma_p^{k-2}(\mathbf{Q})+\epsilon^2}\tilde{\mu}^{\frac{1}{2}-\frac{1}{p}} \Big( \sum_{\Box \in \mathbb{B}: B \subset Y_\Box} \|E^{\mathbf{Q}} f_\Box \|^p_{L^p(w_B)}  \Big)^{\frac{1}{p}}.    
		\end{align*}
		Summing over all $B\subset Y''$ and combining (\ref{assump:dyadic_constant}), there is
		\begin{align}
			\|E^{\mathbf{Q}}f \|_{L^p(Y)} &\lesssim (\log R)^6 \|E^{\mathbf{Q}} f \|_{L^p(Y'')}     \nonumber \\
			& \lesssim  (\log R)^{11} K_1^{\Gamma_p^{k-2}(\mathbf{Q})+\epsilon^2} \tilde{\mu}^{\frac{1}{2}-\frac{1}{p}} \Big(\sum_{\Box \in \mathbb{B}} \|E^{\mathbf{Q}} f _\Box\|^p_{L^p(Y_\Box)}\Big)^{\frac{1}{p}} .   \label{11}
		\end{align}
		For every individual term $ \|E^{\mathbf{Q}}f _\Box\|_{L^p(Y_\Box)}$, we can use the change of variables $$ \xi   \rightarrow c(\tau)+K_1^{-1}\xi,$$ 
		and then 
		$$  |E^{\mathbf{Q}} f _\Box(x)|   = K_1^{-\frac{d}{2}}|E^{\mathbf{Q}}g(\tilde{x})|    $$
		for some function $g$ with Fourier support in $[0,1]^d$ and $\|g\|_{L^2([0,1]^d)}=\|f_\Box\|_{L^2(\tau)}$, where the new coordinates $\tilde{x}=(\tilde{x}',\tilde{x}'')\in \R^{d}\times \R^n$ are related to the old coordinates $x=(x',x'')\in \R^{d}\times \R^n$ by
		\begin{align*}
			\begin{cases}
				~	\tilde{x}'=K_1^{-1}x'+K_1^{-1} c(\tau)^T\cdot \overline{Q}(x''),    \\
				~	\tilde{x}''=K_1^{-2}x'',
			\end{cases}
		\end{align*}  
		where $\overline{Q}(x'')$ is defined in (\ref{098765}). We set $\tilde{x}=Fx$. Therefore one gets
		\begin{equation}\label{12}
			\|E^{\mathbf{Q}} f _\Box\|_{L^p(Y_\Box)}=K_1^{\frac{d+2n}{p}-\frac{d}{2}} \|E^{\mathbf{Q}}g\|_{L^p(\tilde{Y})},
		\end{equation}
		where $\tilde{Y}$ denotes the image of $Y_\Box$ under the transformation in the physical space. Note that $\|E^{\mathbf{Q}}g\|_{L^p(\tilde{Y})}$ satisfies the inductive hypothesis of Proposition \ref{main_prop:du_zhang} under the new scale $\tilde{R}$ and parameter $\tilde{M}_1$: $\tilde{Y}=F(Y_\Box)$ consists of $\sim \tilde{M}_1$ many $\tilde{K}^2$-cubes $F(S)$ in an $\tilde{R}$-ball $F(\Box)$, and $\|E^{\mathbf{Q}}g\|_{L^p(F(S))}$ is essentially a dyadic constant in $S \subset Y_\Box$. If let
		$$   \tilde{\gamma}_1=\max_{\substack{  B(x',r)\subset B(0,\tilde{R}) \\ x'\in \mathbb{R}^{d+n},r\geq \tilde{K}^2 }} \frac{\#\{  B_i:B_i \subset B(x',r)\}}{r^\alpha}, $$
		then we can apply the inductive hypothesis to obtain
		\begin{equation}\label{13}
			\|E^{\mathbf{Q}}g\|_{L^p(\tilde{Y})} \lesssim  \tilde{M}_1^{\frac{1}{p}-\frac{1}{2}} \tilde{\gamma}_1^{\frac{1}{2}-\frac{1}{p}} \tilde{R}^{w(p,k,\alpha,\mathbf{Q})+\epsilon} \|g\|_{L^2([0,1]^d)}.
		\end{equation}
		Combining (\ref{11}), (\ref{12}) with (\ref{13}), one gets
		\begin{align}
			\|E^{\mathbf{Q}} f \|_{L^p(Y)} & \leq  K_1^{\Gamma_p^{k-2}(\mathbf{Q})+\frac{d+2n}{p}-\frac{d}{2}+2\epsilon^2}  \left( \frac{\tilde{\mu} \tilde{\gamma}_1}{\tilde{M}_1} \right)^{\frac{1}{2}-\frac{1}{p}}  \tilde{R}^{w(p,k,\alpha,\mathbf{Q})+\epsilon} \Big(\sum_{\Box \in \mathbb{B}}  \|f_\Box\|_{L^2([0,1]^d)}^p \Big)^{\frac{1}{p}}\nonumber   \\
			& \sim K_1^{\Gamma_p^{k-2}(\mathbf{Q})+\frac{d+2n}{p}-\frac{d}{2}+2\epsilon^2}  \left( \frac{\tilde{\mu} \tilde{\gamma}_1}{\tilde{M}_1 \#\mathbb{B}} \right)^{\frac{1}{2}-\frac{1}{p}}  \tilde{R}^{w(p,k,\alpha,\mathbf{Q})+\epsilon}   \|f\|_{L^2([0,1]^d)}. \label{14}
		\end{align}
		Here $(\log R)^{11}$ is absorbed into $K_1^{\epsilon^2}$ due to $ K^c \leq K_1 \leq K=R^\delta$.
		
		On the other hand, the relations between the old parameters $M,\gamma$ and new parameters $\tilde{M}_1,\tilde{\gamma}_1$ are listed as follows
		\begin{equation}\label{15}
			\frac{\tilde{\mu}}{\#\mathbb{B}}\lesssim \frac{(\log R)^6 \tilde{M}_1\tilde{\eta}}{M},\quad \quad\tilde{\gamma}_1 \tilde{\eta}\lesssim  \gamma  K_1^{\alpha+n}.  
		\end{equation}
		We omit the proof of (\ref{15}), and readers may consult Section 3 in \cite{dz19} and Section 3 in \cite{cmw24} for more details. Plugging (\ref{15}) into (\ref{14}), we get 
		$$    \| E^{\mathbf{Q}} f \|_{L^p(Y)}  \lesssim K_1^{\Gamma_p^{k-2}(\mathbf{Q})+\frac{d+n-\alpha}{p}+\frac{\alpha+n-d}{2}-2w(p,k,\alpha,\mathbf{Q}) +3\epsilon^2-2\epsilon}M^{\frac{1}{p}-\frac{1}{2}}\gamma^{\frac{1}{2}-\frac{1}{p}}R^{ 
			w(p,k,\alpha,\mathbf{Q})  +\epsilon}\|f\|_{L^2([0,1]^d)}.$$
		In order to make the induction work for the narrow case, we need the following constraint
		$$ \Gamma_p^{k-2}(\mathbf{Q})+\frac{d+n-\alpha}{p}+\frac{\alpha+n-d}{2}-2w(p,k,\alpha,\mathbf{Q})\leq 0. $$
		Via some simple calculations, it is equivalent to
		$$  w(p,k,\alpha,\mathbf{Q}) \geq  \frac{\Gamma_p^{k-2}(\mathbf{Q})}{2}+\frac{d+n-\alpha}{2p}+\frac{\alpha+n-d}{4},  $$
		which is just the second quantity of $w(p,k,\alpha,\mathbf{Q})$ in (\ref{eq:key_exponent}). 
	\end{proof}
	
	In the final part of the subsection, we give one remark on Proposition \ref{main_prop:du_zhang}. In Section 4 in \cite{cmw24}, the authors considered another simpler version of this proposition. To be more precise, instead of using the broad-narrow analysis, they ran the argument for the narrow part of Du-Zhang method directly, combined with $\ell^2L^p$ decoupling (not lower-dimensional version as above), to get 
	\begin{equation}
		\|E^{\mathbf{Q}}f\|_{L^p(Y)}\leq C_\epsilon M^{\frac{1}{p}-\frac{1}{2}} \gamma^{\frac{1}{2}-\frac{1}{p}}R^{w(p,\alpha,\mathbf{Q})+\epsilon}\|f\|_{L^2([0,1]^d)},
	\end{equation}
	where $w(p,\alpha,\mathbf{Q})$ is given by 
	\begin{equation}\label{s3ff1}
		w(p,\alpha,\mathbf{Q})= \frac{\Gamma_p^{d}(\mathbf{Q})}{2}+\frac{d+n-\alpha}{2p}+\frac{\alpha+n-d}{4} .
	\end{equation}   
	To some extent, this bound can been seen as the special case of $ w(p,k,\alpha,\mathbf{Q})$ when $k=d+2$. Though this bound seems rough, it is amazing that it is better than (\ref{eq:key_exponent}) for some special degenerate quadratic forms. (\ref{s3ff1}) will also be used in the next two sections.

	\section{Optimality of several estimates in Theorem \ref{th1}}\label{sec4}

	Having obtained the upper bounds of $s(\alpha,\mathbf{Q})$ in Theorem \ref{th1} in the previous two sections, we will discuss the optimality  of the first and third estimates and associated ranges in (\ref{t1e1}) of the section.

	To start with, we consider the first estimate in (\ref{t1e1}). Note that $s(\alpha,\mathbf{Q})=0$ is sharp. The associated range $0<  \alpha \leq \frac{\mathfrak{d}_{d,1}(\mathbf{Q})}{2}$ is due to the uniform Fourier decay estimate (\ref{thm:uniform_decay_eq}) and Shayya's argument. In the following, we will take one example to show that $s(\alpha,\mathbf{Q})>0$ when $\alpha>\frac{\mathfrak{d}_{d,1}(\mathbf{Q})}{2} $, which means that the range $0<  \alpha \leq \frac{\mathfrak{d}_{d,1}(\mathbf{Q})}{2}$ satisfying $s(\alpha,\mathbf{Q})=0$ is maximal possible. This illustrates that there is no loss in the argument of applying uniform Fourier decay and Shayya's proof for general quadratic manifolds.

	There is no harm in assuming that $d'>0$. After linear changes of variables, we may assume that $Q_1$ only depends on $\xi_1,...,\xi_{d'}$. Write frequency points in $[0,1]^d$ as $\xi =(\xi',\xi''),$ with $\xi'=(\xi_1,...,\xi_{d'})$ and $\xi''=(\xi_{d'+1},...,\xi_d)$. Write spatial points in $\mathbb{R}^{d+n}$ as
	$$  x=(x',x'',x_{d+1},...,x_{d+n}),    $$
	with $x'=(x_1,...,x_{d'})$ and $x''=(x_{d'+1},...,x_{d})$. Then we can rewrite $E^{\mathbf{Q}}f$ as
	$$   E^{\mathbf{Q}}f(x)=\int_{\R^{d+n}} e\left(   x' \cdot \xi'+x''\cdot \xi'' +x_{d+1}Q_1(\xi')+x_{d+2}Q_2(\xi)+\cdots+x_{d+n}Q_n(\xi)   \right)    f(\xi) d\xi. $$
	Using linear changes of variables again, we can diagonalize $Q_1(\xi')$, and it becomes
	\begin{align*}
		&E^{\mathbf{Q}}g(y)  \\
		=&\int_{\R^{d+n}} e\left(   y' \cdot \eta'+y''\cdot \eta'' +y_{d+1}(\eta_1^2\pm \eta_2^2\pm \cdots\pm \eta_{d'}^2)+y_{d+2}\tilde{Q}_2(\eta)+\cdots+y_{d+n}\tilde{Q}_n(\eta)   \right)    g(\eta) d\eta, 
	\end{align*}
	for some new coordinates $y$ and $\eta$, and the functions $\tilde{Q}_j$ and $g$. Now we restrict
	$$    |y''|\leq c,~ |y_{d+2}|\leq c,...,~|y_{d+n}|\leq c ,   $$
	for some small constant $c>0$ that may depend on $d$, $n$ and $\mathbf{Q}$. Note that $|\eta|\leq 1$, so it follows from the locally constant property that
	$$   |E^{\mathbf{Q}}g(y)| \sim  \left|\int_{\R^{d+n}} e\left(   y' \cdot \eta'+y_{d+1}(\eta_1^2\pm \eta_2^2\pm \cdots\pm \eta_{d'}^2)   \right)    g(\eta) d\eta\right|. $$
	On the other hand, we define an $\alpha$-dimensional weight $H$ in $\R^{d'+1}$ on the variables $y'$ and $y_{d+1}$. We also take
	$$    g(\eta)= g_1(\eta') \chi_{[0,1]^{d-d'}}(\eta'').  $$
	Based on these choices, we get
	$$  \frac{\|E^{\mathbf{Q}}f\|_{L^2(B_R^{d+n},H)}}{\|f\|_{L^2([0,1]^d)}} \sim \frac{\|E_1 g_1\|_{L^2(B_R^{d'+1},H)}}{\|g_1\|_{L^2([0,1]^{d'})}},   $$
	where $E_1 g_1$ is given by
	$$    E_1 g_1:= \int_{\R^{d'}} e\left(   y' \cdot \eta'+y_{d+1}(\eta_1^2\pm \eta_2^2\pm \cdots\pm \eta_{d'}^2)   \right)    g_1(\eta') d\eta',   $$
	which denotes the Fourier extension operator of the hyperbolic paraboloids in $\mathbb{R}^{d'+1}$. In \cite{bbcrv07}, Barcel\'{o}, Bennett, Carbery, Ruiz and Vilela studied the lower bounds of $s(\alpha,\mathbf{Q})$ for the paraboloids. By constructing a fractal example, they showed that if $\mathbf{Q}(\xi)=(\xi_1^2+\cdots+\xi_d^2)$, there is
	$$  s(\alpha,\mathbf{Q})\geq \frac{2\alpha-d}{2(d+2)} ,\quad \quad \alpha\geq \frac{d}{2}.   $$
	Note that their example is also suitable for the hyperbolic paraboloids. Therefore, as long as taking $\alpha>d'/2$, we can get
	$$    \frac{\|E^{\mathbf{Q}}f\|_{L^2(B_R^{d+n},H)}}{\|f\|_{L^2([0,1]^d)}} \sim \frac{\|E_1 g_1\|_{L^2(B_R^{d'+1},H)}}{\|g_1\|_{L^2([0,1]^{d'})}} \gtrsim R^{  \frac{2\alpha-d'}{2(d'+2)}  },      $$
	which implies
	$$   s(\alpha,\mathbf{Q})\geq \frac{2\alpha-d'}{2(d'+2)}>0.   $$

	Next, we consider the third estimate in (\ref{t1e1}). Although the bound $n/2$ is trivial, we can take one example to show this bound is optimal when $ d-\tilde{d}+n\leq \alpha \leq d+n$, where $\tilde{d}$ is defined in Theorem \ref{th1}. For convenience, we assume that $0<\tilde{d}<d$. By (1.15) of \cite{gozzk23}, there is 
	$$  \mathfrak{d}_{\tilde{d},n}(\mathbf{Q})=  \mathfrak{d}_{\tilde{d},n}(\mathbf{Q}|_H) , $$
	for some $H\subset \mathbb{R}^d$ which is a $\tilde{d}$-dimensional linear subspace. After the linear transformation, without loss of generality, we can assume 
	$$  H=\{  \xi_1=\xi_2=\cdots=\xi_{d-\tilde{d}}=0  \}.   $$
	From the definition of $\tilde{d}$, we get
	$$   \mathfrak{d}_{\tilde{d},n}(\mathbf{Q}|_H) =0, $$
	which is equivalent to 
	$$\mathbf{Q}|_H \equiv 0.$$
	This fact means that all monomials in $\mathbf{Q}(\xi)$ depending on $\xi_{d-\tilde{d}+1}$, ..., $\xi_d$ must be of the forms $\xi_i \xi_{d-\tilde{d}+1}$, ..., $\xi_i \xi_{d}$ for some $i\in \{ 1,2,...,d-\tilde{d}\}.$ Take $f$ as
	$$  f(\xi)=\chi_{[0,R^{-1}]^{d-\tilde{d}}}(\xi_1,...,\xi_{d-\tilde{d}}) \chi_{[0,1]^{\tilde{d}}}(\xi_{d-\tilde{d}+1},...,\xi_d) .      $$
	Thus
	$$ \|f\|_{L^2([0,1]^d)}=  R^{-\frac{d-\tilde{d}}{2}}.   $$
	On the other hand, by the locally constant property, there is 
	$$  |E^{\mathbf{Q}}f(x)| \gtrsim R^{-(d-d')},   $$
	when $x=(x_1,...,x_{d+n})$ belongs to
	$$ \left\{ |x_1|\leq cR,...,|x_{d-\tilde{d}}|\leq cR, |x_{d-\tilde{d}+1}|\leq c,...,|x_d|\leq c, |x_{d+1}|\leq cR,...,|x_{d+n}|\leq cR     \right\},   $$
	for some small constant $c>0$. Denote this set by $T$. Then we obtain
	$$    \frac{\|E^{\mathbf{Q}}f\|_{L^2(B_R,H)}}{\|f\|_{L^2([0,1]^d)}}\geq \frac{\|E^{\mathbf{Q}}f\|_{L^2(T,H)}}{\|f\|_{L^2([0,1]^d)}} \gtrsim  R^{-\frac{d-\tilde{d}}{2}} \left(\int_T H(x)dx  \right)^{\frac{1}{2}}.  $$
	Next, we choose a $(d-\tilde{d}+n)$-dimensional weight $H$ in $\R^{d-\tilde{d}+n}$ on the variables $x_1$, ..., $x_{d-\tilde{d}}$, $x_{d+1}$,..., $x_{d+n}$, and $H$ restricted in each 1-dimensional subspace of these variables is produced by the Lebesgue measure. Thus  the above further becomes
	$$    \frac{\|E^{\mathbf{Q}}f\|_{L^2(B_R,H)}}{\|f\|_{L^2([0,1]^d)}}\gtrsim  R^{-\frac{d-\tilde{d}}{2}} \cdot R^{\frac{d-\tilde{d}+n}{2}}=R^{\frac{n}{2}},  $$
	which implies
	$$   s(\alpha,\mathbf{Q})\geq  \frac{n}{2} ,\quad \quad d-\tilde{d}+n\leq \alpha \leq d+n.  $$

	We have used one example to show the bound $n/2$ in the range $d-\tilde{d}+n\leq \alpha \leq d+n$ is sharp. Besides, we can also  prove that this range such that $s(\alpha,\mathbf{Q})=n/2$ is maximal possible. More accurately, we will show $s(\alpha,\mathbf{Q})< n/2$ when $\alpha< d-\tilde{d}+n$. It relies on the estimates built in Subsection \ref{du_zhang_method}. In fact, (\ref{s3ff1}) is enough to obtain this conclusion:
	$$    s(\alpha,\mathbf{Q})\leq \min_{ p\geq 2 } \left(   \frac{\Gamma_p^{d}(\mathbf{Q})}{2}+\frac{d+n-\alpha}{2p}+\frac{\alpha+n-d}{4} \right).  $$
	Note this quality is just $n/2$ when $p=2$, so it suffices to show: For $\alpha< d-\tilde{d}+n$, there exists $p>2$ such that
	\begin{equation}\label{ex3e1}
		\Gamma_p^{d}(\mathbf{Q})+\frac{d+n-\alpha}{p}+\frac{\alpha+n-d}{2} <n. 
	\end{equation}
	Using the range of $\alpha$ and (\ref{dec th1 00}), one gets
	\begin{align*}
		&\Gamma_p^{d}(\mathbf{Q})+\frac{d+n-\alpha}{p}+\frac{\alpha+n-d}{2} \\
		<& \Gamma_p^{d}(\mathbf{Q})-\left( \frac{1}{2}-\frac{1}{p}\right)\tilde{d}+n    \\
		=  &   \max_{d',n'}  \Big\{ \Big(d'- \mathfrak{d}_{d',n'}(\mathbf{Q})\Big)\Big(\frac{1}{2}-\frac{1}{p}\Big) -\frac{2(n-n')}{p}    \Big\}-\left( \frac{1}{2}-\frac{1}{p}\right)\tilde{d}+n  \\
		= &   \max_{d',n'}  \Big\{ \Big(d'-\tilde{d}- \mathfrak{d}_{d',n'}(\mathbf{Q})\Big)\Big(\frac{1}{2}-\frac{1}{p}\Big) -\frac{2(n-n')}{p}    \Big\}+n.
	\end{align*}
	So we only need to show: For any $0\leq d'\leq d$ and $0\leq n'\leq n$, there exists $p>2$ such that 
	\begin{equation}\label{ex3e2}
		\Big(d'-\tilde{d}- \mathfrak{d}_{d',n'}(\mathbf{Q})\Big)\Big(\frac{1}{2}-\frac{1}{p}\Big) -\frac{2(n-n')}{p}   \leq 0. 
	\end{equation}
	
	We divide the proof of (\ref{ex3e2}) into three cases. The first case is $0\leq d'\leq \tilde{d}$ and $0\leq n'\leq n$. Then (\ref{ex3e2}) holds for all $p>2$ by noting $\mathfrak{d}_{d',n'}(\mathbf{Q})\geq 0$ for all $d'$ and $n'$. The second case is $\tilde{d}< d'\leq d$ and $n'= n$. In this case, we only need to show
	$$   d'-\tilde{d}- \mathfrak{d}_{d',n}(\mathbf{Q})\leq 0,  \quad \quad \tilde{d}< d'\leq d,    $$
	and then (\ref{ex3e2}) holds for all $p>2$. In fact, it is equivalent to
	\begin{equation}\label{s4final}
		\mathfrak{d}_{d',n}(\mathbf{Q}) \geq d'-\tilde{d}, \quad \quad \tilde{d}< d'\leq d. 
	\end{equation}
	If $d'=\tilde{d}+1$, by the definition of $\tilde{d}$, we have 
	$$     \mathfrak{d}_{d',n}(\mathbf{Q}) \geq 1,  $$
	and (\ref{s4final}) follows. If $d'>\tilde{d}+1$, we have one simple observation: As long as $\mathfrak{d}_{d',n}(\mathbf{Q})>0$, there is
	$$    \mathfrak{d}_{d',n}(\mathbf{Q})>\mathfrak{d}_{d'-1,n}(\mathbf{Q}).   $$
	Applying this observation, we can obtain (\ref{s4final}) immediately, and then (\ref{ex3e2}) follows. Finally, we consider the third case: $\tilde{d}<d'\leq d$ and $0\leq n'< n$. We can further assume $d'-\tilde{d}- \mathfrak{d}_{d',n'}(\mathbf{Q})> 0$, otherwise we can repeat the argument of the first case, and then the result follows. Next we can rewrite (\ref{ex3e2}) as follows
	\begin{equation}\label{ex3e3}
		2 < p\leq 2+\frac{4(n-n')}{d'-\tilde{d}- \mathfrak{d}_{d',n'}(\mathbf{Q})}.  
	\end{equation}
	Considering the ranges of $d',n'$ of this case, we can choose an appropriate $p$ to make (\ref{ex3e3}) hold.

	\section{Several examples of Theorem \ref{th1}}\label{sec5}

	In this section, we will deal with several examples of Theorem \ref{th1}, such as the paraboloids, hyperbolic paraboloids, all quadratic manifolds with $d+n\leq 5$, and good manifolds.

    \subsection{Paraboloids and hyperbolic paraboloids}\phantom{x}
    
	Firstly, we consider the parabolic case of Theorem \ref{th1}. The estimates in Theorem \ref{th1} will recover (\ref{his1}) for all $0 <\alpha \leq d+1$, which are the best results yet.
	
	\begin{corollary}[Paraboloids]\label{cor1}
		Suppose that $\mathbf{Q}$ denotes the paraboloids, i.e.,
		$$\mathbf{Q}(\xi)=(\xi_1^2+\xi_2^2+\cdots+\xi_d^2).$$ 
		Then we have    
		\begin{align}\label{cor1e1}
			~s(\alpha,\mathbf{Q})\leq 
			\begin{cases}
				~	0, \quad \quad\quad \quad    & 0<  \alpha \leq \frac{d}{2},   \\
				~	\frac{2\alpha-d}{4}, \quad \quad \quad\quad 
				&\frac{d}{2}<  \alpha< \frac{d+1}{2},   \\
				~	\frac{\alpha}{2(d+1)}, \quad \quad \quad\quad &\frac{d+1}{2}\leq   \alpha\leq d+1. 
			\end{cases}
		\end{align}
	\end{corollary}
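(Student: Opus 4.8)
The plan is to specialize Theorem~\ref{th1} to the standard paraboloid, where every invariant entering (\ref{t1e1}) and (\ref{t1e2}) can be computed by hand. First I would record the algebraic facts. Since $\mathbf{Q}(\xi)=(\xi_1^2+\cdots+\xi_d^2)$ has Hessian $\nabla_\xi^2Q_1=2I$, we get $\overline{Q}(\theta)=2\theta I$ on $\mathbb{S}^0=\{\pm1\}$, whose eigenvalues are all $\pm2\neq0$, so Lemma~\ref{lem:alg_char_m} gives $\mathfrak{d}_{d,1}(\mathbf{Q})=d$. More generally $(\mathbf{Q}\circ M)\cdot N=N\,\xi^TM^TM\xi$ with $M^TM$ positive semidefinite of rank ${\rm rank}(M)$, so this polynomial genuinely involves at least ${\rm rank}(M)$ of the variables; hence $\mathfrak{d}_{d',1}(\mathbf{Q})\geq d'$ for all $1\leq d'\leq d$, which forces $\tilde d=0$ and $d-\tilde d+n=d+1$. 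Plugging $\mathfrak{d}_{d,1}(\mathbf{Q})=d$ and $d-\tilde d+n=d+1$ into (\ref{t1e1}) yields $s(\alpha,\mathbf{Q})\leq0$ for $0<\alpha\leq d/2$, $s(\alpha,\mathbf{Q})\leq\frac{2\alpha-d}{4}$ for $d/2<\alpha<d+1$, and $s(d+1,\mathbf{Q})\leq\frac12$; this already gives the first two lines of (\ref{cor1e1}) together with the endpoint $\alpha=d+1$ of the third.

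It then remains to improve $\frac{2\alpha-d}{4}$ to $\frac{\alpha}{2(d+1)}$ on $\frac{d+1}{2}\leq\alpha<d+1$, which I would extract from (\ref{t1e2}) with the choice $k=d+1$. The geometric input is $X(\mathbf{Q},d+1,m)$: the tangent space $V_\xi$ of $S_\mathbf{Q}$ at $(\xi,|\xi|^2)$ has one-dimensional orthogonal complement spanned by the normal $(2\xi,-1)$, so for any $V$ with $\dim V=m\leq d$ we have $\dim\pi_{V_\xi}(V)=m-\dim(V\cap V_\xi^\perp)\in\{m-1,m\}$, and the locus $\{\xi:\dim\pi_{V_\xi}(V)=m-1\}=\{\xi:(2\xi,-1)\in V\}$ is an affine subspace of $\R^d$ of dimension $\leq m-1\leq d-1=k-2$; hence $X(\mathbf{Q},d+1,m)=m$ for $1\leq m\leq d$, while $\pi_{V_\xi}(\R^{d+1})=V_\xi$ has dimension $d$ for every $\xi$, so $X(\mathbf{Q},d+1,d+1)=d$. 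Therefore $\max_{1\leq m\leq d+1}\frac{m-X(\mathbf{Q},d+1,m)}{2m}\,\alpha=\frac{\alpha}{2(d+1)}$. For the decoupling term, $\Gamma_p^{d-1}(\mathbf{Q})$ is obtained from (\ref{dec th1 00}) by replacing $d$ with $d-1$, and since $\mathfrak{d}_{d',1}(\mathbf{Q})=d'$ as above, it equals $\max\{0,(d-1)(\frac12-\frac1p)-\frac2p\}$, which vanishes exactly for $2\leq p\leq\frac{2(d+1)}{d-1}$. Taking $p=\frac{2(d+1)}{d-1}$ in (\ref{t1e2}), the second entry of the maximum there becomes
\begin{equation*}
\frac{d+1-\alpha}{2p}+\frac{\alpha+1-d}{4}=\frac{(d-1)(d+1-\alpha)+(d+1)(\alpha+1-d)}{4(d+1)}=\frac{2\alpha}{4(d+1)}=\frac{\alpha}{2(d+1)},
\end{equation*}
which matches the first entry; hence $s(\alpha,\mathbf{Q})\leq\frac{\alpha}{2(d+1)}$ for $d/2<\alpha<d+1$.

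Finally, since $\frac{2\alpha-d}{4}$ and $\frac{\alpha}{2(d+1)}$ are both affine in $\alpha$ and agree at $\alpha=\frac{d+1}{2}$, the former is the smaller upper bound on $[d/2,\frac{d+1}{2}]$ and the latter on $[\frac{d+1}{2},d+1]$, while at $\alpha=d+1$ the trivial Agmon--H\"ormander bound $\frac12=\frac{d+1}{2(d+1)}$ closes the gap; assembling these cases gives precisely (\ref{cor1e1}). The only steps that are not pure bookkeeping are the computation of $X(\mathbf{Q},d+1,m)$ — handled above through the explicit normal vectors of the paraboloid — and the observation that $p=\frac{2(d+1)}{d-1}$ is exactly the exponent that balances the two terms of (\ref{t1e2}); beyond these I expect no real obstacle.
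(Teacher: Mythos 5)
Your proposal is correct and follows essentially the same overall strategy as the paper: specialize Theorem~\ref{th1} to the paraboloid, compute $\mathfrak{d}_{d',1}(\mathbf{Q})=d'$ (hence $\tilde d=0$), read off the first two bounds and the trivial endpoint from (\ref{t1e1}), and then extract the third bound from (\ref{t1e2}) via the $k$-linear/decoupling term balance. The one genuine difference is your treatment of $X(\mathbf{Q},k,m)$: you fix $k=d+1$ and compute $X(\mathbf{Q},d+1,m)$ exactly from the paraboloid's one-dimensional normal $(2\xi,-1)$, getting $X=m$ for $m\leq d$ and $X=d$ for $m=d+1$, whereas the paper quotes the general lower bound $X(\mathbf{Q},k,m)\geq m\cdot\frac{k-1}{k}$ from Appendix~C of \cite{ggo23} valid for all $k$ and then observes that $k=d+1$ is optimal. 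Both give $\max_m\frac{m-X}{2m}\alpha=\frac{\alpha}{2(d+1)}$, and your computation of $\Gamma_p^{d-1}(\mathbf{Q})$ and the choice $p=\frac{2(d+1)}{d-1}$ balancing the two terms of (\ref{t1e2}) match the paper's $p=\frac{2k}{k-2}$ at $k=d+1$. Your route has the merit of being fully self-contained and of making the geometric reason for $X(\mathbf{Q},d+1,m)=m$ transparent, at the minor cost of not showing why no other $k$ does better (which for the paraboloid is true but not needed, since you obtain the exact claimed bound).
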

	
	\begin{proof} By Definition \ref{s1d1}, we have: For each $0\leq d'\leq d$, there is
		\begin{align}
			\mathfrak{d}_{d',1}(\mathbf{Q})&= \inf_{\substack{   M \in \mathbb{R}^{d\times d} \\ {\rm rank}(M)=d'   }} \rank \big( M \cdot \nabla_\xi^2\mathbf{Q} \cdot M^T  \big) \nonumber \\
			&=  \inf_{\substack{   M \in \mathbb{R}^{d\times d} \\ {\rm rank}(M)=d'   }} \rank \big( M  \cdot M^T  \big) \nonumber \\ 
			&= \inf_{\substack{   M \in \mathbb{R}^{d\times d} \\ {\rm rank}(M)=d'   }} \rank(M) = d'. \label{s5c51eq1}
		\end{align}
		
		For $2\leq k\leq d+2$, from (\ref{dec th1 00}), we can obtain
		\begin{align*}
			\Gamma_p^{k-2}(\mathbf{Q}) &=  \max_{0\leq d'\leq k-2} \max_{0\leq n' \leq 1} \Big\{  \Big(d'- \mathfrak{d}_{d',n'}(\mathbf{Q})\Big)\Big(\frac{1}{2}-\frac{1}{p}\Big) -\frac{2(1-n')}{p}    \Big\} \\
			&=  \max_{0\leq d'\leq k-2}\max\Big\{ d'\Big( \frac{1}{2} -\frac{1}{p} \Big)-\frac{2}{p},~0    \Big\}  \\
			&= \max\Big\{ \frac{k-2}{2}-\frac{k}{p},~0    \Big\} .
		\end{align*}
		Thus we get the endpoint of $\ell^2L^p$ decoupling is $p=\frac{2k}{k-2}$, and corresponding $\Gamma_p^{k-2}(\mathbf{Q})=0$. On the other hand, repeating the arguments of Appendix C in  \cite{ggo23}, we can get  
		$$   X(\mathbf{Q},k,m) \geq m \cdot \frac{k-1}{k}.   $$
		Combining the two results and doing some calculations, (\ref{t1e2}) implies the third bound in (\ref{cor1e1}), and then Theorem \ref{th1} implies Corollary \ref{cor1}.
	\end{proof}

	Next, we consider the hyperbolic case on Theorem \ref{th1}.
	
	\begin{corollary}[Hyperbolic paraboloids]\label{cor2}
		Suppose that $\mathbf{Q}$ denotes the hyperbolic paraboloids, i.e.,
		$$\mathbf{Q}(\xi)=(\xi_1^2+\cdots+\xi_{d-m}^2-\xi_{d-m+1}^2-\cdots-\xi_d^2)$$ 
		for some $m\leq d/2$. Then we have    
		\begin{align}\label{cor2e1}
			~s(\alpha,\mathbf{Q})\leq 
			\begin{cases}
				~	0, \quad \quad\quad \quad    & 0<  \alpha \leq \frac{d}{2},   \\
				~	\frac{2\alpha-d}{4}, \quad \quad \quad\quad 
				&\frac{d}{2}<  \alpha< d+1-m,   \\
				~	\frac{1}{2}, \quad \quad \quad\quad &d+1-m\leq   \alpha\leq d+1. 
			\end{cases}
		\end{align}
		Moreover, for $m>1$ and $j\in [1,m-1]$, there is
		\begin{align}\label{cor2e2}
			~s(\alpha,\mathbf{Q})\leq 
			\begin{cases}
				~	\frac{\alpha}{2(d+1-j)}, \quad \quad\quad \quad    & \frac{(j-1)(d+1-j)}{m-1}  \leq  \alpha <\frac{j(d+1-j)}{m},   \\
				~	\frac{\alpha-j}{2(d+1-j-m)}, \quad \quad \quad\quad 
				&\frac{j(d+1-j)}{m} \leq   \alpha< \frac{j(d-j)  }{m-1}.
			\end{cases}
		\end{align}
	\end{corollary}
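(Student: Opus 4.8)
## Proof proposal for Corollary~\ref{cor2}

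The plan is to derive both (\ref{cor2e1}) and (\ref{cor2e2}) from Theorem~\ref{th1} by carefully computing the algebraic invariants $\mathfrak{d}_{d',n'}(\mathbf{Q})$ (which here reduces to $\mathfrak{d}_{d',1}(\mathbf{Q})$ since $n=1$), the decoupling exponents $\Gamma_p^{k-2}(\mathbf{Q})$, and the lower-dimensional curvature quantities $X(\mathbf{Q},k,m)$ for the hyperbolic paraboloid. First I would record the straightforward facts: since $\mathbf{Q}(\xi) = \xi^T A \xi$ with $A$ the diagonal signature matrix $\mathrm{diag}(I_{d-m},-I_m)$, for $\mathrm{rank}(M) = d'$ we have $R_{N,M}\mathbf{Q} = \mathbf{Q}\circ M$ has Hessian $M A M^T$, and by Sylvester-type considerations the minimal possible rank of $MAM^T$ over rank-$d'$ matrices $M$ is $\max\{0, d' - 2m\}$ (one can kill up to $2m$ dimensions of rank by pairing the $m$ positive and $m$ negative directions). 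Hence $\mathfrak{d}_{d',1}(\mathbf{Q}) = \max\{0, d'-2m\}$; in particular $\mathfrak{d}_{d,1}(\mathbf{Q}) = d-2m$, and the largest $d'$ with $\mathfrak{d}_{d',1}(\mathbf{Q}) = 0$ is $\tilde d = 2m$, so $d - \tilde d + n = d - 2m + 1 = d + 1 - m$ after using $n=1$... wait, $d-2m+1$, and since $m \le d/2$ this matches the breakpoint $d+1-m$ only when... I would double-check: actually the breakpoint in (\ref{cor2e1}) is $d+1-m$, while $d-\tilde d + n = d - 2m + 1$; these agree iff $m=1$, so for $m \ge 2$ the third range of Theorem~\ref{th1} only gives $s \le 1/2$ on $[d-2m+1, d+1]$, and the intermediate range $[d-2m+1, d+1-m)$ must be improved via (\ref{t1e2}). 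This is exactly where (\ref{cor2e2}) comes from, matching the structure of \cite{beh21}.

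Next I would compute $\Gamma_p^{k-2}(\mathbf{Q})$ from (\ref{dec th1 00}). Using $\mathfrak{d}_{d',1}(\mathbf{Q}) = \max\{0,d'-2m\}$ and $\mathfrak{d}_{d',0}(\mathbf{Q}) = 0$, one gets
$$\Gamma_p^{k-2}(\mathbf{Q}) = \max_{0 \le d' \le k-2}\max\Big\{ \big(d' - \max\{0,d'-2m\}\big)\Big(\tfrac12 - \tfrac1p\Big) - \tfrac2p,\ 0 \Big\},$$
and the inner maximum over $d'$ is achieved either at $d' = k-2$ or at $d' = \min\{k-2, 2m\}$ depending on the sign of the derivative in $d'$; this yields an explicit piecewise-linear formula in $p$ and $k$, from which one reads off the endpoint $p = p(k)$ where $\Gamma_p^{k-2}(\mathbf{Q})$ first vanishes. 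Then I would compute $X(\mathbf{Q},k,m)$ for the hyperbolic paraboloid, which is the genuinely delicate combinatorial-geometric step: by Definition~\ref{s1d2} one must understand, for an $m$-dimensional subspace $V$, the dimension of the set of $\xi$ where the tangent-space projection $\pi_{V_\xi}(V)$ drops rank, and this rests on the structure of the Gauss map of $S_\mathbf{Q}$, which for the hyperbolic paraboloid is a linear (signature-twisted) map. Following the arguments of Appendix~C in \cite{ggo23} adapted to the indefinite signature, I expect a bound of the shape $X(\mathbf{Q},k,m) \ge m\cdot\frac{k-1-?}{k-1}$ with a correction term involving $m$ (the hyperbolicity), and plugging this into the first term $\max_m \frac{m-X(\mathbf{Q},k,m)}{2m}\alpha$ of $w(p,k,\alpha,\mathbf{Q})$ in (\ref{eq:key_exponent}).

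Finally, I would optimize (\ref{t1e2}) over $2 \le k \le d+1$ and $p \ge 2$. The claim is that choosing the dividing dimension $k = d+1-j$ (so that $S_\mathbf{Q}$ restricted to the narrow $(k-2)$-dimensional pieces sees a hyperbolic paraboloid of reduced size) and the corresponding decoupling endpoint $p$ makes the two terms in the $\max$ of (\ref{eq:key_exponent}) balance and produces exactly the exponents $\frac{\alpha}{2(d+1-j)}$ and $\frac{\alpha-j}{2(d+1-j-m)}$ appearing in (\ref{cor2e2}); the two subranges of $\alpha$ there correspond to which of the two terms in the $\max$ dominates, and the endpoints $\frac{(j-1)(d+1-j)}{m-1}$, $\frac{j(d+1-j)}{m}$, $\frac{j(d-j)}{m-1}$ are precisely the crossover values where the optimal $j$ or the dominant term changes. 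For (\ref{cor2e1}), the first two bounds are immediate from (\ref{t1e1}) since $\mathfrak{d}_{d,1}(\mathbf{Q}) = d - 2m$ gives the breakpoint $\frac{d-2m}{2} \le \frac d2$... here I should be careful: (\ref{cor2e1}) states $s \le 0$ up to $\alpha = d/2$, not $\alpha = (d-2m)/2$, so the first bound must itself come from the $j=m$ (or a limiting) case of (\ref{t1e2}) rather than directly from the first line of (\ref{t1e1}) — I would verify that taking $k=d+1$, the appropriate $p$, and $m' = d$ in the first term of $w$ recovers $s \le 0$ on $(0,d/2]$, consistent with the known Stein--Tomas exponent for $S_\mathbf{Q}$. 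The third bound $s \le 1/2$ on $[d+1-m, d+1]$ should follow by taking $\alpha = d+1-m$ in the improved intermediate estimate and noting monotonicity, or directly from the Agmon--Hörmander bound combined with the $j=1$ case.

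\medskip

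\noindent\textbf{Main obstacle.} The hard part will be the precise computation of $X(\mathbf{Q},k,m)$ for the hyperbolic paraboloid, including the dependence on the hyperbolicity index $m$, and then verifying that the optimization over $(k,p)$ in (\ref{t1e2}) reproduces \emph{exactly} the piecewise formula (\ref{cor2e2}) of \cite{beh21} with the stated breakpoints — this requires tracking several competing linear-in-$\alpha$ expressions and identifying their crossings, which is where the bookkeeping is most error-prone. A secondary subtlety is reconciling the breakpoint $d+1-m$ in (\ref{cor2e1}) with the breakpoint $d-\tilde d + n = d+1-2m$ coming from the general Theorem~\ref{th1}, i.e.\ confirming that the "$\frac n2 = \frac12$" plateau genuinely begins at $d+1-m$ for the hyperbolic paraboloid, which forces us to use (\ref{t1e2}) (not just (\ref{t1e1})) throughout the intermediate range.
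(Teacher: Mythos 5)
Your overall strategy (compute $\mathfrak{d}_{d',1}(\mathbf{Q})$, then $\Gamma_p^{k-2}(\mathbf{Q})$ and $X(\mathbf{Q},k,m)$, then optimize (\ref{t1e2})) is the right one and is the one the paper takes. However, there is a concrete algebraic error at the very first step that propagates through the whole argument. Your formula $\mathfrak{d}_{d',1}(\mathbf{Q}) = \max\{0,d'-2m\}$ is incorrect. In particular, for $d'=d$ you would get $\mathfrak{d}_{d,1}(\mathbf{Q}) = d-2m$, but in fact $\mathfrak{d}_{d,1}(\mathbf{Q}) = d$: when $M$ has full rank $d$, the congruent matrix $M^T\nabla_\xi^2\mathbf{Q}\,M$ still has rank $d$, so no rank drop is possible. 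Rank drop is only achieved by \emph{restricting} to a proper subspace that pairs positive and negative directions, and the amount of drop is bounded by the available drop dimension, not by $2m$ uniformly. The correct formula (established in the paper via the identity $\mathfrak{d}_{d-\ell,1}(\mathbf{Q}) = \inf_{\dim V = d-\ell}\dim\pi_V(T(V))$ with $T = \operatorname{diag}(I_{d-m},-I_m)$) is the piecewise expression: $\mathfrak{d}_{d-\ell,1}(\mathbf{Q}) = d-2\ell$ for $0\le\ell\le m$; $= d-\ell-m$ for $m<\ell<d-m$; and $=0$ for $\ell\ge d-m$. With this corrected, $\tilde d = m$ (not $2m$), and the breakpoint $d - \tilde d + n = d+1-m$ in (\ref{cor2e1}) matches the statement exactly. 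Thus the discrepancy you flagged ("$d-2m+1$ versus $d+1-m$") is not a feature of the corollary that forces you to patch the range via (\ref{t1e2}); it is a signal that your $\mathfrak{d}$ computation is wrong, and once it is corrected, the first two estimates in (\ref{cor2e1}) and the breakpoint $d+1-m$ come directly from (\ref{t1e1}) with $\mathfrak{d}_{d,1}(\mathbf{Q})=d$.

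Two further points. First, on $X(\mathbf{Q},k,m)$: you anticipate a hyperbolicity-dependent correction term, but in fact the same bound $X(\mathbf{Q},k,m)\ge m\cdot\frac{k-1}{k}$ that holds for paraboloids also holds for hyperbolic paraboloids; the proof carries over verbatim from Appendix~C of \cite{ggo23}. The hyperbolicity enters purely through the decoupling exponent: using the corrected $\mathfrak{d}$ one finds $\Gamma_p^{k-2}(\mathbf{Q}) = \max\{\frac{k-2}{2}-\frac{k}{p},\ m(\frac{1}{2}-\frac{1}{p})\}$ for $m+2\le k\le d+2$, whose endpoint is $p = \frac{2(k-m)}{k-m-2}$ with $\Gamma_p^{k-2}(\mathbf{Q}) = \frac{m}{k-m}$, and that extra $m$-dependence is exactly what deforms (\ref{t1e2}) into (\ref{cor2e2}). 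Second, the residual optimization over $k$ (with these endpoint $p$'s) yields $s(\alpha,\mathbf{Q})\le\min_{m+2\le k\le d+1}\max\{\frac{\alpha}{2k},\ \frac{k-d+\alpha-1}{2(k-m)}\}$, and the verification that this is equivalent to the piecewise display (\ref{cor2e2}) is genuinely a bookkeeping exercise (the paper defers it to Section~5 of \cite{beh21}). Your broad plan is sound, but the specific quantitative inputs must be fixed as above before the optimization can reproduce (\ref{cor2e2}).
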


	\begin{proof} 
		We claim
		\begin{align}\label{s5c52eq1}
			~\mathfrak{d}_{d-\ell,1}(\mathbf{Q})=
			\begin{cases}
				~	d-2\ell, \quad \quad\quad \quad    & 0\leq   \ell\leq m,   \\
				~	d-\ell-m, \quad \quad \quad\quad 
				&m<  \ell< d-m,   \\
				~	0, \quad \quad \quad\quad &d-m\leq \ell\leq d. 
			\end{cases}
		\end{align}
		In fact, when $0\leq   \ell\leq m$, note
		\begin{equation}\label{s5c51eq2}
			\mathfrak{d}_{d-\ell,1}(\mathbf{Q}) = \inf_{L {\rm ~of~dim~}d-\ell} \mathfrak{d}_{d-\ell,1}(\mathbf{Q}|_{L}).  
		\end{equation}
        By taking 
        $$  L=\big\{  \xi_1=\xi_{d-m+1},~\xi_2=\xi_{d-m+2},\dots,~\xi_\ell=\xi_{d-m+\ell} \big\}      $$
        in (\ref{s5c51eq2}), we have 
        $$   \mathbf{Q}(\xi)|_{L}=(\xi_{\ell+1}^2+\cdots+\xi_{d-m}^2-\xi_{d-m+\ell+1}^2-\cdots-\xi_d^2), $$
        which means $  \mathfrak{d}_{d-\ell,1}(\mathbf{Q}) \leq d-2\ell  $ when $\ell \leq m$. 
		On the other hand, by Lemma 3.1 of \cite{gozzk23}, we know this upper bound is also the lower bound, so we have proved the first equation in (\ref{s5c52eq1}). When $m<  \ell \leq  d-m$, note that $\mathfrak{d}_{d-\ell,1}(\mathbf{Q}) \leq \mathfrak{d}_{d-m,1}(\mathbf{Q}) - (\ell-m) = d - \ell - m$, so it suffices to prove $\mathfrak{d}_{d-\ell,1}(\mathbf{Q}) \geq d - \ell - m$. By Definition \ref{s1d1}, we have: For each $0\leq d'\leq d$, there is
		\begin{align*}
			\mathfrak{d}_{d-\ell,1}(\mathbf{Q})&= \inf_{\substack{   M \in \mathbb{R}^{d\times d} \\ {\rm rank}(M)=d-\ell   }} \rank \big( M \cdot \nabla_\xi^2\mathbf{Q} \cdot M^T  \big) \\
			&= \inf_{\substack{   M \in \mathbb{R}^{(d-\ell)\times d} \\ {\rm rank}(M)=d-\ell   }} \rank \left( M \cdot \begin{pmatrix}
				I_{d-m} & 0\\
				0 & -I_m
			\end{pmatrix} \cdot M^T  \right)\\
			&= \inf_{\dim V = d-\ell} \dim \pi_V (T(V)),
		\end{align*}
		where $V$ is the subspace in $\R^d$ spanned by row vectors of $M$, $\pi_V$ is defined as in Definition \ref{s1d2}, and $T: \R^d \rightarrow \R^d$ with $T(x_1,\dots,x_d)=(x_1,\dots,x_{d-m}, - x_{d-m+1},-x_d)$. In the last line above we used the facts that $M\cdot w = M \cdot \pi_V(w)$ for any $w \in \R^d$, and for any basis $\{v_j\}_{j=1}^{d-\ell}$ of $V$, the map $v\in V \rightarrow (v\cdot v_j)_{j=1}^{d-\ell} \in \R^{d-\ell}$ is a linear isomorphism. 
		Let $W \coloneqq \{v \in \R^d: T(v) = v\}$, then $\dim W = d-m$, and so 
		\begin{align*}
			\dim \, T(V) \cap V &\geq \dim \, W \cap V \\
			&= \dim W + \dim V - \dim \R^d \\
			&= (d-m) + (d-\ell) - d\\ 
			&= d-\ell-m
		\end{align*}
		for any subspace $V \subset \R^d$ with $\dim V = d-\ell$. This implies that
		\begin{align*}
			\mathfrak{d}_{d-\ell,1}(\mathbf{Q}) &= \inf_{\dim V = d-\ell} \dim \pi_V (T(V))\\
			&\geq \inf_{\dim V = d-\ell} \dim \, T(V) \cap V\\
			&\geq d-\ell-m,
		\end{align*}
		as desired. So we have proved the second equation in (\ref{s5c52eq1}). Finally, the last equation in (\ref{s5c52eq1}) is obvious in view of the case $\ell=d-m$, and the proof of the claim is completed.
		
		For $m+2\leq k\leq d+2$, from (\ref{dec th1 00}), we can obtain
		\begin{align*}
			\Gamma_p^{k-2}(\mathbf{Q}) &=  \max_{0\leq d'\leq k-2} \max_{0\leq n' \leq 1} \Big\{  \Big(d'- \mathfrak{d}_{d',n'}(\mathbf{Q})\Big)\Big(\frac{1}{2}-\frac{1}{p}\Big) -\frac{2(1-n')}{p}    \Big\} \\
			&=  \max_{0\leq d'\leq k-2}\max\Big\{ d'\Big( \frac{1}{2} -\frac{1}{p} \Big)-\frac{2}{p},~\Big(d'- \mathfrak{d}_{d',1}(\mathbf{Q})\Big)\Big(\frac{1}{2}-\frac{1}{p}\Big)    \Big\}  \\
			&= \max\Big\{ \frac{k-2}{2}-\frac{k}{p},~\max_{0\leq d'\leq k-2} \Big(d'- \mathfrak{d}_{d',1}(\mathbf{Q})\Big)\Big(\frac{1}{2}-\frac{1}{p}\Big)    \Big\} \\
			&= \max\Big\{ \frac{k-2}{2}-\frac{k}{p},~m\Big(\frac{1}{2}-\frac{1}{p}\Big)  \Big\} .
		\end{align*}
		Thus we get the endpoint of $\ell^2L^p$ decoupling is $p=\frac{2(k-m)}{k-m-2}$, and corresponding $\Gamma_p^{k-2}(\mathbf{Q})=\frac{m}{k-m}$. On the other hand, via the same argument as in Appendix C in \cite{ggo23}, there is
		$$   X(\mathbf{Q},k,m) \geq m \cdot \frac{k-1}{k}.   $$
		Combining above two estimates, by (\ref{t1e2}), we conclude
		\begin{equation}\label{cor2e20}
			s(\alpha,\mathbf{Q})\leq  \min_{m+2\leq k\leq d+1} \max \left\{ \frac{\alpha}{2k},~\frac{k-d+\alpha-1}{2(k-m)}     \right\}. 
	\end{equation}
		In fact, we can find that (\ref{cor2e2}) and (\ref{cor2e20}) are equivalent through some algebraic calculations, see Section 5 in \cite{beh21} for more details. Therefore, Theorem \ref{th1} implies Corollary \ref{cor2}.
		
	\end{proof}
	
	\begin{remark}
		Note that {\rm (\ref{cor2e1})} and {\rm (\ref{cor2e2})} in Corollary {\rm \ref{cor2}} are exactly {\rm (\ref{his2})} and {\rm (\ref{his3})} from \cite{beh21}. But {\rm (\ref{his4})} is beyond the power of Theorem {\rm \ref{th1}}. When $n=1$, Barron, Erdo\u{g}an and Harris proved {\rm (\ref{his4})} by using bilinear weighted restriction estimates due to Lee \cite{lee06} when $d \geq 2$ and Vargas \cite{vargas05} independently when $d = 2$. The bilinear restriction estimates in turn rely on another way of exploiting the curvature property of hyperbolic paraboloids, which is closely related to the so-called ``rotational curvature'' condition. One may consult \cite{bll17} for a higher-codimensional generalization of the bilinear restriction method as well as the rotational curvature. It is interesting to also generalize such a bilinear method to all quadratic forms in our weighted restriction setting, which might further improve Theorem {\rm \ref{th1}}, at least in some special cases. However, the current state of the art of the bilinear method in the higher-codimensional cases relies heavily on the concept of rotational curvature, while quadratic manifolds generically fail to have nonvanishing rotational curvature. One may consult the discussions in \cite{gressman15} or Section~{\rm 5} of \cite{cmp24} for additional background on this issue. Therefore, since we are pursuing a general theory that can be applied to any quadratic form including degenerate ones, we will not proceed along the direction of bilinear restriction in this paper.
	\end{remark}
	
    \subsection{Quadratic manifolds with \texorpdfstring{$d+n \leq 5$}{d+n≤5}}\phantom{x}
    
	Now let us consider all higher-codimensional quadratic forms $\mathbf{Q}$ with $d+n\leq 5$ that do not miss any variable ($\mathfrak{d}_{d,n}(\mathbf{Q})=d$) and are linearly independent ($\mathfrak{d}_{d,1}(\mathbf{Q})>0$). The two constraints on $\mathfrak{d}_{d',n'}(\mathbf{Q})$ are not essential, since they only exclude some trivial cases. Guo and Oh \cite{go22} classified the quadratic forms for the case $d=3$ and $n=2$ by some algebra. The following version stated by using the quantities $\mathfrak{d}_{d',n'}(\mathbf{Q})$ is from Theorem 1.1 in \cite{cmw24}. 
	
    \begin{lemma}\label{addth1}
		Let $d=3$ and $n=2$. Suppose that the quadratic form $\mathbf{Q}=(Q_1,Q_2)$ satisfies   $\mathfrak{d}_{3,2}(\mathbf{Q})=3$ and $\mathfrak{d}_{3,1}(\mathbf{Q})>0$. Then

		\noindent {\rm (1)} If $\mathbf{Q}$ satisfies $\mathfrak{d}_{3,1}(\mathbf{Q})=1$ and $\mathfrak{d}_{2,2}(\mathbf{Q})=1$, then $\mathbf{Q}(\xi)\equiv (\xi_1^2,\xi_2^2+\xi_1\xi_3)$ or  $(\xi_1^2,\xi_2\xi_3)$.
		
		\noindent {\rm (2)} If $\mathbf{Q}$ satisfies $\mathfrak{d}_{3,1}(\mathbf{Q})=1$ and $\mathfrak{d}_{2,2}(\mathbf{Q})=2$, then $\mathbf{Q}(\xi)\equiv (\xi_1^2,\xi_2^2+\xi_3^2)$.
		
		\noindent {\rm (3)} If $\mathbf{Q}$ satisfies $\mathfrak{d}_{3,1}(\mathbf{Q})=2$ and $\mathfrak{d}_{2,2}(\mathbf{Q})=0$, then $\mathbf{Q}(\xi)\equiv (\xi_1 \xi_2,\xi_1\xi_3)$.
		
		\noindent {\rm (4)} If $\mathbf{Q}$ satisfies $\mathfrak{d}_{3,1}(\mathbf{Q})=2$ and $\mathfrak{d}_{2,2}(\mathbf{Q})=1$, then $\mathbf{Q}(\xi)\equiv (\xi_1\xi_2,\xi_2^2+\xi_1\xi_3)$ or $ (\xi_1\xi_2,\xi_1^2\pm\xi^2_3)$.
		
		\noindent {\rm (5)} $\mathbf{Q}$ satisfies $\mathfrak{d}_{3,1}(\mathbf{Q})=2$ and $\mathfrak{d}_{2,2}(\mathbf{Q})=2$ if and only if $\mathbf{Q}$ satisfies the ${\rm (CM)}$ condition. 
    \end{lemma}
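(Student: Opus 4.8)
The plan is to recast the statement as a classification of two-dimensional pencils of real symmetric $3\times 3$ matrices up to congruence, and then to run through the (short) Kronecker--Weierstrass list, reading off the values of $\mathfrak{d}_{3,1}$ and $\mathfrak{d}_{2,2}$ at each stage. Associate to $\mathbf{Q}=(Q_1,Q_2)$ the subspace $\mathcal{A}=\mathrm{span}\{\nabla_\xi^2 Q_1,\nabla_\xi^2 Q_2\}\subset \mathrm{S}(3,\R)$. Since the $N$-action in $R_{N,M}\mathbf{Q}$ only changes the basis of $\mathcal{A}$, one has $\mathbf{Q}\equiv \mathbf{Q}'$ iff $\mathcal{A}'=M^{T}\mathcal{A}M$ for some $M\in \mathrm{GL}(3,\R)$. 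In this language the two standing hypotheses read: $\mathfrak{d}_{3,1}(\mathbf{Q})>0$ means every nonzero member of $\mathcal{A}$ is a nonzero matrix, i.e. $\dim\mathcal{A}=2$ genuinely; and $\mathfrak{d}_{3,2}(\mathbf{Q})=3$ means $\bigcap_{A\in\mathcal{A}}\ker A=\{0\}$, i.e. no variable can be eliminated. Record the binary cubic $p(\theta):=\det\overline{Q}(\theta)$ ($\theta\in\mathbb{S}^{1}$); a real binary cubic is either identically zero or has a real root, so $\mathfrak{d}_{3,1}(\mathbf{Q})=\min_{\theta}\rank\,\overline{Q}(\theta)\le 2$, and under the hypotheses $\mathfrak{d}_{3,1}(\mathbf{Q})\in\{1,2\}$.

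First I would dispatch the singular case $p\equiv 0$. One checks that $p\equiv0$ is equivalent to $\mathfrak{d}_{2,2}(\mathbf{Q})=0$: if some $2$-plane lies in the common zero set of $Q_1,Q_2$, then in a suitable basis both Hessians have a $2\times2$ zero block, forcing $\det\overline{Q}\equiv0$; conversely the singular pencil forces, after pruning blocks with nontrivial common kernel (excluded by $\mathfrak{d}_{3,2}=3$), the Kronecker block $L_{1}\oplus L_{1}^{T}$, which is $(\xi_1\xi_2,\xi_1\xi_3)$ up to congruence, and there the plane $\{\xi_1=0\}$ witnesses $\mathfrak{d}_{2,2}=0$. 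A direct rank computation gives $\mathfrak{d}_{3,1}=2$ there, so the singular case is exactly item (3). From now on assume $p\not\equiv0$, so the pencil is regular, and split on $\mathfrak{d}_{3,1}$. If $\mathfrak{d}_{3,1}=1$, then $\mathcal{A}$ contains a rank-one matrix $\pm vv^{T}$, so after a change of variables $Q_1=\xi_1^{2}$; normalizing $Q_2$ modulo $\R\cdot\xi_1^{2}$ and the stabilizer of $\xi_1^{2}$ (the binary form $Q_2|_{\{\xi_1=0\}}$ is congruent to $0$, $\xi_2^{2}$, $\xi_2^{2}+\xi_3^{2}$ or $\xi_2\xi_3$, and the cross terms $\xi_1\xi_2,\xi_1\xi_3$ can then be absorbed or normalized) leaves exactly the representatives $(\xi_1^{2},\xi_2^{2}+\xi_3^{2})$, which has $\mathfrak{d}_{2,2}=2$ (item (2)), and $(\xi_1^{2},\xi_2^{2}+\xi_1\xi_3)$, $(\xi_1^{2},\xi_2\xi_3)$, which have $\mathfrak{d}_{2,2}=1$ (item (1)); here one uses $\xi_2^{2}-\xi_3^{2}\equiv \xi_2\xi_3$ to merge redundant forms and checks that the three surviving representatives are pairwise inequivalent via the root multiplicities of $p$. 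If instead $\mathfrak{d}_{3,1}=2$ and $p\not\equiv 0$, then $\mathcal{A}$ has no rank-one member, and at any multiple real root of $p$ the Weierstrass elementary-divisor structure must be a genuine (non-semisimple) Jordan block --- either one block of size $3$, giving $(\xi_1\xi_2,\xi_2^{2}+\xi_1\xi_3)$, or a size-$2$ block together with a size-$1$ block, giving $(\xi_1\xi_2,\xi_1^{2}\pm\xi_3^{2})$ with the two signs genuinely inequivalent --- in all of which $\mathfrak{d}_{2,2}=1$ (item (4)); everything that remains, namely $p\not\equiv0$ with only simple real roots and no rank-one member, has $\mathfrak{d}_{3,1}=2$ and $\mathfrak{d}_{2,2}=2$ and is item (5).

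It remains to identify item (5) with the ${\rm (CM)}$ condition; this part admits a clean argument that avoids the normal forms. Since $n/d=2/3\in(1/2,1)$, one has ${\rm (CM)}$ for $\mathbf{Q}$ iff $p=\det\overline{Q}\not\equiv0$ and every real zero of $p$ is simple (a zero of order $k\ge2$ makes $\int_{\mathbb{S}^{1}}|p|^{-\gamma}$ diverge for $\gamma\in(1/k,2/3)$, while a zero of order $k\le1$ is harmless for all $\gamma<1$). Now suppose $\mathbf{Q}$ is \emph{not} item (5). If $p\equiv0$, ${\rm (CM)}$ fails trivially. If $\mathfrak{d}_{3,1}=1$, some $\overline{Q}(\theta_0)$ has rank $1$, and a direct computation shows $p$ vanishes to order $\ge 2$ at $\theta_0$, so ${\rm (CM)}$ fails. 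If $\mathfrak{d}_{3,1}=2$ but $\mathfrak{d}_{2,2}=1$, there is a $2$-plane $V$ and a line $\langle v_0\rangle\subset V$ with $B_i(v_0,V)=0$ for $i=1,2$ (where $B_i$ is the bilinear form of $Q_i$); in a basis adapted to this, both Hessians have a vanishing $(1,1)$ and vanishing $(1,2)$ entry, whence $\det\overline{Q}(\theta)= -a(\theta)^{2}b(\theta)$ for linear forms $a,b$, so $p$ has a multiple zero and ${\rm (CM)}$ fails. Conversely, suppose $\mathfrak{d}_{3,1}=2$ and $\mathfrak{d}_{2,2}=2$ but $p$ had a multiple real root $\theta_0$; then $\overline{Q}(\theta_0)$ is singular of rank exactly $2$ (rank $1$ is ruled out by $\mathfrak{d}_{3,1}=2$), with kernel $\langle v_0\rangle$; the multiplicity $\ge2$ forces $v_0^{T}A v_0=0$ for a generator $A$ of $\mathcal{A}$ complementary to $\overline{Q}(\theta_0)$, while $Av_0\ne0$ (else $v_0$ would lie in the common kernel, contradicting $\mathfrak{d}_{3,2}=3$); taking $V:=(Av_0)^{\perp}$, a $2$-plane containing $v_0$, gives $B_i(v_0,V)=0$ for $i=1,2$, i.e. $\mathfrak{d}_{2,2}(\mathbf{Q})\le1$, a contradiction. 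Hence item (5) is exactly the ${\rm (CM)}$ class, and the lemma follows. I expect the main obstacle to be the exhaustiveness and pruning in the second paragraph: one must verify that the real Kronecker--Weierstrass normal forms for pencils of symmetric $3\times3$ matrices genuinely produce only the listed representatives (including the signature subtlety behind $\xi_1^{2}\pm\xi_3^{2}$ and the distinction between semisimple and non-semisimple multiple roots), and that no two representatives across different items are secretly $\equiv$-equivalent --- essentially re-deriving the $d=3,n=2$ case of the Guo--Oh classification. By comparison, the translation into pencil language, the computation of $\mathfrak{d}_{3,1}$ and $\mathfrak{d}_{2,2}$ for each representative (a few $3\times3$ rank computations and a minimization of ${\rm NV}$ over $2$-planes), and the ${\rm (CM)}$ integrability check are all routine.
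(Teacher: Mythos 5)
The paper itself does not prove Lemma~\ref{addth1}; it is cited from Theorem~1.1 of~\cite{cmw24} (going back to the Guo--Oh classification~\cite{go22}). The closest in-house comparison is the paper's proof of the refinement Lemma~\ref{addth2}, which proceeds by direct elimination: fix a diagonalized $Q_1$, complete squares to normalize $Q_2$, and then distinguish inequivalent representatives via the root distribution of $\Lambda=\{\det(xQ_1+yQ_2)=0\}$ together with a direct signature computation for the $\pm$ ambiguity. Your proposal is correct in spirit but genuinely different in route: you package everything as a classification of pencils of real symmetric $3\times3$ matrices via the Kronecker--Weierstrass theory (singular Kronecker block for item~(3), rank-one member reduction for items~(1)--(2), Jordan-block analysis of multiple real roots for item~(4), and what remains as item~(5)), together with the observations $\mathfrak{d}_{2,2}(\mathbf{Q})\le 1\Leftrightarrow\exists\,V,v_0$ with $B_i(v_0,V)=0$, and $\mathfrak{d}_{2,2}(\mathbf{Q})=0\Leftrightarrow p\equiv 0$. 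What the pencil route buys you is that exhaustiveness and uniqueness of the normal forms are automatic rather than assembled by hand; what it costs is an explicit dependence on the real symmetric Kronecker--Weierstrass structure theorem together with its sign characteristics (which is exactly where the $\xi_1^2\pm\xi_3^2$ ambiguity and the size-$2$-versus-semisimple dichotomy live), a black box that the paper's elementary approach avoids. Your item~(5)$\Leftrightarrow$(CM) argument via the eigenvector analysis at a singular $\theta_0$ (showing a multiple real zero of $p$ forces a 2-plane $V\ni v_0$ with $B_i(v_0,V)=0$, i.e. $\mathfrak{d}_{2,2}\le 1$, and conversely) is clean, coordinate-free, and entirely independent of normal forms; I checked the key identities ($p$ vanishing to order $\ge 2$ at a rank-$1$ point, $\det\overline Q=-a^2b$ under the $\mathfrak{d}_{2,2}=1$ normalization, and the multiplicity-forces-$v_0^TAv_0=0$ expansion) and they are right. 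Two small cautions: the phrase ``only simple real roots'' for item~(5) should be read as ``no multiple \emph{real} root'' (e.g. $(\xi_1^2-\xi_2^2,\xi_1\xi_2+\xi_3^2)$ has a complex pair and belongs to item~(5)); and the remark that the three $\mathfrak{d}_{3,1}=1$ representatives are pairwise distinguished by root multiplicities of $p$ is not quite right, since $(\xi_1^2,\xi_2\xi_3)$ and $(\xi_1^2,\xi_2^2+\xi_3^2)$ both have $p\sim xy^2$; those two are instead separated by $\mathfrak{d}_{2,2}$, which is how the lemma itself sorts them into different items.
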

	
	We note the classification of the case $\mathfrak{d}_{3,1}(\mathbf{Q})=2$ and $\mathfrak{d}_{2,2}(\mathbf{Q})=2$ in above lemma is still not explicit enough. Now we will further categorize this case. In fact, we will provide a complete classification of all higher-codimensional quadratic forms $\mathbf{Q}$ with $d+n\leq 5$ satisfying $\mathfrak{d}_{d,n}(\mathbf{Q})=d$ and $\mathfrak{d}_{d,1}(\mathbf{Q})>0$. 
	
	\begin{lemma}\label{addth2}
		Let $d+n\leq 5$ and $n\geq 2$. Suppose that the quadratic form $\mathbf{Q}$ satisfies $\mathfrak{d}_{d,n}(\mathbf{Q})=d$ and $\mathfrak{d}_{d,1}(\mathbf{Q})>0$. 
		
		\noindent {\rm (a)} Suppose that $d=n=2$, then
		
		{\rm (a1)} If $\mathbf{Q}$ satisfies $\mathfrak{d}_{2,1}(\mathbf{Q})=1$ and $\mathfrak{d}_{1,2}(\mathbf{Q})=0$, then $\mathbf{Q}(\xi)\equiv (\xi_1^2,\xi_1\xi_2)$.
		
		{\rm (a2)} If $\mathbf{Q}$ satisfies $\mathfrak{d}_{2,1}(\mathbf{Q})=1$ and $\mathfrak{d}_{1,2}(\mathbf{Q})=1$, then $\mathbf{Q}(\xi)\equiv (\xi_1^2,\xi_2^2)$.
		
		{\rm (a3)} If $\mathbf{Q}$ satisfies $\mathfrak{d}_{2,1}(\mathbf{Q})=2$, then $\mathbf{Q}(\xi)\equiv (\xi_1 \xi_2,\xi_1^2-\xi_2^2)$.
		
		\noindent {\rm (b)} Suppose that $d=2$ and $n=3$, then $\mathbf{Q}(\xi)\equiv (\xi_1^2,\xi_1\xi_2,\xi_2^2)$.
		
		\noindent {\rm (c)} Suppose that $d=3$ and $n=2$, then
		
		{\rm (c1)} If $\mathbf{Q}$ satisfies $\mathfrak{d}_{3,1}(\mathbf{Q})=1$ and $\mathfrak{d}_{2,2}(\mathbf{Q})=1$, then $\mathbf{Q}(\xi)\equiv (\xi_1^2,\xi_2^2+\xi_1\xi_3)$ or  $(\xi_1^2,\xi_2\xi_3)$.
		
		{\rm (c2)} If $\mathbf{Q}$ satisfies $\mathfrak{d}_{3,1}(\mathbf{Q})=1$ and $\mathfrak{d}_{2,2}(\mathbf{Q})=2$, then $\mathbf{Q}(\xi)\equiv (\xi_1^2,\xi_2^2+\xi_3^2)$.
		
		{\rm (c3)} If $\mathbf{Q}$ satisfies $\mathfrak{d}_{3,1}(\mathbf{Q})=2$ and $\mathfrak{d}_{2,2}(\mathbf{Q})=0$, then $\mathbf{Q}(\xi)\equiv (\xi_1 \xi_2,\xi_1\xi_3)$.
		
		{\rm (c4)} If $\mathbf{Q}$ satisfies $\mathfrak{d}_{3,1}(\mathbf{Q})=2$ and $\mathfrak{d}_{2,2}(\mathbf{Q})=1$, then $\mathbf{Q}(\xi)\equiv (\xi_1\xi_2,\xi_2^2+\xi_1\xi_3)$ or $ (\xi_1\xi_2,\xi_1^2\pm\xi^2_3)$.
		
		{\rm (c5)} If $\mathbf{Q}$ satisfies $\mathfrak{d}_{3,1}(\mathbf{Q})=2$ and $\mathfrak{d}_{2,2}(\mathbf{Q})=2$, then $\mathbf{Q}(\xi)\equiv (\xi_1^2+\xi_2^2,\xi_2^2+\xi_3^2)$ or $ (\xi_1^2-\xi_2^2,\xi_2^2-\xi_3^2)$ or $(\xi_1^2-\xi_2^2,\xi_1\xi_2+\xi_3^2)$.
		
		Moreover, all possible cases of higher-codimensional quadratic forms with $d+n\leq 5$ are listed above, and they are not equivalent to each other. In other words, we get a complete classification.
	\end{lemma}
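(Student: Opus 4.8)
The plan is to enumerate the equivalence classes one pair $(d,n)$ at a time, after a short reduction. First I would observe that $\mathfrak{d}_{d,1}(\mathbf{Q})>0$ is equivalent to $Q_1,\dots,Q_n$ being linearly independent in $\dot{P}_d^2$: indeed $\mathfrak{d}_{d,1}(\mathbf{Q})=\inf_{0\neq N\in\R^n}\rank\big(\sum_j N_jQ_j\big)$, which is positive precisely when no nontrivial linear combination of the $Q_j$ vanishes. Hence $n\leq\dim\dot{P}_d^2=\tfrac{d(d+1)}{2}$, and together with $n\geq2$ and $d+n\leq5$ this forces $(d,n)\in\{(2,2),(2,3),(3,2)\}$ (if $d=1$ then $n\leq1$, contradicting $n\geq2$; if $d\geq4$ then $d+n\leq5$ gives $n\leq1$). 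Since $\equiv$ preserves $d$, $n$ and every $\mathfrak{d}_{d',n'}$, and since the three ranges of $(d,n)$ are automatically mutually inequivalent, it remains to give a complete and irredundant list of representatives in each of the three cases. The running tool for $n=2$ is that the datum of $\mathbf{Q}$ up to $\equiv$ is the pencil of symmetric matrices $\{s\nabla_\xi^2Q_1+t\nabla_\xi^2Q_2\}$ up to simultaneous congruence, classified by the real Kronecker--Weierstrass canonical form for a pair of symmetric bilinear forms; moreover, writing $\overline{Q}(\theta)=\theta_1\nabla_\xi^2Q_1+\theta_2\nabla_\xi^2Q_2$, under $\mathbf{Q}\mapsto(\mathbf{Q}\circ M)\cdot N$ one has $\overline{Q}'(\theta)=M^T\overline{Q}(N\theta)M$, so that the $\R$-factorization type of the binary form $\det\overline{Q}(\theta)$ and the existence of a (semi)definite member of the pencil are $\equiv$-invariants.

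The case $(d,n)=(2,3)$ is immediate: by linear independence the three forms are a basis of the $3$-dimensional space $\dot{P}_2^2$, so $\mathbf{Q}=(\xi_1^2,\xi_1\xi_2,\xi_2^2)\cdot N$ for some $N\in{\rm GL}(3,\R)$, i.e.\ $\mathbf{Q}\equiv(\xi_1^2,\xi_1\xi_2,\xi_2^2)$, which indeed satisfies $\mathfrak{d}_{2,3}(\mathbf{Q})=2$ and $\mathfrak{d}_{2,1}(\mathbf{Q})=1>0$; this yields (b). For $(d,n)=(2,2)$ I would apply the canonical form for a regular or singular pencil of binary quadratic forms: imposing $\mathfrak{d}_{2,2}(\mathbf{Q})=2$ (no common null vector) and linear independence leaves precisely the regular pencil possessing a definite member, $\equiv(\xi_1^2,\xi_2^2)$ ($\det\overline{Q}$ with two distinct real roots); the regular elliptic pencil $\equiv(\xi_1\xi_2,\xi_1^2-\xi_2^2)$ ($\det\overline{Q}$ with no real root); and the pencil $\equiv(\xi_1^2,\xi_1\xi_2)$ whose members share a line of degeneracy ($\det\overline{Q}$ with a double real root). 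Computing $\mathfrak{d}_{2,1}$ and $\mathfrak{d}_{1,2}$ on each ($\mathfrak{d}_{2,1}=1,\mathfrak{d}_{1,2}=1$; $\mathfrak{d}_{2,1}=2$; $\mathfrak{d}_{2,1}=1,\mathfrak{d}_{1,2}=0$ respectively) both matches the trichotomy (a2),(a3),(a1) and proves pairwise inequivalence.

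For $(d,n)=(3,2)$ I would split by $(\mathfrak{d}_{3,1}(\mathbf{Q}),\mathfrak{d}_{2,2}(\mathbf{Q}))$. The combinations with $\mathfrak{d}_{3,1}\in\{1,2\}$ and $\mathfrak{d}_{2,2}\in\{0,1\}$ are exactly Lemma~\ref{addth1}(1)--(4) from \cite{cmw24}, and within each ``or'' the listed forms are separated by a discrete $\equiv$-invariant (e.g.\ the $\R$-factorization type of $\det\overline{Q}(\theta)$, or the presence of a rank-$2$ semidefinite member of the pencil). The one remaining case, $\mathfrak{d}_{3,1}(\mathbf{Q})=2,\ \mathfrak{d}_{2,2}(\mathbf{Q})=2$, is by Lemma~\ref{addth1}(5) exactly the (CM) condition (Definition~\ref{def:CM}). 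Here $\overline{Q}(\theta)$ is a $3\times3$ symmetric matrix linear in $\theta\in\R^2$, so $\det\overline{Q}(\theta)$ is a binary cubic; unwinding \eqref{eq:CM}, (CM) holds iff this cubic is not identically zero and has no repeated real linear factor (a real factor of multiplicity $k$ makes the integrand blow up like $|\cdot|^{-k\gamma}$, integrable for all $0<\gamma<2/3$ iff $k\leq1$, while complex-conjugate factors produce no real singularity). By the real Kronecker--Weierstrass form of a regular symmetric pencil in dimension $3$, the surviving shapes are: three $1\times1$ real blocks with distinct eigenvalues (simultaneously $\R$-diagonalizable pencils), splitting by sign pattern into ``a definite member exists'' $\equiv(\xi_1^2+\xi_2^2,\xi_2^2+\xi_3^2)$ and ``no definite member'' $\equiv(\xi_1^2-\xi_2^2,\xi_2^2-\xi_3^2)$; and one $1\times1$ real block plus one $2\times2$ block carrying a complex-conjugate eigenvalue pair, which forces the $2\times2$ part to be the elliptic pencil $\equiv(\xi_1^2-\xi_2^2,\xi_1\xi_2)$ and, after absorbing the real block into the second form, gives $\equiv(\xi_1^2-\xi_2^2,\xi_1\xi_2+\xi_3^2)$. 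A real Jordan block of size $\geq2$, a singular pencil, or a $2\times2$ parabolic block would each force a repeated real factor or $\det\overline{Q}\equiv0$, violating (CM). The three survivors are pairwise inequivalent because existence of a definite member separates the first from the rest, and the number of real projective roots of $\det\overline{Q}$ (three versus one) separates the second from the third; this is (c5), and together with Lemma~\ref{addth1} it completes the $(3,2)$ case, hence the classification.

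The main obstacle is case (c5): one must carefully translate the integrability condition in \eqref{eq:CM} into the ``no repeated real factor'' statement, and then carry out the real symmetric pencil classification in three variables while bookkeeping signatures, to confirm that exactly the three normal forms of the lemma occur and are inequivalent. Everything else is either a dimension count (the reduction step and $(2,3)$) or reduces cleanly to simultaneous-congruence normal forms of small pencils and to the already established Lemma~\ref{addth1}.
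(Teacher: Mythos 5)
Your proposal is correct in outline and reaches the same normal forms and inequivalence conclusions, but the route through case (c5) is genuinely different from the paper's: you classify the pencil $\{s\nabla_\xi^2 Q_1 + t\nabla_\xi^2 Q_2\}$ by the real Kronecker--Weierstrass canonical form for symmetric pencils, and then cut down the list of block types by the ``no repeated real linear factor of $\det\overline{Q}(\theta)$'' criterion. The paper instead does a direct hands-on reduction: diagonalize $Q_1$ to $\xi_1^2\pm\xi_2^2$, argue $b_{33}\neq 0$ (else $\det(x_1Q_1+x_2Q_2)$ has factor $x_2^2$, violating (CM)), complete squares to clear the off-diagonal terms involving $\xi_3$, and then run a sign-chase over the remaining coefficients $b_{11},b_{22}$. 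Your approach is more systematic and structurally transparent (it explains \emph{why} exactly three orbits appear), while the paper's is more elementary and self-contained; both are valid. Your translation of the $n=2$ (CM) condition into ``$\det\overline{Q}(\theta)$ is nonzero and squarefree over $\R$'' via the local exponent $|t|^{-k\gamma}$ near a root of multiplicity $k$ is correct and matches what the paper uses implicitly.

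Two remarks on the inequivalence step. First, for separating $(\xi_1^2+\xi_2^2,\xi_2^2+\xi_3^2)$ from $(\xi_1^2-\xi_2^2,\xi_2^2-\xi_3^2)$, the paper uses the $\equiv$-invariant $\mathfrak{d}_{1,2}$ (it is $1$ vs.\ $0$), whereas you use existence of a definite member of the pencil; both are correct invariants. Second, and more importantly, for the pair $(\xi_1\xi_2,\xi_1^2+\xi_3^2)$ vs.\ $(\xi_1\xi_2,\xi_1^2-\xi_3^2)$ in (c4), the $\R$-factorization type of $\det\overline{Q}(\theta)$ does \emph{not} discriminate: a short computation gives $\det\overline{Q}(\theta)=\mp 2\theta_1^2\theta_2$ in both cases (same root pattern, one double and one simple real root). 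You do anticipate this by offering ``presence of a rank-$2$ semidefinite member of the pencil'' as an alternative invariant, and that one does work (it distinguishes $\xi_1^2+\xi_3^2$ from the pencil of $(\xi_1\xi_2,\xi_1^2-\xi_3^2)$, all of whose rank-$2$ members are indefinite), and it is indeed preserved by $\equiv$ via Sylvester's law of inertia. But you should present this as the operative invariant for that specific pair rather than as one option among several; the paper, lacking such an invariant in hand, resorts to a direct coefficient-matching contradiction for exactly this pair, and your slicker argument is a small but real improvement provided it is carried out explicitly.
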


	\begin{proof} When $d=1$, we note the case for $2 \leq n \leq 4$ is vacuous. When $d=n=2$, readers can see Theorem 5.1 in \cite{cmp24} for reference. When $d=2$ and $n=3$, since the dimension of the linear space spanned by all quadratic monomials in $2$ variables is exactly $3$, $\mathbf{Q}\equiv (\xi_1^2,\xi_1\xi_2,\xi_2^2)$ is the only quadratic form satisfying $\mathfrak{d}_{2,3}(\mathbf{Q})=2$ and $\mathfrak{d}_{2,1}(\mathbf{Q})>0$. When $d=3$ and $n=2$, combined with Lemma \ref{addth1}, we only need to consider the case $\mathfrak{d}_{3,1}(\mathbf{Q})=2$ and $\mathfrak{d}_{2,2}(\mathbf{Q})=2$.
		
		We pick $M\in \mathbb{R}^{3\times 3}$ and $M'\in \mathbb{R}^{1\times 2}$ such that the equality in (\ref{s2def2e1}) with $d'=3,n'=1$ is achieved. By linear transformations, we may suppose that $M=I$ and $M'=(1,0)$. Then $\mathfrak{d}_{3,1}(\mathbf{Q})=2$ implies that $Q_1$ depends on 2 variables. We use linear transformations again to diagonalize $Q_1$, then
		$$ (Q_1(\xi),Q_2(\xi))\equiv (\xi_1^2\pm\xi_2^2, b_{11}\xi_1^2 +b_{22}\xi_2^2+b_{33}\xi_3^2+2b_{12}\xi_1\xi_2+2b_{13}\xi_1\xi_3+2b_{23}\xi_2\xi_3).     $$
		We divide it into two cases: $Q_1(\xi)=\xi_1^2+\xi_2^2$ and $Q_1(\xi)=\xi_1^2-\xi_2^2$. 
		
		We begin with the first case, i.e., $Q_1(\xi)=\xi_1^2+\xi_2^2$. We first prove $b_{33}\neq 0$. Otherwise, suppose $b_{33}= 0$, then
		$$ (Q_1(\xi),Q_2(\xi))\equiv (\xi_1^2+\xi_2^2, b_{11}\xi_1^2 +2b_{12}\xi_1\xi_2+2b_{13}\xi_1\xi_3+2b_{23}\xi_2\xi_3).     $$
		However, it is easy to see that $\det(x_1Q_1+x_2Q_2)$ has a factor $x_2^2$, which contradicts our assumption that $\mathbf{Q}$ satisfies the (CM) condition. 
		
		Note that $b_{33}\neq 0$, by a change of variables in $\xi_3$, we obtain
		\begin{align*}
			(Q_1(\xi),Q_2(\xi))&\equiv (\xi_1^2+\xi_2^2, b_{11}\xi_1^2 +b_{22}\xi_2^2+2b_{12}\xi_1\xi_2+\xi_3^2)    \\
			&\equiv (\xi_1^2+\xi_2^2, b_{22}\xi_2^2+2b_{12}\xi_1\xi_2+\xi_3^2).
		\end{align*}
		Here we are slightly abusing the notation $b_{ij}$, whose exact meaning may change after linear transformations. We choose to do so for convenience as we only care about the form of the expression. 
		If $b_{22}=0$, then $b_{12}\neq 0$, since otherwise it becomes the case (c2), which contradicts the (CM) assumption. So 
		\begin{equation}\label{addt2e1}
			(Q_1(\xi),Q_2(\xi))\equiv  (\xi_1^2+\xi_2^2, \xi_1\xi_2+\xi_3^2) \equiv  (\xi_1^2+\xi_2^2, \xi_2^2+\xi_3^2).  
		\end{equation}
		If $b_{22}\neq 0$, we can write
		$$    (Q_1(\xi),Q_2(\xi))\equiv (\xi_1^2+\xi_2^2, \xi_2^2+2b_{12}\xi_1\xi_2+\xi_3^2). $$
		By adding a multiple of $Q_1$ to $Q_2$, we form a complete square in $Q_2$. Using a change of variables again, we obtain
		$$    (Q_1(\xi),Q_2(\xi))\equiv (f(\xi_1,\xi_2), \xi_2^2-\xi_3^2) $$
		for some quadratic form $f(\xi_1,\xi_2)$, and it is reduced to the second case.
		
		Now we consider the second case, i.e., $Q_1(\xi)=\xi_1^2-\xi_2^2$. By a change of variables, we can assume $Q_1(\xi)=\xi_1\xi_2$, and then
		$$ (Q_1(\xi),Q_2(\xi))\equiv (\xi_1\xi_2, b_{11}\xi_1^2 +b_{22}\xi_2^2+b_{33}\xi_3^2+2b_{13}\xi_1\xi_3+2b_{23}\xi_2\xi_3).     $$
		Similar to the argument in the first case, we have $b_{33}\neq 0$, and thus
		$$ (Q_1(\xi),Q_2(\xi))\equiv (\xi_1\xi_2, b_{11}\xi_1^2 +b_{22}\xi_2^2+\xi_3^2+2b_{13}\xi_1\xi_3+2b_{23}\xi_2\xi_3).     $$
		Via additional elementary transformation, we get
		$$ (Q_1(\xi),Q_2(\xi))\equiv (\xi_1\xi_2, b_{11}\xi_1^2 +b_{22}\xi_2^2+\xi_3^2).     $$
		We will actually show that $b_{11}\neq 0$ and $b_{22}\neq0$.
		
		If $b_{11}=b_{22}=0$, one gets
		$$ (Q_1(\xi),Q_2(\xi))\equiv (\xi_1\xi_2, \xi_3^2),     $$
		which is the case (c1), a contradiction. If $b_{11}\neq 0$ and $b_{22}=0$, one gets
		$$ (Q_1(\xi),Q_2(\xi))\equiv (\xi_1\xi_2, \xi_1^2 +\xi_3^2),     $$
		which is the case (c4), still a contradiction. The cases  $b_{11}= 0$ and $b_{22}\neq 0$ are the same. 
		
		Thus we have proved $b_{11}\neq 0$ and $b_{22}\neq0$. In this setting, we have either
		\begin{equation}\label{addt2e2}
			(Q_1(\xi),Q_2(\xi))\equiv (\xi_1\xi_2, \xi_1^2 +\xi_2^2+\xi_3^2)\equiv (\xi_1^2+\xi_2^2,\xi_2^2+\xi_3^2), 
		\end{equation}
		or 
		\begin{equation}\label{addt2e3}
			(Q_1(\xi),Q_2(\xi))\equiv (\xi_1\xi_2, \xi_1^2 +\xi_2^2-\xi_3^2)\equiv (\xi_1^2-\xi_2^2,\xi_2^2-\xi_3^2), 
		\end{equation}
		or
		\begin{equation}\label{addt2e4}
			(Q_1(\xi),Q_2(\xi))\equiv (\xi_1\xi_2, \xi_1^2 -\xi_2^2+\xi_3^2)\equiv (\xi_1^2-\xi_2^2,\xi_1\xi_2+\xi_3^2).
		\end{equation}
		Note that (\ref{addt2e2}) is just (\ref{addt2e1}), and we have completed the classification of the case (c5). In addition, for (\ref{addt2e2}), we can easily check $\mathfrak{d}_{2,1}(\mathbf{Q})=0$ and $\mathfrak{d}_{1,2}(\mathbf{Q})=1$. For (\ref{addt2e3}) and (\ref{addt2e4}), we can easily check that $\mathfrak{d}_{2,1}(\mathbf{Q})=0$ and $\mathfrak{d}_{1,2}(\mathbf{Q})=0$.  
		
		To show that we get a complete classification, first note that by our proof these are indeed the only possible cases, so it remains to show that they are not equivalent to each other. Such an issue is not discussed in \cite{go22}.
		
		If two quadratic forms have different $(d,n)$, or have the same $(d,n)$ but different $\mathfrak{d}_{d',n'}(\mathbf{Q})$, then they are not equivalent. This is because $\mathfrak{d}_{d',n'}(\mathbf{Q})$ is an invariant under the equivalence. By this fact, combined with the calculations on the case (c5), we only need to show that $(\xi_1^2,\xi_2^2+\xi_1\xi_3)$ and  $(\xi_1^2,\xi_2\xi_3)$ in (c1) are not equivalent, $(\xi_1\xi_2,\xi_2^2+\xi_1\xi_3)$ and $ (\xi_1\xi_2,\xi_1^2\pm\xi^2_3)$ in (c4) are not equivalent, and $(\xi_1^2-\xi_2^2,\xi_2^2-\xi_3^2)$ and $(\xi_1^2-\xi_2^2,\xi_1\xi_2+\xi_3^2)$ in (c5) are not equivalent.
		
		A very useful property that we will use is that, for $\mathbf{Q} = (Q_1,Q_2)$, the distribution of the zero set
		$$\Lambda \coloneqq \{(x:y) \in \mathbb{RP}^1 : \det(xQ_1+yQ_2) = 0\}$$ 
		is an invariant under the equivalence. Here by ``distribution'' we mean the number of distinct zeros as well as the multiplicities of these zeros. This property can be proved by simply noting that change of variables only multiplies the value of $\det$ by a nonzero constant, and linear transformation of $\mathbf{Q}$ only distorts the zero set in $\mathbb{RP}^1$ by an invertible linear isomorphism. 
		
		For $(\xi_1^2,\xi_2^2+\xi_1\xi_3)$ and  $(\xi_1^2,\xi_2\xi_3)$ in (c1), the $\Lambda$ of the former is given by $$2y^3 = 0 \Leftrightarrow (x:y) = (1:0),$$ 
		while the $\Lambda$ of the latter is given by 
		$$2xy^2 = 0 \Leftrightarrow (x:y) = (1:0), (0:1).$$
		So they are not equivalent.
		
		Similarly, for $(\xi_1^2-\xi_2^2,\xi_2^2-\xi_3^2)$ and $(\xi_1^2-\xi_2^2,\xi_1\xi_2+\xi_3^2)$ in (c5), the $\Lambda$ of the former is given by 
		$$8x(x-y)y = 0 \Leftrightarrow (x:y) = (1:0), (1:1), (0:1),$$ 
		while the $\Lambda$ of the latter is given by 
		$$2y(4x^2+y^2) = 0 \Leftrightarrow (x:y) = (1:0).$$ 
		So they are not equivalent.
		
		Next, for $(\xi_1\xi_2,\xi_2^2+\xi_1\xi_3)$ and $ (\xi_1\xi_2,\xi_1^2\pm\xi_3^2)$ in (c4), the $\Lambda$ of the former is given by 
		$$2y^3 = 0 \Leftrightarrow (x:y) = (1:0),$$ 
		while the $\Lambda$ of the latter is given by 
		$$\pm 2yx^2 = 0 \Leftrightarrow (x:y) = (1:0), (0:1).$$ 
		So they are not equivalent. 
		
		Finally, we still need to show that $(\xi_1\xi_2,\xi_1^2 + \xi_3^2)$ and $ (\xi_1\xi_2,\xi_1^2 - \xi_3^2)$ are not equivalent, and all the tricks introduced before do not work any more. Therefore, we proceed with a direct computation. Assume by contradiction that $(\xi_1\xi_2,\xi_1^2 + \xi_3^2)$ and $ (\xi_1\xi_2,\xi_1^2 - \xi_3^2)$ are equivalent, then there must exist some invertible linear transformation
		$$(\widetilde{\xi}_1, \widetilde{\xi}_2, \widetilde{\xi}_3) = (a_1\xi_1 + b_1\xi_2 + c_1\xi_3, a_2\xi_1 + b_2\xi_2 + c_2\xi_3, a_3\xi_1 + b_3\xi_2 + c_3\xi_3)$$
		such that 
		$$(\widetilde{\xi}_1\widetilde{\xi}_2, \widetilde{\xi}_1^2 + \widetilde{\xi}_3^2) = (A_1\xi_1\xi_2 + B_1(\xi_1^2 - \xi_3^2), A_2\xi_1\xi_2 + B_2(\xi_1^2 - \xi_3^2))$$ 
		with $A_1B_2-A_2B_1 \neq 0$. By comparing the coefficients of $\xi_1^2$ and $\xi_3^2$ in the second factor $\widetilde{\xi}_1^2 + \widetilde{\xi}_3^2$, we see that $a_1^2 + a_3^2 = B_2$ and $c_1^2 + c_3^2 = -B_2$, which means $a_1^2 + a_3^2 + c_1^2 + c_3^2 = 0$ and thus we must have $a_1 = a_3 = c_1 = c_3 = 0$. However, this implies that the $3 \times 3$ matrix
		\begin{align*}
			\begin{pmatrix}
				a_1 & b_1 & c_1 \\
				a_2 & b_2 & c_2 \\
				a_3 & b_3 & c_3
			\end{pmatrix}
			=
			\begin{pmatrix}
				0 & b_1 & 0 \\
				a_2 & b_2 & c_2 \\
				0 & b_3 & 0
			\end{pmatrix}
		\end{align*}
		is not invertible, a contradiction. So $(\xi_1\xi_2,\xi_1^2 + \xi_3^2)$ and $ (\xi_1\xi_2,\xi_1^2 - \xi_3^2)$ are not equivalent. And the proof is completed.
		
	\end{proof}
	
	Based on this classification, combined with Theorem \ref{th1}, we can obtain the following weighted restriction results. In particular, for the case $d=3$ and $n=2$, Theorem \ref{th1} can improve the results of \cite{cmw24}.

	\begin{corollary}\label{th3}
		Let $d+n\leq 5$ and $n\geq 2$. Suppose that the quadratic form $\mathbf{Q}$ satisfies $\mathfrak{d}_{d,n}(\mathbf{Q})=d$ and $\mathfrak{d}_{d,1}(\mathbf{Q})>0$.

		\noindent {\rm (a)} Suppose that $d=n=2$, then

		{\rm (a1)} If $\mathbf{Q}(\xi)\equiv (\xi_1^2,\xi_1\xi_2)$, we have
		\begin{equation}\label{kk1}
			~s(\alpha,\mathbf{Q}) \leq \begin{cases}
				~	0, \quad \quad&\alpha\in(0,\frac{1}{2}],   \\
				~	\frac{2\alpha-1}{4}, \quad \quad&\alpha\in(\frac{1}{2},2],  \\
				~	\frac{\alpha+1}{4}, \quad \quad&\alpha\in(2,3],   \\
				~	1, \quad \quad& \alpha\in(3,4].
			\end{cases}
		\end{equation}

		{\rm (a2)} If $\mathbf{Q}(\xi)\equiv (\xi_1^2,\xi_2^2)$, we have
		\begin{equation}\label{kk2}
			~s(\alpha,\mathbf{Q}) \leq \begin{cases}
				~	0, \quad \quad&\alpha\in(0,\frac{1}{2}],   \\
				~	\frac{2\alpha-1}{4}, \quad \quad&\alpha\in(\frac{1}{2},\frac{3}{2}],  \\
				~	\frac{\alpha}{3}, \quad \quad&\alpha\in(\frac{3}{2},2],   \\
				~	\frac{\alpha+2}{6}, \quad \quad& \alpha\in(2,4].
			\end{cases}
		\end{equation}
		
		{\rm (a3)} If $\mathbf{Q}(\xi)\equiv (\xi_1 \xi_2,\xi_1^2-\xi_2^2)$, we have
		\begin{equation}\label{kk3}
			~s(\alpha,\mathbf{Q}) \leq \begin{cases}
				~	0, \quad \quad&\alpha\in(0,1],   \\
				~	\frac{\alpha-1}{2}, \quad \quad&\alpha\in(1,2],  \\
				~	\frac{\alpha}{4}, \quad \quad& \alpha\in(2,4].
			\end{cases}
		\end{equation}

		\noindent {\rm (b)} Suppose that $d=2$ and $n=3$. If $\mathbf{Q}(\xi)\equiv (\xi_1^2,\xi_1\xi_2,\xi_2^2)$, we have
		\begin{equation}\label{kk4}
			~s(\alpha,\mathbf{Q}) \leq \begin{cases}
				~	0, \quad \quad&\alpha\in(0,\frac{1}{2}],   \\
				~	\frac{2\alpha-1}{4}, \quad \quad&\alpha\in(\frac{1}{2},2],  \\
				~	\frac{3\alpha}{8}, \quad \quad& \alpha\in(2,\frac{16}{5}]   \\
				~    \frac{\alpha+4}{6}  , \quad \quad& \alpha\in(\frac{16}{5},5] .
			\end{cases}
		\end{equation}

		\noindent {\rm (c)} Suppose that $d=3$ and $n=2$.
		
		{\rm (c1)} If $\mathbf{Q}(\xi)\equiv (\xi_1^2,\xi_2^2+\xi_1\xi_3)$ or  $(\xi_1^2,\xi_2\xi_3)$, we have
		\begin{equation}\label{kk5}
			~s(\alpha,\mathbf{Q}) \leq \begin{cases}
				~	0, \quad \quad&\alpha\in(0,\frac{1}{2}],   \\
				~	\frac{2\alpha-1}{4}, \quad \quad&\alpha\in(\frac{1}{2},\frac{3}{2}],  \\
				~	\frac{\alpha}{3}, \quad \quad&\alpha\in(\frac{3}{2},2],   \\
				~	\frac{\alpha+2}{6} , \quad \quad& \alpha\in(2,4],  \\
				~	1, \quad  \quad& \alpha\in(4,5].
			\end{cases}
		\end{equation}

		{\rm (c2)} If $\mathbf{Q}(\xi)\equiv (\xi_1^2,\xi_2^2+\xi_3^2)$, we have
		\begin{equation}\label{kk6}
			~s(\alpha,\mathbf{Q}) \leq \begin{cases}
				~	0, \quad \quad&\alpha\in(0,\frac{1}{2}],   \\
				~	\frac{2\alpha-1}{4}, \quad \quad&\alpha\in(\frac{1}{2},\frac{3}{2}],  \\
				~	\frac{\alpha}{3}, \quad \quad& \alpha\in(\frac{3}{2},\frac{9}{5}],   \\		~	\frac{\alpha+3}{8} , \quad \quad&  \alpha\in(\frac{9}{5},5].
			\end{cases}
		\end{equation}

		{\rm (c3)} If $\mathbf{Q}(\xi)\equiv (\xi_1 \xi_2,\xi_1\xi_3)$, we have
		\begin{equation}\label{kk7}
			~s(\alpha,\mathbf{Q}) \leq \begin{cases}
				~	0, \quad \quad&\alpha\in(0,1],   \\
				~	\frac{\alpha-1}{2}, \quad \quad& \alpha\in(1,3],   \\
				~	1, \quad  \quad& \alpha\in(3,5].
			\end{cases}
		\end{equation}

		{\rm (c4)} If $\mathbf{Q}(\xi)\equiv (\xi_1\xi_2,\xi_2^2+\xi_1\xi_3)$ or $ (\xi_1\xi_2,\xi_1^2\pm\xi^2_3)$, we have
		\begin{equation}
			~s(\alpha,\mathbf{Q}) \leq \begin{cases}
				~	0, \quad \quad&\alpha\in(0,1],   \\
				~	\frac{\alpha-1}{2}, \quad \quad&\alpha\in(1,2],  \\
				~	\frac{\alpha}{4}, \quad \quad& \alpha\in(2,4],   \\
				~	1, \quad  \quad&  \alpha\in(4,5].
			\end{cases}
		\end{equation}
		
		{\rm (c5)} If $\mathbf{Q}(\xi)\equiv (\xi_1^2+\xi_2^2,\xi_2^2+\xi_3^2)$, we have
		\begin{equation}\label{th3e7}
			~s(\alpha,\mathbf{Q}) \leq \begin{cases}
				~	0, \quad \quad&\alpha\in(0,1],   \\
				~	\frac{\alpha-1}{2}, \quad \quad&\alpha\in(1,2],  \\
				~	\frac{\alpha}{4}, \quad \quad& \alpha\in(2,3],  \\
				~	\frac{\alpha+3}{8} , \quad \quad&  \alpha\in(3,5].
			\end{cases}
		\end{equation}

		{\rm (c6)} If $\mathbf{Q}(\xi)\equiv (\xi_1^2-\xi_2^2,\xi_2^2-\xi_3^2)$ or $(\xi_1^2-\xi_2^2,\xi_1\xi_2+\xi_3^2)$, we have
		\begin{equation}\label{th3e8}
			~s(\alpha,\mathbf{Q}) \leq \begin{cases}
				~	0, \quad \quad&\alpha\in(0,1],   \\
				~	\frac{\alpha-1}{2}, \quad \quad&\alpha\in(1,2],  \\
				~	\frac{\alpha}{4}, \quad \quad& \alpha\in(2,4],   \\
				~	1, \quad  \quad&  \alpha\in(4,5].
			\end{cases}
		\end{equation}
		
	\end{corollary}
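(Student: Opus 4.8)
The plan is to go through the classification in Lemma~\ref{addth2} case by case, treating one representative from each equivalence class, which suffices since equivalent quadratic forms share the same weighted restriction theory (see the Remark after Theorem~\ref{th1}). For each representative $\mathbf{Q}$, the first and third bounds in (\ref{kk1})--(\ref{th3e8}) together with their ranges follow immediately from Theorem~\ref{th1} once we know the two numbers $\mathfrak{d}_{d,1}(\mathbf{Q})$ and $\tilde{d}$ (the largest $d'$ with $\mathfrak{d}_{d',n}(\mathbf{Q})=0$). The value of $\mathfrak{d}_{d,1}(\mathbf{Q})$ is recorded in Lemma~\ref{addth2} in most cases, and $\tilde{d}$ is read off after computing $\mathfrak{d}_{d',n}(\mathbf{Q})$ for $d'=0,1,\dots,d$; since $d+n\le 5$, this is a short finite computation, carried out either by exhibiting an explicit rank-$d'$ substitution $M$ (as in the proofs of Corollaries~\ref{cor1} and~\ref{cor2}) or by invoking the monotonicity and lower-bound facts for $\mathfrak{d}_{d',n'}$ from \cite{gozzk23} and \cite{ggo23}. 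The second bound $\frac{2\alpha-\mathfrak{d}_{d,1}(\mathbf{Q})}{4}$ on the initial subinterval is again automatic from Theorem~\ref{th1}.

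The real content lies in the middle ranges, where we must show that the bound produced by (\ref{t1e2}),
$$\min_{2\le k\le d+1,\, p\ge 2}\max\Big\{\max_{1\le m\le d+n}\tfrac{m-X(\mathbf{Q},k,m)}{2m}\alpha,\ \tfrac{\Gamma_p^{k-2}(\mathbf{Q})}{2}+\tfrac{d+n-\alpha}{2p}+\tfrac{\alpha+n-d}{4}\Big\},$$
together with the simpler bound $w(p,\alpha,\mathbf{Q})$ from (\ref{s3ff1}) (the $k=d+2$ endpoint, which is sharper for certain degenerate forms), equals the piecewise-linear function stated in each case. To evaluate (\ref{t1e2}) I would first compute all $\mathfrak{d}_{d',n'}(\mathbf{Q})$ with $d'\le k-2$ and $n'\le n$; plugging these into (\ref{dec th1 00}) gives $\Gamma_p^{k-2}(\mathbf{Q})$ as an explicit maximum of affine functions of $1/p$, hence the optimal decoupling exponent $p=p(k)$ and the value $\Gamma_{p(k)}^{k-2}(\mathbf{Q})$. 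Next I would compute $X(\mathbf{Q},k,m)$ from Definition~\ref{s1d2}, i.e.\ the largest $X$ such that for every $m$-plane $V$ the set $\{\xi:\dim\pi_{V_\xi}(V)<X\}$ has dimension $\le k-2$; for the paraboloid/hyperbolic-paraboloid pieces one gets $X(\mathbf{Q},k,m)\ge m\frac{k-1}{k}$ exactly as in Appendix~C of \cite{ggo23}, while for the genuinely degenerate $\mathbf{Q}$ in Lemma~\ref{addth2} one analyzes the tangent planes $V_\xi$ directly: they depend polynomially on $\xi$ through $\nabla_\xi\mathbf{Q}$, so the exceptional sets are explicit algebraic varieties whose dimensions can be bounded by hand. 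With $X(\mathbf{Q},k,m)$ and $\Gamma_p^{k-2}(\mathbf{Q})$ in hand, the first term of (\ref{t1e2}) becomes $\big(\frac12-\frac1{q(k)}\big)\alpha$ with $q(k)=\max_m\frac{2m}{X(\mathbf{Q},k,m)}$, the second becomes affine in $\alpha$, and minimizing the resulting finitely many ``max of two affine functions'' over the few admissible $k$ produces exactly the stated breakpoints.

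A convenient bookkeeping device is that, for each fixed $k$, the two competing quantities in (\ref{t1e2}) are affine in $\alpha$ with the first increasing faster, so the envelope over $k$ is convex and piecewise-linear; matching it against the claimed formula reduces to checking a handful of vertices, namely the values of $\alpha$ at which two choices of $k$ tie or the two terms inside one $\max$ tie. I would also cross-check continuity at the endpoints: at $\alpha=\mathfrak{d}_{d,1}(\mathbf{Q})/2$ the middle bound must meet $0$, at $\alpha=d-\tilde d+n$ it must meet $n/2$, and at the interior breakpoints the one-sided limits must agree; this pins the formula down uniquely and catches arithmetic slips. In a few of the part-(c) cases the cruder bound (\ref{s3ff1}) wins, and one then verifies this by a direct comparison with the output of (\ref{t1e2}).

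The main obstacle I anticipate is the sharp computation of $X(\mathbf{Q},k,m)$ for the degenerate forms --- for instance $(\xi_1^2,\xi_2\xi_3)$, $(\xi_1\xi_2,\xi_1\xi_3)$, $(\xi_1\xi_2,\xi_1^2\pm\xi_3^2)$ in part~(c) --- since there the tangent spaces collapse along certain subspaces and the set $\{\xi:\dim\pi_{V_\xi}(V)<X\}$ can have unexpectedly large dimension for a cleverly chosen $V$; obtaining the exact $X$ rather than a lossy lower bound is precisely what forces the fallback to (\ref{s3ff1}) in several cases, and confirming that (\ref{s3ff1}) is then the better of the two bounds is itself a nontrivial check. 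A secondary but essentially routine difficulty is the sheer volume of the piecewise-linear optimizations, since there are twelve cases and each splits into up to four subintervals.
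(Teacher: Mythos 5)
Your plan is to derive every entry of the corollary from Theorem~\ref{th1}, using (\ref{t1e2}) together with the cruder bound (\ref{s3ff1}). That is not enough, and the gap is not where you expect it to be. The obstacle you flag (computing $X(\mathbf{Q},k,m)$ sharply for the degenerate forms) is real but secondary; the genuine missing ingredient is an entirely separate tool: sharp Stein--Tomas-type inequalities $\|E^{\mathbf{Q}}f\|_{L^q(B_R)}\lesssim\|f\|_{L^2}$ combined with H\"older's inequality against the weight $H$. Several of the stated bounds --- specifically the ``third'' bound $\alpha/3$ in (a2) and (c1), $3\alpha/8$ in (b), and $\alpha/3$ in (c2) --- are strictly better than anything Theorem~\ref{th1} can produce. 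For instance in case (a2) with $\mathbf{Q}=(\xi_1^2,\xi_2^2)$, both (\ref{t1e2}) (after computing $X(\mathbf{Q},k,m)$ and $\Gamma_p^{k-2}$) and (\ref{s3ff1}) (after computing $\Gamma_p^{2}(\mathbf{Q})=\max\{1-6/p,0\}$ and optimizing over $p$) bottom out at $\frac{\alpha+2}{6}$ on $(1,4]$, whereas the stated bound is $\frac{\alpha}{3}$ on $(3/2,2]$, which is strictly smaller on that interval since $\frac{\alpha}{3}<\frac{\alpha+2}{6}$ for $\alpha<2$. The paper gets $\frac{\alpha}{3}$ from the (CM) condition, the resulting $L^2\to L^6$ Stein--Tomas estimate, and H\"older: $\|E^{\mathbf{Q}}f\|_{L^2(B_R,H)}\le \|E^{\mathbf{Q}}f\|_{L^6(B_R)}\,(\int_{B_R}H)^{1/3}\lesssim R^{\alpha/3}\|f\|_{L^2}$. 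The same device (with the appropriate exponent $q$ for each case, sourced from \cite{mockenhaupt96} and \cite{cmw24}) produces the improved middle-range bounds in (b), (c1), (c2). Without this input, your optimization over $k$ and $p$ will converge to the piecewise-linear envelope of Theorem~\ref{th1}, which disagrees with (\ref{kk2}), (\ref{kk4}), (\ref{kk5}), (\ref{kk6}) in exactly those ranges, and no amount of care in computing $X(\mathbf{Q},k,m)$ will close the gap. Also note that your proposed ``continuity at interior breakpoints'' sanity check would not catch the error, because the weaker envelope is also continuous; you would simply end up with a different (weaker) piecewise-linear function.

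Aside from that, the rest of the proposal (picking one representative per equivalence class, reading off $\mathfrak{d}_{d,1}$ and $\tilde d$ for the first/second/last bounds, computing $\Gamma_p^{k-2}$ from the table of $\mathfrak{d}_{d',n'}$ and $X(\mathbf{Q},k,m)$ via the tangent/normal-plane analysis, and minimizing the resulting max of two affine functions over $k$ and $p$) matches the paper's approach for the cases where Theorem~\ref{th1} alone suffices, e.g.\ (a1), (a3), (c3), (c4), (c5), (c6).
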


	\begin{proof} The proof of this corollary is mainly some tedious calculations. Nonetheless, for completeness, we still record the detailed proof of one case, such as (a2). Suppose that $\mathbf{Q}(\xi) = (\xi_1^2,\xi_2^2)$. We note
		\begin{equation}
			\mathfrak{d}_{1,1}(\mathbf{Q})=0,~\mathfrak{d}_{1,2}(\mathbf{Q})=1,~\mathfrak{d}_{2,1}(\mathbf{Q})=1,~\mathfrak{d}_{2,2}(\mathbf{Q})=2.    
		\end{equation}
		These facts together with (\ref{t1e1}) in Theorem \ref{th1} give the first and second bounds in (\ref{kk2}). Besides, we can obtain $\Gamma_p^0(\mathbf{Q})=0$, and
		\begin{align}
			\Gamma_p^1(\mathbf{Q}) &=  \max_{0\leq d'\leq 1} \max_{0\leq n' \leq 2} \Big\{  \Big(d'- \mathfrak{d}_{d',n'}(\mathbf{Q})\Big)\Big(\frac{1}{2}-\frac{1}{p}\Big) -\frac{2(2-n')}{p}    \Big\}  \nonumber \\
			&=  \max_{0\leq n' \leq 2}\Big\{ \Big(1- \mathfrak{d}_{1,n'}(\mathbf{Q})\Big)\Big(\frac{1}{2}-\frac{1}{p}\Big) -\frac{2(2-n')}{p}     \Big\} \nonumber \\
			&= \max\Big\{ \frac{1}{2}-\frac{3}{p},~0    \Big\} ,  \label{cor6e2}
		\end{align}
		and
		\begin{align}
			\Gamma_p^2(\mathbf{Q}) &=  \max_{0\leq d'\leq 2} \max_{0\leq n' \leq 2} \Big\{  \Big(d'- \mathfrak{d}_{d',n'}(\mathbf{Q})\Big)\Big(\frac{1}{2}-\frac{1}{p}\Big) -\frac{2(2-n')}{p}    \Big\}  \nonumber\\
			&=     \max\Big\{\frac{1}{2}-\frac{3}{p},~0 ,~ \max_{0\leq n'\leq 2} \Big\{ \Big(2- \mathfrak{d}_{2,n'}(\mathbf{Q})\Big)\Big(\frac{1}{2}-\frac{1}{p}\Big) -\frac{2(2-n')}{p}   \Big\}    \Big\} \nonumber \\
			&= \max\Big\{ 1-\frac{6}{p},~0    \Big\} .   \label{cor6e3}
		\end{align}
		On the other hand, we start to calculate the values of $X(\mathbf{Q},k,m)$ for all $2\leq k\leq 3$ and $0\leq m \leq 4$. Recall that $V_\xi$ denotes the tangent space of the graph  $S_{\mathbf{Q}}$ at the point $(\xi,\mathbf{Q}(\xi))$, so
		\begin{equation*}
			V_{\xi}=  \begin{pmatrix}
				1 & 0 & 2\xi_1 &0 \\
				0 & 1 & 0 & 2\xi_2
			\end{pmatrix}  .  
		\end{equation*}
		We also use $V_\xi^{\perp}$ to denote the normal space of the graph  $S_{\mathbf{Q}}$ at the point $(\xi,\mathbf{Q}(\xi))$, so
		\begin{equation*}
			V_{\xi}^{\perp}=  \begin{pmatrix}
				2\xi_1 & 0 & -1 &0 \\
				0 & 2\xi_2 & 0 & -1
			\end{pmatrix}  .  
		\end{equation*}
		Let's first state the final calculation results: When $m=3$, there are
		\begin{align}
			&\sup_{\dim V=3} \dim \{ \xi\in \mathbb{R}^2 : \dim (\pi_{V_\xi} (V)) \geq 3  \}=0,  \label{c6e1}\\   
			&\sup_{\dim V=3} \dim \{ \xi\in \mathbb{R}^2 : \dim (\pi_{V_\xi} (V))\leq 2   \}=2,  \label{c6e2}\\  
			&\sup_{\dim V=3} \dim \{ \xi\in \mathbb{R}^2 : \dim (\pi_{V_\xi} (V))\leq 1   \}=1,  \label{c6e3}\\  
			&\sup_{\dim V=3} \dim \{ \xi\in \mathbb{R}^2 : \dim (\pi_{V_\xi} (V)) \leq 0   \}=0. \label{c6e4}
		\end{align}
		When $m=2$, there are
		\begin{align}
			&\sup_{\dim V=2} \dim \{ \xi\in \mathbb{R}^2 : \dim (\pi_{V_\xi} (V)) \geq 3  \}=0,  \label{c6e5}\\   
			&\sup_{\dim V=2} \dim \{ \xi\in \mathbb{R}^2 : \dim (\pi_{V_\xi} (V))\leq 2   \}=2,  \label{c6e6}\\  
			&\sup_{\dim V=2} \dim \{ \xi\in \mathbb{R}^2 : \dim (\pi_{V_\xi} (V))\leq 1   \}=2, \label{c6e7} \\  
			&\sup_{\dim V=2} \dim \{ \xi\in \mathbb{R}^2 : \dim (\pi_{V_\xi} (V)) \leq 0   \}=0.\label{c6e8}
		\end{align}
		When $m=1$, there are
		\begin{align}   
			&\sup_{\dim V=1} \dim \{ \xi\in \mathbb{R}^2 : \dim (\pi_{V_\xi} (V))\geq 2   \}=0, \label{c6e9} \\  
			&\sup_{\dim V=1} \dim \{ \xi\in \mathbb{R}^2 : \dim (\pi_{V_\xi} (V))\leq 1   \}=2, \label{c6e10} \\  
			&\sup_{\dim V=1} \dim \{ \xi\in \mathbb{R}^2 : \dim (\pi_{V_\xi} (V)) \leq 0   \}=1.\label{c6e11}
		\end{align}
		Combining these estimates with Definition \ref{s1d2}, we have
		\begin{align}
			&X(\mathbf{Q},2,1)=0,~ X(\mathbf{Q},2,2)=1,~X(\mathbf{Q},2,3)=1,~X(\mathbf{Q},2,4)=2, \label{c6e12}\\
			&X(\mathbf{Q},3,1)=1,~ X(\mathbf{Q},3,2)=1,~X(\mathbf{Q},3,3)=2,~X(\mathbf{Q},3,4)=2.  \label{c6e13}
		\end{align}
		Here the case for $m=4$ is obvious by noting $\dim (\pi_{V_\xi} (V))=2$ for all $\xi \in \mathbb{R}^2$. It suffices to prove (\ref{c6e1})-(\ref{c6e11}). We will only prove the case $m=3$, i.e., (\ref{c6e1})-(\ref{c6e4}), and the arguments for other cases are similar. Let $V$ be a subspace with dimension $3$. By the rank inequality, we have
		$$     1 \leq  \dim (\pi_{V_\xi} (V)) \leq 2.  $$
		This fact implies (\ref{c6e1}), (\ref{c6e2}) and (\ref{c6e4}) immediately. So we still need to prove (\ref{c6e3}). In view of Lemma 5.3 in \cite{ggo23}, (\ref{c6e3}) is equivalent to
		\begin{equation}\label{c6e14}
			\sup_{\dim W=1} \dim \{ \xi\in \mathbb{R}^2 : \dim (\pi_{V_\xi^{\perp}} (W))\leq 0   \}=1.   
		\end{equation}
		Let $W =\text{span} \{(a,b,c,d)\}$, then 
		\begin{equation*}
			\dim (\pi_{V_\xi^{\perp}} (W))= \text{rank}_{\mathbb{R}} \begin{pmatrix}
				2\xi_1 & 0 & -1 &0 \\
				0 & 2\xi_2 & 0 & -1
			\end{pmatrix}  
			\begin{pmatrix}
				a \\
				b \\
				c \\
				d
			\end{pmatrix}=
			\text{rank}_{\mathbb{R}} \begin{pmatrix}
				2a\xi_1-c \\
				2b\xi_2-d
			\end{pmatrix} . 
		\end{equation*}
		Then the condition $ \dim (\pi_{V_\xi^{\perp}} (W))\leq 0$ implies 
		\begin{equation}\label{c6e20}
			2a\xi_1-c= 2b\xi_2-d=0.   
		\end{equation}
		Noting that $a,b,c,d$ are not zero simultaneously, (\ref{c6e20}) denotes a submanifold in $\mathbb{R}^2$ with dimensions not exceeding 1 (can reach 1), and then (\ref{c6e3}) follows. After calculating the quantities $X(\mathbf{Q},k,m)$ and $\Gamma_p^{k-2}(\mathbf{Q})$, we can use (\ref{t1e2}) to get 
		\begin{align}
			~s(\alpha,\mathbf{Q})\leq 
			\begin{cases}
				~	\frac{\alpha}{2}, \quad \quad\quad \quad    & 0<  \alpha \leq 1,   \\
				~	\frac{\alpha+2}{6}, \quad \quad \quad\quad 
				&1<  \alpha   \leq 4,
			\end{cases}
		\end{align}
		which gives the final bound in (\ref{kk2}). So Theorem \ref{th1} implies (\ref{kk2}) except the third bound $\alpha/3$. 
		
		The bound $\alpha/3$ in (\ref{kk2}) can be derived by a Stein-Tomas-type inequality. To be more precise, we can easily check that $\mathbf{Q}= (\xi_1^2,\xi_2^2)$ satisfies the (CM) condition. By Theorem 2.14 in \cite{mockenhaupt96}, we have the following sharp estimate:
		$$   \|E^{\mathbf{Q}}f\|_{L^6(B_R)} \lesssim \|f\|_{L^2([0,1]^d)}.   $$
		Using H\"older's inequality, one concludes
		$$     \|E^{\mathbf{Q}}f\|_{L^2(B_R,H)} \lesssim R^{\frac{\alpha}{3}} \|f\|_{L^2([0,1]^d)},  $$
		which gives the third bound in (\ref{kk2}). We also point out that the estimate (\ref{s3ff1}) derived by (\ref{cor6e3}) is 
		$$    ~s(\alpha,\mathbf{Q})\leq \frac{\alpha+2}{6}, $$
		which is not better than that from (\ref{t1e2}).

	\end{proof}

	\begin{remark}\label{rmk:ST_characterization}
		From the proof above, we see that in certain ranges, the bounds derived by Theorem {\rm \ref{th1}} are worse than those in Corollary {\rm \ref{th3}} {\rm(}or Theorem {\rm 1.3} in \cite{cmw24}{\rm)}, such as the third bound in {\rm (\ref{kk2})}, {\rm (\ref{kk4})}, {\rm (\ref{kk5})} and {\rm (\ref{kk6})}. These bounds were obtained by sharp Stein-Tomas-type inequalities as above. Therefore, it is also an interesting problem to excavate such type of argument for general quadratic forms, which should further improve Theorem {\rm \ref{th1}}, at least in some cases. For example, when $d=3$ and $n=2$, this has been completely solved in \cite{cmw24} by using the broad-narrow analysis, adaptive decoupling and adaptive induction on scales.
        
        However, due to possible degenerate structures, attaining sharp Stein-Tomas-type inequalities for general quadratic forms is still an open problem. Another obstacle is to find the correct nondegeneracy condition. Although the algebraic quantities $\mathfrak{d}_{d',n'}(\mathbf{Q})$ are enough to characterize the properties $s(\alpha,\mathbf{Q})=0$ and $s(\alpha,\mathbf{Q})=n/2$\footnote{This is because in {\rm (\ref{t1e1})}, the first and third estimates, as well as their associated ranges, are sharp.}, they cannot fully characterize the endpoints of Stein-Tomas-type inequalities. For example, $(\xi_1\xi_2,\xi_2^2+\xi_1\xi_3)$ and $ (\xi_1\xi_2,\xi_1^2\pm\xi^2_3)$ possess the same $\mathfrak{d}_{d',n'}(\mathbf{Q})$ for all $0\leq d'\leq d$ and $0\leq n'\leq n$, but they have different sharp Stein-Tomas-type inequalities {\rm(}see Section {\rm1} in \cite{cmw24}{\rm)}. 
        
        Based on these considerations, we do not intend to study Stein-Tomas inequalities systematically in this paper. However, in Section {\rm \ref{appendix:relationships}}, we will propose a conjecture about the complete geometric/algebraic characterization of quadratic manifolds satisfying ``best possible'' Stein-Tomas-type inequalities, as well as prove one direction of the implication, which may facilitate future works along this direction. 
	\end{remark}

    \subsection{Good manifolds}\phantom{x}
    
	Finally, we consider good quadratic forms (Definition~\ref{def:good_mfd}). The following corollary of Theorem~\ref{th1} relies on results obtained in our study of Theorem~\ref{thm:relation_diagram}, such as Proposition~\ref{prop:diag_not_str}.

	\begin{corollary}\label{cor:good_mfd}
		Suppose that $\mathbf{Q}$ is a good quadratic form and $d \geq 4$\footnote{This additional assumption is introduced purely to ensure that all the ranges that appear make sense, especially $\lfloor\frac{d+1}{2}\rfloor + 3 \leq k \leq d+1$. However, since when $n=2$, all the cases $d \leq 3$ have been explicitly computed in Corollary \ref{th3}, such an assumption is not a big deal.}, then    
		\begin{align}\label{cor4e1}
			~s(\alpha,\mathbf{Q})\leq 
			\begin{cases}
				~	0, \quad \quad\quad \quad    & 0<  \alpha \leq \frac{d-1}{2},   \\
				~	\frac{2\alpha-d+1}{4}, \quad \quad \quad\quad 
				&\frac{d-1}{2}<  \alpha< \frac{d+3}{2}.
			\end{cases}
		\end{align}
		Moreover, when $\frac{d-1}{2}<\alpha\leq d+2$, we have
		$$  s(\alpha,\mathbf{Q})\leq \min_{3\leq k\leq d+1}  \max\Big\{   \frac{\alpha}{k+1} ,~ \mu(k,\alpha)\Big\},  $$
		where $\mu(k,\alpha)$ is given by
		\begin{align*}
			~\mu(k,\alpha)=
			\begin{cases}
				~	\frac{2k+\alpha-d-2}{2k}, \quad \quad\quad \quad    & 3\leq k\leq \lfloor\frac{d+1}{2}\rfloor+2,   \\
				~	\min\big\{ \frac{k+\alpha-d}{2(k-\lfloor\frac{d+1}{2}\rfloor)},~ \frac{2\lfloor\frac{d+1}{2}\rfloor+\alpha+2-d}{2(\lfloor\frac{d+1}{2}\rfloor+2)}  \big\}, \quad \quad \quad\quad 
				&\lfloor\frac{d+1}{2}\rfloor + 3 \leq k \leq d+1.
			\end{cases}
		\end{align*}
	\end{corollary}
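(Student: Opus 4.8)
The plan is to read off both halves of Corollary~\ref{cor:good_mfd} from Theorem~\ref{th1}, so the work is entirely in computing, for a good $\mathbf{Q}$ (which always has $n=2$), the invariants $\mathfrak{d}_{d',n'}(\mathbf{Q})$, $X(\mathbf{Q},k,m)$ and hence $\Gamma_p^{k-2}(\mathbf{Q})$ via (\ref{dec th1 00}), and then optimising the right-hand side of (\ref{t1e2}). Write $A=\mathrm{diag}(a_i)$, $B=\mathrm{diag}(b_i)$, $D=A^{-1}B=\mathrm{diag}(b_i/a_i)$, and $\ell_0:=\lfloor\tfrac{d+1}{2}\rfloor=\lceil\tfrac d2\rceil$; the ``good'' hypothesis is exactly that $a_i>0$ for all $i$ and that $D$ has simple spectrum. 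Since $Q_1$ is positive definite, $Q_1|_V$ depends on all $\dim V$ of its variables, so $\mathfrak{d}_{d',2}(\mathbf{Q})=d'$ for every $d'$ and in particular $\tilde d=0$. For $\mathfrak{d}_{d',1}(\mathbf{Q})$ one argues as in the proof of Lemma~\ref{lem:alg_char_m} (using Lemma~A.2 of \cite{ggo23}) that $\mathfrak{d}_{d',1}(\mathbf{Q})=\inf_{\theta\in\mathbb{S}^1}\inf_{\dim V=d'}\mathrm{rank}(\overline{Q}(\theta)|_V)$. Because the points $(a_i,b_i)$ lie in an open half-plane with pairwise distinct arguments, the line $\{\theta_1x_1+\theta_2x_2=0\}$ can be rotated so that $\overline{Q}(\theta)$ acquires any prescribed ``threshold'' signature; taking the line through the median point gives an $\overline{Q}(\theta)$ with one zero eigenvalue and a totally isotropic subspace of dimension $\ell_0$, whence $\mathfrak{d}_{d',1}(\mathbf{Q})=0$ for $d'\le\ell_0$, and for $\ell_0<d'\le d$ the same $\overline{Q}(\theta)$ gives $\mathfrak{d}_{d',1}(\mathbf{Q})\le 2d'-d-1$, with the matching lower bound coming from Claim~3.5 of \cite{gozzk23} and $\mathfrak{d}_{d,1}(\mathbf{Q})=d-1$ (this is where Proposition~\ref{prop:diag_not_str} enters). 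In particular $\mathfrak{d}_{d,1}(\mathbf{Q})=d-1$ and $\tilde d=0$, so (\ref{t1e1}) is precisely the first two lines of (\ref{cor4e1}); the second range is truncated at $\tfrac{d+3}{2}$ only because (\ref{t1e2}) will beat the trivial value $n/2=1$ beyond that point.

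Feeding these values into (\ref{dec th1 00}), elementary bookkeeping yields
\[
\Gamma_p^{k-2}(\mathbf{Q})=\begin{cases}\max\bigl\{0,\ \tfrac{k-2}{2}-\tfrac{k}{p}\bigr\}, & k-2\le\ell_0,\\[4pt]\max\bigl\{0,\ \tfrac{\ell_0}{2}-\tfrac{\ell_0+2}{p},\ \tfrac{k-2}{2}-\tfrac{k+2}{p}\bigr\}, & k-2>\ell_0,\end{cases}
\]
the dichotomy reflecting whether a $(k-2)$-plane is large enough to contain the $\ell_0$-dimensional flat slab on which some $c_1Q_1+c_2Q_2$ vanishes. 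For $X(\mathbf{Q},k,m)$ I will use Lemma~5.3 of \cite{ggo23} to restate $\dim(\pi_{V_\xi}(V))<X$ as a containment condition for the normal directions $n_1(\xi)=((2a_i\xi_i)_i,-1,0)$ and $n_2(\xi)=((2b_i\xi_i)_i,0,-1)$. Since $V_\xi^\perp=\mathrm{span}(n_1(\xi),n_2(\xi))$ is two-dimensional, one always has $X(\mathbf{Q},k,m)\in\{m-2,m-1,m\}$, so it suffices to bound the dimensions of the loci $\{\xi:n_1(\xi),n_2(\xi)\in V\}$ and $\{\xi:sn_1(\xi)+tn_2(\xi)\in V\text{ for some }(s,t)\neq 0\}$. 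The crucial linear-algebra input, using that $D$ has simple spectrum (so its only invariant subspaces are coordinate ones) together with the $(-1,0)$, $(0,-1)$ in the last two coordinates of $n_1,n_2$, is that the first locus has dimension $\le\dim V-2$ and the second has dimension $\le\dim V$, both sharp; the bound $\dim V$ (rather than $\dim V-1$) on the second locus is forced by the $d$ exceptional directions $(s:t)=(-b_i:a_i)$. Tracking the three cases gives $X(\mathbf{Q},k,m)=m$ for $m\le k-2$, $=k-2$ for $m=k-1$, $=k-1$ for $m\in\{k,k+1\}$, $=m-2$ for $m\ge k+1$ (with the obvious modifications when $m=d+1,d+2$), and therefore $\max_{1\le m\le d+n}\tfrac{m-X(\mathbf{Q},k,m)}{2m}=\tfrac{1}{k+1}$ for every $3\le k\le d+1$.

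Thus the first term inside the maximum in (\ref{t1e2}) equals $\tfrac{\alpha}{k+1}$, and it remains to minimise $g(p):=\tfrac12\Gamma_p^{k-2}(\mathbf{Q})+\tfrac{d+n-\alpha}{2p}+\tfrac{\alpha+n-d}{4}$ over $p\ge 2$. Since $g$ is piecewise of the form $c_0+c_1p^{-1}$, its minimum is attained at a breakpoint of $\Gamma_p^{k-2}(\mathbf{Q})$: at $p=\tfrac{2k}{k-2}$ when $k-2\le\ell_0$, which gives $\tfrac{2k+\alpha-d-2}{2k}$; and at $p=\tfrac{2(\ell_0+2)}{\ell_0}$ or $p=\tfrac{2(k-\ell_0)}{k-\ell_0-2}$ when $k-2>\ell_0$, which give $\tfrac{2\ell_0+\alpha+2-d}{2(\ell_0+2)}$ and $\tfrac{k+\alpha-d}{2(k-\ell_0)}$ respectively, the relevant one depending on the sign of $\alpha-\lfloor d/2\rfloor$. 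Taking the minimum of these candidate values produces exactly $\mu(k,\alpha)$, and since $\min_{p\ge 2}\max\{\tfrac{\alpha}{k+1},g(p)\}=\max\{\tfrac{\alpha}{k+1},\mu(k,\alpha)\}$, minimising over $3\le k\le d+1$ yields the asserted inequality (the endpoint $\alpha=d+2$ follows by continuity, where both sides equal $n/2=1$). I expect the computation of $X(\mathbf{Q},k,m)$ to be the main obstacle: controlling how an $m$-plane meets the moving $2$-plane $V_\xi^\perp$ as $\xi$ varies, and in particular isolating the contribution of the exceptional directions $(-b_i:a_i)$, is where the good hypothesis is genuinely needed, whereas the remaining steps are routine manipulation of (\ref{dec th1 00}) and one-variable calculus.
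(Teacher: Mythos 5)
Your proposal reaches the same final bound via Theorem~\ref{th1}, but it derives the required inputs by a genuinely different route. Where the paper imports $\mathfrak{d}_{d-m,2}(\mathbf{Q}) \geq d-m$ and $\mathfrak{d}_{d-m,1}(\mathbf{Q}) \geq d-2m-1$ from Lemma~5.1 of \cite{ggo23} and $X(\mathbf{Q},k,m) \geq m\tfrac{k-1}{k+1}$ from Proposition~5.2 of \cite{ggo23}, you re-derive them: positive-definiteness of $Q_1$ gives $\mathfrak{d}_{d',2}(\mathbf{Q})=d'$; $\mathfrak{d}_{d,1}(\mathbf{Q})=d-1$ together with Claim~3.5 of \cite{gozzk23} reproduces the same numerical lower bound on $\mathfrak{d}_{d',1}(\mathbf{Q})$ as the paper's citation; the key fact $\mathfrak{d}_{\lfloor(d+1)/2\rfloor,1}(\mathbf{Q})=0$ is obtained by a rotation argument in the $(a_i,b_i)$-plane, which is the same idea (balancing the positive/negative eigenvalue count) as the intermediate-value argument in the proof of Proposition~\ref{prop:diag_not_str} and Remark~\ref{rmk:d_even_partial} that the paper actually cites; and the $X$-values come from a direct case analysis of how a fixed $m$-plane meets the moving two-plane $V_\xi^\perp$. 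That last step is the most substantial departure and also the most ambitious: the paper needs only the lower bound from Proposition~5.2 (which already yields $\max_m\tfrac{m-X(\mathbf{Q},k,m)}{2m}\le\tfrac{1}{k+1}$), whereas you claim exact values in four cases according to whether $m\le k-2$, $m=k-1$, $m\in\{k,k+1\}$, or $m>k+1$. This is more than the corollary requires, your sketch of it is the least detailed part (as you acknowledge), and verifying it rigorously would take real work — but in the instances I spot-checked the claimed values are consistent with the cited lower bound, so the needed inequality $\max_m\tfrac{m-X}{2m}\le\tfrac1{k+1}$ still comes out. The computation of $\Gamma_p^{k-2}(\mathbf{Q})$ from (\ref{dec th1 00}), the identification of the breakpoints in $p$, and the final minimisation over $k$ agree with the paper's.
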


	\begin{proof} From Lemma 5.1 in \cite{ggo23}, we have
		\begin{equation}\label{cor4pe1}
			\mathfrak{d}_{d-m,2}(\mathbf{Q})  \geq d-m , \quad  \mathfrak{d}_{d-m,1}(\mathbf{Q})  \geq d-2m-1.      
		\end{equation}
		We note $\mathfrak{d}_{d,1}(\mathbf{Q})  \geq d-1$. In fact, using one basic computation, we have $\mathfrak{d}_{d,1}(\mathbf{Q}) = d-1$, and then (\ref{cor4e1}) follows. 
		
		By the proof of Proposition~\ref{prop:diag_not_str} and Remark~\ref{rmk:d_even_partial}, we have $\mathfrak{d}_{\frac{d+1}{2},1}(\mathbf{Q}) =0$ when $d$ is odd, and $\mathfrak{d}_{\frac{d}{2},1}(\mathbf{Q}) =0$ when $d$ is even. In summary, we have $\mathfrak{d}_{\lfloor\frac{d+1}{2}\rfloor,1}(\mathbf{Q}) =0$, where $\lfloor a\rfloor$ represents the maximum integer not greater than $a$. We now use all above results to calculate $\ell^2L^p$ decoupling exponents:
		\begin{align}
			\Gamma_p^{k-2}(\mathbf{Q}) &=  \max_{0\leq d'\leq k-2} \max_{0\leq n' \leq 2} \Big\{  \Big(d'- \mathfrak{d}_{d',n'}(\mathbf{Q})\Big)\Big(\frac{1}{2}-\frac{1}{p}\Big) -\frac{2(2-n')}{p}    \Big\} \nonumber \\
			&=     \max\Big\{\frac{k-2}{2}-\frac{k+2}{p},~0 ,~ \max_{0\leq d'\leq k-2} \Big\{ \Big(d'- \mathfrak{d}_{d',1}(\mathbf{Q})\Big)\Big(\frac{1}{2}-\frac{1}{p}\Big) -\frac{2}{p}   \Big\}    \Big\}. \label{cor4pe2} 
		\end{align}
		If $k-2\leq \lfloor\frac{d+1}{2}\rfloor$, then (\ref{cor4pe2}) becomes
		\begin{align*}
			\Gamma_p^{k-2}(\mathbf{Q}) &=     \max\Big\{\frac{k-2}{2}-\frac{k+2}{p},~0 ,~ \max_{0\leq d'\leq k-2} \Big\{d'\Big(\frac{1}{2}-\frac{1}{p}\Big) -\frac{2}{p}   \Big\}    \Big\} \\
			&= \max\Big\{\frac{k-2}{2}-\frac{k}{p},~0  \Big\}.
		\end{align*}
		Thus we get the endpoint of $\ell^2L^p$ decoupling and the corresponding exponent:
		$$ p=\frac{2k}{k-2},\quad \quad    \Gamma_p^{k-2}(\mathbf{Q})=0. $$
		If $k-2> \lfloor\frac{d+1}{2}\rfloor$, then (\ref{cor4pe2}) becomes
		\begin{align*}
			\Gamma_p^{k-2}(\mathbf{Q}) &\leq  \max\Big\{\frac{k-2}{2}-\frac{k+2}{p},~0 ,\max_{0\leq d'\leq \lfloor\frac{d+1}{2}\rfloor} \Big\{d'\Big(\frac{1}{2}-\frac{1}{p}\Big) -\frac{2}{p}   \Big\} ,\\
			&\quad \quad \quad \quad \quad \quad \quad \quad \quad \quad \quad \quad~\max_{\lfloor\frac{d+1}{2}\rfloor+1\leq d'\leq k-2} \Big\{\Big(d-d'+1 \Big)\Big(\frac{1}{2}-\frac{1}{p}\Big) -\frac{2}{p}   \Big\}    \Big\} \\
			&=  \max\Big\{\frac{k-2}{2}-\frac{k+2}{p},~\frac{\lfloor\frac{d+1}{2}\rfloor}{2}-\frac{\lfloor\frac{d+1}{2}\rfloor+2}{p},~\frac{\lceil\frac{d-1}{2}\rceil}{2}-\frac{\lceil\frac{d-1}{2}\rceil+2}{p},~0   \Big\}    \\
			&=   \max\Big\{\frac{k-2}{2}-\frac{k+2}{p},~\frac{\lfloor\frac{d+1}{2}\rfloor}{2}-\frac{\lfloor\frac{d+1}{2}\rfloor+2}{p},~0   \Big\}  ,
		\end{align*}
		where $\lceil a\rceil$ represents the smallest integer not less than $a$.
		Thus we get the endpoints of $\ell^2L^p$ decoupling and the corresponding exponents:
		$$ p=\frac{2(\lfloor\frac{d+1}{2}\rfloor+2)}{\lfloor\frac{d+1}{2}\rfloor}, \quad\quad  \Gamma_p^{k-2}(\mathbf{Q})=0;    $$
		and
		$$    p=\frac{2(k-\lfloor\frac{d+1}{2}\rfloor)}{k-\lfloor\frac{d+1}{2}\rfloor-2}, \quad\quad  \Gamma_p^{k-2}(\mathbf{Q})=\frac{2\lfloor\frac{d+1}{2}\rfloor-k+2}{k-\lfloor\frac{d+1}{2}\rfloor}.   $$
		On the other hand, by Proposition 5.2 in \cite{ggo23}, we have
		$$   X(\mathbf{Q},k,m) \geq m \cdot \frac{k-1}{k+1}.   $$
		Using all estimates we obtained above, (\ref{t1e2}) becomes
		\begin{align*}
			s(\alpha,\mathbf{Q})&\leq \min_{ \substack{ 3\leq k\leq d+1 \\ p\geq 2 } } \max\Big\{   \frac{\alpha}{k+1} ,~\frac{\Gamma_p^{k-2}(\mathbf{Q})}{2}+\frac{d+2-\alpha}{2p}+\frac{\alpha+2-d}{4} \Big\}    \\
			&=   \min_{ 3\leq k\leq d+1  }  \max\Big\{   \frac{\alpha}{k+1} ,~\min_{  p\geq 2  }\Big\{\frac{\Gamma_p^{k-2}(\mathbf{Q})}{2}+\frac{d+2-\alpha}{2p}+\frac{\alpha+2-d}{4}\Big\} \Big\}  \\
			&\leq   \min \Big\{ \min_{3\leq k\leq \lfloor\frac{d+1}{2}\rfloor+2} \max\Big\{  \frac{\alpha}{k+1} , \frac{2k+\alpha-d-2}{2k} \Big\} ,   \\
			&\quad \quad \quad \quad \quad \quad  \min_{\lfloor\frac{d+1}{2}\rfloor+3\leq k  \leq d} \max\Big\{  \frac{\alpha}{k+1} ,    \frac{k+\alpha-d}{2(k-\lfloor\frac{d+1}{2}\rfloor)}\Big\} , ~   \frac{2\lfloor\frac{d+1}{2}\rfloor+\alpha+2-d}{2(\lfloor\frac{d+1}{2}\rfloor+2)}\Big\}.
		\end{align*}
		
	\end{proof}

	\section{The proof of Theorem~\ref{thm:relation_diagram} and additional remarks}\label{appendix:relationships}

    The goal of this section is to present the proof of Theorem~\ref{thm:relation_diagram}, which is quite lengthy: there are so many arrows that we need to justify. So we will split the proof into $12$ subsections. As we have mentioned, not all parts of the proof belong to us, and we will cite references at the proper time along the way when something has already been done in the literature. We also devote one additional subsection to remarks on general higher-codimensional Fourier restriction theory, which can be viewed as a natural extension of the discussions in this section and may inspire future work.

    Without loss of generality, we may always assume $\mathbf{Q}$ is nontrivial, i.e., $\mathfrak{d}_{d,1}(\mathbf{Q}) \geq 1$, throughout this section. Otherwise $S_\mathbf{Q}$ would be degenerate and compressed into a low-dimensional vector space, so Theorem~\ref{thm:relation_diagram} is vacuous.

    \addtocontents{toc}{\protect\setcounter{tocdepth}{0}}
    
    \subsection{Proof of \texorpdfstring{$\protect\circled{1}$}{①}}\label{subsec:1_proof}\phantom{x}
	
	``$\Longrightarrow$'' is due to Proposition~2.3 in \cite{mockenhaupt96}, while ``$\Longleftarrow$'' when $n=2$ is essentially due to discussions on page~17 of \cite{mockenhaupt96}. However, when $n \geq 3$, ``$\Longleftarrow$'' is not shown anywhere in the literature. Here we will provide a proof of it by performing a direct computation, part of which is similar to that in the proof of Proposition~2.3 in \cite{mockenhaupt96}. Even so, some annoying issues need to be taken care of to deal with the  ``endpoint'' cases. Therefore, for the reader's convenience as well as for demonstrating the additional difficulties, we begin by first rewriting the proof of ``$\Longrightarrow$'' in \cite{mockenhaupt96} by our notation in a rigorous way.
	
	By (\ref{eq:tilde_E_pointwise}) at the beginning of the proof of Proposition~\ref{prop:direct_compute}, we have
	
	\begin{align}\label{eq:tilde_E_Lp}
		\int_{\R^{d+n}} \abs{\widetilde{E}^\mathbf{Q}1(x)}^p dx & = \int_{\R^{d+n}} \abs{(2\pi)^{\frac{d}{2}} \cdot \det(I - i\overline{Q}(x''))^{-\frac{1}{2}} \cdot e^{-{x'}^T\cdot (I - i\overline{Q}(x''))^{-1} \cdot x'/2}}^p dx\nonumber\\
		& \sim \int_{\R^n} \abs{\det(I-i\overline{Q}(x''))}^{-\frac{p}{2}} \int_{\R^d}\abs{e^{-{x'}^T\cdot (I - i\overline{Q}(x''))^{-1} \cdot x'}}^{\frac{p}{2}} dx'dx''.
	\end{align}
	Note that $(I - i\overline{Q}(x''))^{-1} = (I + \overline{Q}(x'')^2)^{-1}(I + i\overline{Q}(x''))$, so we have
	\begin{align*}
		\int_{\R^d}\abs{e^{-{x'}^T\cdot (I - i\overline{Q}(x''))^{-1} \cdot x'}}^{\frac{p}{2}} dx' & = \int_{\R^d}\abs{e^{-{x'}^T\cdot (I + \overline{Q}(x'')^2)^{-1}(I + i\overline{Q}(x'')) \cdot x'}}^{\frac{p}{2}} dx'\\
		& = \int_{\R^d}\abs{e^{-{x'}^T\cdot (I + \overline{Q}(x'')^2)^{-1} \cdot x'}}^{\frac{p}{2}} dx'\\
		& = \det(I + \overline{Q}(x'')^2)^{\frac{1}{2}} \cdot \int_{\R^d}e^{-\frac{p}{2}|x'|^2} dx'\\
		& \sim \det(I + \overline{Q}(x'')^2)^{\frac{1}{2}}.
	\end{align*}
	Besides, for any $A\in \R^{d\times d}$, by $\det(I + A^2) = \det(I+iA)\det(I-iA)$ and $\det(I+iA) = \overline{\det(I-iA)}$, we see that $\det(I + A^2) = \abs{\det(I-iA)}^2$ (always nonnegative). For our purposes, we take $A = \overline{Q}(x'')$. Thus, by plugging these estimates back into (\ref{eq:tilde_E_Lp}), we get
	\begin{align}\label{eq:Lp_identity_moc96}
		\int_{\R^{d+n}} \abs{\widetilde{E}^\mathbf{Q}1(x)}^p dx & \sim \int_{\R^n} \abs{\det(I-i\overline{Q}(x''))}^{-\frac{p}{2}+1} dx'' \nonumber\\
		& \sim \int_{\R^n} \left[\prod_{j=1}^d (1 + |x''||\mu_j(\theta)|)\right]^{-\frac{p}{2} + 1} dx''\nonumber\\
		& = \int_{\mathbb{S}^{n-1}} \int_0^\infty \frac{r^{n-1}}{\left[\prod_{j=1}^d (1 + r|\mu_j(\theta)|)\right]^{\frac{p}{2} - 1}} dr d\sigma(\theta),
    \end{align}
	where in the second line we applied (\ref{eq:det_eigen_express}) with $\theta = x''/|x''|$, and in the last line we used polar coordinates transformation. Recall that we only care about the case when $p > \frac{2(d+n)}{d}$, which means the exponent $\frac{p}{2}-1 > \frac{n}{d} > 0$. As a side remark, the computations we have done so far follow \cite{mockenhaupt96} and will be used in the proofs of both ``$\Longrightarrow$'' and ``$\Longleftarrow$''.
	
	For ``$\Longrightarrow$'', Mockenhaupt \cite{mockenhaupt96} proceeded by first obtaining a good lower bound for the inner integral in (\ref{eq:Lp_identity_moc96}). Note that 
	$$\prod_{j=1}^d (1 + r|\mu_j(\theta)|)=\gamma_0(\theta) + \gamma_1(\theta) r + \cdots + \gamma_d(\theta) r^d,$$ where each coefficient $\gamma_k(\theta) \geq 0$, and
	$$  \gamma_0(\theta) = 1, \quad \gamma_d(\theta) = \prod_{j=1}^d \abs{\mu_j(\theta)} = \abs{\det(\overline{Q}(\theta))}.    $$
	Since each $\gamma_k(\theta)$ is a polynomial in $\{\abs{\mu_j(\theta)}\}_{j=1}^d$ which is uniformly bounded above by some constant $C_\mathbf{Q}$ (by Corollary~\ref{cor:uniform_spectral}), we know that there exists some constant $M_\mathbf{Q}$ such that $\gamma_k(\theta) \leq M_\mathbf{Q}$, $\forall\, k=1,\dots,d$. Now for each $\theta \in \mathbb{S}^{d-1}$, by considering $r \geq  \widetilde{C}_\mathbf{Q}/\gamma_d(\theta)$ \footnote{We interpret this as $r\geq \infty$ if $\gamma_d(\theta) = 0$, i.e., we simply ignore contribution to (\ref{eq:Lp_identity_moc96}) from such $\theta$'s.} where  $\widetilde{C}_\mathbf{Q} \coloneqq \max\{C_\mathbf{Q}, M_\mathbf{Q}\}$, we have $r \geq \widetilde{C}_\mathbf{Q}/C_\mathbf{Q} \geq 1$, which implies 
	$$   \gamma_d(\theta)r^d \geq \gamma_d(\theta)r \geq \widetilde{C}_\mathbf{Q} \geq C_\mathbf{Q} \geq 1 = \gamma_0(\theta)   $$
	and
	$$  \gamma_d(\theta)r^d \geq M_\mathbf{Q}r^k,\quad \quad \forall~ k = 1,\dots,d-1. $$
	Therefore, we can bound the inner integral in (\ref{eq:Lp_identity_moc96}) as follows: For each fixed $\theta \in \mathbb{S}^{n-1}$, we have
	\begin{align*}
		\int_0^\infty \frac{r^{n-1}}{\left[\prod_{j=1}^d (1 + r|\mu_j(\theta)|)\right]^{\frac{p}{2} - 1}} dr & = \int_0^\infty \frac{r^{n-1}}{(\gamma_0(\theta) + \gamma_1(\theta) r + \cdots + \gamma_d(\theta) r^d)^{\frac{p}{2} - 1}} dr\\
		& \geq \int_0^\infty \frac{r^{n-1}}{(\gamma_0(\theta) + M_\mathbf{Q}r + \cdots + M_\mathbf{Q}r^{d-1} + \gamma_d(\theta) r^d)^{\frac{p}{2} - 1}} dr\\
		& \geq \int_{\frac{\widetilde{C}_\mathbf{Q}}{\gamma_d(\theta)}}^\infty \frac{r^{n-1}}{(\gamma_0(\theta) + M_\mathbf{Q}r + \cdots + M_\mathbf{Q}r^{d-1} + \gamma_d(\theta) r^d)^{\frac{p}{2} - 1}} dr\\
		& \geq \int_{\frac{\widetilde{C}_\mathbf{Q}}{\gamma_d(\theta)}}^\infty \frac{r^{n-1}}{\big((d+1)\gamma_d(\theta) r^d\big)^{\frac{p}{2} - 1}} dr\\
		& \gtrsim \gamma_d(\theta)^{1-\frac{p}{2}} \int_{\frac{\widetilde{C}_\mathbf{Q}}{\gamma_d(\theta)}}^\infty r^{n-1-(\frac{p}{2}-1)d} dr\\
		& \sim_\mathbf{Q} \gamma_d(\theta)^{(\frac{p}{2}-1)(d-1) - n},
	\end{align*}
	where the last line holds due to $n-1-\big(\frac{p}{2}-1\big)d <-1$.
	Plugging this back into (\ref{eq:Lp_identity_moc96}), we get
	\begin{align*}
		\int_{\R^{d+n}} \abs{\widetilde{E}^\mathbf{Q}1(x)}^p dx & \gtrsim_\mathbf{Q} \int_{\mathbb{S}^{n-1}} \gamma_d(\theta)^{(\frac{p}{2}-1)(d-1) - n} d\sigma(\theta)\\
		& = \int_{\mathbb{S}^{n-1}} \abs{\det(\overline{Q}(\theta))}^{(\frac{p}{2}-1)(d-1) - n} d\sigma(\theta).
	\end{align*}
	So $\widetilde{E}^\mathbf{Q}1 \in L^p$ holds for any $p > \frac{2(d+n)}{d}$ will imply that (\ref{eq:CM}) holds for any index $0< \gamma < \frac{d}{n}$, i.e., the (CM) condition holds. And the proof of ``$\Longrightarrow$'' is completed.
	
	To prove ``$\Longleftarrow$'', our strategy is to bound $\int_{\R^{d+n}} \abs{\widetilde{E}^\mathbf{Q}1(x)}^p dx$ above by something like $\int_{\mathbb{S}^{d-1}} \abs{\det(\overline{Q}(\theta))}^{-\gamma} d\sigma(\theta)$. Our starting point is still (\ref{eq:Lp_identity_moc96}). However, additional difficulties arise. One naive way to proceed is to simply bound the inner integral above by $\int_0^\infty r^{d-1}/(1 + \gamma_d(\theta) r^d)^{\frac{p}{2} - 1}$, where the terms of intermediate degrees in the denominator are thrown away. Unfortunately, after some computation everything comes down to the endpoint (CM) condition (\ref{eq:CM_endpoint}), which is in general stronger than the (CM) condition (\ref{eq:CM}), so we are missing the endpoint case for ``$\Longleftarrow$''. 
	
	To remedy this defect, when $n=2$, Mockenhaupt \cite{mockenhaupt96} takes advantage of the fact that for any $\theta$ there is at least one nonzero $\mu_j(\theta)$, which ultimately yields an $\epsilon$-gain of the vanishing order of zeros of the one-variable function $\overline{Q}(\theta)$ in $\theta\in\mathbb{S}^1$. A crucial feature that makes his arguments work is that when $n=2$, the (CM) condition is equivalent to a certain condition on the vanishing order of zeros of $\overline{Q}(\theta)$. At first glance, this seems to be an essential barrier to handle cases when $n\geq 3$, as the zero set of $|\det(\overline{Q}(\theta))|$ can be a complicated algebraic variety, and there are no simple criteria in terms of vanishing orders to measure how singular $|\det(\overline{Q}(\theta))|^{-\gamma}$ is around this zero set. However, thanks to the properties of the eigenvalues $\{\mu_j(\theta)\}_{j=1}^d$ established in Section~\ref{sec2}, we are able to extend Mockenhaupt's argument to the $n\geq 3$ case through some modifications.
	
	The key point here is that we cannot crudely throw away all intermediate terms in the previous naive approach: much more delicate analysis is necessary to handle the endpoint cases. Recall that $\{\Lambda_k(\theta)\}_{k=1}^d$ are $\{|\mu_j(\theta)|\}_{j=1}^d$ in order, and by Corollary~\ref{cor:k_continuity}, they are all continuous functions on $\mathbb{S}^{n-1}$. Also, by Theorem~\ref{thm:U_L_bound_eigen}, Lemma~\ref{lem:alg_char_m}, and the assumption that $\mathfrak{d}_{d,1}(\mathbf{Q}) \geq 1$, we always have
	\begin{equation}\label{asdfgh}
		\Lambda_1(\theta) \sim_\mathbf{Q} 1, \quad \quad \theta \in \mathbb{S}^{n-1}.  
	\end{equation}
	Then for each fixed $\theta\in\mathbb{S}^{n-1}$, we have
	\begin{align*}
		\int_0^\infty \frac{r^{n-1}}{\left[\prod_{j=1}^d (1 + r|\mu_j(\theta)|)\right]^{\frac{p}{2} - 1}} dr & =  \int_0^\infty \frac{r^{n-1}}{\left[\prod_{k=1}^d (1 + r\Lambda_k(\theta))\right]^{\frac{p}{2} - 1}} dr\\
		& = \int_0^\infty \frac{r^{n-1}}{(\gamma_0(\theta) + \gamma_1(\theta) r + \cdots + \gamma_d(\theta) r^d)^{\frac{p}{2} - 1}} dr\\
		& \leq \int_0^\infty \frac{r^{n-1}}{(\gamma_0(\theta) + \gamma_{d-1}(\theta) r^{d-1}  + \gamma_d(\theta) r^d)^{\frac{p}{2} - 1}} dr,
	\end{align*}
	where 
	$$   \gamma_0(\theta) = 1,\quad \gamma_{d-1}(\theta) = \sum_{j=1}^d \prod_{k\neq j}\Lambda_k(\theta),\quad \gamma_d(\theta) = \prod_{k=1}^d \Lambda_k(\theta) = \abs{\det(\overline{Q}(\theta))}.    $$
	As a side remark, without loss of generality, we may always assume $d \geq 2$, which ensures the existence of the $r^{d-1}$ term, as $d=1$ implies $n=1$ and $\mathbf{Q}$ must be a parabola, for which the equivalence $\circled{1}$ trivially holds. By the AM-GM inequality and (\ref{asdfgh}), we have\footnote{Indeed, there is no unique way of doing this, and even choosing to work with the term $\gamma_{d-1}(\theta)$ is not a must. The only important thing is that the profit from $\Lambda_1(\theta)$ is exploited in some sense to gain a little bit of the exponent of $\abs{\det(\overline{Q}(\theta))}$.}
	\begin{align*}
		\gamma_{d-1}(\theta) & \geq \sum_{j=2}^d \prod_{k\neq j}\Lambda_k(\theta)\\
		& \geq (d-1) \left(\prod_{j=2}^d \prod_{k\neq j}\Lambda_k(\theta)\right)^{\frac{1}{d-1}}\\
		& = (d-1)\Lambda_1(\theta)\prod_{k=2}^d \Lambda_k(\theta)^{\frac{d-2}{d-1}}\\
		& \sim_\mathbf{Q} \prod_{k=1}^d \Lambda_k(\theta)^{\frac{d-2}{d-1}}\\
		& = \abs{\det(\overline{Q}(\theta))}^{\frac{d-2}{d-1}},
	\end{align*}
	which implies
	\begin{align*}
		\int_0^\infty \frac{r^{n-1}}{\left[\prod_{j=1}^d (1 + r|\mu_j(\theta)|)\right]^{\frac{p}{2} - 1}} dr & \lesssim_\mathbf{Q} 
		\int_0^\infty \frac{r^{n-1}}{\left(
			1 + \abs{\det(\overline{Q}(\theta))}^{\frac{d-2}{d-1}} r^{d-1}  + \abs{\det(\overline{Q}(\theta))} r^d \right)^{\frac{p}{2} - 1}} dr\\
		& \leq 
		\int_0^\infty \frac{r^{n-1}}{\left(
			1 + \abs{\det(\overline{Q}(\theta))}^{1-\frac{\epsilon}{d-1}} r^{d-\epsilon} \right)^{\frac{p}{2} - 1}} dr,
	\end{align*}
	for any $\epsilon\in (0,1)$. In the second line above, we used a simple fact from calculus that $a + b \geq a^\epsilon b^{1-\epsilon}$ for any $a,b\geq 0$ and $\epsilon\in (0,1)$. Define $r_0$ satisfying
	$$  \abs{\det(\overline{Q}(\theta))}^{1-\frac{\epsilon}{d-1}} r_0^{d-\epsilon} = 1,   $$
	then we can further bound the integral above by
	\begin{align*}
		&\int_0^{r_0} r^{n-1} dr + \abs{\det(\overline{Q}(\theta))}^{-(1-\frac{\epsilon}{d-1})(\frac{p}{2}-1)} \int_{r_0}^\infty r^{n-1-(d-\epsilon)(\frac{p}{2}-1)} dr \\
		\sim\,\, & r_0^n + \abs{\det(\overline{Q}(\theta))}^{-(1-\frac{\epsilon}{d-1})(\frac{p}{2}-1)} r_0^{n-(d-\epsilon)(\frac{p}{2}-1)}\\
		=\,\,& r_0^n = \abs{\det(\overline{Q}(\theta))}^{-\frac{n}{d-\epsilon}(1-\frac{\epsilon}{d-1})},
	\end{align*}
	as long as $\epsilon\in(0,1)$ is taken sufficiently small (depending on $p$) such that $n-1-(d-\epsilon)(\frac{p}{2}-1) < -1$, i.e., $\epsilon < d-n/(\frac{p}{2}-1)$ ($p>\frac{2(d+n)}{d}$ ensures its positivity). 
	
	Plugging the estimate just obtained back into (\ref{eq:Lp_identity_moc96}), we get
	\begin{align*}
		\int_{\R^{d+n}} \abs{\widetilde{E}^\mathbf{Q}1(x)}^p dx \lesssim_\mathbf{Q} \int_{\mathbb{S}^{n-1}} \abs{\det(\overline{Q}(\theta))}^{-\frac{n}{d-\epsilon}(1-\frac{\epsilon}{d-1})} d\sigma(\theta)
	\end{align*}
	for any $0 < \epsilon < d-n/(\frac{p}{2}-1)$. But note that 
	$$\frac{n}{d-\epsilon}\Big(1-\frac{\epsilon}{d-1}\Big) = \frac{n}{d} - \frac{n\epsilon}{d(d-1)(d-\epsilon)} \in \Big(0,\frac{n}{d}\Big),$$
	so by the (CM) condition the right hand side is finite, and $\widetilde{E}^{\mathbf{Q}}1 \in L^p$ for any $p > \frac{2(d+n)}{d}$. Thus we have proved ``$\Longleftarrow$'' in $\circled{1}$.
	
	\phantom{x}
	
    \subsection{Proof of \texorpdfstring{$\protect\circled{2}$}{②}}\label{subsec:2_proof}\phantom{x}

    Note that the best Stein-Tomas in Theorem~\ref{thm:relation_diagram} is formulated in terms of $E^\mathbf{Q}$ instead of $\widetilde{E}^\mathbf{Q}$ or $\widetilde{E}_\varphi^\mathbf{Q}$ (as defined in (\ref{eq:gaussian_integral}) and (\ref{eq:oscillatory_integral})), although the latter two are the most commonly used in the literature of Stein-Tomas-type inequalities. However, by Proposition~\ref{prop:restriction_equiv}, estimates for all these versions of Fourier extension operators are indeed equivalent to each other. This allows us to cite results in the literature without worrying about compatibility.
    
	For the proof of 
	$\circled{2}$, ``$\Longrightarrow$'' is due to Corollary~2.12 in \cite{mockenhaupt96}. Moreover, Theorem~2.11 in \cite{mockenhaupt96} also shows that the endpoint (CM) condition implies the endpoint best Stein-Tomas-type  inequality (i.e., $L^2\rightarrow L^{\frac{2(d+2n)}{d}}$). The main technique is complex interpolation of analytic family of operators. As a side remark, in Conjecture~4.1.2 in \cite{banner02}, it is conjectured that the (CM) condition together with the assumption that $n < d$ implies the endpoint best Stein-Tomas-type  inequality. By Theorem~2.14 in \cite{mockenhaupt96}, this conjecture is true when $n=2$.
	
	As for the reverse implication ``$\Longleftarrow$'', when $n=2$, Proposition~3.1 in \cite{christ82} proved\footnote{Although the statement of the proposition itself only says that the best ``endpoint'' Stein-Tomas-type inequality (i.e., $L^2 \rightarrow L^{\frac{2(d+4)}{d}}$) implies the (CM) condition, which requires slightly stronger assumption, its proof actually works well enough to provide what we want. As a side remark, something more general is proved: There is no need to confine ourselves to quadratic forms, and the result actually holds for any smooth manifolds. In this setting, by using Hessian matrices, the (CM) condition can also be defined in a similar way as in Definition~\ref{def:CM}.} that the best Stein-Tomas-type 
	inequality (i.e., $L^2 \rightarrow L^{\frac{2(d+4)}{d}+}$) implies the (CM) condition. The approach is based on deriving certain local ``normal forms'' for manifolds, followed by a Knapp-type infinitesimal homogeneity argument. Proving the existence of the normal forms (Proposition~3.2 in \cite{christ82}) is not an easy job and requires heavy manipulation of change of coordinates. There are other ways of proving ``$\Longleftarrow$'' when $n=2$. As we will soon see, $\circled{6}$ combined with ``$\Longleftarrow$'' of $\circled{7}$ when $n=2$ provides an alternative proof (although still not easy, see explanations in the $\circled{7}$ part later on), which relies on geometric invariant theory.
	
	However, when $n \geq 3$, ``$\Longleftarrow$'' is not true in general, even if we strengthen our assumption to endpoint best Stein-Tomas-type inequalities. Here we provide several counterexamples:
	\begin{itemize}
		\item When $d=5$ and $n=3$, 
		$$\mathbf{Q}(\xi) = (\xi_1^2 + \xi_3^2 + \xi_5^2, 2(\xi_1\xi_2 + \xi_3\xi_4), \xi_2^2 + \xi_4^2 +\xi_5^2)$$ 
		satisfies the endpoint best Stein-Tomas-type inequality, see Theorem~2.18 in \cite{mockenhaupt96} for the proof. However, it does not satisfy the (CM) condition, because Example~2.10 in \cite{mockenhaupt96} tells us that 
		$$\int_{\mathbb{S}^{2}}  |\det(\overline{Q}(\theta))|^{-\gamma}d\sigma(\theta) = \infty, \quad \quad \forall ~\gamma \geq \frac{1}{2},$$
		while for the (CM) condition to hold we need 
		$$\int_{\mathbb{S}^{2}}  |\det(\overline{Q}(\theta))|^{-\gamma}d\sigma(\theta) < \infty , \quad \quad \forall~\gamma < \frac{3}{5}.$$
		Another way to see the failure of the (CM) condition is through the equivalence in $\circled{1}$: Theorem~2.18 in \cite{mockenhaupt96} also tells us that $\widetilde{E}^\mathbf{Q}1 \not\in L^p$ whenever $p \leq \frac{10}{3}$, while for the (CM) condition to hold we need $\widetilde{E}^\mathbf{Q}1 \in L^p$ for all $p > \frac{16}{5}$.
		
		\item When $d=2$ and $n=3$, 
		$$\mathbf{Q}(\xi) = (\xi_1^2, 2\xi_1\xi_2, \xi_2^2)$$ 
		satisfies the endpoint best Stein-Tomas-type inequality, see Proposition~16.2 in \cite{christ82} or Theorem~2.15 in \cite{mockenhaupt96} for the proof. However, it does not satisfy the (CM) condition, because  
		$$\int_{\mathbb{S}^{2}}  |\det(\overline{Q}(\theta))|^{-\gamma}d\sigma(\theta) = \int_{\mathbb{S}^{2}}  |4(\theta_1\theta_3-\theta_2^2)|^{-\gamma}d\sigma(\theta) = \infty,\quad \quad \forall~\gamma \geq 1,$$
		while for the (CM) condition to hold we need 
		$$\int_{\mathbb{S}^{2}}  |\det(\overline{Q}(\theta))|^{-\gamma}d\sigma(\theta) < \infty, \quad \quad \forall~\gamma < \frac{3}{2}.$$
		Another way to see the failure of the (CM) condition is through the equivalence in $\circled{1}$: Theorem~2.15 in \cite{mockenhaupt96} also tells us that $\widetilde{E}^\mathbf{Q}1 \in L^p$ if and only if $p > 6$, while for the (CM) condition to hold we need $\widetilde{E}^\mathbf{Q}1 \in L^p$ if and only if $p > 5$.
		
		\item When $d=3$ and $n=5$, 
		$$\mathbf{Q}(\xi) = (2\xi_1\xi_2, 2\xi_1\xi_3, 2\xi_2\xi_3, \xi_2^2 - \xi_1^2, \xi_3^2 - \xi_1^2)$$ 
		satisfies the endpoint best Stein-Tomas-type inequality, see Proposition~6.1.2 in \cite{banner02} for the proof. However, it does not satisfy the (CM) condition, because in the proof of Proposition~6.1.2 in \cite{banner02} it is also shown that $\{\theta\in\mathbb{S}^4 : \overline{Q}(\theta)=0\}$ is a smooth submanifold of codimension one in $\mathbb{S}^4$, which implies 
		$$\int_{\mathbb{S}^{4}}  |\det(\overline{Q}(\theta))|^{-\gamma}d\sigma(\theta) = \infty,\quad \quad \forall~\gamma \geq 1,$$
		while for the (CM) condition to hold we need 
		$$\int_{\mathbb{S}^{4}}  |\det(\overline{Q}(\theta))|^{-\gamma}d\sigma(\theta) < \infty,\quad \quad \forall~\gamma < \frac{5}{3}.$$
		One may consult Corollary~2.6 in \cite{mockenhaupt96} for  more general discussions in this spirit.
	\end{itemize}
	
	\phantom{x}
	
    \subsection{Proof of \texorpdfstring{$\protect\circled{3}$}{③}}\label{subsec:3_proof}\phantom{x}
	
	By the definition of good manifolds, in what follows we always restrict ourselves to diagonal forms $\mathbf{Q}(\xi) = (a_1\xi_1^2 + \cdots + a_d\xi_d^2,b_1\xi_1^2 + \cdots + b_d\xi_d^2)$ of codimension $2$. When $d=2$, the ``$\Longrightarrow$'' is actually ``$\Longleftrightarrow$'', because in this case the only good manifold is $\mathbf{Q} \equiv (\xi_1^2, \xi_2^2)$, and so is the only strongly nondegenerate diagonal form.
	
	For any even $d \geq 4$, we claim that ``$\Longrightarrow$'' holds, while ``$\Longleftarrow$'' is not true in general. To verify that a good manifold $\mathbf{Q}$ is strongly nondegenerate, we need to check that $\mathfrak{d}_{d-m,n'}(\mathbf{Q}) \geq \frac{n'}{2}d - m$ for every $m$ with $0\leq m \leq d$ and every $n'$ with $0 \leq n' \leq 2$. The case $n'=0$ is trivial by noting $\mathfrak{d}_{d-m,0}(\mathbf{Q}) = 0$. If $n'=1$, since $\mathfrak{d}_{d-m,n'}(\mathbf{Q}) \geq 0$, without loss of generality we may assume $0 \leq m < \frac{d}{2}$, which implies $m \leq \frac{d}{2} - 1$ (note that $\frac{d}{2} \in \N^+$ for even $d$). By Lemma~5.1 in \cite{ggo23}, we have $\mathfrak{d}_{d-m,1}(\mathbf{Q}) \geq d-2m-1$, which is further bounded below by $\frac{d}{2}-m$, as $m \leq \frac{d}{2} - 1$. If $n'=2$, then by Lemma~5.1 in \cite{ggo23}, we directly have $\mathfrak{d}_{d-m,2}(\mathbf{Q}) \geq d-m$ for all $0\leq m \leq d$, as desired. Therefore, ``$\Longrightarrow$'' holds.
	
	To show that ``$\Longleftarrow$'' is not true in general, one may consider 
	$$\mathbf{Q}(\xi) = (\xi_1^2 + \cdots + \xi_{\frac{d}{2}}^2, \xi_{\frac{d}{2}+1}^2 + \cdots + \xi_d^2),$$
	which is clearly not good. To check strongly nondegeneracy, similarly to the discussions in the last paragraph, it suffices to show $\mathfrak{d}_{d-m,1}(\mathbf{Q}) \geq \frac{d}{2} - m$ and $\mathfrak{d}_{d-m,2}(\mathbf{Q}) \geq d-m$ for every $m$ with $0\leq m \leq d$. For the former, observe that 
	\begin{equation}\label{123321123321}
		\mathfrak{d}_{d-m,1}(\mathbf{Q}) = \inf_{\theta = (s,t)\in \mathbb{S}^1} \mathfrak{d}_{d-m,1}(\overline{Q}(\theta)),   
	\end{equation}
	where 
	$$\overline{Q}(\theta) = s(\xi_1^2 + \cdots + \xi_{\frac{d}{2}}^2) + t(\xi_{\frac{d}{2}+1}^2 + \cdots + \xi_d^2).$$
	If $s=0$ or $t=0$, then it is easy to see that $\mathfrak{d}_{d-m,1}(\overline{Q}(\theta)) \geq \frac{d}{2} - m$ (much similar to the paraboloid case in Corollary~\ref{cor1}), as desired. So it remains to consider the case when $s\neq 0$ and $t\neq 0$. If $s$ and $t$ have the same sign, then $\overline{Q}(\theta) \equiv \xi_1^2 + \cdots + \xi_d^2$, which by the paraboloid case in Corollary~\ref{cor1} satisfies $\mathfrak{d}_{d-m,1}(\overline{Q}(\theta)) \geq d-m > \frac{d}{2}-m$. If $s$ and $t$ have different signs, then by the hyperbolic paraboloid case in Corollary~\ref{cor2} (with $m=\frac{d}{2}$), we have $\mathfrak{d}_{d-m,1}(\overline{Q}(\theta)) = d-2m \geq \frac{d}{2}-m$ (note that without loss of generality we may assume $0\leq m\leq \frac{d}{2}$), as desired. Thus in any case we have verified $\mathfrak{d}_{d-m,1}(\mathbf{Q}) \geq \frac{d}{2}-m$. For the latter, observe that the proof of Lemma~5.1 in \cite{ggo23} carries through without any modification. To be more precise, suppose to the contrary $\mathfrak{d}_{d-m,2}(\mathbf{Q}) < d-m$, then they showed that there exists a nonzero vector $\vec{w} = (w_1,\dots,w_d)\in\R^d$ such that $\sum_{i=1}^d a_i (w_i)^2=0$ and $\sum_{i=1}^d b_i (w_i)^2=0$, where $a_i$'s and $b_i$'s are the coefficients in the diagonal form. But in our case this is exactly 
	$$\sum_{i=1}^{\frac{d}{2}} (w_i)^2=0, \quad \sum_{\frac{d}{2}+1}^d (w_i)^2=0,$$
	which implies $\vec{w}=0$, a contradiction. So the proof of strongly nondegeneracy is completed.
	
	For any odd $d \geq 3$, we claim that ``$\Longrightarrow$'' is not true in general, while ``$\Longleftarrow$'' is vacuous. To see that ``$\Longrightarrow$'' fails in general, one may consider 
	$$\mathbf{Q}(\xi) = \Big(\sum_{i=1}^d \xi_i^2, \sum_{i=1}^{\frac{d-1}{2}} i \xi_i^2 - \sum_{i=\frac{d+3}{2}}^{d} (d + 1 - i) \xi_{i}^2\Big).$$
	Note that the coefficient matrix of this $\mathbf{Q}$ is
	\begin{align*}
		\begin{pmatrix}
			a_1 & \cdots & a_d\\
			b_1 & \cdots & b_d
		\end{pmatrix}
		=
		\begin{pmatrix}
			1 & \cdots & 1 & 1 & 1 & \cdots & 1\\
			1 & \cdots & \frac{d-1}{2} & 0 & -\frac{d-1}{2} & \cdots & -1
		\end{pmatrix},
	\end{align*}
	we easily see that it is good. But by considering (\ref{123321123321}) with $m=\frac{d-1}{2}$, we have $$\mathfrak{d}_{\frac{d+1}{2},1}(\mathbf{Q}) \leq \mathfrak{d}_{\frac{d+1}{2},1}(\overline{Q}((0,1))) = \mathfrak{d}_{\frac{d+1}{2},1} \Big(\sum_{i=1}^{\frac{d-1}{2}} i \xi_i^2 - \sum_{i=\frac{d+3}{2}}^{d} (d + 1 - i) \xi_{i}^2 \Big) =0,$$
	which is much similar to the hyperbolic paraboloid case in Corollary~\ref{cor2}. So $\mathfrak{d}_{\frac{d+1}{2},1}(\mathbf{Q}) \geq \frac{1}{2}$ does not hold, and $\mathbf{Q}$ is not strongly nondegenerate.
	
	To show that ``$\Longleftarrow$'' is vacuous, we only need to prove:
	\begin{proposition}\label{prop:diag_not_str}
		For any odd $d\geq 3$, all diagonal forms $\mathbf{Q}(\xi) = (Q_1(\xi),Q_2(\xi)) \coloneqq (a_1\xi_1^2 + \cdots + a_d\xi_d^2,b_1\xi_1^2 + \cdots + b_d\xi_d^2)$ cannot be strongly nondegenerate.
	\end{proposition}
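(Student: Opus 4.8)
The plan is to show that strong nondegeneracy already fails at the single parameter pair $(m,n')=\big(\tfrac{d-1}{2},1\big)$; concretely, I will prove that
$\mathfrak{d}_{\frac{d+1}{2},1}(\mathbf{Q})=0$ for every diagonal $\mathbf{Q}$ when $d$ is odd, whereas strong nondegeneracy (Definition~\ref{def:str_non_degen}, with $n=2$) would force $\mathfrak{d}_{\frac{d+1}{2},1}(\mathbf{Q})=\mathfrak{d}_{d-\frac{d-1}{2},1}(\mathbf{Q})\ge \tfrac{d}{2}-\tfrac{d-1}{2}=\tfrac12$, a contradiction since $\mathfrak{d}$ is integer-valued.

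First I would record the working formula for $\mathfrak{d}_{d',1}$. Combining the identity $\mathfrak{d}_{d',1}(\mathbf{Q})=\inf_{\dim L=d'}\mathfrak{d}_{d',1}(\mathbf{Q}|_L)$ (used already in the proof of Corollary~\ref{cor2}) with Lemma~\ref{lem:alg_char_m} applied to the $d'$-variable tuple $\mathbf{Q}|_L$, and noting $\overline{(\mathbf{Q}|_L)}(\theta)=\overline{Q}(\theta)|_L$, one gets
\[
\mathfrak{d}_{d',1}(\mathbf{Q})=\inf_{\theta\in\mathbb{S}^{n-1}}\ \inf_{\dim V=d'}\ \rank\big(\overline{Q}(\theta)|_V\big),
\]
where $\overline{Q}(\theta)|_V$ denotes the quadratic form $v\mapsto v^T\overline{Q}(\theta)v$ restricted to $V$. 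In particular $\mathfrak{d}_{d',1}(\mathbf{Q})=0$ as soon as some $\overline{Q}(\theta)$ admits a totally isotropic subspace of dimension $d'$. For a diagonal $\mathbf{Q}(\xi)=(\sum_i a_i\xi_i^2,\sum_i b_i\xi_i^2)$ and $\theta=(s,t)\in\mathbb{S}^1$ we have $\overline{Q}(\theta)=\mathrm{diag}\big(2(sa_i+tb_i)\big)$, and if this diagonal matrix has $P$ indices with positive entry, $M$ with negative entry and $Z$ with zero entry, then pairing each positive coordinate with a distinct negative coordinate into a null vector and adjoining the coordinate axes in $Z$ produces a totally isotropic subspace of dimension $|Z|+\min(|P|,|M|)=d-\max(|P|,|M|)$. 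Hence $\mathfrak{d}_{\frac{d+1}{2},1}(\mathbf{Q})=0$ once we find $\theta^*$ with $\max(|P|,|M|)\le \frac{d-1}{2}$.

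The crux is to produce such a $\theta^*$ when $d=2k+1$ is odd. Discard the trivial case $\mathbf{Q}\equiv 0$, choose a generic $\theta_0\in\mathbb{S}^1$ (no $sa_i+tb_i$ vanishing), and let $p_0$ be its number of positive entries; since $\overline{Q}(-\theta_0)=-\overline{Q}(\theta_0)$, the antipode has $d-p_0$ positive entries. As $p_0+(d-p_0)=2k+1$ is odd, exactly one of $p_0,d-p_0$ is $\le k$; say $p_0\le k$. Now move $\theta$ along a half–circle from $\theta_0$ to $-\theta_0$ and let $p(\theta)$ be the number of positive entries of $\overline{Q}(\theta)$: this is piecewise constant, changing only at the finitely many ``critical'' directions where some $sa_i+tb_i=0$, and it goes from $p_0\le k$ to $d-p_0\ge k+1$. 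Let $\theta^*$ be the critical direction across which $p$ first leaves $\{0,\dots,k\}$, so $p(\theta^{*-})\le k$ and $p(\theta^{*+})\ge k+1$. The entries not vanishing at $\theta^*$ keep the same sign on both sides of $\theta^*$, so the number of positive entries of $\overline{Q}(\theta^*)$ is $\le p(\theta^{*-})\le k$, and the number of negative entries of $\overline{Q}(\theta^*)$ is $\le q(\theta^{*+})=d-p(\theta^{*+})\le k$ (evaluating $q$ at a generic point just past $\theta^*$). This is exactly $\max(|P|,|M|)\le k=\frac{d-1}{2}$, so by the previous step $\mathfrak{d}_{\frac{d+1}{2},1}(\mathbf{Q})=0$, completing the proof.

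The main obstacle — the only point that is not formal bookkeeping — is this last step: one must know that the sign count $p(\theta)$ genuinely passes through the balanced value at a \emph{degenerate} direction, and that there both the positive and the negative counts drop to $\le k$ simultaneously. The parity of $d$ is what makes this work ($p_0$ and $d-p_0$ are forced to straddle $k$), and the semicontinuity of the sign counts across a crossing gives the simultaneous drop; note that this is precisely where oddness of $d$ is essential, consistent with the statement being false for even $d$. (The same argument applies verbatim to non-diagonal $\mathbf{Q}$ and any $n\ge 2$, with ``number of positive entries'' replaced by ``number of positive eigenvalues of $\overline{Q}(\theta)$'', but the diagonal codimension-two case is all that is needed for $\protect\circled{3}$.)
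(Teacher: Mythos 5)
Your proposal is correct, and it uses the same core mechanism as the paper: the oddness of $d$ forces the positive/negative sign counts of the pencil at antipodal directions to straddle $\tfrac{d-1}{2}$ from opposite sides, and an intermediate-value sweep then isolates a critical direction where both counts drop to at most $\tfrac{d-1}{2}$ simultaneously. Where you differ is in the packaging. The paper parametrizes the pencil affinely as $Q_2+\lambda Q_1$ (after first normalizing so that every $a_i\neq 0$), tracks $n_\pm(\lambda)$ on the intervals between critical $\lambda$'s, introduces counters $u,v$ for the vanishing coefficients split by the sign of $a_i$, and concludes $\mathfrak{d}_{\frac{d+1}{2},1}(\widetilde Q)=0$ by running a rank-drop recursion on $\mathfrak{d}_{d',1}$ together with a separate case split for $u>\tfrac{d+1}{2}$. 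You instead parametrize over $\mathbb{S}^1$ directly (so no normalization is needed), bound $\max(|P|,|M|)\leq\tfrac{d-1}{2}$ at the critical direction in one stroke via the one-sided semicontinuity of the sign counts (which gives $|P|\leq p(\theta^{*-})$ and $|M|\leq q(\theta^{*+})$), and then read off a totally isotropic $\tfrac{d+1}{2}$-dimensional subspace directly rather than recursing through $\mathfrak{d}_{d',1}$. Your version is tidier and avoids the $u,v$ bookkeeping and the attendant case split. One caveat on your closing parenthetical: for non-diagonal $\mathbf{Q}$ the argument is not quite ``verbatim,'' since an eigenvalue of $\overline{Q}(\theta)$ can touch zero without changing sign, so the one-sided sign-count drop at the crossing is no longer automatic; this does not affect the diagonal case you actually need.
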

	\begin{proof}
		The key idea of the proof is to find a linear combination of $Q_1$ and $Q_2$ that is the most degenerate by tracing the signs of the coefficients. Firstly, as $\mathbf{Q}$ does not omit any variables, by perturbing $Q_1$ using $Q_2$, we may without loss of generality assume that $a_i\neq 0$ for any $1\leq i \leq d$. Let $n_+$ and $n_-$ be the numbers of positive and negative numbers in $\{a_i\}_{i=1}^d$ respectively, then $n_+ + n_- =d$. Also, note that $d$ is odd implies $n_+ \neq n_-$, so we may further assume that $n_+ > n_-$ (otherwise we replace $Q_1$ with $-Q_1$). Next, let $n_+(\lambda)$, $n_-(\lambda)$, and $n_0(\lambda)$ be the numbers of positive, negative, and zero numbers in $\{b_i+\lambda a_i\}_{i=1}^d$ (the coefficients of $Q_2 + \lambda Q_1$), respectively. Note that for $\lambda \gg 1$, the signature status of $\{b_i+\lambda a_i\}_{i=1}^d$ is the same as that of $\{a_i\}_{i=1}^d$, which means 
		$$   n_+(\lambda_L) = n_+, \quad n_-(\lambda_L) = n_-, \quad  n_0(\lambda_L)=0   $$
		for some large $\lambda_L$. Similarly, for $\lambda \ll -1$, the signature status of $\{b_i+\lambda a_i\}_{i=1}^d$ is the opposite of that of $\{a_i\}_{i=1}^d$, which means 
		$$  n_+(\lambda_S) = n_-, \quad   n_-(\lambda_S) = n_+, \quad n_0(\lambda_S)=0 $$
		for some small $\lambda_S$. 
		
		Roughly speaking, our goal is to find some $\lambda \in [\lambda_S,\lambda_L]$ such that $|n_+ - n_-|$ is as small as possible, which is believed to be the most degenerate scenario. To achieve this, observe that there can be at most finitely many $\lambda \in [\lambda_S,\lambda_L]$ such that $n_0(\lambda) > 0$, because any such $\lambda$ must be one of $\{-b_i/a_i\}_{i=1}^d$. Besides, by the intermediate value principle, for any subinterval $[\lambda_s,\lambda_l] \subset [\lambda_S,\lambda_L]$ satisfying $n_+(\lambda_s) \neq n_+(\lambda_l)$ or $n_-(\lambda_s) \neq n_-(\lambda_l)$, there must be at least one $\lambda \in [\lambda_s,\lambda_l]$ such that $n_0(\lambda) > 0$. In particular, recall that 
		$$  n_+(\lambda_S) = n_- < n_+ = n_+(\lambda_L), \quad  n_-(\lambda_S) = n_+ > n_- =  n_-(\lambda_L),   $$
		thus by taking $[\lambda_s,\lambda_l] = [\lambda_S,\lambda_L]$, there must be at least one $\lambda \in [\lambda_S,\lambda_L]$ such that $n_0(\lambda) > 0$. By our assumption that $n_0(\lambda_S) = n_0(\lambda_L)=0$, all such $\lambda$'s cannot be equal to $\lambda_S$ and $\lambda_L$, and so we can arrange them in order as 
		$$  (\lambda_0 \eqqcolon \lambda_S <) \, \lambda_1 < \cdots < \lambda_k \, (< \lambda_L \coloneqq \lambda_{k+1})   $$
		for some $k\in\N^+$. Note that for any $j = 0, \dots, k$, we have $n_0(\lambda) \equiv 0$ over $(\lambda_j, \lambda_{j+1})$, so applying our previous arguments to any subinterval $[\lambda_s,\lambda_l] \subset (\lambda_j, \lambda_{j+1})$ yields $n_+(\lambda_s) = n_+(\lambda_l)$ and $n_-(\lambda_s) = n_-(\lambda_l)$. In other words, on each $(\lambda_j,\lambda_{j+1})$ ($j=0,\dots, k$), not only $n_0(\lambda) \equiv0$, but also both $n_+(\lambda)$ and $n_-(\lambda)$ are constant, so we may define $n_{j,+}$ and $n_{j,-}$ to be these constant values of $n_+(\lambda)$ and $n_-(\lambda)$, respectively. Note that for each $j = 0,\dots,k$, $n_{j,+} + n_{j,-} = d$ with $d$ odd, so we have either $n_{j,+} > n_{j,-}$ or $n_{j,+} < n_{j,-}$. However, as $n_0(\lambda_S) = n_0(\lambda_L)=0$, by continuity at the two extreme endpoints $\lambda_S$ and $\lambda_L$, we know that 
		$$   n_{0,+} = n_+(\lambda_S) = n_-, \quad n_{0,-} = n_-(\lambda_S) = n_+,    $$
		$$   n_{k,+} = n_+(\lambda_L) = n_+, \quad  n_{k,-} = n_-(\lambda_L) = n_- ,$$
		which by $n_+>n_-$ implies 
		$$  n_{0,+} < n_{0,-},\quad n_{k,+} > n_{k,-}.  $$
		Therefore, by the intermediate value principle again, there must exist some $j_0 \in \{1,\dots,k+1\}$ such that 
		$$  n_{j_0-1,+} < n_{j_0-1,-}, \quad  n_{j_0,+} > n_{j_0,-}. $$
		Then $\lambda = \lambda_{j_0} \in [\lambda_S,\lambda_L]$ is the critical moment at which $Q_2 + \lambda Q_1$ is expected to be the most degenerate. In what follows we will show that this indeed forces 
		$$   \mathfrak{d}_{d-m,1}(Q_2 + \lambda_{j_0} Q_1) < \frac{d}{2} - m $$
		for some $0\leq m\leq d$, and thus $\mathfrak{d}_{d-m,1}(\mathbf{Q}) < \frac{d}{2} - m$ by definition, which implies that $\mathbf{Q}$ cannot be strongly nondegenerate as desired.
		
		For convenience, let $\widetilde{Q} \coloneqq Q_2 + \lambda_{j_0} Q_1$. By the definition of $\lambda_{j_0}$, we have $n_0(\lambda_{j_0}) > 0$. Also, without loss of generality, we may assume that $n_+(\lambda_{j_0}) \leq n_-(\lambda_{j_0})$ (the other case when $n_+(\lambda_{j_0}) \geq n_-(\lambda_{j_0})$ can be dealt with in a completely symmetric way). When $\lambda$ goes down from slightly above $\lambda_{j_0}$ to touching $\lambda_{j_0}$, each of $\{b_i+\lambda a_i\}_{i=1}^d$ varies monotonically, and we wish to track how their behavior on $\lambda_{j_0}$ is related to that slightly above $\lambda_{j_0}$. Define 
		$$ u \coloneqq \#\{1 \leq i \leq d: b_i+\lambda_{j_0} a_i = 0, a_i > 0\}   $$
		and 
		$$   v \coloneqq \#\{1 \leq i \leq d: b_i+\lambda_{j_0} a_i = 0, a_i < 0\} .$$
		As $a_i \neq 0$ for any $1\leq i \leq d$, we have $u+v = n_0(\lambda_{j_0})$, which further implies $n_+(\lambda_{j_0}) + n_-(\lambda_{j_0}) + u + v = d$. Moreover, it is not hard to observe that $n_+(\lambda_{j_0}) + u = n_{j_0,+}$ and $n_-(\lambda_{j_0}) + v = n_{j_0,-}$, which implies $n_+(\lambda_{j_0}) + u > n_-(\lambda_{j_0}) + v$ as $n_{j_0,+} > n_{j_0,-}$.  Therefore, we can summarize all the relations between the parameters as follows:
		\[
		\begin{cases}
			0 \leq n_-(\lambda_{j_0}) - n_+(\lambda_{j_0}) \leq u-v-1,\\
			n_-(\lambda_{j_0}) + n_+(\lambda_{j_0}) = d-u-v.
		\end{cases}
		\]
		Thus 
		$$n_+(\lambda_{j_0}) \geq \frac{(d-u-v) - (u-v-1)}{2} = \frac{d+1}{2} - u.$$ 
		
		We claim $\mathfrak{d}_{\frac{d+1}{2},1}(\widetilde{Q}) = 0$. First assume $u\leq \frac{d+1}{2}$. Then there are at least $\frac{d+1}{2} - u$ disjoint pairs of positive and negative coefficient numbers in $\{b_i+\lambda_{j_0}a_i\}_{i=1}^d$, which implies 
		$$  \mathfrak{d}_{d-(\frac{d+1}{2} - u), 1}(\widetilde{Q}) \leq (d-u-v) - 2\Big(\frac{d+1}{2} - u\Big)= u - v - 1, $$ much similar to the hyperbolic paraboloid case in Corollary~\ref{cor2}. Thus we have 
		$$\mathfrak{d}_{\frac{d+1}{2},1}(\widetilde{Q}) \leq \max\Big\{0, \mathfrak{d}_{\frac{d-1}{2}+u,1}(\widetilde{Q}) - (u-1)\Big\} \leq \max\{0,-v\} = 0.$$
		It remains to consider $u > \frac{d+1}{2}$. In this case, $n_0(\lambda_{j_0}) \geq u \geq \frac{d+3}{2}$ ($d$ odd), which implies 
		$$\mathfrak{d}_{d,1}(\widetilde{Q}) \leq d - \frac{d+3}{2} = \frac{d-3}{2}.$$
		Thus we have 
		$$\mathfrak{d}_{\frac{d+1}{2},1}(\widetilde{Q}) \leq \max\Big\{0,\mathfrak{d}_{d,1}(\widetilde{Q}) - \frac{d-1}{2}\Big\} = 0.$$ 
		Therefore, in any case, we get $\mathfrak{d}_{\frac{d+1}{2},1}(\widetilde{Q}) = 0$, and so $\mathfrak{d}_{\frac{d+1}{2},1}(\mathbf{Q}) = 0$. However, for $\mathbf{Q}$ to be strongly nondegenerate, we need $\mathfrak{d}_{\frac{d+1}{2},1}(\mathbf{Q}) \geq \frac{1}{2}$. So the proof is completed.
	\end{proof}
	
	\begin{remark}\label{rmk:d_even_partial}
		In fact, via a similar strategy, we can show that $\mathfrak{d}_{\frac{d}{2},1}(\mathbf{Q}) = 0$ when $d$ is even. When we run the proof of Proposition~{\rm\ref{prop:diag_not_str}} for $n$ even, the major problem is that we do not necessarily have $n_+ \neq n_-$ at the beginning. However, if $n_+ = n_-$, then by $n_+ + n_- = d$, we have $n_+ = n_- = \frac{d}{2}$, which immediately implies $\mathfrak{d}_{\frac{d}{2},1}(\mathbf{Q}) = 0$. Therefore, we may always assume $n_+ \neq n_-$. Similarly, in the process of locating $\lambda_0$ by the intermediate value principle, we can always assume $n_{j,+} \neq n_{j,-}$, and all arguments carry through. The only difference is the computation at the final stage. Since $d$ is even, we have $n_+(\lambda_{j_0}) \geq \frac{d+2}{2} - u$, which implies 
		$$  \mathfrak{d}_{d-(\frac{d+2}{2} - u), 1}(\widetilde{Q}) \leq (d-u-v) - 2\Big(\frac{d+2}{2} - u\Big)= u - v - 2.   $$
		Thus we have 
		$$\mathfrak{d}_{\frac{d}{2},1}(\widetilde{Q}) \leq \max\Big\{0, \mathfrak{d}_{\frac{d-2}{2}+u,1}(\widetilde{Q}) - (u-1)\Big\} \leq \max\{0,-v-1\} = 0.$$
		It remains to consider $u > \frac{d+2}{2}$. In this case, $n_0(\lambda_{j_0}) \geq u \geq \frac{d+4}{2}$ {\rm(}$d$ even{\rm)}, which implies 
		$$\mathfrak{d}_{d,1}(\widetilde{Q}) \leq d - \frac{d+4}{2} = \frac{d-4}{2}.$$
		Thus we have 
		$$\mathfrak{d}_{\frac{d}{2},1}(\widetilde{Q}) \leq \max\Big\{0,\mathfrak{d}_{d,1}(\widetilde{Q}) - \frac{d}{2}\Big\} = 0.$$ 
		Therefore, in any case, we get $\mathfrak{d}_{\frac{d}{2},1}(\widetilde{Q}) = 0$, and so $\mathfrak{d}_{\frac{d}{2},1}(\mathbf{Q}) = 0$ as desired.
	\end{remark}
	
	\phantom{x}
	
    \subsection{Proof of \texorpdfstring{$\protect\circled{4}$}{④}}\label{subsec:4_proof}\phantom{x}
	
	This is due to Corollary~1.3 in \cite{gozzk23}.
	
	\phantom{x}
	
    \subsection{Proof of \texorpdfstring{$\protect\circled{5}$}{⑤}}\label{subsec:5_proof}\phantom{x}
	
	If one only cares about local Stein-Tomas-type inequalities, then $\circled{5}$ is a kind of folklore but is hardly written down in detail in the literature. When $n=1$, the only strongly nondegenerate quadratic form is the paraboloid, for which ``$\Downarrow$'' has been proved by Pinney \cite{pinney2021}. Here we will focus on all strongly nondegenerate quadratic forms, but the work carries through without any difficulty. 
	
	However, to get global Stein-Tomas-type inequalities, we will need an epsilon removal lemma for general higher-codimensional manifolds. There has been a version of it in the literature:
    
    \begin{lemma}[Epsilon removal, Theorem~4.1 in \cite{go22}]\label{lem:epsilon_removal}
        Let $q\geq p\geq 2$, $\varphi \in C_c^\infty(\R^d)$ with $\varphi\not\equiv0$. Assume for some $\gamma > 0$, we have the uniform Fourier decay
        \begin{align*}
            \abs{\widetilde{E}_\varphi^\psi 1 (x)} \lesssim_{\psi, \gamma} (1+|x|)^{-\gamma}, \quad \quad x\in \R^{d+n}.
        \end{align*} 
        Also, assume for every $0< \alpha \ll 1$, we have the local extension estimate
        \begin{align*}
            \norm{\widetilde{E}_\varphi^\psi f}_{L^q(B_R)} \lesssim_\alpha R^\alpha \norm{f}_{L^p(\R^d)}
        \end{align*}
        for any $R\geq 1$, any ball $B_R$ in $\R^{d+n}$, and any $f\in L^p(\R^d)$. Then for any $r > q$, we have the global extension estimate 
        \begin{align*}
            \norm{\widetilde{E}_\varphi^\psi f}_{L^r(\R^{d+n})} \lesssim \norm{f}_{L^p(\R^d)}
        \end{align*}
        for any $f\in L^p(\R^d)$.
    \end{lemma}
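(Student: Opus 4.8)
The plan is to run Tao's $\epsilon$-removal argument in the form adapted to higher-codimensional extension operators in \cite{go22}; I only describe the architecture. Write $E\coloneqq\widetilde{E}_\varphi^\psi$ and $d\mu\coloneqq\varphi\,d\mu_\psi$, so the hypotheses read $|\widehat{d\mu}(x)|\lesssim_{\psi,\gamma}(1+|x|)^{-\gamma}$ with $\gamma>0$, together with the local bound $\|Ef\|_{L^q(B_R)}\lesssim_\alpha R^{\alpha}\|f\|_{L^p(\R^d)}$ for every $\alpha>0$. The argument splits into two essentially independent pieces. The main step is to upgrade the \emph{local-in-ball} hypothesis to a \emph{local-in-measure} estimate
\begin{equation}\label{eq:er_local_measure}
    \|Ef\|_{L^q(\Omega)}\lesssim_\epsilon|\Omega|^{\epsilon}\,\|f\|_{L^p(\R^d)},\qquad\forall\,\epsilon>0,
\end{equation}
uniformly over measurable $\Omega\subset\R^{d+n}$ with $|\Omega|<\infty$. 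The second step is the soft observation that \eqref{eq:er_local_measure} already forces the global estimate $\|Ef\|_{L^r(\R^{d+n})}\lesssim\|f\|_{L^p(\R^d)}$ for every $r>q$.

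For the first step I would begin by reducing to $\Omega$ being a union of unit cubes, which is legitimate because $Ef$ is essentially constant at scale $1$ (it has Fourier support in a bounded neighbourhood of $S_\psi$; equivalently one may exploit the bump/Gaussian smoothing). The core is then the \emph{sparse lemma}: if $\{B(c_j,\rho)\}_{j=1}^N$ has centres pairwise separated by more than $(N\rho)^{C}$, for a large constant $C=C(d,n,\gamma,q,r)$, then $\|Ef\|_{L^q(\bigcup_j B(c_j,\rho))}$ is controlled by the single-ball bound $\rho^{\alpha}\|f\|_{L^p(\R^d)}$ with \emph{no} loss in $N$; the off-diagonal interactions are killed by the Fourier decay of $d\mu$ via a $TT^{\ast}$-type computation, since the cross terms only see $\widehat{d\mu}$ at scale $\gtrsim(N\rho)^{C}$. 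One then feeds this into a sparse-versus-clustered dichotomy run at a sequence of scales (Tao's Whitney-type decomposition): at each stage either a large sub-collection of cubes is sparse at the current scale (apply the sparse lemma, no loss), or the collection is clustered inside a bounded number of balls of a larger radius (apply the local-in-ball hypothesis there, paying a power-of-the-scale loss). Tracking the recursion shows it closes after $O(\log|\Omega|)$ stages with a total loss of size $|\Omega|^{O(\alpha)}$, which is absorbed into $|\Omega|^{\epsilon}$ by taking $\alpha$ small; this yields \eqref{eq:er_local_measure}.

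For the second step, fix $r>q$ and $f$ with $\|f\|_{L^p(\R^d)}=1$; compact support of $\varphi$ gives $\|Ef\|_{L^\infty}\lesssim1$. For dyadic $\lambda\in(0,1]$ put $\Omega_\lambda\coloneqq\{x:|Ef(x)|\sim\lambda\}$, so that $\lambda\,|\Omega_\lambda|^{1/q}\lesssim\|Ef\|_{L^q(\Omega_\lambda)}\lesssim_\epsilon|\Omega_\lambda|^{\epsilon}$ by \eqref{eq:er_local_measure}, hence $|\Omega_\lambda|\lesssim_\epsilon\lambda^{-q/(1-q\epsilon)}$. Summing level sets,
\begin{equation*}
    \|Ef\|_{L^r(\R^{d+n})}^r\ \lesssim\sum_{\lambda\ \mathrm{dyadic},\ \lambda\lesssim1}\lambda^{r}\,|\Omega_\lambda|\ \lesssim_\epsilon\ \sum_{\lambda}\lambda^{\,r-q/(1-q\epsilon)}\ \lesssim\ 1,
\end{equation*}
once $\epsilon<(r-q)/(rq)$, which is admissible precisely because $r>q$. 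This closes the plan.

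The hard part is entirely in the first step: proving the sparse lemma with the separation constant $C$ chosen large enough (as a function of $\gamma,q,r,d,n$) that the $TT^{\ast}$ cross terms are genuinely negligible, and carrying out the combinatorial bookkeeping of the sparse-versus-clustered recursion so that the scales and the number of stages are controlled. It is worth noting that \emph{any} $\gamma>0$ suffices, the value of $\gamma$ entering only through $C$. All of this is done in Theorem~4.1 of \cite{go22}. In our applications the Fourier decay hypothesis is supplied by Theorem~\ref{thm:uniform_decay} together with Corollary~\ref{cor:recover_banner}, which give $\gamma=\mathfrak{d}_{d,1}(\mathbf{Q})/2>0$ for any nontrivial $\psi=\mathbf{Q}$.
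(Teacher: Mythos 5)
The paper does not prove this lemma itself — it is cited verbatim from Theorem~4.1 of \cite{go22}, with only the brief remark that the same proof works for any $\varphi\in C_c^\infty(\R^d)$ with $\varphi\not\equiv0$, so there is no in-paper proof to compare against. Your sketch correctly outlines the Tao-style epsilon-removal argument (upgrade local-in-ball to local-in-measure via the sparse lemma and a sparse-versus-clustered recursion, then level-set summation to pass to the global $L^r$ estimate for $r>q$) that \cite{go22} adapts to the higher-codimensional setting, and you appropriately defer the hard $TT^{*}$ and combinatorial bookkeeping to that reference.
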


    \begin{remark}
        It is worth mentioning that although Guo and Oh \cite{go22} only state the theorem when $\varphi \in C_c^\infty(\R^d)$ with $\supp\varphi \subset [-1, 2]^d$ and $\varphi\equiv 1$ on $[0,1]^d$, the same proof actually works for general $\varphi \in C_c^\infty(\R^d)$ with $\varphi \not\equiv0$ without any changes.
    \end{remark}
    
    Unfortunately, Lemma~\ref{lem:epsilon_removal} only works for a  smoothed version of extension operators, and it is unclear to what extent it can be applied to general quadratic manifolds. Our main contribution here is to obtain a version of the epsilon removal lemma (Corollary~\ref{cor:quad_ep_removal}) that works directly for all $E^\mathbf{Q}$. Since in the recent study of Fourier restriction people tend to work with  $E^\mathbf{Q}$ instead of its smoothed version, our lemma should be more convenient to apply in many cases. Besides, with the help of the uniform Fourier decay estimates (Corollary~\ref{cor:recover_banner}), we can actually conclude that all nontrivial quadratic manifolds enjoy epsilon removal:
    \begin{corollary}\label{cor:quad_ep_removal}
        Let $q \geq p \geq 2$. Suppose $\mathbf{Q}$ is an $n$-tuple of linearly independent real quadratic forms in $d$ variables {\rm(}i.e., $\mathfrak{d}_{d,1}(\mathbf{Q}) \geq 1${\rm)}. If for every $0<\alpha \ll 1$, we have the local extension estimate
        \begin{align*}
            \norm{E^\mathbf{Q} f}_{L^q(B_R)} \lesssim_\alpha R^\alpha \norm{f}_{L^p([0,1]^d)}
        \end{align*}
        for any $R\geq 1$, any ball $B_R$ in $\R^{d+n}$, and any $f\in L^p([0,1]^d)$. Then for any $r > q$, we have the global extension estimate 
        \begin{align*}
            \norm{E^\mathbf{Q} f}_{L^r(\R^{d+n})} \lesssim \norm{f}_{L^p([0,1]^d)}
        \end{align*}
        for any $f\in L^p([0,1]^d)$.
    \end{corollary}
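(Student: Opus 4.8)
The plan is to reduce the statement to the epsilon removal lemma already available for the bump-smoothed operator (Lemma~\ref{lem:epsilon_removal}), applied with $\psi = \mathbf{Q}$, and then to pass back and forth between $E^\mathbf{Q}$ and its smoothed version using the equivalences of Subsection~\ref{subsec:E^Q_versions}. Throughout, fix once and for all some $\varphi \in C_c^\infty(\R^d)$ with $\varphi \not\equiv 0$. The argument has three steps.

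\textbf{Step 1: from the local hypothesis on $E^\mathbf{Q}$ to a local estimate for $\widetilde{E}_\varphi^\mathbf{Q}$.} Since $\supp\varphi$ is compact, it is covered by finitely many lattice unit cubes $\{\nn + [0,1]^d\}_{\nn\in F}$, and writing $\xi$-pieces accordingly one has $\widetilde{E}_\varphi^\mathbf{Q}f = \sum_{\nn\in F} E_{\nn+[0,1]^d}^\mathbf{Q}\!\big((f\varphi)\chi_{\nn+[0,1]^d}\big)$. For each fixed $\nn$, the affine transformations $L_\nn$ (on the frequency side) and $L_\nn^*$ (on the physical side) introduced in (\ref{def:affine_trans}) are volume-preserving and carry a ball $B_R$ into a set contained in a ball of radius $\lesssim (1+|\nn|)^C R$; as $F$ is finite, the distortion factor $(1+|\nn|)^C$ is bounded. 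Hence the hypothesis $\|E^\mathbf{Q} g\|_{L^q(B_R)} \lesssim_\alpha R^\alpha \|g\|_{L^p([0,1]^d)}$ transfers (cf.\ Remark~\ref{rmk:restriction_equiv}) to $\|E_{\nn+[0,1]^d}^\mathbf{Q} g\|_{L^q(B_R)} \lesssim_\alpha R^\alpha \|g\|_{L^p(\nn+[0,1]^d)}$, uniformly in $\nn\in F$. Summing over the finitely many $\nn$ and using $\|f\varphi\|_{L^p} \lesssim_\varphi \|f\|_{L^p}$ yields, for every $0<\alpha\ll1$,
\begin{align*}
    \|\widetilde{E}_\varphi^\mathbf{Q} f\|_{L^q(B_R)} \lesssim_\alpha R^\alpha \|f\|_{L^p(\R^d)}, \qquad f\in L^p(\R^d),\ R\geq1.
\end{align*}

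\textbf{Step 2: apply epsilon removal.} Since $\mathfrak{d}_{d,1}(\mathbf{Q}) \geq 1$, Corollary~\ref{cor:recover_banner} gives the uniform Fourier decay $|\widetilde{E}_\varphi^\mathbf{Q} 1(x)| \lesssim_\mathbf{Q} (1+|x|)^{-\mathfrak{d}_{d,1}(\mathbf{Q})/2}$ with decay exponent $\mathfrak{d}_{d,1}(\mathbf{Q})/2 \geq \tfrac12 > 0$. Together with Step~1, both hypotheses of Lemma~\ref{lem:epsilon_removal} (taken with $\psi = \mathbf{Q}$ and the fixed bump $\varphi$) are verified, so for every $r > q$ we obtain the global bound $\|\widetilde{E}_\varphi^\mathbf{Q} f\|_{L^r(\R^{d+n})} \lesssim \|f\|_{L^p(\R^d)}$ for all $f\in L^p(\R^d)$.

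\textbf{Step 3: return to $E^\mathbf{Q}$.} By the equivalence $(iii)\Leftrightarrow(i)$ in Proposition~\ref{prop:restriction_equiv}, applied with exponents $p$ and $r$, the global estimate for $\widetilde{E}_\varphi^\mathbf{Q}$ obtained in Step~2 is equivalent to $\|E^\mathbf{Q} f\|_{L^r(\R^{d+n})} \lesssim \|f\|_{L^p([0,1]^d)}$ for all $f\in L^p([0,1]^d)$, which is exactly the desired conclusion. The only genuinely technical point is Step~1: one must check that the \emph{local} estimate (with the $R^\alpha$ loss), rather than the global $L^p\to L^q$ boundedness handled directly in Proposition~\ref{prop:restriction_equiv}, still transfers cleanly between $E^\mathbf{Q}$ and $\widetilde{E}_\varphi^\mathbf{Q}$; here it is essential that $\varphi$ is compactly supported, so that only finitely many translated pieces arise and the polynomial distortion of balls under $L_\nn$, $L_\nn^*$ remains uniformly bounded. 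Everything else is a direct assembly of results already established.
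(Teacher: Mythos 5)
Your proof is correct and follows the same overall route as the paper's: obtain a local $R^\alpha$-loss estimate for the bump-smoothed operator $\widetilde{E}_\varphi^\mathbf{Q}$, pair it with the uniform decay from Corollary~\ref{cor:recover_banner} (using $\mathfrak{d}_{d,1}(\mathbf{Q})\geq 1$), feed both into Lemma~\ref{lem:epsilon_removal}, then convert the resulting global estimate back to $E^\mathbf{Q}$ via Proposition~\ref{prop:restriction_equiv}. Your Step~1 is in fact slightly more careful than the paper's: the paper picks $\supp\varphi\subset[-1,1]^d$ and simply writes $\widetilde{E}_\varphi^\mathbf{Q}f=E^\mathbf{Q}(f\varphi)$, which is not a literal identity since $E^\mathbf{Q}$ integrates only over $[0,1]^d$; your covering of $\supp\varphi$ by finitely many unit cubes together with the uniformly-bounded polynomial distortion of balls under $L_\nn$, $L_\nn^*$ is exactly what is needed to make that step rigorous for the \emph{local} estimate.
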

    \begin{proof}
        Fix any $\varphi \in C_c^\infty(\R^d)$ with $\supp\varphi \subset [-1, 1]^d$, $|\varphi|\leq 1$, and $\varphi\equiv 1$ on $[-\frac{1}{2},\frac{1}{2}]^d$. Then by $\widetilde{E}_\varphi^\mathbf{Q} f = E^\mathbf{Q} (f\varphi)$ and the assumed local extension estimate for $E^\mathbf{Q}$, for every  $0<\alpha\ll 1$, we have
        \begin{align*}
            \norm{\widetilde{E}_\varphi^\mathbf{Q} f}_{L^q(B_R)} = \norm{E^\mathbf{Q} (f\varphi)}_{L^q(B_R)}
            \lesssim \norm{f\varphi}_{L^p([0,1]^d)} \leq \norm{f}_{L^p([0,1]^d)}
        \end{align*}
        for each $R\geq 1$, each ball $B_R$ in $\R^{d+n}$, and each $f\in L^p(\R^d)$.
        
        On the other hand, by Corollary~\ref{cor:recover_banner}, we have the uniform Fourier decay estimate
        \begin{align*}
            \abs{\widetilde{E}_\varphi^\mathbf{Q}1 (x)} \lesssim_\mathbf{Q} (1+|x|)^{-\frac{\mathfrak{d}_{d,1}(\mathbf{Q})}{2}}, \quad \quad x\in \R^{d+n}.
        \end{align*}
        By our assumption that $\mathfrak{d}_{d,1}(\mathbf{Q}) \geq 1$, this implies
        \begin{align*}
            \abs{\widetilde{E}_\varphi^\mathbf{Q}1 (x)} \lesssim_\mathbf{Q} (1+|x|)^{-\frac{1}{2}}, \quad \quad x\in \R^{d+n}.
        \end{align*}
        
        Thus the assumptions in Lemma~\ref{lem:epsilon_removal} are satisfied, and by conclusions there, for any $r > q$, we have the global estimate 
        \begin{align*}
            \norm{\widetilde{E}_\varphi^\mathbf{Q} f}_{L^r(\R^{d+n})} \lesssim \norm{f}_{L^p(\R^d)}, \qquad \forall \,f\in L^p(\R^d).
        \end{align*}
        Finally, by the equivalence $(i) \Leftrightarrow (iii)$ proved in Proposition~\ref{prop:restriction_equiv}, for any $r > q$, we have the global estimate
        \begin{align*}
            \norm{E^\mathbf{Q} f}_{L^r(\R^{d+n})} \lesssim \norm{f}_{L^p([0,1]^d)}, \qquad \forall\, f\in L^p([0,1]^d),
        \end{align*}
        as desired.
    \end{proof}	
	
    \begin{remark}
        As a historical remark, the first type of the epsilon removal lemma is due to Tao \cite{tao99}, which deals with the case when $S_\psi$ is a sphere and $p=q$. Since then, his revolutionary result has been extended to various other settings. One may also consult Section~{\rm3} of \cite{pinney2021} for another exposition of epsilon removal, which sticks to the extension formulation {\rm(}i.e., does not involve any restriction formulation originally used by Tao{\rm)}. Moreover, we point out that in the proof of the epsilon removal lemma, it is important that we have a ``uniform'' Fourier decay estimate in hand: For estimates lacking uniformity, such as that in Lemma~{\rm2.1.1} of \cite{banner02}, the proof will not go through any more.
    \end{remark}

	Now we are ready to begin the proof of $\circled{5}$. 	By Lemma~\ref{lem:thickening}, to prove 
	$$\norm{E^\mathbf{Q} f}_{L^{p_0}(B_{R})} \lesssim R^{\alpha} \norm{f}_{L^2([0,1]^d)},$$
	it suffices to prove 
	$$\big\|\widehat{F}\big\|_{L^{p_0}(B_{R})} \lesssim R^{\alpha-\frac{n}{2}} \norm{F}_{L^2(N_{R^{-1}}(S_{\mathbf{Q}}))}.$$
	When $\mathbf{Q}$ is strongly nondegenerate, by Corollary~1.3 in \cite{gozzk23}, the decoupling inequality (\ref{eq:decoupling_neighborhood}) holds for $q=2$, $2 \leq p < \infty$, and 
	$$\Gamma_p^d(\mathbf{Q}) = \max\Big\{0, d\big(\frac{1}{2}-\frac{1}{p}\big)-\frac{2n}{p}\Big\}.$$
	In particular, by taking $p = p_0 \coloneqq \frac{2(d+2n)}{d}$, we know that\footnote{Here we simply replace $\R^{d+n}$ by the smaller $B_{R}$ on the left hand side of (\ref{eq:decoupling_neighborhood}). Besides, for simplicity, we will omit cumbersome explanations of inequalities from now on.}
	\begin{equation}\label{eq:decoupling_str_nondegenerate}
		\| \widehat{F}\|_{L^{p_0}(B_{R})} \lesssim_\epsilon R^{\epsilon} \Big( \sum_\tau \big\|\widehat{F_\tau}\big\|_{L^{p_0}(\R^{d+n})}^2  \Big)^{\frac{1}{2}}.
	\end{equation} 
    Using H\"older's inequality, we have for each $\tau$ that 
	\begin{align*}
		\big\|\widehat{F_\tau}\big\|_{L^{p_0}(\R^{d+n})} &\leq \big\|\widehat{F_\tau}\big\|_{L^{\infty}(\R^{d+n})}^{\frac{2n}{d+2n}} \big\|\widehat{F_\tau}\big\|_{L^{2}(\R^{d+n})}^{\frac{d}{d+2n}}\\
		& \leq \norm{F_\tau}_{L^1(N_{R^{-1}}(S_{\mathbf{Q}}|_\tau))}^{\frac{2n}{d+2n}}
		\norm{F_\tau}_{L^2(N_{R^{-1}}(S_{\mathbf{Q}}|_\tau))}^{\frac{d}{d+2n}}\\
		& \leq |N_{R^{-1}}(S_{\mathbf{Q}}|_\tau)|^{\frac{n}{d+2n}} \norm{F_\tau}_{L^2(N_{R^{-1}}(S_{\mathbf{Q}}|_\tau))}\\
		& \sim R^{-\frac{n}{2}} \norm{F_\tau}_{L^2(N_{R^{-1}}(S_{\mathbf{Q}}|_\tau))}.
	\end{align*}
	Using the above inequality to bound the right hand side of (\ref{eq:decoupling_str_nondegenerate}), we have
	\begin{align*}
		\big\|\widehat{F}\big\|_{L^{p_0}(B_R)} & \lesssim_\epsilon R^{\epsilon} \Big( \sum_\tau \big\|\widehat{F_\tau}\big\|_{L^{p_0}(\R^{d+n})}^2  \Big)^{\frac{1}{2}}\\
		& \lesssim R^{-\frac{n}{2}+\epsilon}  \Big( \sum_\tau \norm{F_\tau}_{L^2((N_{R^{-1}}(S_{\mathbf{Q}}|_\tau))}^2 \Big)^{\frac{1}{2}}\\
		& = R^{-\frac{n}{2}+\epsilon} \norm{F}_{L^2((N_{R^{-1}}(S_{\mathbf{Q}}))}.
	\end{align*}
	As mentioned, by Lemma~\ref{lem:thickening}, this implies 
	$$\norm{E^\mathbf{Q} f}_{L^{p_0}(B_{R})} \lesssim_\epsilon R^{\epsilon} \norm{f}_{L^2([0,1]^d)}, \quad \quad \forall
	\, \epsilon>0.$$ 
	Finally, By Corollary~\ref{cor:quad_ep_removal}, for any $r > p_0$, we have 
	$$\norm{E^\mathbf{Q} f}_{L^r(\R^{d+n})} \lesssim \norm{f}_{L^2([0,1]^d)},$$ 
	i.e., $E^\mathbf{Q}: L^2 \rightarrow L^{\frac{2(d+2n)}{d}+}$, as desired. And the proof of $\circled{5}$ is completed.
	
% \begin{remark}
% It is worth mentioning that we cannot expect to derive sharp Stein-Tomas-type estimates for all quadratic forms, even if we already have the sharp $\ell^2L^p$ decoupling theorems in hand. For example, even when $n=1$, any hyperbolic paraboloid serves as a counterexample: The additional loss mainly comes from the geometric fact that the hyperbolic paraboloid contains a line. However, if we change the mode of frequency decomposition in \cite{gozzk23}, then it is possible to remedy such a defect in some cases. For example, when $d=2$ and $n=1$, \cite{gmo24} developed a new way to divide hypersurfaces into a finitely-overlapping collection of rectangular caps, whose corresponding (sharp) $\ell^2L^4$ decoupling inequality indeed implies sharp Stein-Tomas inequalities (see Section~1.2 of \cite{gmo24}). Unfortunately, their approach does not readily extend to higher dimensional cases (see Appendix~B of \cite{gmo24}). On the other hand, higher codimensional quadratic manifolds often resemble hyperbolic paraboloids in many ways, so there are still difficulties towards this direction. The key point of $\circled{5}$ is that if we impose stronger enough nondegeneracy condition on the quadratic manifold such that there is no ``flat'' lower-dimensional part, then everything would be fine. In a nutshell, sharp $\ell^2L^p$ decoupling does not necessarily imply sharp Stein-Tomas-type inequalities, while best $\ell^2L^p$ decoupling indeed imply best Stein-Tomas-type inequalities.
% \end{remark}
	
	In the end, we provide several counterexamples showing that the reverse implication ``$\Uparrow$'' is not true in general:
	\begin{itemize}
		\item All three examples presented at the end of Subsection~\ref{subsec:2_proof} satisfy the endpoint best Stein-Tomas-type inequality, but they are all not strongly nondegenerate. This is because 
		\begin{itemize}
			\item When $d=5$ and $n=3$, consider 
			$$\mathbf{Q}(\xi) = (\xi_1^2 + \xi_3^2 + \xi_5^2, 2(\xi_1\xi_2 + \xi_3\xi_4), \xi_2^2 + \xi_4^2 +\xi_5^2).$$ 
			We have $\mathfrak{d}_{3,2}(\mathbf{Q}) \leq 1$ and $\mathfrak{d}_{2,2}(\mathbf{Q})=0$, while we need $\mathfrak{d}_{3,2}(\mathbf{Q}) \geq \frac{4}{3}$ and $\mathfrak{d}_{2,2}(\mathbf{Q}) \geq \frac{1}{3}$ for it to be strongly nondegenerate.
			
			\item When $d=2$ and $n=3$, consider 
			$$\mathbf{Q}(\xi) = (\xi_1^2, 2\xi_1\xi_2, \xi_2^2).$$ 
			We have $\mathfrak{d}_{1,2}(\mathbf{Q}) = 0$, while we need $\mathfrak{d}_{1,2}(\mathbf{Q}) \geq  \frac{1}{3}$ for it to be strongly nondegenerate.
			
			\item When $d=3$ and $n=5$, consider 
			$$\mathbf{Q}(\xi) = (2\xi_1\xi_2, 2\xi_1\xi_3, 2\xi_2\xi_3, \xi_2^2 - \xi_1^2, \xi_3^2 - \xi_1^2).$$ We have $\mathfrak{d}_{1,4}(\mathbf{Q})=0$ and $\mathfrak{d}_{2,2}(\mathbf{Q})=0$, while we need $\mathfrak{d}_{1,4}(\mathbf{Q}) \geq  \frac{2}{5}$ and $\mathfrak{d}_{2,2}(\mathbf{Q}) \geq  \frac{1}{5}$ for it to be strongly nondegenerate.
		\end{itemize}
		
		\item Recall that all the examples above do not satisfy the (CM) condition. However, there are also many examples which indeed satisfy the (CM) condition (thus by $\circled{2}$, also satisfy the best Stein-Tomas-type inequalities), while are not strongly nondegenerate:
		\begin{itemize}
			\item When $n=1$, all hyperbolic paraboloids $\mathbf{Q}$ (as defined in Corollary~\ref{cor2}) are not strongly nondegenerate, because $\mathfrak{d}_{d-1,1}(\mathbf{Q}) = d-2 < d-1$. Meanwhile, they all clearly satisfy the (CM) condition (the $n=1$ case explained in Definition~\ref{def:CM}).
			
			\item When $d=4$ and $n=2$, for 
			$$\mathbf{Q}(\xi) = (\xi_1^2 - \xi_2^2, \xi_3^2 - \xi_4^2),$$ 
			we have $\mathfrak{d}_{2,2}(\mathbf{Q}) = 0$, while we need $\mathfrak{d}_{2,2}(\mathbf{Q}) \geq 2$ for it to be strongly nondegenerate. Meanwhile, it satisfies the (CM) condition, because $\det(\overline{Q}(\theta)) = 16\theta_1^2\theta_2^2$ do not admit any root of multiplicity larger than $\frac{d}{2}(=2)$ on $\mathbb{S}^1$. Another way to see the validity of the (CM) condition is through the equivalence in $\circled{1}$: By regarding $\mathbf{Q}$ as the tensor product of two hyperbolic paraboloids in $\R^3$, we see that $\widetilde{E}^\mathbf{Q}1 \in L^p$ all $p > \frac{2(d+n)}{d} (=3)$.
			
			\item When $d=3$ and $n=3$, for 
			$$\mathbf{Q}(\xi) = (\xi_1\xi_2, \xi_2\xi_3, \xi_3\xi_1),$$
			we have $\mathfrak{d}_{1,3}(\mathbf{Q}) = 0$ and $\mathfrak{d}_{2,2}(\mathbf{Q}) = 0$, while we need $\mathfrak{d}_{1,3}(\mathbf{Q}) \geq 1$ and $\mathfrak{d}_{2,2}(\mathbf{Q}) \geq 1$ for it to be strongly nondegenerate. Meanwhile, it satisfies the (CM) condition, because 
			$$\int_{\mathbb{S}^{2}}  |\det(\overline{Q}(\theta))|^{-\gamma}d\sigma(\theta) = \int_{\mathbb{S}^{2}}  |2\theta_1\theta_2\theta_3|^{-\gamma}d\sigma(\theta) < \infty, \quad \quad \forall~\gamma < 1.$$ Though we do not find existing analysis of $\widetilde{E}^\mathbf{Q}1$ in the literature, by $\circled{1}$, we know $\widetilde{E}^\mathbf{Q}1\in L^p$ for all $p > 2$, which is best possible (up to the endpoint).
		\end{itemize}
		As a side remark, we do not know whether strong nondegeneracy implies the (CM) condition: we can neither prove it nor find a counterexample. We choose not to formally post it as a conjecture like Conjecture~\ref{conj:S-T} for two reasons: Firstly, the evidence does not seem to be abundant and strong enough for us; secondly, this problem does not seem to be as important as Conjecture~\ref{conj:S-T} at the current stage.
	\end{itemize}
	
	\phantom{x}
	
    \subsection{Proof of \texorpdfstring{$\protect\circled{6}$}{⑥}}\label{subsec:6_proof}\phantom{x}
	
	Let us first explain why $\circled{6}$ is nontrivial. Recall that Proposition~\ref{prop:convex_test} connects the Fourier restriction to the Oberlin condition. If we naively apply Proposition~\ref{prop:convex_test} with $N=d+n$, $\mu = \mu_\mathbf{Q}^*$, $E^\mu = E^{\mathbf{Q}}$, $p=2$, and $q > \frac{2(d+2n)}{d}$, then we immediately know that $\mu_\mathbf{Q}^*$ satisfies the Oberlin condition with any exponent $\alpha < \frac{d}{D}$ (for quadratic manifolds, the homogeneous dimension $D = d+2n$), which is almost what we want in view of Theorem~\ref{thm:gressman_thm1}. However, the major problem of such an intuitive argument is that we are actually missing the endpoint case $\alpha=\frac{d}{D}$, so strictly speaking we cannot directly apply Theorem~\ref{thm:gressman_thm1} to deduce well-curvedness. Indeed, there are no existing results on whether or not the Oberlin condition is ``closed'', i.e., if a measure satisfies the Oberlin condition with all exponents $\alpha<\frac{d}{D}$, then it also satisfies the Oberlin condition with the exponent $\alpha=\frac{d}{D}$.\footnote{In contrast, if we had assumed the endpoint best Stein-Tomas $E^\mathbf{Q}: L^2 \rightarrow L^{\frac{2(d+2n)}{d}}$, then by Proposition~\ref{prop:convex_test} we would know that $\mu_\mathbf{Q}^*$ satisfies the Oberlin condition with $\alpha = \frac{d}{D}$, which by (2) of Theorem~\ref{thm:gressman_thm1} would imply that the affine measure $\mu_{\mathcal{A}}$ of $S_\mathbf{Q}$ satisfies $\mu_{\mathcal{A}} \gtrsim \mu_\mathbf{Q}^*$, so is everywhere nonzero, as desired.}
    
    To rectify these drawbacks and make everything rigorous, we will argue by contradiction via the Newton-type polyhedra characterization Theorem~\ref{thm:gressman_thm6} of well-curvedness. Roughly speaking, if (\ref{eq:newton_convex_hull}) is somehow violated, then by the separating hyperplane theorem it must be violated at a certain quantitative degree, and such a quantitative gap would neutralize the absence of endpoint results. Now we provide the details of the proof.
	
	Suppose $\mathbf{Q}$ is not well-curved, and we want to prove that $E^\mathbf{Q}: L^p \rightarrow L^q$ is unbounded for $p=2$ and some $q > \frac{2(d+2n)}{d}$. By Theorem~\ref{thm:gressman_thm6} (take $m=n$, $\kappa=2$, $p = \mathbf{Q}$, and $f(\xi) = (\xi, \mathbf{Q}(\xi))$), there exists $(O_1,O_2) \in {\rm O}(n,\R) \times {\rm O}(d, \R)$ such that (\ref{eq:newton_convex_hull}) does not hold. Let 
	$\mathbf{Q}' = (Q_1',\dots,Q_n') \coloneqq R_{O_1,O_2} \mathbf{Q}$, then we have $\left( \frac{1}{n} \1_n, \frac{2}{d} \1_d \right) \not\in \mathcal{N}(\mathbf{Q}')$. Define $\widetilde{O}\in {\rm O}(d+n,\R)$ by 
	$\zeta \cdot \widetilde{O} \coloneqq (\zeta'\cdot O_2, \zeta''\cdot O_1)$, where $\zeta = (\zeta',\zeta'')\in \R^d\times\R^n$. Note that for $x\in \mathbb{R}^{d+n}$, we have\footnote{When we use ``$\cdot$'' between two vectors in $\R^N$, we mean inner product;  when we use ``$\cdot$'' between a vector and a matrix, we mean matrix multiplication; when we directly put a matrix before a vector, the vector should always be understood as a column vector. In other words, $A^T x = x \cdot A$ for any matrix $A$ and row vector $x$.}
	\begin{align*}
		E^{\mathbf{Q}'}f(x) &= \int_{[0,1]^d} e^{ix\cdot(\xi,\mathbf{Q}'(\xi))}f(\xi)d\xi\\
		&= \int_{[0,1]^d} e^{ix\cdot(\xi,\mathbf{Q}(\xi\cdot O_2^T)\cdot O_1)}f(\xi)d\xi\\
		&= \int_{O_2([0,1]^d)} e^{ix\cdot (\eta\cdot O_2,\mathbf{Q}(\eta)\cdot O_1)} f(\eta \cdot O_2)d\eta\\
		&= \int_{O_2([0,1]^d)} e^{ix\cdot [(\eta,\mathbf{Q}(\eta))\cdot O]} f(\eta \cdot O_2)d\eta\\
		&= \int_{O_2([0,1]^d)} e^{i(Ox)\cdot (\eta,\mathbf{Q}(\eta))} f(\eta \cdot O_2)d\eta\\
		&= {E}_{O_2([0,1]^d)}^{\mathbf{Q}} \tilde{f} (Ox),
	\end{align*}
	where $\eta = \xi \cdot O_2^T$ and $\tilde{f}(\eta) \coloneqq f(\eta\cdot O_2)$. This immediately implies that for any $p,q\in [1,\infty]$, the $L^p \rightarrow L^q$ boundedness property of $E^{\mathbf{Q}'}$ and ${E}_{O_2([0,1]^d)}^{\mathbf{Q}}$ are the same. On the other hand, by Remark~\ref{rmk:restriction_equiv}, we know that the $L^p \rightarrow L^q$ boundedness property of ${E}_{O_2([0,1]^d)}^{\mathbf{Q}}$ and ${E}^{\mathbf{Q}}$ are also the same. Therefore, to prove $E^\mathbf{Q}$ is unbounded, it suffices to prove $E^{\mathbf{Q}'}$ is unbounded.
	
	Let $V(\mathbf{Q}') \coloneqq \{(e_j,\gamma): \partial^\gamma Q_j'(\xi)|_{\xi=0} \neq 0\}$, then $\mathcal{N}(\mathbf{Q}')$ is the convex hull of $V(\mathbf{Q}')$. Note that any $(e_j,\gamma)\in V(\mathbf{Q}')$ must satisfy $|\gamma|=2$, and for each $j$, we have $Q_j'(\xi) = \sum_{(e_j,\gamma)\in V(\mathbf{Q}')} c_{\gamma}\xi^\gamma$ for some nonzero real coefficients $c_\gamma$. Since $\left( \frac{1}{n} \1_n, \frac{2}{d} \1_d \right) \not\in \mathcal{N}(\mathbf{Q}')$, by the separating hyperplane theorem, there exist some $(\omega, \lambda) \in \R^n \times \R^d$ such that
	\begin{align}\label{eq:hahn-banach}
		(\omega, \lambda) \cdot (e_j, \gamma) > (\omega, \lambda) \cdot \left( \frac{1}{n} \1_n, \frac{2}{d} \1_d \right) + c_0, \qquad \forall\, (e_j, \gamma) \in V(\mathbf{Q}')
	\end{align}
	for some constant $c_0>0$ depending only on $\mathbf{Q}$. 
	Note that $\frac{2}{d} \1_d \cdot \1_d = 2 = \gamma \cdot \1_d$, so we can add a multiple of $\1_d$ to $\lambda$ (if necessary) so that $\lambda = (\lambda_1, \dots, \lambda_d) \in [0, \infty)^d$ and (\ref{eq:hahn-banach}) remain true.
	
	Now the key point is to consider the Knapp-type example corresponding to the part of $S_{\mathbf{Q}'}$ above $Z(\delta) \coloneqq \prod_{i=1}^d[0,\delta^{\lambda_i}]$ for $0 < \delta \ll 1$, where $\lambda \in [0, \infty)^d$ guarantees $Z(\delta) \subset [0,1]^d$. For any $j$, there exists $(e_j,\gamma_j)\in V(\mathbf{Q}')$ such that $\lambda\cdot\gamma_j = \min\{\lambda\cdot\gamma: \,(e_j,\gamma)\in V(\mathbf{Q}')\}$. Then we have
	\[\sup_{\xi\in Z(\delta)} |Q_j'(\xi)| 
	\lesssim \sup_{\xi\in Z(\delta)} \sum_{(e_j,\gamma)\in V(\mathbf{Q}')} \xi^\gamma 
	= \sum_{(e_j,\gamma)\in V(\mathbf{Q}')} \delta^{\lambda \cdot \gamma}
	\lesssim \delta^{\min\{\lambda\cdot\gamma: \,(e_j,\gamma)\in V(\mathbf{Q}')\}} = \delta^{\lambda\cdot\gamma_j}.
	\]
	Therefore, there exists some constant $C>0$ depending only on $\mathbf{Q}$, such that the rectangular box $T \coloneqq Z \times \prod_{j=1}^n [-C \delta^{\lambda\cdot\gamma_j}, C \delta^{\lambda\cdot\gamma_j}]$ contains $\{(\xi,\mathbf{Q}(\xi)): \xi \in Z\}$. And we can bound $|T|$ from above as follows:
	\begin{align*}
		|T| \lesssim \prod_{i=1}^d \delta^{\lambda_i} \times \prod_{j=1}^n \delta^{\lambda\cdot\gamma_j} = \delta^{\sum_{i=1}^d \lambda_i + \sum_{j=1}^n \lambda\cdot\gamma_j}.
	\end{align*}
	
	Besides, by applying (\ref{eq:hahn-banach}) to each $(e_j,\gamma_j)$ and taking the arithmetic mean of all the $n$ inequalities, we obtain
	\begin{align*}
		(\omega, \lambda) \cdot \left(\frac{1}{n} \1_n, \frac{1}{n} \sum_{j=1}^n\gamma_j \right) > (\omega, \lambda) \cdot \left( \frac{1}{n} \1_n, \frac{2}{d} \1_d \right) + c_0.
	\end{align*}
	Note that the inner product with $\omega$ cancels out on both sides above, so we get 
	\begin{align*}
		\frac{1}{n} \sum_{j=1}^n \lambda\cdot\gamma_j > \frac{2}{d} \sum_{i=1}^d \lambda_i +c_0,
	\end{align*}
	which implies
	\begin{align*}
		|T| \lesssim \delta^{\frac{d+2n}{d} \sum_{i=1}^d \lambda_i + nc_0}.
	\end{align*}
	
	Suppose $\mu_\mathbf{Q}^*$ satisfies the Oberlin condition with exponent $\alpha \geq 0$, then by testing it on $T$, we must have $\mu_\mathbf{Q}^*(T) \lesssim |T|^\alpha$. By noting that $\mu_\mathbf{Q}^*(T) \geq |Z| = \delta^{\sum_{i=1}^d \lambda_i}$ and combining it with the upper bound of $|T|$ above, we get
	\begin{align*}
		\delta^{\sum_{i=1}^d \lambda_i} \lesssim \delta^{\alpha(\frac{d+2n}{d} \sum_{i=1}^d \lambda_i + nc_0)}.
	\end{align*}
	By taking $\delta \rightarrow 0$, this becomes
	\begin{align*}
		\sum_{i=1}^d \lambda_i &\geq \alpha \left(\frac{d+2n}{d} \sum_{i=1}^d \lambda_i + nc_0\right)\\
		\Longleftrightarrow \qquad \alpha&\leq \frac{\sum_{i=1}^d \lambda_i}{\left(\frac{d+2n}{d} \sum_{i=1}^d \lambda_i + nc_0\right)}.
	\end{align*}
	Note that this upper bound for $\alpha$ is strictly smaller than $\frac{d}{d+2n}$ as $c_0>0$, so there always exists some $\alpha < \frac{d}{d+2n}$ such that the above relation is violated, which in turn contradicts our assumption that $\mu_\mathbf{Q}^*$ satisfies the Oberlin condition with exponent $\alpha \geq 0$. In other words, $\mu_\mathbf{Q}^*$ does not satisfy the Oberlin condition with exponent $p'/q$ for some $q > \frac{p'(d+2n)}{d}$. By applying Proposition~\ref{prop:convex_test} with $N=d+n$, $\mu = \mu_\mathbf{Q}^*$, and $E^\mu = E^{\mathbf{Q}'}$, we know that $E^{\mathbf{Q}'}: L^p\rightarrow L^q$ is unbounded for some $q > \frac{p'(d+2n)}{d}$, and so is $E^\mathbf{Q}$. In particular, by taking $p=2$, we get $E^\mathbf{Q}: L^2\rightarrow L^q$ is unbounded for some $q > \frac{2(d+2n)}{d}$, which contradicts our assumption that $E^\mathbf{Q}$ satisfies the best Stein-Tomas inequality. So the proof by contradiction is completed, and $\circled{6}$ is true.
	
	For the reverse implication ``$\Uparrow$'', the $n=2$ case can be verified affirmatively by combining $\circled{2}$ and Theorem~2 in \cite{dmv22}, which says that $\circled{7}$ is equivalent when $n=2$. Therefore, Conjecture~\ref{conj:S-T} holds true when $n=2$. However, we do not know whether it is true or not in general when $n \geq 3$.
	
	\phantom{x}
	
    \subsection{Proof of \texorpdfstring{$\protect\circled{7}$}{⑦}}\label{subsec:7_proof}\phantom{x}
	
	``$\Longrightarrow$'' is an immediate corollary of $\circled{2}$ and $\circled{6}$, the $n=2$ case of which has been previously proved by Theorem~2 in \cite{dmv22}. We first briefly summarize their proof. The method in \cite{dmv22} relies heavily on the nice fact that the (CM) condition can be reformulated as $\overline{Q}(\theta)$ has no root of multiplicity $> \frac{d}{2}$ on $\mathbb{S}^1$, which is in turn equivalent to a certain semistability of $\overline{Q}$ via a simple application of the Hilbert-Mumford criterion for ${\rm SL}(2,\C)$-action. Such a semistability of $\overline{Q}$ immediately leads to a certain semistability of $\mathbf{Q}$, which is in turn equivalent to the nonvanishing property of invariant polynomials via the so-called ``Fundamental Theorem of GIT''. Starting from this point, they can show that $\mathbf{Q}$ is well-curved by directly computing the density of the affine measure and applying the so-called ``First Fundamental Theorem for ${\rm SL}(2)$-invariants'' on generators. 
	
	Compared to their proof, our alternative proof avoids additional black boxes from geometric invariant theory, and is more of the analytic ($\circled{2}$ only relies on complex interpolation of analytic family of operators) and geometric ($\circled{6}$ is basically some Knapp-type infinitesimal homogeneity argument) natures. Also, our proof seems to be more robust, in the sense that it also works for any $n \geq 3$, which is not dealt with at all in \cite{dmv22}. Besides, our proof is ``stronger'' than that in \cite{dmv22} in the sense that we can naturally insert the hierarchy of best Stein-Tomas in between the (CM) condition and well-curvedness. Such an observation is new even when $n=2$.
	
	For the reverse implication ``$\Longleftarrow$'', the $n=2$ case has been verified affirmatively by Theorem~2 in \cite{dmv22}. They showed that the path from the nonvanishing property of invariant polynomials to well-curvedness of $\mathbf{Q}$ used in ``$\Longrightarrow$'' can actually be reversed, thanks to Lemma~2 in \cite{gressman19}, which provides a way to estimate the density of the affine measure in terms of the homogeneous generators of the invariant polynomials. Thus they actually get an equivalent characterization of well-curvedness of $\mathbf{Q}$ (Lemma~5 in \cite{dmv22}). Under all these conversions, everything boils down to a problem in linear algebra, which can be solved by a careful analysis of matrices (Section~4.2 and 4.3 in \cite{dmv22}).
	
	However, when $n \geq 3$, ``$\Longleftarrow$'' is not true in general. Here we provide several
	counterexamples: 
	\begin{itemize}
		\item All three examples presented at the end of Subsection~\ref{subsec:2_proof} are well-curved by $\circled{6}$ and the fact that they all satisfy the best Stein-Tomas-type inequality. However, we already know in $\circled{2}$ that the (CM) condition fails for all of them.
		\item When $d=n=4$, consider 
		$$\mathbf{Q}(\xi) = (\xi_1\xi_2, \xi_2\xi_3, \xi_3\xi_4, \xi_4\xi_1).$$
		It is fairly straightforward to check that $f(\xi) = (\xi, \mathbf{Q}(\xi))$ satisfies (\ref{model_poly_form}) with $c=c'=2\neq 0$, so by (2) in Theorem~\ref{thm:gressman_thm2}, we know that the affine measure of $S_\mathbf{Q}$ is a nonzero constant times the push-forward of Lebesgue measure via $f$ (i.e., $\mu$ defined in the proof of $\circled{6}$), which in particular implies $\mathbf{Q}$ is well-curved. However, $\mathbf{Q}$ does not satisfy the (CM) condition, because direct computation shows that  
		$$\int_{\mathbb{S}^3} |\det(\overline{Q}(\theta))|^{-\gamma} d\sigma(\theta) = \int_{\mathbb{S}^3} |(\theta_1\theta_3 - \theta_2\theta_4)^2|^{-\gamma} d\sigma(\theta) = \infty,\quad \quad \forall~\gamma\geq \frac{1}{2},$$
		while for the (CM) condition to hold we need 
		$$\int_{\mathbb{S}^3} |\det(\overline{Q}(\theta))|^{-\gamma} d\sigma(\theta) < \infty,\quad \quad \forall~\gamma < 1.$$
	\end{itemize}
	
	Finally, we point out that when $n=2$, $\circled{2}\,\circled{6}\,\circled{7}$ are all ``$\Longleftrightarrow$'' and form a cycle.

	\phantom{x}
	
    \subsection{Proof of \texorpdfstring{$\protect\circled{8}$}{⑧}}\label{subsec:8_proof}\phantom{x}

	Our proof strategy is to argue by contradiction, and the key ingredient is Theorem~\ref{thm:gressman_thm6_variant}. Suppose $\mathbf{Q}$ is not nondegenerate, then there exist $0 \leq d' \leq d$ and $0 \leq n' \leq n$ such that $\mathfrak{d}_{d',n'}(\mathbf{Q}) < \frac{n'}{n}d - 2(d-d')$. Since any nontrivial $\mathbf{Q}$ satisfies $\mathfrak{d}_{d,n}(\mathbf{Q}) = d$, we must have either $0 \leq d' < d$ or $0 \leq n' < n$, which implies $\frac{n'}{n}d - 2(d-d') < d'$. For any $0 \leq m_1 \leq m_2$, let $\DD_{m_1,m_2}$ be the set of $m_2\times m_2$ diagonal matrices with $m_1$ diagonal elements $1$ and $m_2 - m_1$ diagonal elements $0$. Note that we can rewrite the definition of $\mathfrak{d}_{d',n'}(\mathbf{Q})$ in (\ref{s2def2e1}) as
	\begin{align*}
		\mathfrak{d}_{d',n'}(\mathbf{Q}) & = \inf_{\substack{   M \in \mathbb{R}^{d\times d} \\ {\rm rank}(M)=d'   }}  \inf_{\substack{   N \in \mathbb{R}^{n\times n'} \\ {\rm rank}(N)=n'   }} {\rm NV}(R_{N,M}\mathbf{Q})\\
		& = \inf_{\substack{   M \in {\rm GL}(d,\R) \\ D_1 \in \DD_{d',d}   }}  \inf_{\substack{   N \in {\rm GL}(n,\R) \\ D_2 \in \DD_{n',n}   }} {\rm NV}(R_{N D_2, M D_1}\mathbf{Q})\\
		& = \inf_{\substack{   D_2 \in \DD_{d',d} \\ D_1 \in \DD_{n',n}   }}  \inf_{\substack{   M \in {\rm GL}(d,\R) \\ N \in {\rm GL}(n,\R)   }} {\rm NV}\left( R_{D_2,D_1}(R_{N, M}\mathbf{Q}) \right).
	\end{align*}
	Since ${\rm NV}$ only take values in $\N$, the infimum must be attained, i.e., there exist $(D_2, D_1) \in \DD_{n',n} \times \DD_{d',d}$ and $(N,M) \in {\rm GL}(n,\R)\times {\rm GL}(d,\R)$ such that $${\rm NV}\left( R_{D_2,D_1}(R_{N, M}\mathbf{Q}) \right) = \mathfrak{d}_{d',n'}(\mathbf{Q}) < \frac{n'}{n}d - 2(d-d').$$ 
	Let 
	$$\I_1 \coloneqq \{1\leq i \leq d: (D_1)_{ii} = 1\}, \quad \J \coloneqq \{1\leq j \leq n: (D_2)_{jj} = 1\},$$ and $\mathbf{Q}' = (Q_1', \dots, Q_n') \coloneqq R_{N, M}\mathbf{Q} $. Then $\#\I_1 = d'$ and $\#\J = n'$. On the other hand, since we assume $\mathbf{Q}$ is well-curved, by Theorem~\ref{thm:gressman_thm6_variant} (take $m=n$, $\kappa=2$, $p = \mathbf{Q}$, and $f(\xi) = (\xi, \mathbf{Q}(\xi))$), we know that $\left( \frac{1}{n} \1_n, \frac{2}{d} \1_d \right) \in \mathcal{N}(\mathbf{Q}')$. Our goal comes down to prove that this Newton-type polyhedra condition contradicts $${\rm NV}\left( R_{D_2,D_1}(\mathbf{Q}') \right) < \frac{n'}{n}d - 2(d-d').$$ Once this goal is achieved, the proof of $\circled{8}$ by contradiction would be completed.
	
	By $\left( \frac{1}{n} \1_n, \frac{2}{d} \1_d \right) \in \mathcal{N}(\mathbf{Q}')$, there exist nonnegative real numbers $\{\theta_{j,\gamma}\}_{(e_j,\gamma)\in V(\mathbf{Q}')}$ such that $\sum_{(e_j,\gamma)\in V(\mathbf{Q}')}\theta_{j,\gamma} = 1$, and 
	\begin{align}\label{eq:convex_identity}
		\sum_{1\leq j\leq n}\sum_{\gamma: (e_j,\gamma)\in V(\mathbf{Q}')} \theta_{j,\gamma} (e_j, \gamma) = \left( \frac{1}{n} \1_n, \frac{2}{d} \1_d \right).
	\end{align}
	For each $1 \leq j \leq n$, by taking an inner product of both sides of (\ref{eq:convex_identity}) with $(e_j,0)$, we get $\sum_{\gamma: (e_j,\gamma)\in V(\mathbf{Q}')} \theta_{j,\gamma} = \frac{1}{n}$. Let $\{\tilde{e}_1,\dots,\tilde{e}_d\}$ be the standard basis of $\R^d$. Define the index set $\I_2 \text{ to be those } i \in \I_1 \text{ such that there exists } (e_j,\gamma)\in V(\mathbf{Q}') \text{ with } j\in\J \text{ and } \gamma\cdot \tilde{e}_i \neq 0, \text{ and whenever this happens, we have } \forall i'\not\in\I_1, \gamma\cdot \tilde{e}_{i'} = 0$. Let $\I_3 \coloneqq \I_1 \setminus \I_2$. Note that $\I_2$ corresponds to variables $\xi_i$ that remains in $\mathbf{Q}'$ after deleting those $Q'_j$ with $j\not\in \J$ and letting $\xi_i=0$ for $i\not\in \I_1$, so $\#\I_2 = {\rm NV}\left( R_{D_2,D_1}(\mathbf{Q}') \right)$. On the other hand, $\I_3$ corresponds to variables $\xi_i$ that satisfies at least one of the following two cases: $(i)$ it only appears in those $Q'_j$ with $j\not\in \J$; $(ii)$ it appears in some $Q'_j$ with $j \in \J$ and are not directly sent to $0$, but are still deleted as they appears only in crossed terms paired with those $\xi_{i'}$ that are indeed sent to $0$. By our assumption, we have $\#\I_2 < \frac{n'}{n}d - 2(d-d')$, which implies that
	$$\#\I_3 = \#\I_1 - \#\I_2 > d' - \Big(\frac{n'}{n}d - 2(d-d')\Big) = \frac{n-n'}{n}d + (d-d') (> 0)$$
	by our previous computation. Now the key point is to compare the contribution of both sides of (\ref{eq:convex_identity}) over $\I_3$. More precisely, by taking an inner product with $\sum_{i \in \I_3} (0,\tilde{e}_i)$ on both sides of (\ref{eq:convex_identity}), we get
	\begin{align}\label{eq:convex_identity_sum}
		\sum_{i \in \I_3} \sum_{1\leq j\leq n}\sum_{\gamma:(e_j,\gamma)\in V(\mathbf{Q}')} \theta_{j,\gamma} \gamma \cdot \tilde{e}_i = \sum_{i \in \I_3} \frac{2}{d} \1_d \cdot \tilde{e}_i.
	\end{align}
	For the right hand side of (\ref{eq:convex_identity_sum}), we have
	\begin{align}\label{eq:convex_identity_RHS}
		\sum_{i \in \I_3} \frac{2}{d} \1_d \cdot \tilde{e}_i = \frac{2}{d}\#\I_3 > \frac{2}{d}\Big( \frac{n-n'}{n}d + (d-d') \Big) = \frac{2}{n}(n-n') + \frac{2}{d}(d-d').
	\end{align}
	For the left hand side of (\ref{eq:convex_identity_sum}), we split it into two sums in view of Case $(i)$ and Case $(ii)$ in our intuitive interpretation of $\I_3$:
	\begin{align*}
		&\,\phantom{iii}\sum_{i \in \I_3} \sum_{1\leq j \leq n}\sum_{\gamma:(e_j,\gamma)\in V(\mathbf{Q}')} \theta_{j,\gamma} \gamma \cdot \tilde{e}_i\\
		&= \sum_{i \in \I_3} \sum_{j \not\in \J} \sum_{\gamma:(e_j,\gamma)\in V(\mathbf{Q}')} \theta_{j,\gamma} \gamma \cdot \tilde{e}_i 
		+ \sum_{i \in \I_3} \sum_{j \in \J} \sum_{\gamma:(e_j,\gamma)\in V(\mathbf{Q}')} \theta_{j,\gamma} \gamma \cdot \tilde{e}_i\\
		&\eqqcolon \Pi_1 + \Pi_2.
	\end{align*}
	As for $\Pi_1$, we interchange the order of summation and apply the trivial estimate $\sum_{i \in \I_3} \gamma \cdot \tilde{e}_i \leq |\gamma| = 2$, which gives us:
	\begin{align*}
		\Pi_1 = \sum_{j \not\in \J} \sum_{\gamma:(e_j,\gamma)\in V(\mathbf{Q}')} \theta_{j,\gamma} \sum_{i \in \I_3} \gamma \cdot \tilde{e}_i
		\leq 2 \sum_{j\not\in \J} \sum_{\gamma:(e_j,\gamma)\in V(\mathbf{Q}')} \theta_{j,\gamma}.
	\end{align*}
	Recall that $\#\J = n'$ and $\sum_{\gamma: (e_j,\gamma)\in V(\mathbf{Q}')} \theta_{j,\gamma} = \frac{1}{n}$ for any $1\leq j \leq n$ (and in particular for any $j\not\in\J$), so the above upper bound for $\Pi_1$ becomes
	\begin{align}\label{eq:Pi_1}
		\Pi_1 \leq \frac{2}{n}\#\{1\leq j \leq n: j \not\in \J\} = \frac{2}{n}(n-n').
	\end{align}
	It remains to control $\Pi_2$. The key observation here is that by our previous explanation in Case $(ii)$ (which is the case for $\Pi_2$), for each $j\in\J$ and $(e_j,\gamma)\in V(\mathbf{Q}')$ such that $\gamma\cdot\tilde{e}_i \neq 0$ for some $i\in\I_3$, there must exist some $i' \not\in \I_1$ such that $\gamma\cdot\tilde{e}_{i'} \neq 0$. In this case, $\gamma$ corresponds to the crossed term $\xi_i\xi_{i'}$ ($i\in\I_3$, $i' \not\in \I_1$ means $i \neq i'$) and $\gamma\cdot\tilde{e}_i = \gamma\cdot\tilde{e}_{i'} = 1$, which implies that $\sum_{i\in\I_3}\gamma\cdot\tilde{e}_i = \sum_{i'\not\in\I_1} \gamma\cdot\tilde{e}_{i'}$ (as $\gamma\cdot\tilde{e}_{i''}=0$ whenever $i''\neq i,i'$). Therefore, by interchanging the order of summation in $\Pi_2$ , we have
	\begin{align*}
		\Pi_2 & = \sum_{j \in \J} \sum_{\gamma:(e_j,\gamma)\in V(\mathbf{Q}')} \theta_{j,\gamma} \sum_{i \in \I_3} \gamma \cdot \tilde{e}_i\\
		& = \sum_{j \in \J} \sum_{\gamma:(e_j,\gamma)\in V(\mathbf{Q}')} \theta_{j,\gamma} \sum_{i'\not\in\I_1} \gamma\cdot\tilde{e}_{i'}\\
		& \leq \sum_{1\leq j \leq n} \sum_{\gamma:(e_j,\gamma)\in V(\mathbf{Q}')} \theta_{j,\gamma} \sum_{i'\not\in\I_1} \gamma\cdot\tilde{e}_{i'}\\
		& = \sum_{i'\not\in\I_1} \sum_{1\leq j \leq n} \sum_{\gamma:(e_j,\gamma)\in V(\mathbf{Q}')} \theta_{j,\gamma} \gamma\cdot\tilde{e}_{i'}.
	\end{align*}
	However, by taking an inner product with $\sum_{i' \not\in \I_1} (0,\tilde{e}_{i'})$ on both sides of (\ref{eq:convex_identity}), we get
	\begin{align*}
		\sum_{i' \not\in \I_1} \sum_{1\leq j\leq n}\sum_{\gamma:(e_j,\gamma)\in V(\mathbf{Q}')} \theta_{j,\gamma} \gamma \cdot \tilde{e}_{i'} = \sum_{i' \not\in \I_1} \frac{2}{d} \1_d \cdot \tilde{e}_{i'},
	\end{align*}
	whose left hand side matches the upper bound for $\Pi_2$ above, so we get
	\begin{align}\label{eq:Pi_2}
		\Pi_2 \leq \sum_{i' \not\in \I_1} \frac{2}{d} \1_d \cdot \tilde{e}_{i'} = \frac{2}{d}(d-\#\I_1) = \frac{2}{d}(d-d').
	\end{align}
	Now by combining the upper bound (\ref{eq:Pi_1}) for $\Pi_1$ and the upper bound (\ref{eq:Pi_2}) for $\Pi_2$, we can conclude that the left hand side of (\ref{eq:convex_identity_sum}) $\leq \frac{2}{n}(n-n') + \frac{2}{d}(d-d')$, which contradicts (\ref{eq:convex_identity_RHS}). Hence our initial assumption that $\mathbf{Q}$ is not nondegenerate cannot hold, and the proof of $\circled{8}$ is completed. 
	
	However, when $n \geq 2$, ``$\Uparrow$'' is not true in general. Here we provide all counterexamples in the case when $d=3$ and $n=2$: Any $$\mathbf{Q}(\xi) \equiv (\xi_1\xi_2, \xi_2^2 + \xi_1\xi_3) ~\text{or}~ (\xi_1\xi_2,\xi_1^2 \pm \xi_1\xi_3)$$ is nondegenerate by (c4) in Lemma~\ref{addth2}, since $\mathfrak{d}_{3,1}(\mathbf{Q})=2 \geq \frac{3}{2}$ and $\mathfrak{d}_{2,2}(\mathbf{Q})=1 \geq 1$. There are many ways to see that they are all not well-curved. The easiest way is by using $\circled{7}$, which tells us that well-curvedness is equivalent to the (CM) condition when $n=2$. However, by (e) in Lemma~\ref{addth1}, they must not satisfy the (CM) condition. It is easy to see that they are the only counterexamples when $d=3$ and $n=2$. If one does not want to resort to the nontrivial black box $\circled{7}$, then one may also choose to apply the criteria established in \cite{gressman19}. For simplicity, below we only work with $\mathbf{Q}(\xi) \equiv (\xi_1\xi_2, \xi_2^2 + \xi_1\xi_3)$ to illustrate the ideas, but the same methods equally apply to $\mathbf{Q}(\xi) \equiv (\xi_1\xi_2,\xi_1^2 \pm \xi_1\xi_3)$. An initial observation is that by Corollary~\ref{cor:quad_equiv_curved}, instead of considering all equivalent quadratic forms, we only need to check $\mathbf{Q}(\xi) = (\xi_1\xi_2, \xi_2^2 + \xi_1\xi_3)$ without loss of generality.
	\begin{itemize}
		\item Newton-type polyhedra approaches: By Theorem~\ref{thm:gressman_thm6} (take $d=3$, $m=2$, $\kappa=2$, $p = \mathbf{Q}$, and $f(\xi) = (\xi, \mathbf{Q}(\xi))$), to show that $\mathbf{Q}$ is not well-curved, it suffices to show that there exists $(O_1,O_2) \in {\rm O}(n, \R) \times {\rm O}(d, \R)$ such that $\left( \frac{1}{2}, \frac{1}{2}, \frac{2}{3}, \frac{2}{3}, \frac{2}{3} \right) \not\in \mathcal{N}(R_{O_1,O_2} \mathbf{Q})$. In fact, if we take $O_1$ and $O_2$ to be identity matrices, then $\left( \frac{1}{2}, \frac{1}{2}, \frac{2}{3}, \frac{2}{3}, \frac{2}{3} \right) \not\in \mathcal{N}(\mathbf{Q})$, which is the convex hull of $V(\mathbf{Q}) = \{(1,0,1,1,0),(0,1,0,2,0),(0,1,1,0,1)\}$. This is because if 
		$$\theta_1 (1,0,1,1,0) + \theta_2 (0,1,0,2,0) + \theta_3 (0,1,1,0,1) = \left( \frac{1}{2}, \frac{1}{2}, \frac{2}{3}, \frac{2}{3}, \frac{2}{3} \right),$$
		then $\theta_1 = \frac{1}{2}$ (first position) and $\theta_3 = \frac{2}{3}$ (last position), so $\theta_2 (0,1,0,2,0) = \left( 0,-\frac{1}{6},-\frac{1}{2}, \frac{1}{6},0 \right)$, which is impossible.
		\item Sublevel set approaches: By Section~3.2 of \cite{gressman19}, given a parametrization $f$ of the form (\ref{canonical_poly_form})\footnote{For our purposes we only need this form, but the technique actually works for general $f$.}, if for $s_1,\dots,s_m \in\R^d$, define the polynomial $$\mathcal{P}(s_1,\dots,s_m) \coloneqq \det \begin{bmatrix}
			p_1(s_1) & \cdots & p_1(s_m)\\
			\vdots & \ddots & \vdots\\
			p_m(s_1) & \cdots & p_m(s_m)
		\end{bmatrix}$$
		and the associated sublevel set $Z \coloneqq \{(s_1,\dots,s_m): |\mathcal{P}(s_1,\dots,s_m)|\leq 1\}$, then we have ($\nu_{\mathcal{A}}$ is a constant by Proposition~\ref{prop:TDI_affine_const}):
		\begin{align}\label{eq:sublevel_oberlin}
			\left( \frac{d\nu_{\mathcal{A}}}{dt} \right)^{-1} \sim \sup_{M\in {\rm GL}(d,\R)} \Big\{ |MK|^{\frac{m}{D}} : (MK)^m \subset Z \Big\},
		\end{align}
		where $K$ is the unit ball of any norm on $\R^d$, $|MK|$ denotes the Lebesgue measure, and $(MK)^m$ is the $m$-fold product of $MK$. Now for $f(\xi) = (\xi, \mathbf{Q}(\xi))$, we have 
		$$\mathcal{P}(u_1,u_2,u_3,v_1,v_2,v_3) = u_1u_2(v_2^2 - v_1v_3) - v_1v_2(u_2^2 - u_1u_3).$$
		The boxes $K_\epsilon = [-\frac{\epsilon^3}{4}, \frac{\epsilon^3}{4}] \times [-\epsilon^{-1},\epsilon^{-1}] \times [-\epsilon^{-5}, \epsilon^{-5}]$ have volume tending to infinity as $\epsilon \rightarrow 0^+$, but $K_\epsilon \times K_\epsilon$ is contained in $Z$ for all $\epsilon >0$. Thus (\ref{eq:sublevel_oberlin}) implies $\nu_{\mathcal{A}} = 0$ ($\mu_{\mathcal{A}} = 0$), and $S_\mathbf{Q}$ is not well-curved.
	\end{itemize}
	
	For a counterexample for ``$\Uparrow$'' when $n \geq 3$, one may take $$\mathbf{Q}(\xi) = (\xi_3^2, \xi_1\xi_2, \xi_2^2 + \xi_1\xi_3)$$ with $d=n=3$. To check that $\mathbf{Q}$ is nondegenerate, the key point is to show $\mathfrak{d}_{3,3}(\mathbf{Q}) = 3$, $\mathfrak{d}_{2,3}(\mathbf{Q}) \geq 1$, $\mathfrak{d}_{3,2}(\mathbf{Q}) \geq 2$, and $\mathfrak{d}_{3,1}(\mathbf{Q}) \geq 1$:
	\begin{itemize}
		\item $\mathfrak{d}_{3,3}(\mathbf{Q}) = 3$: Suppose to the contrary that $\mathfrak{d}_{3,3}(\mathbf{Q}) \leq 2$, then we must have $\mathbf{Q} \equiv (\xi_1^2, \xi_1\xi_2, \xi_2^2)$ (as all components are linearly independent), which implies that there exist at least two different $(\theta_1:\theta_2:\theta_3)\in\R\mathbb{P}^2$ such that $\theta_1\xi_3^2 + \theta_2\xi_1\xi_2 + \theta_3(\xi_2^2 + \xi_1\xi_3)$ is a perfect square $(a\xi_1+b\xi_2+c\xi_3)^2$. However, this is impossible: Since there is no $\xi_1^2$ term, we must have $a=0$, which forces $\theta_2 = \theta_3 = 0$, i.e., $(\theta_1:\theta_2:\theta_3)=(1:0:0)$ is the only solution.
		
		\item $\mathfrak{d}_{2,3}(\mathbf{Q}) \geq 1$: Suppose to the contrary that $\mathfrak{d}_{2,3}(\mathbf{Q}) = 0$, then we must have $\mathbf{Q}(\xi) \equiv (\xi_1L_1(\xi), \xi_1L_2(\xi), \xi_1L_3(\xi))$ where each $L_i(\xi)$ is a linear form in $\xi$. This implies that there is a nontrivial common factor of all components of $\mathbf{Q}$, which is clearly impossible.
		
		\item $\mathfrak{d}_{3,2}(\mathbf{Q}) \geq 2$: Suppose to the contrary that $\mathfrak{d}_{3,2}(\mathbf{Q}) \leq 1$, then $\mathbf{Q} \equiv (\xi_1^2, 0 , *)$, where $*$ can be any quadratic form in $\xi$. This implies that there exists a nontrivial linear combination of all components of $\mathbf{Q}$ that equals $0$, which is clearly impossible.
		
		\item $\mathfrak{d}_{3,1}(\mathbf{Q}) \geq 1$: Suppose to the contrary that $\mathfrak{d}_{3,1}(\mathbf{Q}) = 0$, then similarly, there exists a nontrivial linear combination of all components of $\mathbf{Q}$ that equals $0$, which is clearly impossible.
	\end{itemize}
	To check that $\mathbf{Q}$ is not well-curved, we can apply Newton-type polyhedra approaches as before. At the end of the day, things are reduced to disprove the existence of a convex combination 
	$$\theta_1(1,0,0,0,0,2) + \theta_2(0,1,0,1,1,0) + \theta_3(0,0,1,0,2,0) + \theta_4(0,0,1,1,0,1) = \left(\frac{1}{3},\frac{1}{3},\frac{1}{3},\frac{2}{3},\frac{2}{3},\frac{2}{3}\right).$$
	The first position yields $\theta_1 = \frac{1}{3}$, while the second position yields $\theta_2 = \frac{1}{3}$, so we get $\theta_3(0,0,1,0,2,0) + \theta_4(0,0,1,1,0,1) = \left(0,0,\frac{1}{3},\frac{1}{3},\frac{1}{3},0\right)$, which is clearly impossible. Therefore, $\mathbf{Q}$ is nondegenerate but not well-curved.
	
	\phantom{x}

    \subsection{Proof of \texorpdfstring{$\protect\circled{9}$}{⑨}}\label{subsec:9_proof}\phantom{x}
	
	This is due to Corollary~1.4 in \cite{gozzk23}.
	
    \subsection{Proof of \texorpdfstring{$\protect\circled{10}$}{⑩}}\label{subsec:10_proof}\phantom{x}
	
	By Theorem~\ref{thm:uniform_decay}, we have the uniform Fourier decay estimate $$|\widetilde{E}^\mathbf{Q}1(x)| \lesssim_\mathbf{Q} (1+|x|)^{-\frac{\mathfrak{d}_{d,1}(\mathbf{Q})}{2}},$$
	and $\mathfrak{d}_{d,1}(\mathbf{Q}) = d$ represents the best decay 
	$$|\widetilde{E}^\mathbf{Q}1(x)| \lesssim_\mathbf{Q} (1+|x|)^{-\frac{d}{2}},$$ 
	which implies that
	\begin{align*}
		\big\|\widetilde{E}^\mathbf{Q}1\big\|_{L^p(\R^{d+n})}^p \lesssim_\mathbf{Q} \int_{\R^{d+n}}(1+|x|)^{-\frac{d}{2}p}dx.
	\end{align*}
	The integral above is finite if $p>\frac{2(d+n)}{d}$, as desired. 
	
	An alternative approach is to apply $\circled{1}$. Since $\mathfrak{d}_{d,1}(\mathbf{Q}) = d$ implies that $\overline{Q}(\theta)$ is nonsingular for any $\theta\in\mathbb{S}^{n-1}$, the integrand $\det(\overline{Q}(\theta))$ in the (CM) condition (\ref{eq:CM}) is nonvanishing on $\mathbb{S}^{n-1}$. So $\mathbf{Q}$ trivially satisfies the (CM) condition, which implies $\widetilde{E}^\mathbf{Q}1 \in L^{\frac{2(d+n)}{d}+}$ by ``$\Leftarrow$'' of $\circled{1}$.
	
	The reverse implication ``$\Uparrow$'' is certainly not true in general. Still in view of $\circled{1}$, for $\widetilde{E}^\mathbf{Q}1 \in L^{\frac{2(d+n)}{d}+}$ to hold, we only need some control over the singularity of $|\det(\overline{Q})|^{-\gamma}$ near its zero set, while for $\mathfrak{d}_{d,1}(\mathbf{Q}) = d$ we have to completely avoid all zeros. For instance, all $\mathbf{Q}$'s in (c5) of Lemma~\ref{addth2} serve as counterexamples. For an example when $n\geq 3$, one may take $\mathbf{Q}(\xi) = (\xi_1\xi_2, \xi_2\xi_3, \xi_3\xi_1)$ with $d=n=3$.
	
	\begin{remark}
		For a thorough discussion on historical background and the significance of the condition $\mathfrak{d}_{d,1}(\mathbf{Q}) = d$, one may consult Appendix~{\rm\ref{appendix:par_d,1}}.
	\end{remark}

	\begin{remark}
		Recall at the end of Subsection~{\rm\ref{subsec:5_proof}} that the {\rm(CM)} condition does not imply strong nondegeneracy. Since we have just seen that $\mathfrak{d}_{d,1}(\mathbf{Q}) = d$ is stronger than the {\rm(CM)} condition, it is quite natural to ask whether $\mathfrak{d}_{d,1}(\mathbf{Q}) = d$ implies strong nondegeneracy. Unfortunately, the answer is negative. When $n=1$, all hyperbolic paraboloids satisfy $\mathfrak{d}_{d,1}(\mathbf{Q})=d$, but are not strongly nondegenerate. For higher-codimensional examples, by the explicit construction described in Remark~{\rm\ref{rmk:partial_d_1_construct}} of Appendix~{\rm\ref{appendix:par_d,1}}, when $d=4$ and $n=3$, $$\mathbf{Q}(\xi) = (\xi_1\xi_4 - \xi_2\xi_3, \xi_1\xi_3 - \xi_2\xi_4, \xi_1^2 + \xi_2^2 - \xi_3^2 - \xi_4^2)$$ satisfies $\mathfrak{d}_{4,1}(\mathbf{Q}) = 4$. However, by deleting the last component of $\mathbf{Q}$ and setting $\xi_1 = \xi_2 = 0$, we get $\mathfrak{d}_{2,2}(\mathbf{Q}) = 0 <\frac{2}{3}$, which means $\mathbf{Q}$ is not strongly nondegenerate. For a counterexample when $n=2$, one may simply restrict the previous $\mathbf{Q}$ to its first two components, i.e., take $$\mathbf{Q}(\xi) = (\xi_1\xi_4 - \xi_2\xi_3, \xi_1\xi_3 - \xi_2\xi_4)$$ when $d=4$ and $n=2$. Note that we still have $\mathfrak{d}_{4,1}(\mathbf{Q})=4$ and $\mathfrak{d}_{2,2}(\mathbf{Q}) = 0 < 2$.
		
		Conversely, strong nondegeneracy also does not imply $\mathfrak{d}_{d,1}(\mathbf{Q}) = d$ in general. When $d=4$ and $n=2$, one may take $$\mathbf{Q}(\xi) = (\xi_1^2 + \xi_2^2, \xi_3^2 + \xi_4^2).$$ When $d=3$ and $n=3$, one may take $$\mathbf{Q}(\xi) = (\xi_1^2 + \xi_3^2, 2\xi_1\xi_2, \xi_2^2 + \xi_3^2)$$
        or 
        $$\mathbf{Q}(\xi) = (\xi_1^2 + \xi_2\xi_3, \xi_2^2 + \xi_1\xi_3, \xi_3^2 + \xi_1\xi_2).$$ Verification of these examples is left to the interested readers. Therefore, there is no direct relationship between $\mathfrak{d}_{d,1}(\mathbf{Q}) = d$ and strong nondegeneracy {\rm(}best $\ell^2L^p$ decoupling{\rm)}. However, by Corollary~{\rm\ref{cor:algebra_partial}}, we know $\mathfrak{d}_{d,1}(\mathbf{Q}) = d$ indeed implies that $\mathbf{Q}$ is nondegenerate {\rm(}best $\ell^pL^p$ decoupling{\rm)}.
	\end{remark}

    \subsection{Proof of \texorpdfstring{$\protect\circled{11}$}{⑪}}\label{subsec:11_proof}\phantom{x}
	
	Recall the classical fact that for Borel sets $A$, $\dim_F A \leq \dim_H A$ (Hausdorff dimension). One may consult \cite{mattila15} for its proof.
	
	As $S_\mathbf{Q}$ is a $d$-dimensional smooth manifold (with boundary), we clearly have $\dim_H S_\mathbf{Q} = d$. Therefore, by Definition~\ref{def:salem}, to prove $\circled{11}$, it suffices to show:
	\begin{proposition}\label{prop:dim_F}
		$\dim_F S_\mathbf{Q} = \mathfrak{d}_{d,1}(\mathbf{Q})$.
	\end{proposition}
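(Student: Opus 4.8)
\textbf{Proof proposal for Proposition~\ref{prop:dim_F}.}
The plan is to prove the two inequalities $\dim_F S_\mathbf{Q} \geq \mathfrak{d}_{d,1}(\mathbf{Q})$ and $\dim_F S_\mathbf{Q} \leq \mathfrak{d}_{d,1}(\mathbf{Q})$ separately. The lower bound is the easy direction and follows almost immediately from the uniform Fourier decay estimate already established. Indeed, by Theorem~\ref{thm:uniform_decay} (or Corollary~\ref{cor:recover_banner}), the smoothed measure $\widetilde{\mu}_\mathbf{Q} \coloneqq e^{-|\xi|^2/2}d\mu_\mathbf{Q}$ satisfies $|\widehat{\widetilde{\mu}_\mathbf{Q}}(x)| = |\widetilde{E}^\mathbf{Q}1(x)| \lesssim_\mathbf{Q} (1+|x|)^{-\mathfrak{d}_{d,1}(\mathbf{Q})/2}$ for all $x\in\R^{d+n}$. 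After normalizing, this is a probability measure in $\mathcal{M}(S_\mathbf{Q})$ (its support is the compact set $S_\mathbf{Q}$), so by Definition~\ref{def:salem} we get $\dim_F S_\mathbf{Q} \geq \mathfrak{d}_{d,1}(\mathbf{Q})$. (A minor point: one should check $\widetilde{\mu}_\mathbf{Q}$ is not the zero measure and has compact support inside $S_\mathbf{Q}$, which is clear since $e^{-|\xi|^2/2}\chi_{[0,1]^d}$ is a nonnegative bounded density.)

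For the upper bound, the strategy is to argue by contradiction using the optimality half of Theorem~\ref{thm:uniform_decay}, suitably localized. Suppose $\dim_F S_\mathbf{Q} > \mathfrak{d}_{d,1}(\mathbf{Q})$; then there is an $s > \mathfrak{d}_{d,1}(\mathbf{Q})$ and a nonzero $\mu \in \mathcal{M}(S_\mathbf{Q})$ with $|\widehat{\mu}(x)| \lesssim |x|^{-s/2}$ for all $x$. Since $\mu$ lives on the graph $S_\mathbf{Q}$, we may write $d\mu(\xi,\mathbf{Q}(\xi)) = g(\xi)d\xi$ for some nonnegative $g \in L^1([0,1]^d)$ with $g \not\equiv 0$ (the measure is absolutely continuous with respect to the surface measure because $S_\mathbf{Q}$ is a smooth graph and $\mu$ is carried by it — more carefully, $\mu$ pushes forward to a finite measure on $[0,1]^d$ which we can mollify, or we can directly work with a smoothed density). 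Then $\widehat{\mu}(x) = \int e^{ix\cdot(\xi,\mathbf{Q}(\xi))}g(\xi)d\xi = E^\mathbf{Q}g(x)$ with $g \geq 0$, $g\not\equiv 0$. The idea is to exploit that, by the proof of Corollary~\ref{cor:recover_banner} (the optimality part, via the one-dimensional oscillatory integral $I(\lambda)$), no nonnegative nonzero density can beat the decay rate $\mathfrak{d}_{d,1}(\mathbf{Q})/2$ along the distinguished direction $\theta_0 \in \mathbb{S}^{n-1}$ realizing $\rank(\overline{Q}(\theta_0)) = \mathfrak{d}_{d,1}(\mathbf{Q})$. Concretely, one restricts attention to a small neighborhood where $g$ is bounded below (by Lebesgue density, after a translation we may assume $g \gtrsim 1$ on a small cube $\xi_0 + [0,r]^d$), mimics the rotation and rescaling in the proof of Corollary~\ref{cor:recover_banner} to reduce to a product of one-dimensional integrals $\int e^{\pm i(|x|/2)u^2}\phi(u)g_{\text{loc}}(u)du$ along $x = (0, |x|\theta_0)$, and then observes — using the positivity of $g_{\text{loc}}$ near a density point — that this cannot decay faster than $|x|^{-1/2}$ in each of the $m = \mathfrak{d}_{d,1}(\mathbf{Q})$ non-degenerate coordinates, so $|\widehat{\mu}(x)| \gtrsim |x|^{-m/2}$ along this ray, contradicting $|\widehat{\mu}(x)| \lesssim |x|^{-s/2}$ with $s > m$.

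The main obstacle I anticipate is the last reduction: transferring the lower bound on the oscillatory integral from the constant density ($g=1$) case handled in Corollary~\ref{cor:recover_banner} to a merely nonnegative $L^1$ density that is only bounded below on a small set. The clean way around this is to use the positivity crucially — a bump $\phi \geq 0$ times a density $g_{\text{loc}} \gtrsim 1$ on $\supp\phi$ still gives $\int e^{i\lambda u^2}\phi(u)g_{\text{loc}}(u)du$ a genuine obstruction at $\lambda\to\infty$, because one can write $\phi g_{\text{loc}} \geq c\phi$ only after further localizing $\phi$ to where $g_{\text{loc}}$ is close to its essential infimum, and then run the same $L^2$/Plancherel argument (via $\tilde\phi(v) = \chi_{[0,\infty)}(v)\phi(\sqrt v)g_{\text{loc}}(\sqrt v)/\sqrt v \notin L^2$ near $0$) that appears at the end of the proof of Corollary~\ref{cor:recover_banner}. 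Alternatively — and perhaps more robustly — one can avoid pointwise lower bounds entirely and instead integrate $|\widehat\mu(x)|^2$ over a slab around the ray $\R\theta_0$, using Plancherel in the $x'$ variable to convert $\int_{|x''|\sim R}\int_{\R^d}|E^\mathbf{Q}g(x)|^2 w(x')dx'dx''$ into an expression involving $\int g(\xi)\overline{g(\eta)}$ supported where $\mathbf{Q}(\xi)-\mathbf{Q}(\eta)$ is controlled, which (again using $\rank\overline{Q}(\theta_0) = m$) forces a lower bound incompatible with $s > m$. I expect either route to work; the slab/$L^2$ route is likely cleaner to write and is more in the spirit of Shayya's argument already used in Subsection~\ref{subsec:shayya}, so that would be my primary plan, with the pointwise route as a fallback.
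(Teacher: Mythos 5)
Your proposal correctly identifies the two-sided structure and the key role of Corollary~\ref{cor:recover_banner}, but both halves contain genuine gaps that the paper's proof is specifically engineered to avoid.

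For the lower bound, the measure $\widetilde{\mu}_\mathbf{Q} \coloneqq e^{-|\xi|^2/2}d\mu_\mathbf{Q}$ is \emph{not} in $\mathcal{M}(S_\mathbf{Q})$: $\mu_\mathbf{Q}$ is the pushforward of Lebesgue measure on all of $\R^d$, so $\widetilde{\mu}_\mathbf{Q}$ is supported on the entire unbounded graph, not on $S_\mathbf{Q}=\{(\xi,\mathbf{Q}(\xi)):\xi\in[0,1]^d\}$. Your parenthetical patch replaces the Gaussian by $e^{-|\xi|^2/2}\chi_{[0,1]^d}$, but then the resulting measure is no longer $\widetilde{E}^\mathbf{Q}1$, and the rough indicator destroys the decay: for $x=(x',0)$ the Fourier transform only decays like $|x'|^{-1}$ near coordinate axes from the boundary discontinuity, which is slower than $\mathfrak{d}_{d,1}(\mathbf{Q})/2$ whenever $\mathfrak{d}_{d,1}(\mathbf{Q})>2$. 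This is exactly the ``rough truncation loses decay along $(\xi,0)$'' issue the paper flags in the introduction. The fix is to take a fixed $\varphi_0\in C_c^\infty$ with $\supp\varphi_0\subset[0,1]^d$, $\varphi_0\geq 0$, $\varphi_0\not\equiv 0$, and set $d\mu(\zeta)\coloneqq\varphi_0(\zeta')d\mu_\mathbf{Q}(\zeta)$; then $\widehat\mu=\widetilde E_{\varphi_0}^\mathbf{Q}1(-\cdot)$, which decays at the sharp rate by Corollary~\ref{cor:recover_banner}.

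For the upper bound, the claim that an arbitrary $\mu\in\mathcal{M}(S_\mathbf{Q})$ has a density $g\in L^1([0,1]^d)$ with respect to the surface measure is false: $\mu$ could be a point mass or concentrated on a lower-dimensional subset, so it need not be absolutely continuous with respect to $\mu_\mathbf{Q}^*$. Even in the density case, the step ``by Lebesgue density, after a translation we may assume $g\gtrsim 1$ on a small cube'' is not valid for general $L^1$ functions (Lebesgue density gives lower bounds on averages, not pointwise lower bounds), and the Plancherel fallback $\widetilde\phi(v)=\chi_{[0,\infty)}(v)\phi(\sqrt v)g_{\text{loc}}(\sqrt v)/\sqrt v\notin L^2$ would then require $g_{\text{loc}}(u)^2/u$ to be non-integrable near $0$, which again does not follow for merely nonnegative $L^1$ data. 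Your ``slab/$L^2$'' alternative is too sketchy to audit. What the paper actually does is orthogonal to all of this: it does not try to regularize $\mu$ by Lebesgue differentiation or slab integration, but instead exploits the TDI structure of quadratic manifolds. Pull $\mu$ back to $\nu\in\mathcal M(\R^d)$, truncate so that $\supp\nu\subset[\epsilon,1-\epsilon]^d$, and observe that the affine change of variables $L_{\xi_0}$ shows that translating $\nu$ preserves the Fourier decay rate \emph{uniformly} in $\xi_0$ on a compact set. Then convolve: $\varphi\coloneqq\nu*\varphi_1\in C_c^\infty([0,1]^d)$ is nonzero for a suitable bump $\varphi_1$, and the identity $\widetilde E_\varphi^\mathbf{Q}1(x)=\int E_{\nu_{-\xi}}^\mathbf{Q}1(x)\varphi_1(\xi)d\xi$ (a direct computation using the quadratic structure) propagates the fast decay of $\widehat\mu$ to $\widetilde E_\varphi^\mathbf{Q}1$. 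This puts you squarely in the domain of the optimality statement in Corollary~\ref{cor:recover_banner}, which is stated precisely for $C_c^\infty$ bumps, giving the contradiction cleanly. The mollification-by-translation argument is the missing ingredient in your proposal; without it there is no rigorous bridge from an arbitrary Borel measure to the smooth-bump setting where the optimality half of Corollary~\ref{cor:recover_banner} applies.
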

	\begin{proof}
		Fix any $\varphi_0\in C_c^\infty(\R^d)$ with $\varphi_0 \geq 0$, $\varphi_0 \not\equiv 0$, and $\supp\varphi_0 \subset [0,1]^d$. Consider the Borel measure $\mu$ defined by 
        $$d\mu(\zeta) \coloneqq \varphi_0(\zeta') d\mu_\mathbf{Q}(\zeta), \quad \quad \zeta = (\zeta',\zeta'')\in \R^d\times\R^n. $$
        Then $\mu \in \mathcal{M}(S_\mathbf{Q})$, and $$|\widehat{\mu}(x)| = \abs{\widetilde{E}_{\varphi_0}^\mathbf{Q}1(-x)} \lesssim_\mathbf{Q} (1+|x|)^{- \frac{\mathfrak{d}_{d,1}(\mathbf{Q})}{2}   }$$ 
		by Corollary~\ref{cor:recover_banner}. This implies $\dim_F S_\mathbf{Q} \geq \mathfrak{d}_{d,1}(\mathbf{Q})$. 
		
		To show $\dim_F S_\mathbf{Q} \leq \mathfrak{d}_{d,1}(\mathbf{Q})$, we argue by contradiction. Suppose $\dim_F S_\mathbf{Q} > \mathfrak{d}_{d,1}(\mathbf{Q})$, then by the definition, there must exist some $\mu \in \mathcal{M}(S_\mathbf{Q})$ such that 
		$$|\widehat{\mu}(x)| \lesssim |x|^{-\gamma}, \quad \quad \text{for ~some~}\gamma > \frac{\mathfrak{d}_{d,1}(\mathbf{Q})}{2}.$$ Note that $\mu$ is finite implies $|\widehat{\mu}(x)| \lesssim (1+|x|)^{-\gamma}$ for the same $\gamma$. By truncating $\mu$ with a suitable smooth bump as in the proof of Corollary~\ref{cor:recover_banner}, we may without loss of generality assume $\supp\mu \subset [\epsilon,1-\epsilon]^d \times \R^n$ for some $\epsilon>0$. Thanks to the TDI property of quadratic manifolds, we can apply a smoothing trick similar to that in the proof of Proposition~\ref{prop:TDI_affine_const}. It is worth mentioning that in Section~6 of \cite{fhk22fourier}, similar ideas have been used to determine the Fourier dimension of light cones, where rotational symmetry is exploited instead. 
		
		For any $\nu \in \mathcal{M}(\R^d)$, we define
		$$E_\nu^\mathbf{Q}1(x) \coloneqq \int_{\R^d} e^{i(x'\cdot\xi + \xi^T\cdot \overline{Q}(x'')\cdot\xi/2)} d\nu(\xi), \quad\quad  x = (x',x'') \in \R^{d+n}.$$
		In particular, let $\nu \in \mathcal{M}([\epsilon,1-\epsilon]^d)$ be the pullback of $\mu$ under the map $\xi \mapsto (\xi,\mathbf{Q}(\xi))$, i.e., 
		$$\int_{\R^d} f(\zeta') d\nu(\zeta') = \int_{\R^{d+n}} f(\zeta') d\mu(\zeta),    \quad \quad \zeta = (\zeta',\zeta'')\in \R^d\times\R^n$$ 
		for any $f \in C_c^\infty(\R^d)$, then $\widehat{\mu}(-x) = E_\nu^\mathbf{Q}1(x)$. 
		
		By change of variables (as in the proof of Corollary~\ref{cor:recover_banner}, except that $\varphi(\xi)d\xi$ is replaced with $d\nu(\xi)$), we see that 
		$$\abs{E_\nu^\mathbf{Q}1(x)} \lesssim (1+|x|)^{-\gamma}, \quad \quad \forall ~x\in \R^{d+n}$$ 
		implies that 
	\begin{align}\label{eq:uniform_decay_xi_0}
	       \abs{E_{\nu_{\xi_0}}^\mathbf{Q}1(x)} \lesssim (1+|x|)^{-\gamma}, \quad \quad \forall ~x\in \R^{d+n},
	\end{align}
		where $\nu_{\xi_0}$ is the translation of $\nu$ by $\xi_0 \in \R^d$, i.e., 
		$$\int_{\R^d} f(\eta) d\nu_{\xi_0}(\eta) = \int_{\R^d} f(\eta-\xi_0) d\nu(\eta)$$ 
		for any nonnegative Borel function $f$ on $\R^d$. Also, the bound ``$\lesssim$'' in (\ref{eq:uniform_decay_xi_0}) is uniform for all $\xi_0$ in a compact region of $\R^d$. Now since $\nu \neq 0$, we can always find $\varphi_1 \in C_c^\infty(\R^d)$ with $\varphi_1 \geq 0$, $\supp\varphi_1 \subset B_{\epsilon/3}$, and $\nu * \varphi_1 \neq 0$.\footnote{Otherwise by taking an approximation of the identity and Theorem~2.6 of \cite{mattila15}, we know that $\nu$, as the weak limit of zero measures, must also be zero itself.} Let $\varphi \coloneqq \nu * \varphi_1$, then $\varphi \in C_c^\infty([0,1]^d)$, $\varphi \geq 0$, and $\varphi \neq 0$. Moreover, we have
		\begin{align*}
			\widetilde{E}_\varphi^\mathbf{Q}1(x) 
			& = \int_{\R^d} e^{i(x'\cdot\xi + \xi^T\cdot \overline{Q}(x'')\cdot\xi/2)} \varphi(\xi) d\xi\\
			& = \int_{\R^d} e^{i(x'\cdot\xi + \xi^T\cdot \overline{Q}(x'')\cdot\xi/2)} (\nu * \varphi_1)(\xi) d\xi\\
			& = \int_{\R^d} e^{i(x'\cdot\xi + \xi^T\cdot \overline{Q}(x'')\cdot\xi/2)} \int_{\R^d} \varphi_1(\xi-\eta)d\nu(\eta) d\xi\\
			& = \int_{\R^d} \int_{\R^d} e^{i(x'\cdot(\xi+\eta) + (\xi+\eta)^T\cdot \overline{Q}(x'')\cdot(\xi+\eta)/2)}  \varphi_1(\xi)d\nu(\eta) d\xi\\
			& = \int_{\R^d} \int_{\R^d} e^{i(x'\cdot(\eta+\xi) + (\eta+\xi)^T\cdot \overline{Q}(x'')\cdot(\eta+\xi)/2)}  d\nu(\eta) \varphi_1(\xi) d\xi\\
			& = \int_{\R^d} \int_{\R^d} e^{i(x'\cdot\eta + \eta^T\cdot \overline{Q}(x'')\cdot\eta/2)}  d\nu_{-\xi}(\eta) \varphi_1(\xi) d\xi\\
			& = \int_{\R^d} E_{\nu_{-\xi}}^\mathbf{Q}1(x) \varphi_1(\xi) d\xi.
		\end{align*}
	On the other hand, by (\ref{eq:uniform_decay_xi_0}) and the related uniformity observation, we have
		$$\abs{E_{\nu_{-\xi}}^\mathbf{Q}1(x)} \lesssim (1+|x|)^{-\gamma}, \quad \quad \forall~\xi \in \supp\varphi_0,$$ 
		thus we obtain
		\begin{align*}
			\abs{\widetilde{E}_\varphi^\mathbf{Q}1(x)} \leq \int_{\supp\varphi_1} \abs{E_{\nu_{-\xi}}^\mathbf{Q}1(x)} \varphi_1(\xi) d\xi \lesssim_{\varphi_1} (1+|x|)^{-\gamma}.
		\end{align*}
		However, by the optimality part of Corollary~\ref{cor:recover_banner}, we know that this cannot hold for any $\gamma > \frac{\mathfrak{d}_{d,1}(\mathbf{Q})}{2}$. So we arrive at a contradiction, and the proof of Proposition~\ref{prop:dim_F} (thus $\circled{11}$) is completed.
	\end{proof}

        \begin{remark}
		Firstly, by taking $\tilde{\mu}$ to be the pushforward measure of $\varphi(\xi)d\xi$ in the proof, we have $\tilde{\mu} \in \mathcal{M}(S_\mathbf{Q})$ and $|\tilde{\mu}(x)| \lesssim |x|^{-\gamma}$ for all $x\in\R^{d+n}$. This means that to test the Fourier dimension of $S_\mathbf{Q}$, we only need to focus on smooth $\mu \in \mathcal{M}(S_\mathbf{Q})$. Secondly, via exactly the same proof, one can easily see that $\dim_F \{(\xi,\mathbf{Q}(\xi)): \xi \in U\} = \mathfrak{d}_{d,1}(\mathbf{Q})$ for any set $U \subset \R^d$ containing an interior point {\rm(}not necessarily $U = [0,1]^d${\rm)}. Finally, the smoothing trick in this subsection works equally well for all {\rm TDI} manifolds.
	\end{remark} 
    
	\begin{remark}
		When $n=1$ and $S_\mathbf{Q}$ is a paraboloid or hyperbolic paraboloid, we trivially have $\dim_F S_\mathbf{Q} = d$, thus $\mathbf{Q}$ is Salem by $\dim_F S_\mathbf{Q} \leq \dim_H S_\mathbf{Q} = d$ and $\dim_F S_\mathbf{Q} \geq \mathfrak{d}_{d,1}(\mathbf{Q}) = d$. In fact, this result holds for all hypersurfaces with nonvanishing Gaussian curvature, and no smoothing trick is required. However, when $n \geq 2$, the trivial upper bound $\dim_F S_\mathbf{Q} \leq \dim_H S_\mathbf{Q} = d$ no longer matches the lower bound, as $\mathfrak{d}_{d,1}(\mathbf{Q}) < d$ generically {\rm(}see Remark~{\rm\ref{rmk:rho_growing_order})}, and we need to perform a more delicate analysis. This is a fundamental difference between the codimension $1$ case and higher-codimensional cases.
	\end{remark}

    \addtocontents{toc}{\protect\setcounter{tocdepth}{2}}

	\subsection{Optimality of the exponents}\label{subsec:best_proof}\phantom{x}
	
	In the previous subsections, we have almost completed the proof of Theorem~\ref{thm:relation_diagram}, which is the main result in this section. However, there are still some leftover problems, such as the optimality of $L^{\frac{2(d+n)}{d}+}$ and $L^{\frac{2(d+2n)}{d}+}$, which will be dealt with in this subsection.
	
	Our proof of optimality strongly relies on the machinery of Oberlin affine curvature, and is very flexible to be extended to higher-codimensional submanifolds of degree $\geq 3$.
	
	\cprotect\subsubsection{Optimality of $\frac{2(d+2n)}{d}$}\phantom{x}
	
	First, let us see why the threshold $\frac{2(d+2n)}{d}$ in the Stein-Tomas-type inequality $E^\mathbf{Q}: L^2 \rightarrow L^{\frac{2(d+2n)}{d}+}$ cannot be lowered. Suppose to the contrary that $E^\mathbf{Q}: L^2 \rightarrow L^{q+}$ for some $q < \frac{2(d+2n)}{d}$, then there must exist some $1 \leq q_0 < \frac{2(d+2n)}{d}$ such that $E^\mathbf{Q}: L^2 \rightarrow L^{q_0}$. Now by applying Proposition~\ref{prop:convex_test} with $N=d+n$, $\mu = \mu_\mathbf{Q}^*$, $E^\mu = E^{\mathbf{Q}}$, $p=2$, and $q = q_0$, we immediately know that $\mu_\mathbf{Q}^*$ satisfies the Oberlin condition with exponent $\alpha = \frac{2}{q_0} > \frac{d}{d+2n} = \frac{d}{D}$. However, by (1) of Theorem~\ref{thm:gressman_thm1}, this means that $\mu_\mathbf{Q}^*$ must be a zero measure, contradicting the fact that $\mu_\mathbf{Q}^*$ is the (nonzero) surface measure of $S_\mathbf{Q}$. Therefore, the threshold $\frac{2(d+2n)}{d}$ is optimal.
	
	In exactly the same way, one can easily prove the following more general result, whose proof we omit: 
	\begin{proposition}\label{prop:opt_ST_index}
		Let $U$ be a bounded open/closed region of $\R^d$. Then for any smooth function $\psi: U \rightarrow \R^n$, $p \in[1,\infty]$, and $q \in [1,\frac{p'D}{d})$ {\rm(}$D = D(d,n)$ is the homogeneous dimension{\rm)}, the associated Fourier extension operator $E_U^\psi$ {\rm(}recall {\rm(\ref{def:general_ext_op})} for its definition{\rm)} cannot be bounded from $L^p(U)$ to $L^q(\R^{d+n})$. In other words, $E_U^\psi: L^p \rightarrow L^{\frac{p'D}{d}+}$ {\rm(}once established{\rm)} is the best, in the sense that the threshold $\frac{p'D}{d}$ cannot be replaced by any strictly smaller exponent.
	\end{proposition}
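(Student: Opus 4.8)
The plan is to prove Proposition~\ref{prop:opt_ST_index} as an almost immediate consequence of Proposition~\ref{prop:convex_test} together with part (1) of Theorem~\ref{thm:gressman_thm1}, following exactly the template just used for the special case $\frac{2(d+2n)}{d}$. The key geometric input is that the pushforward of Lebesgue measure on $U$ via $\xi \mapsto (\xi,\psi(\xi))$ is a nonzero Borel measure supported on the $d$-dimensional immersed submanifold $S_\psi$, so by part (1) of Theorem~\ref{thm:gressman_thm1} it cannot satisfy the Oberlin condition with any exponent strictly larger than $d/D$.

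First I would set up notation: let $\mu_\psi^*$ denote the pushforward of Lebesgue measure on $U$ via the map $\xi \mapsto (\xi,\psi(\xi))$, which is a nonzero finite (since $U$ is bounded) Borel measure on $\R^{d+n}$ supported on $S_\psi$. Since $\psi$ is smooth and $U$ is bounded, $S_\psi$ is an immersed $d$-dimensional submanifold of $\R^{d+n}$ (a graph, so the immersion condition is automatic), to which Theorem~\ref{thm:gressman_thm1} applies. Next, argue by contradiction: suppose $E_U^\psi$ is bounded from $L^p(U)$ to $L^q(\R^{d+n})$ for some $q < \frac{p'D}{d}$. Note that $E_U^\psi = E^{\mu_\psi^*}$ in the notation of Subsection~\ref{subsec:E^Q_versions}, so Proposition~\ref{prop:convex_test} (with $N = d+n$, $\mu = \mu_\psi^*$) yields that $\mu_\psi^*$ satisfies the Oberlin condition with exponent $p'/q$. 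But $q < \frac{p'D}{d}$ forces $p'/q > \frac{d}{D}$, so by part (1) of Theorem~\ref{thm:gressman_thm1} the measure $\mu_\psi^*$ must be the zero measure, contradicting that it is a nonzero pushforward of Lebesgue measure on the nonempty region $U$. This contradiction proves that no such $q$ exists, i.e., $E_U^\psi : L^p \to L^{\frac{p'D}{d}+}$ (whenever it holds) has the optimal threshold.

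A minor point to check is that Proposition~\ref{prop:convex_test} is stated for $p \in (1,\infty]$ and $q \in [1,\infty]$, so the case $p = 1$ needs the remark following that proposition: for $\sigma$-finite $\mu$ (which $\mu_\psi^*$ is, being finite), the argument carries through, and in fact when $p=1$ boundedness $L^1 \to L^q$ can only hold for $q = \infty$, in which case $q = \infty \geq \frac{p'D}{d} = \infty$ and there is nothing to prove. So the only genuine content is for $p \in (1,\infty]$, where Proposition~\ref{prop:convex_test} applies directly. I do not anticipate a serious obstacle here; the entire argument is a two-line deduction once one observes that $E_U^\psi$ is literally the extension operator $E^{\mu_\psi^*}$ for a measure on a $d$-dimensional submanifold, so the homogeneous-dimension threshold from geometric invariant theory kicks in automatically. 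The main thing to be careful about is simply to record why $\mu_\psi^*$ is nonzero and why $S_\psi$ is a legitimate immersed submanifold (so that part (1) of Theorem~\ref{thm:gressman_thm1} is applicable), which are both trivial for graphs of smooth functions over regions with nonempty interior.
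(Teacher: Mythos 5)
Your proof is correct and follows exactly the route the paper has in mind: it explicitly says that Proposition~\ref{prop:opt_ST_index} can be proved ``in exactly the same way'' as the $\frac{2(d+2n)}{d}$ case, namely via Proposition~\ref{prop:convex_test} and part (1) of Theorem~\ref{thm:gressman_thm1}, and the paper omits the details. Your additional checks --- that $\mu_\psi^*$ is a nonzero Borel measure, that $S_\psi$ is an immersed $d$-dimensional submanifold, and the $p=1$ edge case via the remark following Proposition~\ref{prop:convex_test} --- are precisely the routine points one would record in a full write-up.
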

	For example, the moment curve $\{\psi(t) = (t,t^2,\dots, t^N): t\in [0,1]\}$ in $\R^N$ has a homogeneous dimension $D = 1+2+\cdots+N = \frac{N(N+1)}{2}$, which by Proposition~\ref{prop:opt_ST_index} implies that $E^\psi: L^p \rightarrow L^{\frac{p'N(N+1)}{2}+}$ (once established) is the best. As is well known, this is indeed the case.

	\cprotect\subsubsection{Optimality of $\frac{2(d+n)}{d}$}\phantom{x}
	
	Next, let us see why the threshold $\frac{2(d+n)}{d}$ in $\widetilde{E}^\mathbf{Q} 1 \in L^{\frac{2(d+n)}{d}+}$ cannot be lowered. 
    % In fact, we will show the same thing for $E_\varphi^\mathbf{Q}$, which is slightly stronger. 
    Our proof closely follows the framework developed in \cite{gizzk23}. Suppose to the contrary that $\widetilde{E}^\mathbf{Q}1 \in L^{p+}$ for some $p < \frac{2(d+n)}{d}$, then there must exist some $1 \leq p_0 < \frac{2(d+n)}{d}$ such that $\widetilde{E}^\mathbf{Q} 1 \in L^{p_0}$. Via exactly the same arguments in the $(ii) \Rightarrow (iii)$ part of the proof of Proposition~\ref{prop:restriction_equiv} (with $p=\infty$ and $f \equiv 1$ there), we immediately know that for any $\varphi\in C_c^\infty(\R^d)$ with $\varphi\not\equiv0$, we have $\widetilde{E}_\varphi^\mathbf{Q} 1 \in L^{p_0}$ for some $1 \leq p_0 < \frac{2(d+n)}{d}$. In particular, we may fix one $\varphi_0\in C_c^\infty(\R^d)$ such that $\varphi_0\equiv0$ outside $[\frac{1}{8},\frac{7}{8}]^d$, $\varphi_0\equiv 1$ inside $[\frac{1}{4},\frac{3}{4}]^d$, and $\varphi_0 \in [0,1]$ in between.
	
	For any (small) dyadic number $\delta\in(0,1)$, we cover $[0,1]^d$ with dyadic cubes $\{[-\delta,\delta]^d + \delta\nn\}_{\nn\in\Z^d\cap[0,\delta^{-1}]^d}$. Since $\mathbf{Q}$ is a quadratic form, there must exist some constant $C$ (depending only on $\mathbf{Q}$) such that the rectangular box $\Box_\delta \coloneqq [-\delta,\delta]^d\times [-C\delta^2, C\delta^2]^n$ contains $\{(\xi,\mathbf{Q}(\xi)):\xi\in [-\delta,\delta]^d\}$. Fix a $\phi \in C_c^\infty(\R)$ with $\supp\phi \subset [-2,2]$, $\phi\equiv 1$ on $[-1,1]$, $\phi\in[0,1]$ in between, and $\sum_{k\in \Z}\phi(\cdot+3k)\equiv 1$ over $\R$ (thus $\sum_{k\in \Z}\phi(\cdot+k)\equiv 3$ over $\R$). Define 
	$$\Phi_\delta(\xi) \coloneqq \prod_{i=1}^d\phi\Big(\frac{\xi_i}{\delta}\Big)\prod_{j=1}^n\phi\Big(\frac{\xi_{d+j}}{C\delta^2}\Big),$$
	then $\Phi_\delta \in C_c^\infty(\R^{d+n})$ such that $\Phi_\delta\equiv0$ outside $2\Box_\delta$, $\Phi_\delta \equiv 1$ inside $\Box_\delta$, and $\Phi_\delta \in [0,1]$ on $2\Box_\delta\setminus \Box_\delta$. For any $\nn\in\Z^d\cap[0,\delta^{-1}]^d$, define $\Phi_{\delta,\nn} \coloneqq \Phi_\delta \circ L_{\delta\nn}$. Then it is easy to see that $\Phi_{\delta,\nn} \equiv 0$ outside $2L_{\delta\nn}^{-1}(\Box_\delta)$, $\Phi_{\delta,\nn} \equiv 1$ inside $L_{\delta\nn}^{-1}(\Box_\delta)$, and $\Phi_{\delta,\nn} \in [0,1]$ on $2L_{\delta\nn}^{-1}(\Box_\delta)\setminus L_{\delta\nn}^{-1}(\Box_\delta)$. Also, $\{\Phi_{\delta,\nn}\}_{\nn\in\Z^d\cap[0,\delta^{-1}]^d}$ is finitely overlapping, and 
	$$\Phi_\delta^\mathbf{Q} \coloneqq \sum_{\nn\in\Z^d\cap[0,\delta^{-1}]^d} \Phi_{\delta,\nn} \equiv 3,\quad \quad \text{on~}\Big\{(\xi,\mathbf{Q}(\xi)):\xi\in\Big[\frac{1}{8},\frac{7}{8}\Big]^d\Big\},$$
	which means 
	$$\widetilde{E}_{\varphi_0}^\mathbf{Q}1 = (\varphi_0 d\sigma)^\vee = \frac{1}{3}\Big(\Phi_\delta^\mathbf{Q} \cdot \varphi_0 d\sigma\Big)^\vee = \frac{1}{3}(\Phi_\delta^\mathbf{Q})^\vee * \widetilde{E}_{\varphi_0}^\mathbf{Q}1, \quad \quad \forall~\delta>0.$$
	By H\"older's inequality, this implies that $$\big\|\widetilde{E}_{\varphi_0}^\mathbf{Q}1\big\|_{L^\infty(\R^{d+n})} \lesssim \big\|(\Phi_\delta^\mathbf{Q})^\vee\big\|_{L^{p_0'}(\R^{d+n})} \big\|\widetilde{E}_{\varphi_0}^\mathbf{Q}1\big\|_{L^{p_0}(\R^{d+n})},$$
	where $\big\|\widetilde{E}_{\varphi_0}^\mathbf{Q}1\big\|_{L^{p_0}(\R^{d+n})} <\infty$ by our assumption. We claim that 
	$$\big\|(\Phi_\delta^\mathbf{Q})^\vee\big\|_{L^{p_0'}(\R^{d+n})} \rightarrow 0\quad \quad \text{as~}\delta \rightarrow 0.$$ This will imply $\big\|\widetilde{E}_{\varphi_0}^\mathbf{Q}1\big\|_{L^\infty(\R^{d+n})} = 0$, i.e., $\widetilde{E}_{\varphi_0}^\mathbf{Q}1 \equiv 0$. However, $\varphi_0\not\equiv0$ directly implies that $\widetilde{E}_{\varphi}^\mathbf{Q}1 \not\equiv 0$, so we arrive at a contradiction, and the proof is completed.
	
	To show the above claim, first observe that
	\begin{align*}
		\big\|(\Phi_\delta^\mathbf{Q})^\vee\big\|_{L^{2}(\R^{d+n})} & =  \big\|\Phi_\delta^\mathbf{Q} \big\|_{L^{2}(\R^{d+n})}\\
		& = \Big\| \sum_{\nn\in\Z^d\cap[0,\delta^{-1}]^d} \Phi_{\delta,\nn} \Big\|_{L^{2}(\R^{d+n})}\\
		& \lesssim \Big(\sum_{\nn\in\Z^d\cap[0,\delta^{-1}]^d} \big\| \Phi_{\delta,\nn} \big\|_{L^{2}(\R^{d+n})}^2 \Big)^{\frac{1}{2}}\\
		& \sim \delta^{-\frac{d}{2}}\cdot \big\| \Phi_\delta \big\|_{L^{2}(\R^{d+n})} \sim \delta^{n}.
	\end{align*}
	On the other hand, we also have
	\begin{align*}
		\big\|(\Phi_\delta^\mathbf{Q})^\vee\big\|_{L^{1}(\R^{d+n})} & \leq   \sum_{\nn\in\Z^d\cap[0,\delta^{-1}]^d} \big\| (\Phi_{\delta,\nn})^\vee \big\|_{L^{1}(\R^{d+n})}\\
		& \sim \delta^{-d} \cdot \big\| (\Phi_\delta)^\vee \big\|_{L^{1}(\R^{d+n})}  \sim \delta^{-d}.
	\end{align*}
	Interpolation gives that $\big\|(\Phi_\delta^\mathbf{Q})^\vee\big\|_{L^{r}(\R^{d+n})} \rightarrow 0$ as $\delta \rightarrow 0$ for any $r > \frac{2(d+n)}{d+2n}$, and in particular for $r = p_0'$, as desired.

    Our proof actually shows that $p = \frac{2(d+n)}{d}$ is the best possible for $\widetilde{E}_{\varphi_0}^\mathbf{Q} 1 \in L^{p+}$ to hold, not just $\widetilde{E}^\mathbf{Q} 1 \in L^{p+}$. Moreover, there is no special role played by $\varphi_0$, except that it is not identically zero. In particular, by replacing $\varphi_0$ with $\chi_{[0,1]^d}$ in the proof, we get $p = \frac{2(d+n)}{d}$ is the best possible for ${E}^\mathbf{Q} 1 \in L^{p+}$ to hold.
    
    In fact, one can say something much stronger, as we actually do not need any a priori regularity assumption of the input of the extension operators. Also, the framework behind the proof is very robust, as it basically only involves some Knapp-type analysis: One can imagine that such a method can be carried out for any higher-codimensional submanifold, not just quadratic manifolds. In general, one can prove the following unified result:
	\begin{proposition}\label{prop:opt_surf_meas_Lp_index}
		Let $U$ be a bounded open/closed region of $\R^d$, and $\psi: U \rightarrow \R^n$ be any smooth function that extends to a neighborhood of $\overline{U}$. If $f\in L^p$ with $1\leq p<\frac{D + d}{d}$ {\rm(}$D = D(d,n)$ is the homogeneous dimension{\rm)} satisfies that $\widehat{f}$ is supported on $\{(\xi,\psi(\xi)): \xi\in U\}$, then $f$ is identically zero. In particular, $E^\psi1 \in L^{\frac{D+d}{d}+}$ and $\widetilde{E}_\varphi^\psi 1 \in L^{\frac{D+d}{d}+}$, where $\varphi \in C_c^\infty(\R^d)$ with $\varphi\not\equiv 0$ on $U$, {\rm(}once established{\rm)} is the best, in the sense that the threshold $\frac{D+d}{d}$ cannot be replaced by any strictly smaller exponent.
	\end{proposition}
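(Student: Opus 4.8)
The plan is to run the Knapp-type argument from the optimality proof of $\frac{2(d+n)}{d}$ above, but in the general setting, with the homogeneous dimension $D$ entering through the correct choice of an \emph{anisotropic} Knapp box. First I would argue by contradiction: suppose $f\in L^p$ with $1\le p<\frac{D+d}{d}$ and $\widehat f$ supported on $S_\psi\coloneqq\{(\xi,\psi(\xi)):\xi\in U\}$. Since $U$ is bounded, $S_\psi$ is compact, so $\widehat f$ is a compactly supported tempered distribution; using a smooth partition of unity on $U$ one may assume $\widehat f$ is supported on a small compact piece of $S_\psi$. The heart of the proof is to construct, for each dyadic $\delta\in(0,1)$, a function $\Phi_\delta^\psi\in C_c^\infty(\R^{d+n})$ that equals $1$ on that piece of $S_\psi$ and satisfies
\[
\big\|(\Phi_\delta^\psi)^\vee\big\|_{L^r(\R^{d+n})}\longrightarrow 0\qquad(\delta\to0)\quad\text{for all }r>\tfrac{D+d}{D}.
\]
Granting this, the identity $\widehat f=\Phi_\delta^\psi\,\widehat f$ gives $f=(\Phi_\delta^\psi)^\vee*f$, whence $\|f\|_{L^\infty}\lesssim\|(\Phi_\delta^\psi)^\vee\|_{L^{p'}}\|f\|_{L^p}$; since $p<\frac{D+d}{d}$ forces $p'>\big(\frac{D+d}{d}\big)'=\frac{D+d}{D}$, letting $\delta\to0$ yields $f\equiv0$. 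Applied to $f=E^\psi1$ (whose Fourier transform is a constant times $\chi_U\,d\sigma$) this contradicts $E^\psi1(0)=|U|>0$, which is the ``in particular'' conclusion; the same applies to $\widetilde E_\varphi^\psi1$.

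To build $\Phi_\delta^\psi$, I would cover $U$ by $\sim\delta^{-d}$ many $\delta$-cubes, and over the cube centered at $\xi_0$ apply the volume-preserving affine map $L_{\xi_0}$ that flattens the tangent plane. By Taylor's theorem the surface piece then lies in a rectangular box $\Box_{\delta,\xi_0}$ with $d$ short sides of length $\sim\delta$ and $n$ further sides of lengths $\sim\delta^{a_1},\dots,\delta^{a_n}$, where $a_j\ge2$ is the order of vanishing, modulo the tangent plane, of a suitably \emph{diagonalized} component of $\psi$. The crucial point is that, after this diagonalization, the $d+n$ coordinate functions of $S_\psi$ have leading parts that are $d+n$ distinct nonconstant homogeneous polynomials of degrees $1,\dots,1,a_1,\dots,a_n$, so by the very definition of the homogeneous dimension $d+a_1+\cdots+a_n\ge D$, i.e.\ $|\Box_{\delta,\xi_0}|\lesssim\delta^{D}$; using one fixed (``generic'') box shape uniformly over $\xi_0$ is harmless, since at more degenerate points the surface is even flatter and hence still contained. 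Setting $\Phi_\delta^\psi\coloneqq\sum_{\nn}\Phi_{\delta,\nn}$, with each $\Phi_{\delta,\nn}$ a bump adapted to the box over the $\nn$-th cube, normalized so that $\Phi_\delta^\psi\equiv1$ on the relevant piece of $S_\psi$, one checks exactly as in the quadratic case that $\{\Phi_{\delta,\nn}\}$ has bounded overlap. Then $\|(\Phi_\delta^\psi)^\vee\|_{L^2}=\|\Phi_\delta^\psi\|_{L^2}\lesssim(\delta^{-d}|\Box_\delta|)^{1/2}\lesssim\delta^{(D-d)/2}$, while $\|(\Phi_\delta^\psi)^\vee\|_{L^1}\le\delta^{-d}\|(\Phi_\delta)^\vee\|_{L^1}\lesssim\delta^{-d}$ by the product structure of the bump and the volume-preservation of $L_{\xi_0}$; interpolating between $L^1$ and $L^2$ gives $\|(\Phi_\delta^\psi)^\vee\|_{L^r}\lesssim\delta^{\frac{D+d}{r'}-d}\to0$ precisely when $r'<\frac{D+d}{d}$, which is the decay bound above.

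The hard part will be the box-volume estimate $|\Box_{\delta,\xi_0}|\lesssim\delta^{D}$: this is where the Oberlin affine curvature machinery genuinely enters, since $\delta^D$ is exactly the exponent making the Oberlin testing relation $\mu^*(\Box)\sim\delta^d=|\Box|^{d/D}$ sharp, and extracting the correct anisotropic scaling requires carefully diagonalizing $\psi$ over ${\rm GL}(n,\R)\times{\rm GL}(d,\R)$ modulo the tangent plane and invoking the combinatorial characterization of $D$ as the minimal total degree of $d+n$ distinct nonconstant monomials in $d$ variables — one must also arrange the diagonalization and the ordering of the $a_j$ uniformly on the compact $\overline U$. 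A secondary, more routine point is justifying $\widehat f=\Phi_\delta^\psi\widehat f$ for a \emph{general} $f\in L^p$: when $p\le2$ this is immediate, since then $\widehat f\in L^{p'}$ is a function supported on a null set, hence vanishes and $f=0$ outright; for $2<p<\frac{D+d}{d}$ it reduces to the observation that such low $L^p$-integrability prevents $\widehat f$ from carrying transverse derivatives along the smooth submanifold $S_\psi$, so $\widehat f$ is absolutely continuous with respect to the surface measure, under which the multiplication identity is clear. In the applications $f=E^\psi1$ and $f=\widetilde E_\varphi^\psi1$ this issue does not arise, as $\widehat f$ is already a measure on $S_\psi$; a clean worked case illustrating the anisotropic box is the moment curve $\{(t,t^2,\dots,t^N)\}$ in $\R^N$, where $D=\tfrac{N(N+1)}{2}$ and $\Box_\delta$ has sides $\delta\times\delta^2\times\cdots\times\delta^N$.
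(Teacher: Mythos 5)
Your high-level structure matches the paper's: cover $U$ by $\delta$-cubes, construct a partition of unity $\Phi_\delta^\psi$ adapted to small rectangular boxes over each cube, use $\widehat f = \Phi_\delta^\psi\widehat f$ and Young/H\"older plus $L^1$--$L^2$ interpolation, and let $\delta\to 0$. However, your treatment of the key box-volume estimate $|\Box_{\delta,\xi_0}|\lesssim\delta^D$ is a genuine gap, and the ``diagonalization over ${\rm GL}(n,\R)\times{\rm GL}(d,\R)$ modulo the tangent plane'' argument does not constitute a proof. You assert that after diagonalization the $d+n$ coordinate functions of $S_\psi$ have leading parts that are ``$d+n$ distinct nonconstant homogeneous polynomials'' of degrees $1,\dots,1,a_1,\dots,a_n$, and then invoke ``the very definition of the homogeneous dimension'' to conclude $d+a_1+\cdots+a_n\ge D$. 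But $D$ is defined via sums of degrees of $d+n$ distinct \emph{monomials}, not distinct homogeneous polynomials, and passing from the latter to the former is precisely the difficult content of the Oberlin affine curvature theory. Moreover there is no a priori reason why the leading parts of the $\psi_j$ can be made distinct by a ${\rm GL}$-action in a way that is uniform in the basepoint $\xi_0$; the orders $a_j$ and the principal directions vary with $\xi_0$ and can change discontinuously at degenerate points, so the ``one fixed generic box shape'' claim is not justified. The paper avoids all of this by directly citing (1) of Theorem~\ref{thm:gressman_thm1} (which already encodes that the convex hull $K_{\delta,\nn}$ of the surface piece over a $\delta$-cube has volume $\lesssim\delta^D$, with uniformity over $\overline U$ supplied by compactness of an extension), and then uses the equivalence in Lemma~\ref{lem:Oberlin_equiv} (John's ellipsoid) to pass from the convex hull to a rectangular box $T_{\delta,\nn}\supset K_{\delta,\nn}$ of comparable volume. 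In short: the step you flagged as ``the hard part'' is indeed the hard part, and your sketch of it is not a proof; it essentially attempts to reprove Gressman's $|K_{\delta,\nn}|\lesssim\delta^D$ from scratch without confronting the real difficulties.

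A smaller point: your worry about the multiplication identity $\widehat f=\Phi_\delta^\psi\widehat f$ for $2<p<\frac{D+d}{d}$ is unfounded, and your proposed resolution is incorrect. Since $S_\psi$ is compact, $\widehat f$ is a compactly supported tempered distribution, and multiplying any tempered distribution by a $C_c^\infty$ function equal to $1$ on a neighborhood of the support returns the same distribution; no absolute-continuity claim along the submanifold is needed (and none holds in general). The subsequent Young/H\"older step $\|f\|_\infty\le\|(\Phi_\delta^\psi)^\vee\|_{p'}\|f\|_p$ is legitimate for all $p\in[1,\infty)$ as stated.
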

	Since the ideas behind Proposition~\ref{prop:opt_surf_meas_Lp_index} are exactly the same as those in the quadratic case, we only provide a sketch of its proof.
	\begin{proof}[Proof sketch of Proposition~\ref{prop:opt_surf_meas_Lp_index}]
		Without loss of generality, we may assume $U = [0,1]^d$. Cover $[0,1]^d$ with dyadic cubes $\{[-\delta,\delta]^d + \delta\nn\}_{\nn\in\Z^d\cap[0,\delta^{-1}]^d}$, and for each $\nn\in\Z^d\cap[0,\delta^{-1}]^d$, let $K_{\delta,\nn}$ be the convex hull of $\{(\xi,\psi(\xi)): \xi\in [0,1]^d \cap ([-\delta,\delta]^d + \delta\nn) \}$. By the proof of (1) of Theorem~\ref{thm:gressman_thm1} in \cite{gressman19} and compactness, one can see that $|K_{\delta,\nn}| \leq C \delta^D$ with a constant $C$ uniform in $\nn$ and $\delta$ (depending only on $\psi$). For each $K_{\delta,\nn}$, by the arguments in $(iii) \Rightarrow (ii) \Rightarrow (i)$ of Lemma~\ref{lem:Oberlin_equiv}, there exists a rectangular box $T_{\delta,\nn}$ with $K_{\delta,\nn} \subset T_{\delta,\nn}$ and $|T_{\delta,\nn}| \leq C_{d,n} |K_{\delta,\nn}|$. Thus we can carefully choose a partitioning of unity $\{\Phi_{\delta,\nn}\}$ nicely adapted to $\{T_{\delta,\nn}\}$ such that $\big\| \Phi_{\delta,\nn} \big\|_{L^{2}(\R^{d+n})} \lesssim \delta^{\frac{D}{2}}$ and $\big\| (\Phi_{\delta,\nn})^\vee \big\|_{L^{1}(\R^{d+n})} \lesssim 1$ hold uniformly. Also, $\{\Phi_{\delta,\nn}\}$ is finitely overlapping and $\Phi_\delta^\psi \coloneqq \sum_{\nn}\Phi_{\delta,\nn} \equiv 1$ over $\{(\xi,\psi(\xi)):\xi\in[0,1]^d\}$. Similarly to the quadratic case, now $f = f * (\Phi_\delta^\psi)^\vee$ implies $\norm{f}_{L^\infty} \leq \norm{f}_{L^p} \big\|(\Phi_\delta^\psi)^\vee \big\|_{L^{p'}}$, and we only need to check that $\big\|(\Phi_\delta^\psi)^\vee \big\|_{L^{p'}(\R^{d+n})} \rightarrow 0$ as $\delta \rightarrow 0$ whenever $1 \leq p < \frac{D+d}{d}$. But via similar computation as before, we have $\big\|(\Phi_\delta^\mathbf{Q})^\vee\big\|_{L^{2}(\R^{d+n})} \lesssim \delta^{\frac{D-d}{2}}$ and $\big\|(\Phi_\delta^\mathbf{Q})^\vee\big\|_{L^{1}(\R^{d+n})} \lesssim \delta^{-d}$, which by interpolation yield what we want.
	\end{proof}
	
	As a historical remark on Proposition~\ref{prop:opt_surf_meas_Lp_index}, our formulation (as well as that in \cite{gizzk23}) for general $f$ is called ``the $p$-thin problem'', and previous techniques are mainly inspired by spectral synthesis and of an Agmon-Hörmander-type flavor. More precisely, a subset $X$ of $\R^N$ is said to be ``$p$-thin''\footnote{We may always assume $p\geq 1$.}, if for any $u \in \mathcal{S}'(\R^N)$ with $\widehat{u} \in L^p(\R^N)$ and $\supp\, u \subset X$, we have $u=0$. This type of results has many applications in other fields, and one may consult \cite{guo1993p} for uniqueness problems of wave equations, \cite{agranovsky2004p} for uniqueness problems of convolution equations, and \cite{ikromov1997convergence} or \cite{bgzzk2024stationary} for Tarry's problem in number theory.
	
	Now we provide some higher-codimensional examples demonstrating the power of Proposition~\ref{prop:opt_surf_meas_Lp_index}. In Theorem~1 of \cite{agranovsky2004p}, Agranovsky--Narayanan proved that any $d$-dimensional smooth submanifold of $\R^{d+n}$ is $p$-thin if $p\leq \frac{2(d+n)}{d}$. Note that $D(d,n) \geq d+2n$ for any $d$ and $n$, so our Proposition~\ref{prop:opt_surf_meas_Lp_index} recovers the main result in \cite{agranovsky2004p} up to the endpoint. Moreover, when $d \geq n$, the threshold $\frac{2(d+n)}{d}$ is sharp, see Theorem~2 of \cite{agranovsky2004p}. Another example is the $d$-dimensional Parsell-Vinogradov manifold of degree $k$, $\{(\xi^\gamma)_{1\leq |\gamma| \leq k}: \xi\in[0,1]^d\}$ in $\R^N$, where $N = \binom{d+k}{k}-1$. Ikromov \cite{ikromov1997convergence} proved that the Fourier transform of its surface measure, i.e., $E^\psi1$ with $\psi(\xi) = (\xi^\gamma)_{2\leq |\gamma| \leq k}$ in our notation, is not in $L^p$ for any $p < \binom{d+k}{d+1} + 1$. The idea behind his proof is to show that $E^\psi1$ is ``large'' on a disjoint union of rectangular boxes with the help of oscillatory integral estimates. Note that $D = \frac{kd}{d+1}\binom{d+k}{d}$ (one may consult Figure~1 in \cite{gressman19}), so our Proposition~\ref{prop:opt_surf_meas_Lp_index} yields a threshold $\frac{D+d}{d} = \binom{d+k}{d+1} + 1$, which in particular recovers Ikromov's result via a simpler and more geometric method. Also, we actually get the $p$-thin property, which is stronger than merely about $E^\psi1$. 
	
	\begin{remark}
		However, we should point out that in terms of $E^\psi1 \in L^p$, the threshold $\binom{d+k}{d+1}+1$ is in general not sharp: Basu--Guo--Zhang--Zorin-Kranich \cite{bgzzk2024stationary} proved that when $d=2$, the sharp threshold should be $\binom{d+k}{d+1}+2$, which is a bit larger. For the lower bound of the threshold, they refined the arguments in \cite{ikromov1997convergence} by exploiting the ``rotational symmetry'' of Parsell-Vinogradov manifolds to find more rectangular boxes on which $E^\psi1$ is large; for the upper bound of the threshold, they developed a novel stationary set estimate for oscillatory integrals, which relies on tools from real algebraic geometry and model theory. On the other hand, when $d=1$ {\rm(}moment curve{\rm)}, the threshold $\binom{d+k}{d+1}+1$ is indeed sharp. Such a phenomenon that Proposition~{\rm\ref{prop:opt_surf_meas_Lp_index}} may or may not be sharp is very important in understanding what a desired Fourier restriction theory for general higher-codimensional submanifolds should be like, and we will turn to this issue seriously in the final part of this subsection.
	\end{remark}

    \subsection{Final remarks on higher-codimensional Fourier restriction theory}\label{subsec:final_remarks}\phantom{x}
    
	In this final subsection, we will discuss and make conjectures on what should be the decisive factors dominating the higher-codimensional Fourier restriction theory. To begin with, consider the following natural question:
	\begin{question}\label{ques:desired_restriction}
		For given $d$ and $n$, what is the best $L^p \rightarrow L^q$ Fourier extension estimate that a $d$-dimensional submanifold of $\R^{d+n}$ can have?
	\end{question}
	This question is a bit vague as we need to make sense of what we mean by ``best''. One way to define it is by using the right-angled trapezoid $\TT(\Sigma)$ formed by all $\left(\frac{1}{p},\frac{1}{q}\right)$ such that $L^p \rightarrow L^q$ Fourier extension estimate holds for a submanifold $\Sigma$. If there exists a $d$-dimensional submanifold $\Sigma_0$ in $\R^{d+n}$ such that $\TT(\Sigma) \subset \TT(\Sigma_0)$ for any (other) $d$-dimensional submanifold $\Sigma$ in $\R^{d+n}$, then we say that $\TT(\Sigma_0)$ is the best $L^p \rightarrow L^q$ Fourier restriction estimate for given $d$ and $n$. Of course, it is not known a priori whether or not such a $\Sigma_0$ exists. Question~\ref{ques:desired_restriction} is inevitable if one wants to formulate a Fourier restriction conjecture for general higher-codimensional surfaces, which seems a trivial task (in view of the $n=1$ case) but in fact requires deep understanding of both geometric and Fourier analytic aspects of higher-codimensional surfaces.
	
	In the study of Fourier restriction theory, to find necessary conditions for Fourier extension estimates, one usually considers two types of examples: One is Knapp-type homogeneity examples, which yields the oblique leg of $\TT$; the other is the $L^p$ bound of the Fourier transform of the surface measure\footnote{Whether or not it is smoothed.}, which yields the top base of $\TT$. 
	
	In our case, Knapp-type homogeneity examples correspond to the Oberlin condition, so it is reasonable to conjecture that well-curvedness determines the best oblique leg of $\TT$. Indeed, by Proposition~\ref{prop:opt_ST_index}, it is reasonable to conjecture that for well-curved surfaces, the oblique leg of $\TT$ should always lie on $\frac{p'}{q}=\frac{d}{D}$. This 
	is also one reason why we believe Conjecture~\ref{conj:S-T} should be true: The pair $\left(\frac{1}{p},\frac{1}{q}\right)$ for sharp Stein-Tomas-type inequality always lies on the oblique leg of $\TT$, at least for all known examples. Moreover, by 
	(3) of Theorem~\ref{thm:gressman_thm2}, for any $d$ and $n$, there always exists a well-curved $d$-dimensional submanifold of $\R^{d+n}$, so it is reasonable to make the following conjecture, which is a cousin of Conjecture~\ref{conj:S-T}:
	\begin{conjecture}\label{conj:oblique_leg}
		For Question~{\rm\ref{ques:desired_restriction}}, the best oblique leg of $\TT(\Sigma)$ should lie on $\frac{p'}{q}=\frac{d}{D}$, which is achieved if and only if $\Sigma$ is well-curved.
	\end{conjecture}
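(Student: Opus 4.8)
I would split the statement into three parts. \textbf{(a)} The \emph{universal bound}: no $d$-dimensional submanifold $\Sigma\subset\R^{d+n}$ can have an oblique leg lying strictly beyond the line $\frac{p'}{q}=\frac{d}{D}$. \textbf{(b)} The \emph{``only if'' part}: if $\Sigma$ is \emph{not} well-curved, then its oblique leg lies strictly inside the half-plane $\{\frac{p'}{q}<\frac{d}{D}\}$, i.e.\ it fails to attain the extremal line. \textbf{(c)} The \emph{``if''/achievability part}: a well-curved $\Sigma$ does have Fourier extension estimates reaching $\frac{p'}{q}=\frac{d}{D}$. Part (a) is already available: it is exactly Proposition~\ref{prop:opt_ST_index}, which says $E_U^\psi\colon L^p\to L^q$ is impossible whenever $q<\frac{p'D}{d}$, so every $\TT(\Sigma)$ is contained in $\{\frac{p'}{q}\le\frac{d}{D}\}$. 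Thus the plan is to reduce the conjecture to parts (b) and (c).

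\textbf{Part (b).} Here I would run the argument of the proof of $\circled{6}$ in Subsection~\ref{subsec:6_proof}, but starting from the hypothesis that the affine measure satisfies $\mu_{\mathcal{A}}\equiv 0$ on $\Sigma$, after the reduction to a model form provided by Theorem~\ref{thm:gressman_thm2}(1) (which applies to immersed submanifolds of any degree $\ge 2$, not only quadratic ones). Non-well-curvedness, read through the Newton-type polyhedron criterion Theorem~\ref{thm:gressman_thm6} (or its ${\rm GL}$-version Theorem~\ref{thm:gressman_thm6_variant}), yields after an orthogonal change of variables a separating hyperplane, hence a weight vector $\lambda\in[0,\infty)^d$ and a gap $c_0=c_0(\Sigma)>0$ for which the anisotropic boxes $T_\delta$ sitting above $\prod_{i}[0,\delta^{\lambda_i}]$ satisfy $\mu(T_\delta)\gtrsim|T_\delta|^{\alpha}$ only when
\begin{align*}
  \alpha\;\le\;\frac{\sum_i\lambda_i}{\tfrac{D}{d}\sum_i\lambda_i+c_0\cdot(\text{const})}\;<\;\frac{d}{D}.
\end{align*}
Feeding these $T_\delta$ into Proposition~\ref{prop:convex_test} then shows $E^\Sigma\colon L^p\to L^q$ already fails for a line's worth of pairs $(\tfrac1p,\tfrac1q)$ strictly inside the forbidden region, so the oblique leg of $\TT(\Sigma)$ is pushed off $\frac{p'}{q}=\frac{d}{D}$. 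The only new technical point beyond $\circled{6}$ is to make the input uniform in degree $\ge 3$: there one should use the sublevel-set characterization of $\mu_{\mathcal{A}}\equiv 0$ recorded around the discussion of $\circled{8}$ (and in Section~3.2 of \cite{gressman19}) in place of the elementary monomial bookkeeping that suffices for quadratics.

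\textbf{Part (c) --- the main obstacle.} This is where the conjecture goes past current technology, and I would not expect an unconditional proof. The Oberlin/affine-measure machinery pins down the \emph{conjectured} endpoint of the oblique leg but supplies no positive extension bound; demanding estimates all the way to $\frac{p'}{q}=\frac{d}{D}$ already contains, at $n=1$, the classical restriction conjecture for the paraboloid, which is open for $d\ge 3$. A realistic line of attack is: first establish the $p=2$ endpoint of the leg --- which is precisely ``best Stein--Tomas'' of Conjecture~\ref{conj:S-T}, known for strongly nondegenerate $\mathbf{Q}$ via $\circled{4}$ and $\circled{5}$, and (through $\circled{6}$) for all well-curved $\mathbf{Q}$ when $n=2$ --- and then try to propagate the estimate along the line toward $p=\infty$ using broad--narrow analysis, the $k$-linear restriction estimates of \cite{ggo23}, and decoupling, in the spirit of Section~\ref{sec3}. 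Unconditionally one can hope only for low-dimensional verification, e.g.\ $d+n\le 5$, where Lemma~\ref{addth2} lists all quadratic forms and the sharp ranges are accessible case by case. In general, since Theorem~\ref{thm:gressman_thm2}(3) guarantees a well-curved $\Sigma$ in every $(d,n)$, the most honest outcome of the plan is to \emph{reduce} Conjecture~\ref{conj:oblique_leg} to a clean ``best possible'' higher-codimensional restriction conjecture for well-curved manifolds, proving parts (a) and (b) outright and isolating (c) as its genuinely open heart.
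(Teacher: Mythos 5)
This statement is labeled a \emph{conjecture} in the paper and is not proved there, so there is no proof to compare against; the best one can do is check whether your decomposition is consistent with the paper's surrounding discussion. It is. Your part (a) is exactly Proposition~\ref{prop:opt_ST_index}, which the paper proves and explicitly cites as motivation immediately before the conjecture. Your part (b), the ``only if'' direction, is indeed the content of $\circled{6}$: the separating-hyperplane gap $c_0>0$ in the Newton-polytope criterion gives failure of the Oberlin condition at some \emph{strictly} smaller exponent $\alpha_1<\frac{d}{D}$, and Proposition~\ref{prop:convex_test} applies for all $p\in(1,\infty]$, not only $p=2$, so the oblique leg is pushed off the critical line for a quadratic $\mathbf{Q}$ that is not well-curved. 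Be aware, though, that $\circled{6}$ (and Theorem~\ref{thm:gressman_thm2}(1), Theorem~\ref{thm:gressman_thm6}, and the sublevel-set identity (\ref{eq:sublevel_oberlin})) are all stated for polynomial embeddings of the special form (\ref{canonical_poly_form}), not arbitrary smooth $\Sigma$; extending (b) to general submanifolds needs a Taylor/jet reduction that is neither carried out in the paper nor supplied by your outline, so calling it ``the only new technical point'' understates the gap. Your part (c) is correctly identified as the genuinely open heart: it already contains the full $n=1$ restriction conjecture, and the paper's only positive input toward it is the conditional oblique-leg-at-$p=2$ result via $\circled{4}$, $\circled{5}$, $\circled{6}$ (equivalently Conjecture~\ref{conj:S-T}). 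In short, your write-up is not a proof but an accurate assessment of which parts of Conjecture~\ref{conj:oblique_leg} are within reach and which are not, and it matches the paper's own framing.
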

	
	However, as for the Fourier transform of the surface measure, tricky issues happen. A naive way is to simply apply Proposition~\ref{prop:opt_surf_meas_Lp_index}, which tells us that the top base of $\TT$ cannot lie above $\frac{1}{q} = \frac{d}{D+d}$. Thus we obtain a very general necessary condition:
	\begin{proposition}\label{prop:opt_index}
		For any $d$-dimensional submanifold $\Sigma$ in $\R^{d+n}$, we have $\TT(\Sigma) \subset \TT_0$, where $\TT_0$ is the right-angled trapezoid with top base on $\frac{1}{q} = \frac{d}{D+d}$, bottom base on $\frac{1}{q}=0$, oblique leg on $\frac{p'}{q}=\frac{d}{D}$, and perpendicular leg on $\frac{1}{p} = 0$. 
	\end{proposition}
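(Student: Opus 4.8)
The plan is to assemble Proposition~\ref{prop:opt_index} directly from the two one-dimensional facts already established in this subsection, namely Proposition~\ref{prop:opt_ST_index} (which governs the oblique leg) and Proposition~\ref{prop:opt_surf_meas_Lp_index} (which governs the top base), together with some elementary monotonicity of the set of admissible exponents. First I would recall that for a fixed $d$-dimensional submanifold $\Sigma$ of $\R^{d+n}$, parametrized by some smooth $\psi$ over a bounded region $U$ (after possibly localizing, which does not affect the range of $L^p\to L^q$ estimates up to the usual summation-over-pieces argument), the associated Fourier extension operator $E_U^\psi$ agrees with $E^\Sigma$ in the relevant sense, and that $\TT(\Sigma)$ is by definition the set of $\left(\frac1p,\frac1q\right)$ for which the $L^p\to L^q$ bound holds. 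Since any nontrivial quadratic or higher-degree manifold we care about has homogeneous dimension $D \geq d+2n$, in particular $D > d$, all the trapezoids below are genuine (nondegenerate) trapezoids.

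The key steps, in order. Step 1: establish the perpendicular leg $\frac1p = 0$ and the bottom base $\frac1q = 0$ are free constraints --- these just record that $p \leq \infty$ and $q \geq 1$, so $\TT(\Sigma)$ automatically lies in the quadrant $0 \leq \frac1p$, $0\le \frac1q$. Step 2: for the top base, observe that a point with $\frac1q > \frac{d}{D+d}$ would mean $q < \frac{D+d}{d}$, and then taking $p = \infty$ (the most favorable case, by the trivial nesting $\|f\|_{L^\infty} \geq \|f\|_{L^p}$ on a bounded domain, or rather by testing on $f\equiv 1$) we would get $E^\psi 1 \in L^q$ with $q < \frac{D+d}{d}$, contradicting Proposition~\ref{prop:opt_surf_meas_Lp_index}. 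Hence no point of $\TT(\Sigma)$ has $\frac1q > \frac{d}{D+d}$, i.e.\ $\TT(\Sigma)$ lies below the line $\frac1q = \frac{d}{D+d}$. Step 3: for the oblique leg, suppose $\left(\frac1p,\frac1q\right) \in \TT(\Sigma)$ with $\frac{p'}{q} > \frac dD$, i.e.\ $q < \frac{p'D}{d}$; then $E_U^\psi : L^p \to L^q$ would be bounded with $q$ strictly below the threshold $\frac{p'D}{d}$, directly contradicting Proposition~\ref{prop:opt_ST_index}. So every point of $\TT(\Sigma)$ satisfies $\frac{p'}{q} \leq \frac dD$, meaning $\TT(\Sigma)$ lies on or below the oblique line $\frac{p'}{q} = \frac dD$ (equivalently $\frac1q \leq \frac dD\left(1 - \frac1p\right)$). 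Step 4: combine the four half-plane constraints from Steps 1--3; their intersection is exactly $\TT_0$, the right-angled trapezoid described in the statement, so $\TT(\Sigma) \subset \TT_0$ for every such $\Sigma$.

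I do not expect any serious obstacle here: both Proposition~\ref{prop:opt_ST_index} and Proposition~\ref{prop:opt_surf_meas_Lp_index} have already been proved (or sketched) above in full generality for arbitrary smooth $\psi$, and the remaining work is purely the bookkeeping of intersecting half-planes in the $\left(\frac1p,\frac1q\right)$-square. The only mild subtlety worth a sentence is making sure that ``$E^\psi 1 \in L^q$'' is really implied by membership of $\left(0,\frac1q\right)$ in $\TT(\Sigma)$: on a bounded parameter domain $U$ the constant function $1$ lies in $L^p(U)$ for every $p$, so $\left(\frac1p,\frac1q\right)\in\TT(\Sigma)$ for any $p$ gives $\|E_U^\psi 1\|_{L^q} \lesssim \|1\|_{L^p(U)} < \infty$; taking $p=\infty$ is cleanest. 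One should also note that localization is harmless: if $\Sigma$ is a global submanifold one covers it by finitely many graph pieces and sums, which neither enlarges nor shrinks $\TT(\Sigma)$ beyond the stated closed constraints (the endpoint cases, which are excluded anyway since Step 2 and Step 3 are strict inequalities in the forbidden region, cause no trouble). Hence the proposition follows, and I would present it as a short corollary-style argument rather than a standalone computation.
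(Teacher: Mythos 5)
Your proposal is correct and mirrors exactly what the paper intends: the bound on the top base comes from Proposition~\ref{prop:opt_surf_meas_Lp_index} (test on $f\equiv 1 \in L^p(U)$ for bounded $U$), the bound on the oblique leg comes from Proposition~\ref{prop:opt_ST_index}, and the other two sides of $\TT_0$ are trivial. The paper gives this argument only implicitly in the surrounding discussion rather than as a displayed proof, but your explicit four-step assembly is the same route.
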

	Define the ``critical endpoint'' of a right-angled trapezoid to be the intersection point of its top base and its oblique leg. Then in Proposition~\ref{prop:opt_index}, the critical point of $\TT_0$ is $(\frac{1}{p},\frac{1}{q}) = (\frac{d}{D+d}, \frac{d}{D+d})$, which exactly lies on a $45$ degree line through the origin. 
	In view of Conjecture~\ref{conj:oblique_leg} and the case when $n=1$ (hypersurfaces with nonvanishing Gaussian curvature) or $d=1$ (curves with nonvanishing torsion), it is reasonable to ask whether or not any well-curved $\Sigma$ satisfies $\TT(\Sigma) = \TT_0$. 
    
    Unfortunately, this is not true in general. Typical counterexamples are offered by the $d$-dimensional Parsell-Vinogradov manifold of degree $k$ introduced previously, which is well-curved by the example following equation (20) in \cite{gressman19}. As we have seen before, by \cite{bgzzk2024stationary}, when $d=2$, the conjectured top base of $\TT$ should be strictly below $\frac{1}{q} = \frac{d}{D+d}$. Also, when $k=2$, Bak--Lee \cite{bl04} proved that although the oblique leg of $\TT$ matches that of $\TT_0$ ($\frac{p'}{q}=\frac{1}{d+2}$ by $D = d(d+2)$), the top base of $\TT$ ($\frac{1}{q} = \frac{1}{2(d+1)}$) is strictly below that of $\TT_0$ ($\frac{1}{q} = \frac{1}{d+3}$ by $D = d(d+2)$) when $d \geq 2$. For counterexamples that are not Parsell-Vinogradov, one may take any of the examples presented in Subsection~\ref{subsec:7_proof}, which are well-curved quadratic manifolds, but do not satisfy the (CM) condition. This is because by $\circled{1}$ of Theorem~\ref{thm:relation_diagram}, for quadratic manifolds, the best $L^p$ bound ($L^{\frac{2(d+n)}{d}}$) of the Fourier transform of the surface measure is completely determined by the (CM) condition.
	
	In a nutshell, for any given $d$ and $n$, although we expect that the oblique leg of $\TT_0$ (or the best Stein-Tomas-type inequality $L^2 \rightarrow L^{\frac{2D}{d}+}$) is always achieved (by well-curved submanifolds), the top base of $\TT_0$ (i.e., $E^\psi1\in L^{\frac{D+d}{d}+}$) may not be achieved (by any submanifolds). However, this at least means that there should always exist $\Sigma_0$ such that $\TT(\Sigma) \subset \TT(\Sigma_0)$ for any other $\Sigma$, so that there exists a ``best'' $L^p\rightarrow L^q$ estimate and Question~\ref{ques:desired_restriction} should be well-posed. Besides, we should always expect the critical endpoint of $\TT(\Sigma)$ for a well-curved $\Sigma$ to lie on a line passing through the origin but with $\leq 45$ degree. These are indeed the cases for all known examples.
	
	For quadratic manifolds, which is the main focus of the current paper,  the gap between $\TT(\Sigma)$ and $\TT_0$ for any well-curved $\Sigma$ should exactly correspond to the gap between the (CM) condition and well-curvedness ($\circled{7}$ of the diagram): For general $d$ and $n$, well-curvedness can always be achieved, while the (CM) condition may not. In other words, if we were to pursue a complete picture of Fourier restriction theory, i.e., to get a good answer to Question~\ref{ques:desired_restriction}, the notion of well-curvedness developed in \cite{gressman19} should not be enough, even for quadratic manifolds (we will give an explicit counterexample at the end of this subsection). One needs more delicate estimates of the form $E^\psi1 \in L^p$, as has been done in \cite{bgzzk2024stationary} for $2$-dimensional Parsell-Vinogradov manifolds of
	degree $k$ and in \cite{mockenhaupt96} for several quadratic manifolds. Such a phenomenon cannot be seen when $d=1$ or $n=1$, and only appears when submanifolds are of ``intermediate'' dimension and codimension. However, we are currently not sure what the form of a general theory would be like to fully answer Question~\ref{ques:desired_restriction}.
	
	In view of the previous discussions, it seems reasonable to post the following question, which seems more approachable and should shed light on Question~\ref{ques:desired_restriction}:
	\begin{question}\label{ques:exist_CM}
		For which $d$ and $n$ does there always exist some $n$-tuple of real quadratic forms $\mathbf{Q}$ in $d$ variables that satisfy the {\rm (CM)} condition?
	\end{question}
	This question has been partially answered by Mockenhaupt in Corollary~2.6 of \cite{mockenhaupt96}, which says that for the (CM) condition to be achieved by some $\mathbf{Q}$, if $d$ is odd, or if $d$ is even and there exists $\theta \in \mathbb{S}^{n-1}$ with $\overline{Q}(\theta)$ positive definite, then $d\geq n$. Here we prove a kind of converse of Mockenhaupt's observation:
	\begin{proposition}\label{prop:exist_CM}
		For any $d$ and $n$ with $d \geq n$, there exists a $n$-tuple of real quadratic form $\mathbf{Q}$ in $d$ variables that satisfies the {\rm (CM)} condition.
	\end{proposition}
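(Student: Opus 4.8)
The plan is to construct $\mathbf{Q}$ explicitly as an $n$-tuple of \emph{diagonal} quadratic forms, so that the pencil $\overline{Q}(\theta) = \sum_{j=1}^n \theta_j \nabla_\xi^2 Q_j$ is a diagonal matrix whose entries are linear forms in $\theta$. Concretely, since $d \geq n$ we have $\lfloor d/n\rfloor \geq 1$, so we may pick integers $m_1,\dots,m_p \geq 1$ with $\sum_{s=1}^p m_s = d$ and $m_s \leq \lfloor d/n\rfloor$ for every $s$ (this forces $p \geq n$, with $p = n$ exactly when $n \mid d$). Next, choose vectors $v_1,\dots,v_p \in \R^n$ in general position, i.e.\ so that any $n$ of them are linearly independent --- for instance the Vandermonde vectors $v_s = (1, t_s, \dots, t_s^{n-1})$ with $t_1,\dots,t_p$ distinct. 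Let $C \in \R^{d\times n}$ be the matrix whose first $m_1$ rows equal $v_1$, the next $m_2$ rows equal $v_2$, and so on, and set $Q_j(\xi) := \tfrac12 \sum_{i=1}^d C_{ij}\xi_i^2$. Then $\nabla_\xi^2 Q_j = \mathrm{diag}(C_{1j},\dots,C_{dj})$, so $\overline{Q}(\theta) = \mathrm{diag}\big(L_1(\theta)^{[m_1]},\dots,L_p(\theta)^{[m_p]}\big)$, where $L_s(\theta) := v_s\cdot\theta$ occurs with multiplicity $m_s$; consequently
$$ \det(\overline{Q}(\theta)) = \prod_{s=1}^p L_s(\theta)^{m_s} $$
up to a nonzero constant. (This automatically makes $Q_1,\dots,Q_n$ linearly independent, since the rows of $C$ already span $\R^n$.)

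With this $\mathbf{Q}$ in hand, the verification of the (CM) condition reduces to a local integrability estimate for $|\det(\overline{Q}(\theta))|^{-\gamma}$ on $\mathbb{S}^{n-1}$, which I would carry out as follows. Fix $\theta_0 \in \mathbb{S}^{n-1}$ and let $I := \{s : L_s(\theta_0) = 0\}$. The crucial geometric point is that $|I| \leq n-1$: the condition $L_s(\theta_0) = 0$ says $v_s \perp \theta_0$, i.e.\ $v_s$ lies in the $(n-1)$-dimensional subspace $\theta_0^\perp$, and by general position $n$ of the $v_s$ can never all lie in a proper subspace. For $s \notin I$ the factor $|L_s|$ is bounded above and below near $\theta_0$, while for $s \in I$ the functions $L_s|_{\mathbb{S}^{n-1}}$ vanish at $\theta_0$ with differentials equal to the orthogonal projections of $v_s$ onto $T_{\theta_0}\mathbb{S}^{n-1} = \theta_0^\perp$ (which is just $v_s$ itself), and these differentials are linearly independent; since $|I| \leq \dim \mathbb{S}^{n-1}$, we may complete $\{L_s|_{\mathbb{S}^{n-1}}\}_{s\in I}$ to a local coordinate system $(t_s)_{s\in I}$ near $\theta_0$. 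In these coordinates $|\det(\overline{Q}(\theta))|^{-\gamma} \sim \prod_{s\in I}|t_s|^{-m_s\gamma}$ times a bounded factor, and by Fubini this is integrable over a small box iff $m_s\gamma < 1$ for every $s \in I$. Because $m_s \leq \lfloor d/n\rfloor \leq d/n$, we have $1/m_s \geq n/d$, so the estimate holds for all $\gamma < n/d$. Covering $\mathbb{S}^{n-1}$ by finitely many such coordinate patches (compactness) yields $\int_{\mathbb{S}^{n-1}} |\det(\overline{Q}(\theta))|^{-\gamma}\,d\sigma(\theta) < \infty$ for all $0 < \gamma < n/d$, i.e.\ $\mathbf{Q}$ satisfies the (CM) condition.

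The construction itself is essentially forced, and the main obstacle is the bookkeeping in the last step: one must ensure that the number of linear factors of $\det(\overline{Q}(\theta))$ vanishing simultaneously at a point never exceeds the dimension $n-1$ of the sphere, so that straightening coordinates and applying Fubini is legitimate --- this is precisely where $d \geq n$ enters, since it is what allows all multiplicities $m_s$ to be taken $\leq d/n$ while still summing to $d$. When $n \nmid d$ one genuinely needs $p > n$ distinct linear forms (the naive partition into $n$ groups would force some multiplicity to exceed $d/n$), which is why we work with general-position vectors rather than the coordinate basis; for $d=3$, $n=2$ this recovers the known example $\mathbf{Q} \equiv (\xi_1^2 + \xi_2^2,\ \xi_2^2 + \xi_3^2)$ from (c5) of Lemma~\ref{addth2}. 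As a byproduct, when $n \nmid d$ the same computation actually delivers the endpoint (CM) condition as well, since then $m_s < d/n$ strictly, whereas when $n \mid d$ the construction only yields (CM) itself.
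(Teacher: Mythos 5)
Your proof is correct and takes essentially the same approach as the paper: build $\mathbf{Q}$ from diagonal quadratic forms whose coefficient vectors are in general position (the paper cites Lemma~1 of Agranovsky--Narayanan, you use Vandermonde vectors), so that $\det(\overline{Q}(\theta))$ factors as a product of linear forms of which at most $n-1$ vanish at any point of $\mathbb{S}^{n-1}$, and then check local integrability via a change of coordinates. The only real difference is that you introduce an extra layer of flexibility (allowing distinct linear forms to carry multiplicities $m_s \leq \lfloor d/n\rfloor$) that the paper has no need for: the paper simply takes all $d$ linear forms distinct, i.e.\ your construction with $p=d$ and $m_s\equiv 1$, in which case the integrability condition is just $\gamma<1$ and the hypothesis $d\geq n$ enters only through $n/d\leq 1$. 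The multiplicities add bookkeeping without gain, and they also make your final ``byproduct'' remark imprecise: with $m_s\equiv1$ one actually gets the \emph{endpoint} (CM) condition whenever $d>n$ (not merely when $n\nmid d$), since then $n/d<1$; it is only when $d=n$ that the endpoint is genuinely lost. A minor notational point: after straightening coordinates at $\theta_0$ you write ``$(t_s)_{s\in I}$'' as if $|I|=n-1$; when $|I|<n-1$ you of course mean the $|I|$ functions $L_s|_{\mathbb{S}^{n-1}}$ completed by $n-1-|I|$ auxiliary coordinates, but this does not affect the Fubini estimate.
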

	\begin{proof}
		The construction is exactly the same as that in the proof of Theorem~2 in \cite{agranovsky2004p}. Consider $\mathbf{Q}(\xi) = (\sum_{i=1}^d \lambda_{ij} \xi_i^2)_{j=1}^n$, where $\lambda_{ij}$ are to be chosen later. It is easy to see that
		\begin{align*}
			\int_{\mathbb{S}^{n-1}} |\det(\overline{Q}(\theta))|^{-\gamma} d\sigma(\theta) 
			& = \int_{\mathbb{S}^{n-1}} \left|2^d\prod_{i=1}^d \left(\sum_{j=1}^n \lambda_{ij} \theta_j\right)\right|^{-\gamma} d\sigma(\theta)\\
			& \eqqcolon \int_{\mathbb{S}^{n-1}} \left|2^d\prod_{i=1}^d L_i(\theta)\right|^{-\gamma} d\sigma(\theta),
		\end{align*}
		where $L_i(\theta) \coloneqq \sum_{j=1}^n \lambda_{ij} \theta_j$ for any $i$. Now by Lemma~1 in \cite{agranovsky2004p}, when $d\geq n$, we can always choose $\lambda_{ij}$ such that any $n$ of $\{(\lambda_{ij})_{j=1}^n\}_{i=1}^d$ form a linearly independent set in $\R^n$. This ensures that no more than $n-1$ of the linear forms $\{L_i\}_{i=1}^d$ can vanish simultaneously on $\mathbb{S}^{n-1}$: Otherwise, $L_i(x) = 0$ for some $x\in \R^{n}$ and $n$ many linear forms implies $x=0$, due to linear independence, which contradicts $x\in\mathbb{S}^{n-1}$.
		
		Suppose $\prod_{i=1}^d L_i(\theta_0) = 0$ for some $\theta_0 \in \mathbb{S}^{n-1}$. Then without loss of generality, we may assume $L_i(\theta_0) = 0$ for $1\leq i \leq m$ with $m\leq n-1$, while $L_i(\theta_0) \neq 0$ for $m+1\leq i \leq n$. Via a suitable diffeomorphism, the integrability of $\left|\prod_{i=1}^d L_i(\theta)\right|^{-\gamma}$ in a small neighborhood of $\theta_0$ (such that $L_i$ is bounded away from $0$ for $m+1\leq i \leq n$) is the same as that of $\prod_{j=1}^r |x_j|^{-\gamma}$ in a small neighborhood of $0$, which holds true whenever $\gamma < 1$. Since $\frac{n}{d} \leq 1$, we in particular know that $|\det(\overline{Q}(\theta))|^{-\gamma}$ is integrable in this small neighborhood of $\theta_0$ whenever $\gamma<\frac{n}{d}$. Arguing similarly at other zeros of $\prod_{i=1}^d L_i$ and using the compactness of $\mathbb{S}^{n-1}$, we conclude that $|\det(\overline{Q}(\theta))|^{-\gamma}$ is integrable over the whole $\mathbb{S}^{n-1}$ whenever $\gamma<\frac{n}{d}$, i.e., $\mathbf{Q}$ satisfies the (CM) condition.
	\end{proof}
	
	By combining Mockenhaupt's observation with Proposition~\ref{prop:exist_CM}, we get a complete answer to Question~\ref{ques:exist_CM} when $d$ is odd, i.e., for odd $d$ there exists some $\mathbf{Q}$ satisfying the (CM) condition if and only if $d \geq n$. And the only remaining case is when $d$ is even and $d < n$. This case can be subtle, and we do not know what a complete answer should be like.
	
	Anyway, Proposition~\ref{prop:exist_CM} together with previous comments following Proposition~\ref{prop:opt_index} motivates us to propose the following conjecture, which partially tells us in what cases we can expect Hypotheses~2.2 in \cite{mockenhaupt96} to hold true:
	\begin{conjecture}\label{conj:opt_quad_restr}
		For Question~{\rm\ref{ques:desired_restriction}} when $d \geq n$, the best $\TT(\Sigma) = \TT_0$, which is achieved if and only if $\Sigma$ satisfies the {\rm (CM)} condition\footnote{Recall that for general smooth submanifolds, we can use Hessian matrices to define the {\rm (CM)} condition in a similar way as in Definition~\ref{def:CM}.}. Here $\TT_0$ is the right-angled trapezoid with top base on $\frac{1}{q} = \frac{d}{2(d+n)}$, bottom base on $\frac{1}{q}=0$, oblique leg on $\frac{p'}{q}=\frac{d}{d+2n}$, and perpendicular leg on $\frac{1}{p} = 0$.
	\end{conjecture}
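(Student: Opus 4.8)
The plan is to split Conjecture~\ref{conj:opt_quad_restr} into a \emph{containment} half and an \emph{achievement} half. The containment $\TT(\Sigma) \subseteq \TT_0$ for every $d$-dimensional submanifold $\Sigma$ of $\R^{d+n}$ is already in hand: under the hypothesis $d \geq n$ one has $n \leq d(d+1)/2$, hence the homogeneous dimension is $D(d,n) = d+2n$, and Proposition~\ref{prop:opt_index} then gives exactly $\TT(\Sigma) \subseteq \TT_0$. The ``only if'' of the achievement half is obtained by testing $E^\Sigma$ on the constant function: if $\TT(\Sigma) = \TT_0$, then $(0, \tfrac{d}{2(d+n)})$ is a limit point of $\TT(\Sigma)$, so $E^\Sigma 1 = \widehat{d\mu_\Sigma} \in L^{\frac{2(d+n)}{d}+}$, which by the ``$\Longrightarrow$'' direction of $\circled{1}$ (Mockenhaupt's Proposition~2.3 in \cite{mockenhaupt96} for quadratic $\Sigma$; Christ's normal-form argument in \cite{christ82} for general smooth $\Sigma$) forces the (CM) condition. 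Finally, the existence of a (CM) submanifold when $d \geq n$ is Proposition~\ref{prop:exist_CM} (and Mockenhaupt's Corollary~2.6 in \cite{mockenhaupt96} shows that this range of $(d,n)$ is essentially sharp). So the genuine content is the ``if'' direction: \textbf{(CM)}${}\Longrightarrow \mathrm{int}(\TT_0) \subseteq \TT(\Sigma)$.

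To establish this I would assemble four families of \emph{operator} estimates and interpolate. (i) From the ``$\Longleftarrow$'' direction of $\circled{1}$ (Subsection~\ref{subsec:1_proof}), using that Banner's pointwise bound (\ref{uniform decay e2}) is insensitive to a bounded amplitude $f$ with $\|f\|_\infty \leq 1$, together with the $L^p$-integration carried out in (\ref{eq:Lp_identity_moc96}), (CM) yields the operator estimate $E^\Sigma: L^\infty \to L^{\frac{2(d+n)}{d}+}$, i.e.\ the top-left vertex $(0, \tfrac{d}{2(d+n)})$ of $\TT_0$. (ii) From $\circled{2}$ (Mockenhaupt), (CM) gives the best Stein--Tomas inequality $E^\Sigma: L^2 \to L^{\frac{2(d+2n)}{d}+}$, i.e.\ the point $(\tfrac12, \tfrac{d}{2(d+2n)})$ on the oblique leg. (iii) The trivial inequality $E^\Sigma: L^1 \to L^\infty$ provides the bottom-right vertex $(1,0)$. (iv) The crucial missing ingredient is a near-critical restriction estimate $E^\Sigma: L^{\frac{2(d+n)}{d}+\epsilon} \to L^{\frac{2(d+n)}{d}+\epsilon}$ for every $\epsilon>0$, i.e.\ a neighbourhood of the critical endpoint $(\tfrac{d}{2(d+n)},\tfrac{d}{2(d+n)})$. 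Granted (iv), one globalizes it by the epsilon-removal lemma Corollary~\ref{cor:quad_ep_removal} (applicable here since $q=p$ and $\mathfrak{d}_{d,1}(\mathbf{Q})\geq 1$), and then real interpolation among (i), (iii), (iv) fills $\mathrm{int}(\TT_0)$; combined with the containment half this proves the conjecture. For general smooth $\Sigma$ one simply replaces these quadratic-specific tools by Mockenhaupt's general-surface versions and Tao's general epsilon-removal \cite{tao99}.

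The hard part will be step (iv). A direct convex-hull computation shows that (i)--(iii) alone only reach the strictly smaller quadrilateral with vertices $(0,0)$, $(1,0)$, $(\tfrac12,\tfrac{d}{2(d+2n)})$, $(0,\tfrac{d}{2(d+n)})$, which misses a triangular sliver adjacent to the critical endpoint; filling it genuinely requires a restriction estimate at (or arbitrarily close to) the critical exponent $\tfrac{2(d+n)}{d}$. This is a higher-codimensional analogue of the full restriction conjecture and is open even for the paraboloid when $d\geq 3$, so I do not expect an unconditional proof. The plausible attack is to push the machinery of this paper: since (CM) implies the best $\ell^pL^p$ decoupling (Corollary~\ref{cor:(CM)_decoupling}), one can run a broad--narrow / Bourgain--Guth induction combined with the $\theta$-uniform $k$-linear restriction estimates of \cite{ggo23} and the Du--Zhang fractal argument of Section~\ref{sec3}; but all of these currently stop short of the critical exponent. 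A realistic intermediate target is therefore to prove Conjecture~\ref{conj:opt_quad_restr} with $\TT_0$ replaced by the above smaller quadrilateral, and to prove the full statement conditionally on the restriction conjecture for (CM) manifolds --- the ``only if'' and ``existence'' halves being, as explained, unconditional.
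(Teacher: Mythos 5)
This statement is explicitly a \emph{conjecture}, and the paper offers no proof of it: at the end of Subsection~\ref{subsec:final_remarks} the authors themselves remark that already the $n=1$ case is the classical restriction conjecture. So there is no ``paper proof'' to compare against, and your proposal should be read as a roadmap with an acknowledged open core rather than a proof. On that understanding it is essentially correct, and it matches both the partial results the paper supplies (Proposition~\ref{prop:opt_index} for the containment half, Proposition~\ref{prop:exist_CM} for existence when $d\geq n$, $\circled{1}$/$\circled{2}$ and Corollary~\ref{cor:(CM)_decoupling} for the (CM) ingredients) and the way the authors motivate the conjecture. Your geometric analysis is also correct: interpolating (i)--(iii) yields the quadrilateral with vertices $(0,0)$, $(1,0)$, $(\tfrac12,\tfrac{d}{2(d+2n)})$, $(0,\tfrac{d}{2(d+n)})$, which misses a sliver at the critical endpoint $(\tfrac{d}{2(d+n)},\tfrac{d}{2(d+n)})$, and step (iv) is exactly a near-critical restriction estimate, which is a higher-codimensional analogue of the full restriction conjecture. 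Your identification of Corollary~\ref{cor:quad_ep_removal} (with $p=q>2$) and Lemma~\ref{lem:epsilon_removal} as the globalization tools is the right one.

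One point deserves flagging in your ``only if'' half. You pass from $\TT(\Sigma)=\TT_0$ to $E^\Sigma1\in L^{\frac{2(d+n)}{d}+}$ and then invoke the ``$\Longrightarrow$'' direction of $\circled{1}$ to deduce (CM). The paper establishes $\circled{1}$ \emph{only for quadratic} $\mathbf{Q}$, by the exact Gaussian computation leading to (\ref{eq:Lp_identity_moc96}); for general smooth $\Sigma$ you substitute ``Christ's normal-form argument'', but the normal-form result the paper actually cites from \cite{christ82} (Proposition~3.1 there, used in $\circled{2}$) is a Knapp-type deduction of (CM) from the \emph{Stein--Tomas} threshold when $n=2$, not a characterization of $\widehat{d\mu_\Sigma}\in L^{\frac{2(d+n)}{d}+}$ for general $\Sigma$ and general $n$. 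A general-$\Sigma$, general-$n$ analogue of ``$\Longrightarrow$'' in $\circled{1}$ is not established in the paper, and you would need to supply it separately (a plausible route is a Knapp/stationary-phase argument localized near a direction where $\det(\overline{Q}(\theta))$ degenerates, but this is not a pure citation). A related small hygiene point: rather than applying $E^\Sigma$ to $1$, apply the assumed $L^\infty\to L^{\frac{2(d+n)}{d}+}$ estimate to a bump $\varphi$ supported where (\ref{eq:CM}) will be tested, so that you land directly on $\widetilde{E}_\varphi^\Sigma 1\in L^{\frac{2(d+n)}{d}+}$ and avoid the rough-truncation subtleties the paper itself warns about (footnote in Subsection~\ref{subsec:shayya}).
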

	A resolution of Conjecture~\ref{conj:opt_quad_restr}, even in quadratic cases, would represent great progress towards a complete understanding of higher-codimensional Fourier restriction theory. Note that the special case when $n=1$ is just the classical Fourier restriction conjecture, which remains unsolved so far.
	
	Finally, let us look at an interesting special case of Proposition~\ref{prop:exist_CM}. When $d=5$ and $n=3$, there exists $\mathbf{Q}$ satisfying the (CM) condition, and we may explicitly take $$\mathbf{Q}(\xi) = (\xi_1^2 + \xi_4^2 + 2\xi_5^2, \xi_2^2 + 2\xi_4^2 + \xi_5^2, \xi_3^2 + \xi_4^2 + \xi_5^2)$$ by the construction in the proof. By $\circled{7}$ of Theorem~\ref{thm:relation_diagram}, this $\mathbf{Q}$ is also well-curved. However, recall the example $$\mathbf{Q}(\xi) = (\xi_1^2 + \xi_3^2 + \xi_5^2, 2(\xi_1\xi_2 + \xi_3\xi_4), \xi_2^2 + \xi_4^2 +\xi_5^2)$$ (still when $d=5$ and $n=3$) in Subsection~\ref{subsec:7_proof}, which is well-curved but fails to satisfy the (CM) condition. This means that for given $d$ and $n$, it is possible that some well-curved quadratic manifolds satisfy the (CM) condition, while others are not: For the former, we expect $\TT$ equals to $\TT_0$; while for the latter, we expect $\TT$ to be a proper subset of $\TT_0$. Such a phenomenon is not accidental. For example, one may also take $\mathbf{Q}(\xi) = (\xi_1^2,\xi_2^2,\xi_3^2,\xi_4^2)$ and $\mathbf{Q}(\xi) = (\xi_1\xi_2,\xi_2\xi_3,\xi_3\xi_4,\xi_4\xi_1)$  when $d=n=4$: The former is (CM), while the latter is well-curved but not (CM), as shown in Subsection~\ref{subsec:7_proof}. 
    
    In other words, for given $d$ and $n$, it is possible that well-curved quadratic manifolds do not enjoy the same $L^p\rightarrow L^q$ Fourier extension estimate, which confirms in a more concrete way that the notion of well-curvedness is not enough to completely answer Question~\ref{ques:desired_restriction}: We need more delicate information to fully determine the Fourier restriction theory for general higher-codimensional submanifolds. For the previous examples, this more delicate information comes from the (CM) condition which reflects not only the dimension of the algebraic variety $\{\theta\in\mathbb{S}^2: \det(\overline{Q}(\theta))=0\}$ but also how singular $|\det(\overline{Q}(\theta))|^{-\gamma}$ is around this variety. But as we have mentioned before, in order to completely answer Question~\ref{ques:desired_restriction}, one may need (in the first place) to get a better understanding of estimates of the form $E^\mathbf{Q}1 \in L^p$ for general $\mathbf{Q}$ that may or may not satisfy the (CM) condition.

    \appendix
	
    \section{Properties of \texorpdfstring{$\mathfrak{d}_{d,1}(\mathbf{Q})$}{∂\_\{d,1\}(Q)}}\label{appendix:par_d,1}
	
	The algebraic quantity $\mathfrak{d}_{d,1}(\mathbf{Q})$ plays a vital role in many different research topics:
	\begin{itemize}
		\item Fourier decay: The sharp order of uniform Fourier decay in Theorem~\ref{thm:uniform_decay} is $\frac{\mathfrak{d}_{d,1}(\mathbf{Q})}{2}$.
		
		\item Weighted Fourier restriction: In our Theorem~\ref{th1}, $\mathfrak{d}_{d,1}(\mathbf{Q})$ takes effect in several ways. Firstly, it appears in the second estimate of $s(\alpha,\mathbf{Q})$; secondly, it appears in the threshold $\frac{\mathfrak{d}_{d,1}(\mathbf{Q})}{2}$ dividing the first two ranges of $\alpha$; thirdly, this threshold is sharp for the first estimate to hold.
		
		\item Geometric measure theory: By 
		the proof of $\circled{11}$ in our Theorem~\ref{thm:relation_diagram}, the Fourier dimension of $S_\mathbf{Q}$ equals $\mathfrak{d}_{d,1}(\mathbf{Q})$, and $S_\mathbf{Q}$ is a Salem set if and only if $\mathfrak{d}_{d,1}(\mathbf{Q}) = d$.
		
		\item Radon-like transforms: Convolution with the surface measure of a quadratic manifold associated with $\mathbf{Q}$ has the best mapping properties from $L^2$ into the scale of $L^2$-Sobolev spaces if and only if $\mathfrak{d}_{d,1}(\mathbf{Q}) = d$. This is because for general Radon-like transforms, such optimal Sobolev regularity is characterized by the nonvanishing of the Phong-Stein rotational curvature, which is equivalent to the invertibility of certain Monge–Ampère-type matrix. By computing everything in our special translation-invariant setting involving $\mathbf{Q}$, one can easily see that the invertibility is equivalent to $\mathfrak{d}_{d,1}(\mathbf{Q}) = d$. We omit the details, and one may consult \cite{gressman15} for more background on this point.
	\end{itemize}
	
	Therefore, we are sufficiently motivated to explore the properties of $\mathfrak{d}_{d,1}(\mathbf{Q})$.
	
	In this appendix, we will provide 
	a short survey discussing the connection between $\mathfrak{d}_{d,1}(\mathbf{Q})$ and other fields such as applied linear algebra, algebraic geometry and algebraic topology. Such a connection in its essence is not new and we do not claim any originality here. However, we feel that the connection deserves to be explained thoroughly for several reasons: Firstly, the importance of $\mathfrak{d}_{d,1}(\mathbf{Q})$ is not fully recognized (or at least has not been investigated in a systematic and serious way) among people working on Fourier restriction theory of higher-codimensional quadratic forms; secondly, $\mathfrak{d}_{d,1}(\mathbf{Q})$ has only appeared in the literature outside harmonic analysis in an implicit way, so translating everything into our setting may ease future reference; thirdly, the research on the related algebraic problems seems to be sluggish for about a decade, so highlighting their importance in analysis may help to bring them back to the fore.
	
	There are many different ways of raising a question about $\mathfrak{d}_{d,1}(\mathbf{Q})$. The most naive question is, given $\mathbf{Q}$, how can we explicitly compute the exact value of $\mathfrak{d}_{d,1}(\mathbf{Q})$? This problem has already been solved in Appendix A of \cite{ggo23}, where an algorithm is designed (even for general $\mathfrak{d}_{d',n'}(\mathbf{Q})$) by using tools from real algebraic geometry. However, the algorithm is complicated and very hard to manipulate by hand (even for small $d'$ and $n'$), so it makes sense to ask for general rules of $\mathfrak{d}_{d,1}(\mathbf{Q})$ instead of exact values. In this appendix, we will mainly focus on the following type of question:
	\begin{question}\label{ques:partial_d_1}
		Given $r \in \N^+$, under what conditions of $d$ and $n$ {\rm(}may depend on $r${\rm)} do we always have $\mathfrak{d}_{d,1}(\mathbf{Q}) < r$ for all 
		$n$-tuple real quadratic forms $\mathbf{Q}$ in $d$ variables?
	\end{question}
	As we shall soon see, even for this seemingly much simpler question, we are still far from getting a complete answer.
	
	In the study of the so-called linear preserver problems \cite{beasley1997rank} in applied linear algebra, people care about the following problem:
	\begin{question}\label{ques_max_dim}
		Let $V \coloneqq {\rm S}(d, \R)$. For any given integer $0 \leq r \leq d$, try to find the maximal possible dimension $D$ of a subspace $L$ of $V$ satisfying one of the following three types of subspaces:

        \noindent {\rm (1)} For any $ A \in L$, $\rank(A) \leq r$;

        \noindent {\rm (2)} For any $ A \in L\setminus{\{0\}}$, $\rank(A) = r$;

        \noindent {\rm (3)} For any $ A \in L\setminus{\{0\}}$, $\rank(A) \geq r$.
	\end{question}
	All three types above have been used in \cite{beasley1997rank} and systematically studied later on. Besides ${\rm S}(d,\R)$, here $V$ can also be taken to be complex Hermitian matrices or real skew-symmetric matrices. For all these $V$'s, one may consult \cite{causin2007real} for very sharp results of type (2), which were obtained by using the theory of stable vector bundles and homotopy of the classical groups (topological K-theory). The reason why we mainly care about ${\rm S}(d,\R)$ in Question~\ref{ques_max_dim} is that Hessian matrices associated with quadratic forms are always real symmetric. However, if one cares about the algebraic problem in its own right, then there is no need to confine oneself to the $\R$ or $\C$ case, and one can ask the same question by replacing ${\rm S}(d,\R)$ by ${\rm S}(d,\F)$ for a general field $F$. In this most general setting, \cite{pazzis2016affine} contains very sharp results of type (1). Unfortunately, the type most relevant to our Question~\ref{ques:partial_d_1} is type (3), which is far from being fully understood. More precisely, if $n > D$, then $\mathfrak{d}_{d,1}(\mathbf{Q}) < r$ (i.e., if $n \geq D + 1$, then $\mathfrak{d}_{d,1}(\mathbf{Q}) \leq r - 1$) for all $n$-tuple quadratic forms $\mathbf{Q}$ in $d$ variables. This can be easily seen by unfolding the definition of $\mathfrak{d}_{d,1}(\mathbf{Q})$. Therefore, we will focus on type (3) from now on. Moreover, due to the complexity of the problem, we will often be content with upper bounds of $D$, which still allows us to obtain interesting properties of $\mathfrak{d}_{d,1}(\mathbf{Q})$. For example, if $D \leq D_0$ holds for some $D_0$, then $\mathfrak{d}_{d,1}(\mathbf{Q}) < r$ for all $\mathbf{Q}$ whenever $n>D_0$.
	\begin{remark}
		It is also possible to translate {\rm(1)} and {\rm(2)} in Question~{\rm\ref{ques_max_dim}} into certain statements regarding $\mathfrak{d}_{d,1}(\mathbf{Q})$. However, the statements do not seem as interesting as that corresponding to Question~{\rm\ref{ques:partial_d_1}}.
	\end{remark}
	\begin{remark}
		If ${\rm S}(d,\R)$ is replaced with ${\rm S}(d,\C)$, then {\rm(3)} has been completely determined {\rm(}$D = \binom{n-r+2}{2}${\rm)} in \cite{fl99} by using results on the degree of determinantal varieties in \cite{harris1984}. Such a method does not work for ${\rm S}(d,\R)$ because $\R$ is not algebraically closed.
	\end{remark}
	
	When $r=d$, since $\rank(A) \geq d$ is equivalent to $A \in {\rm GL}(d,\R)$, things are reduced to the following classical result in \cite{alp65}:
	\begin{theorem}\label{thm:Radon_Hurwitz}
		The maximal possible dimension $D$ of a subspace of ${\rm S}(d,\R)$ where each nonzero matrix is invertible equals $\rho\big(\frac{d}{2}\big) + 1$, where $\rho(u)$ is the Radon-Hurwitz number of $u$:
		\begin{align*}
			\rho(u) =
			\begin{cases}
				0, &{\rm if ~} u \notin \N;\\
				2^a + 8b, &{\rm if~ } u = 2^{a+4b}(2c+1) {\rm~ with~ } a,b,c \in\N, 0 \leq a \leq 3.
			\end{cases}
		\end{align*}
	\end{theorem}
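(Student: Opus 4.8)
\textbf{Proof proposal for Theorem~\ref{thm:Radon_Hurwitz}.}

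The plan is to deduce this from the classical resolution of the Hurwitz--Radon problem on sums of squares and vector field counts on spheres; this is precisely the content of \cite{alp65}, and what remains for us is to translate between the ``maximal family of anticommuting orthogonal matrices'' formulation and the ``subspace of ${\rm S}(d,\R)$ with all nonzero elements invertible'' formulation. First I would reduce to the case where the subspace $L \subset {\rm S}(d,\R)$ contains the identity matrix $I$: given any subspace $L$ all of whose nonzero elements are invertible, pick any $A_0 \in L \setminus \{0\}$; since $A_0$ is symmetric and invertible we may write $A_0 = B^T J B$ for some $B \in {\rm GL}(d,\R)$ where $J$ is diagonal with entries $\pm 1$, and after conjugating $L$ by $B$ (which preserves both symmetry and the property that nonzero elements are invertible) we may assume $A_0 = J$. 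Composing further with left multiplication by $J$ (i.e. replacing each $A \in L$ by $JA$), which is a linear isomorphism onto a new subspace $L'$ of matrices satisfying $(JA)^T J (JA) = A^T J A \cdot$\ldots — here one must be slightly careful, as $JA$ need not be symmetric — the cleaner route is to note that $J$ defines an indefinite inner product, and $L$ becomes a family of ``$J$-self-adjoint'' operators that are invertible; a standard argument then produces a family of $d/2$-many genuinely anticommuting orthogonal complex structures, forcing $d$ even unless $\dim L = 1$.

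Next I would invoke the precise count. Normalizing so that $I \in L$ and choosing a complement $L = \R I \oplus L_0$, one shows (this is the heart of \cite{alp65}, or equivalently Adams's theorem on vector fields on spheres) that any $A, B \in L_0$ of unit norm can be arranged, after a single orthogonal change of coordinates adapted to $L$, to be skew-symmetric and orthogonal with $AB + BA = 2\langle A,B\rangle I$; such a system of matrices on $\R^d$ exists with $\dim L_0 = k$ if and only if $k+1 \le \rho(d)$ in the Hurwitz--Radon indexing, and the clean statement after passing to the symmetric setting is that $\dim L \le \rho(d/2) + 1$, with equality realized by an explicit Clifford-algebra construction. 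Conversely, the explicit construction: take a representation of the Clifford algebra $\mathrm{Cl}_{\rho(d/2)-1}$ and form $\{I, J e_1, \dots, J e_{\rho(d/2)-1}\}$ where $J$ is a fixed complex structure on $\R^d \cong \C^{d/2}$ commuting appropriately — this yields a subspace of ${\rm S}(d,\R)$ of the claimed dimension whose nonzero elements are all invertible, since any $\lambda_0 I + \sum \lambda_i J e_i$ squares to $(\lambda_0^2 + \sum \lambda_i^2) I$ up to sign conventions, hence is invertible when not all $\lambda_i$ vanish.

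The main obstacle, and the reason I would lean heavily on \cite{alp65} rather than reprove everything, is the sharp \emph{upper} bound $\dim L \le \rho(d/2)+1$: this is genuinely equivalent (via the span of the sphere $S^{d-1}$) to Adams's solution of the vector fields on spheres problem, which requires real $K$-theory and the $e$-invariant and is not something to reproduce here. By contrast, the lower bound (the Clifford construction) and the reduction to the identity-containing case are elementary and I would write those out; the passage from ``invertible symmetric'' to ``anticommuting skew'' is a short linear-algebra lemma about the pencil $\det(\lambda A + \mu B)$ having no real roots forcing the normalized product $A^{-1}B$ to have no real eigenvalues. So the structure is: (i) normalize $I \in L$; (ii) elementary lemma turning $L_0$ into an anticommuting orthogonal skew family; (iii) cite Adams/Hurwitz--Radon (i.e. \cite{alp65}) for the exact count $\rho(d/2)+1$; (iv) give the explicit Clifford-module example attaining it.
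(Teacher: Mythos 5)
Your proposal treats this theorem as a black-box citation of \cite{alp65} for the hard upper bound, which is fair and is exactly what the paper does (it does not re-prove Theorem~\ref{thm:Radon_Hurwitz}; it cites \cite{alp65} and sketches the proof structure and the explicit lower-bound construction in the remark that follows). So the philosophy matches. However, the parts you do fill in have genuine errors.

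The most serious is the lower-bound construction. You propose the subspace spanned by $\{I, Je_1, \dots, Je_{\rho(d/2)-1}\}$, but this cannot work: first, it is an off-by-one ($\rho(d/2)$ matrices, not $\rho(d/2)+1$), and second, \emph{no} extremal subspace can contain $I$. Already for $d=2$ one needs dimension $\rho(1)+1 = 2$, but if $I \in L$ and $B \in L$ is symmetric and not a scalar multiple of $I$, then $B$ has a real eigenvalue $\lambda$ and $\lambda I - B$ is a nonzero singular element of $L$. (This also undercuts the very first reduction you attempt in paragraph one, before you switch to normalizing to a signature matrix $J$.) Consistent with this, your invertibility check is wrong: if $A_i \coloneqq Je_i$ are symmetric, orthogonal, and pairwise anticommuting, then $(\lambda_0 I + \sum_i \lambda_i A_i)^2 = (\lambda_0^2 + \sum_i \lambda_i^2)I + 2\lambda_0\sum_i \lambda_i A_i$, and the cross term does not vanish when $\lambda_0 \neq 0$ — the square is not scalar precisely because $I$ commutes with everything rather than anticommuting with the other generators. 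The construction the paper records (following \cite{causin2007real}) avoids both issues by working in $2\times 2$ block form with the $\rho(d/2)+1$ traceless symmetric matrices
\[
\begin{pmatrix} 0 & M_i\\ M_i^T & 0\end{pmatrix}\ (1 \leq i \leq \rho(d/2)-1), \quad
\begin{pmatrix} 0 & I\\ I & 0\end{pmatrix}, \quad
\begin{pmatrix} I & 0\\ 0 & -I\end{pmatrix},
\]
where the $M_i \in {\rm O}(d/2,\R)$ are skew-symmetric with $M_i^2 = -I$ and $M_iM_j + M_jM_i = 0$ for $i\neq j$. These generators pairwise anticommute and each squares to $+I$, so any nonzero linear combination $M$ satisfies $M^2 = (\sum c_i^2)I$, which gives invertibility cleanly; notice $I$ itself is not in this span (all generators are traceless). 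Finally, after normalizing to $J \in L$ and passing to $J$-self-adjoint operators, you invoke ``a standard argument'' to produce anticommuting orthogonal complex structures; this is not a short linear-algebra lemma but essentially the content of \cite{alp65} (which, as the paper's remark explains, proceeds by tensoring with Cayley numbers to reduce to the $\R^{8d\times 8d}$ case and then applying Adams). If you intend to cite \cite{alp65} for the upper bound, it is cleaner to cite it for this reduction as well rather than leave the gap unmarked.
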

	\begin{remark}\label{rmk:partial_d_1_construct}
		In \cite{alp65}, the upper bound for $D$ is proved by first reducing to the $\R^{8d \times 8d}$ case through tensor product with Cayley numbers and then applying the results in \cite{adams1962}, which relates the problem to linearly independent vector fields on the unit sphere. For the lower bound, \cite{alp65} reduces it to the $\R^{\frac{d}{2}\times\frac{d}{2}}$ case, and then still applies known results in \cite{adams1962}. For a more direct proof of the lower bound by explicitly constructing the subspace in ${\rm S}(d,\R)$, one may consult Proposition {\rm3.2.9} in \cite{causin2007real}, which is actually stronger in that it works for all ``constant rank'' cases {\rm(}not necessarily full rank{\rm)}. The key point is, for every even $d$, by using ``Radon-Hurwitz system'' of order $\frac{d}{2}$, we can find $\{M_1,\dots,M_{\rho(\frac{d}{2})-1}\} \subset {\rm O}\big(\frac{d}{2},\R\big)$ satisfying the ``Clifford relations'':
		\[
		M_iM_j + M_jM_i = 0, \,\forall i\neq j, \quad M_i^2 = -I, \quad M_i^T = -M_i.
		\]
		Then we can complete the construction by taking 
		\begin{align*}
			\begin{pmatrix}
				0 & M_i\\
				M_i^T & 0
			\end{pmatrix},\, 1 \leq i \leq \rho\Big(\frac{d}{2}\Big)-1, \quad 
			\begin{pmatrix}
				0 & I\\
				I & 0
			\end{pmatrix},
			\begin{pmatrix}
				I & 0\\
				0 & -I
			\end{pmatrix}.
		\end{align*}
	\end{remark}
	
	Here and henceforth, in the statements we omit ``for all $n$-tuple quadratic forms $\mathbf{Q}$ in $d$ variables'', but the reader should keep such a setting in mind. By the connection to $\mathfrak{d}_{d,1}(\mathbf{Q})$ as previously explained, Theorem~\ref{thm:Radon_Hurwitz} immediately yields:
	\begin{corollary}\label{cor:Radon_Hurwitz}
		If $n \geq \rho(\frac{d}{2}) + 2$, then $\mathfrak{d}_{d,1}(\mathbf{Q}) \leq d-1$.
	\end{corollary}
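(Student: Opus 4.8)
The plan is to obtain Corollary~\ref{cor:Radon_Hurwitz} as an immediate consequence of Theorem~\ref{thm:Radon_Hurwitz}, the bridge being the algebraic characterization of $\mathfrak{d}_{d,1}(\mathbf{Q})$ proved in Lemma~\ref{lem:alg_char_m}. Recall from (the proof of) that lemma that
\[
\mathfrak{d}_{d,1}(\mathbf{Q}) \;=\; \inf_{x'' \in \R^n\setminus\{0\}} \rank\big(\overline{Q}(x'')\big), \qquad \overline{Q}(x'') = \sum_{j=1}^n x_{d+j}\,\nabla_\xi^2 Q_j,
\]
so $\mathfrak{d}_{d,1}(\mathbf{Q}) = d$ holds if and only if $\overline{Q}(x'')$ is invertible for every $x''\in\R^n\setminus\{0\}$. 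Since $\mathfrak{d}_{d,1}(\mathbf{Q})\leq d$ always, proving the corollary amounts to showing that $\mathfrak{d}_{d,1}(\mathbf{Q}) = d$ is impossible when $n \geq \rho\big(\tfrac{d}{2}\big) + 2$.

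First I would argue by contradiction: suppose $\mathfrak{d}_{d,1}(\mathbf{Q}) = d$. Then, by the displayed identity, $\overline{Q}(x'')$ is invertible for all $x'' \neq 0$. In particular the Hessians $\nabla_\xi^2 Q_1,\dots,\nabla_\xi^2 Q_n$ are linearly independent in ${\rm S}(d,\R)$: a nontrivial vanishing linear combination would be $\overline{Q}(x'')$ for some nonzero $x''$, contradicting invertibility. Hence $L \coloneqq \mathrm{span}\{\nabla_\xi^2 Q_j\}_{j=1}^n$ is an $n$-dimensional subspace of ${\rm S}(d,\R)$, and every nonzero element of $L$ — being of the form $\overline{Q}(x'')$ with $x'' \neq 0$ — is invertible. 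Now Theorem~\ref{thm:Radon_Hurwitz} caps the dimension of any such subspace at $\rho\big(\tfrac{d}{2}\big) + 1$, so $n \leq \rho\big(\tfrac{d}{2}\big) + 1$, contradicting the hypothesis. Therefore $\mathfrak{d}_{d,1}(\mathbf{Q}) \leq d-1$.

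There is essentially no obstacle in this argument: all the genuine difficulty is packaged inside Theorem~\ref{thm:Radon_Hurwitz} (the Adams–Lax–Phillips theorem), which we are free to invoke as a black box, and the only point needing a line of care is excluding the degenerate case of linearly dependent Hessians, handled above. Equivalently, one could phrase the whole thing contrapositively via the general principle recorded just before Question~\ref{ques:partial_d_1}: a subspace of ${\rm S}(d,\R)$ whose nonzero members all have rank $\geq r$ has dimension at most the maximal dimension from Question~\ref{ques_max_dim}, so $n$ exceeding that bound by $1$ forces $\mathfrak{d}_{d,1}(\mathbf{Q}) \leq r-1$; specializing to $r = d$, where the maximal dimension equals $\rho\big(\tfrac{d}{2}\big) + 1$ by Theorem~\ref{thm:Radon_Hurwitz}, yields the corollary verbatim.
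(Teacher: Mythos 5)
Your argument is correct and is exactly the route the paper takes: use Lemma~\ref{lem:alg_char_m} to identify $\mathfrak{d}_{d,1}(\mathbf{Q})=d$ with the condition that every nonzero element of $\mathrm{span}\{\nabla_\xi^2 Q_j\}$ is invertible, then invoke Theorem~\ref{thm:Radon_Hurwitz} to cap the dimension of such a subspace. The only detail you spell out that the paper leaves implicit — that the Hessians are automatically linearly independent in this situation, so the span really has dimension $n$ — is handled correctly.
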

	
	\begin{remark}\label{rmk:rho_growing_order}
		By digesting the definition one can easily see that $\rho(\frac{d}{2})$ grows approximately like $\log_2 d$, which is much smaller than $d$ for large $d$, so the intuition behind Corollary~{\rm\ref{cor:Radon_Hurwitz}} is that $\mathfrak{d}_{d,1}(\mathbf{Q}) \leq d-1$ ``almost always'' happens for general $d$ and $n$. In other words, $\mathfrak{d}_{d,1}(\mathbf{Q}) = d$ is such a strong nondegeneracy requirement that it is rarely met.
	\end{remark}
	
	Once $\mathfrak{d}_{d,1}(\mathbf{Q}) \leq d-1$ is well understood, it is natural to ask when  $\mathfrak{d}_{d,1}(\mathbf{Q}) \leq d-2$ or $\mathfrak{d}_{d,1}(\mathbf{Q}) \leq d-3$ hold. By the results in \cite{fl99} and \cite{fll02}, we are able to obtain such properties of $\mathfrak{d}_{d,1}(\mathbf{Q})$ in a rather straightforward way. We summarize all the conclusions as follows:
	\begin{theorem}\label{thm:partial_d_1_property}
		For any $k,d\in\N$ with $k < d$, define\footnote{This is just the degree of the projective variety formed by all matrices in $\text{S}(d,\C)$ of rank $k$ or less (see  \cite{harris1984} for the proof of this fact).} $\delta_{k,d} \coloneqq \prod_{j=0}^{d-k-1}\binom{d+j}{d-k-j}/\binom{2j+1}{j}$. Then the following properties hold:

        \noindent {\rm (1)} If $\delta_{k,d}$ is odd and $n \geq \binom{d-k+1}{2}+1$, then $\mathfrak{d}_{d,1}(\mathbf{Q}) \leq k$.

        \noindent {\rm (2)} {\rm(}Consequence of {\rm(1))} If $d>q\geq 1$ and $d\equiv \pm q (\mathrm{mod}\,\, 2^{\lceil \log_2(2q)\rceil})$, then $\delta_{d-q,d}$ is odd, and so $n \geq \binom{q+1}{2}+1$ implies $\mathfrak{d}_{d,1}(\mathbf{Q}) \leq d-q$.

        \noindent {\rm (3)} {\rm(}Consequence of {\rm(2))} If $d \equiv 2 (\mathrm{mod}\,\,4), n\geq 4$, then $\mathfrak{d}_{d,1}(\mathbf{Q}) \leq d-2$.

        \noindent {\rm (4)} {\rm(}Consequence of {\rm(2))} If $d \equiv 3,5 (\mathrm{mod}\,\,8), n\geq 7$, then $\mathfrak{d}_{d,1}(\mathbf{Q}) \leq d-3$. {\rm(}This actually implies that if $d \equiv 2,4 (\mathrm{mod}\,\,8), n\geq 7$, then $\mathfrak{d}_{d,1}(\mathbf{Q}) \leq d-2$.{\rm)}

        \noindent {\rm (5)} If $d \equiv 0 (\mathrm{mod}\,\,4), n\geq d+1$, then $\mathfrak{d}_{d,1}(\mathbf{Q}) \leq d-2$.

        \noindent {\rm (6)} If $\xi_1^2 + \cdots + \xi_j^2 \in \mathbf{Q}(\xi)$ for some $1\leq j \leq d$ up to linear transformation, then $d \geq 4, n \geq \sigma(d) +2$ implies $\mathfrak{d}_{d,1}(\mathbf{Q}) \leq d-2$. Here
			\begin{align*}
				\sigma(d) = 
				\begin{cases}
					2, &{\rm~ if~ } d \not\equiv 0,\pm 1 (\mathrm{mod}\,\, 8);\\
					\rho(4b), &{\rm ~if~ } d = 8b,8b\pm1 {\rm~ with~ } b \in\N^+.
				\end{cases}
			\end{align*}
	\end{theorem}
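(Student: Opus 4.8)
The plan is to deduce Theorem~\ref{thm:partial_d_1_property} entirely by translating known results in applied linear algebra (the "constant rank" / "bounded rank" literature) into statements about $\mathfrak{d}_{d,1}(\mathbf{Q})$, using the elementary observation that $\mathfrak{d}_{d,1}(\mathbf{Q}) < r$ holds for \emph{all} $n$-tuples $\mathbf{Q}$ of quadratic forms in $d$ variables whenever $n$ exceeds the maximal dimension $D$ of a subspace $L \subset {\rm S}(d,\R)$ on which every nonzero matrix has rank $\geq r$. Indeed, if $\mathfrak{d}_{d,1}(\mathbf{Q}) \geq r$ for some $\mathbf{Q}$, then unfolding Definition~\ref{s1d1} (and using the identification in Lemma~\ref{lem:alg_char_m} relating $\mathfrak{d}_{d,1}$ to ranks of $\overline{Q}(\theta) = \sum_j \theta_j \nabla_\xi^2 Q_j$) shows that the span of $\{\nabla_\xi^2 Q_1,\dots,\nabla_\xi^2 Q_n\}$ is an $n$-dimensional subspace of ${\rm S}(d,\R)$ on which every nonzero element has rank $\geq r$; hence $n \leq D$. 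So each part of the theorem will follow once we quote the correct upper bound for $D$ with $r = k$ (equivalently $q = d-k$).

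For part~(1), the bound is $D \leq \binom{d-k+1}{2}$ whenever $\delta_{k,d}$ is odd; this is exactly the content of the main theorem in \cite{fl99}, whose proof uses the fact that $\delta_{k,d}$ is the degree of the determinantal variety of $d\times d$ symmetric complex matrices of rank $\leq k$ (computed in \cite{harris1984}), combined with a topological/K-theoretic argument showing that when this degree is odd, a real symmetric subspace avoiding that variety (except at $0$) cannot be too large. I would simply cite \cite{fl99} for this; no reproof is needed. Then parts~(2)--(6) are purely arithmetic consequences: part~(2) is the observation (going back to \cite{fll02}) that the $2$-adic valuation calculation makes $\delta_{d-q,d}$ odd precisely when $d \equiv \pm q \pmod{2^{\lceil \log_2(2q)\rceil}}$, which I would verify by a short Kummer/Legendre-formula computation of $v_2$ of the product defining $\delta_{d-q,d}$; parts~(3), (4) are the special cases $q = 2$ and $q = 3$ of part~(2), where one checks the modular condition $2^{\lceil \log_2 4\rceil} = 4$ resp. $2^{\lceil \log_2 6\rceil} = 8$, and part~(4)'s parenthetical remark follows because $\mathfrak{d}_{d,1}(\mathbf{Q}) \leq d-3 < d-2$ automatically. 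Part~(5) requires a different input: when $d \equiv 0 \pmod 4$ the relevant bound $D \leq d$ for corank-$2$ subspaces is in \cite{fll02} (or \cite{causin2007real}), obtained again via vector bundle / characteristic class obstructions; I would quote it directly.

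Part~(6) is the one genuinely extra ingredient: it adds the hypothesis that (up to linear equivalence) $\mathbf{Q}$ contains a positive-definite form $\xi_1^2+\cdots+\xi_j^2$ among its components, or more precisely that some nonzero $\overline{Q}(\theta)$ is positive \emph{semi}definite on a $j$-dimensional subspace. The plan is to reduce to the \emph{affine} (rather than linear) bounded-rank problem: after the linear change of variables putting one component into the standard positive-semidefinite shape, the remaining forms live in an affine slice, and then one invokes the sharp affine constant-rank results — the relevant statement is that a subspace of ${\rm S}(d,\R)$ containing a positive-definite matrix and of corank $\leq 2$ has dimension $\leq \sigma(d)+1$, where $\sigma$ is built from Radon--Hurwitz numbers $\rho(4b)$ much as in Theorem~\ref{thm:Radon_Hurwitz} and Remark~\ref{rmk:partial_d_1_construct}. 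This is in the spirit of \cite{pazzis2016affine}; I would cite it and then translate via the $\mathfrak{d}_{d,1}$-dictionary as above. I expect the main obstacle to be bookkeeping: making sure the "$+1$" / "$+2$" offsets are consistent between the three conventions in play (dimension of the avoiding subspace, the codimension parameter $q = d-k$, and the strict-versus-non-strict inequality $\mathfrak{d}_{d,1}(\mathbf{Q}) < r$ vs $\leq k$), and correctly extracting the odd-degree condition from Kummer's theorem in part~(2). None of this is deep, but it is error-prone, so I would write out the $v_2(\delta_{d-q,d})$ computation in full and state the dictionary lemma as a standalone claim before dispatching (1)--(6) in sequence.
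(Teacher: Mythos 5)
Your overall strategy — reduce to the ``type (3)'' subspace dimension bounds from the applied linear algebra literature via the dictionary $\mathfrak{d}_{d,1}(\mathbf{Q})\geq r \iff \text{the span of }\{\nabla_\xi^2 Q_j\}_j\text{ is $n$-dimensional with every nonzero element of rank}\geq r$ — is exactly what the paper does, and the paper itself provides no more detailed argument than ``translate \cite{fl99} and \cite{fll02} and compute.'' Your dictionary lemma is stated correctly, and your treatment of (1), (2), (3), (5) matches what the paper intends.

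However, your explanation of the parenthetical remark in (4) is wrong. You write that it ``follows because $\mathfrak{d}_{d,1}(\mathbf{Q}) \leq d-3 < d-2$ automatically,'' but the parenthetical concerns a \emph{different} residue class: the main clause of (4) covers $d\equiv 3,5\pmod 8$, whereas the parenthetical covers $d\equiv 2,4\pmod 8$, which is disjoint. The correct deduction requires an embedding argument: given $d\equiv 2,4\pmod 8$ and an $n$-dimensional subspace $L\subset {\rm S}(d,\R)$ of Hessians with all nonzero elements of rank $\geq d-1$, pad each matrix with a zero row and column to obtain an $n$-dimensional subspace of ${\rm S}(d+1,\R)$ on which every nonzero element has rank $\geq d-1 = (d+1)-2$; since $d+1\equiv 3,5\pmod 8$, part (4) forces $n\leq 6$, giving the contrapositive. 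Without this step, the parenthetical is unsupported. A second, smaller issue is the citation for (6): you invoke \cite{pazzis2016affine}, but that reference treats the ``bounded rank $\leq r$'' (type (1)) problem, not the constant-corank-with-positive-definite problem; the intended source is closer to \cite{fll02}. You should also be careful that (6)'s hypothesis allows $j=d-1$ (a rank-$(d-1)$ positive semidefinite element rather than a positive definite one), which is not instantly covered by quoting a theorem about subspaces containing the identity; either show $j\leq d-2$ is trivial and $j=d$ is the quoted theorem, then separately reduce $j=d-1$, or find a reference that handles positive semidefinite rank-$(d-1)$ elements directly.
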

	One can see that the properties of $\mathfrak{d}_{d,1}(\mathbf{Q})$ may depend on some number theoretic characteristics of $d$ and $n$.
	
	Moreover, \cite{fll02} shows that
	\begin{proposition}
		Suppose $d$ is odd. Then there exists a $(d+1)$-dimensional subspace $L$ of ${\rm S}(d,\R)$ such that for any $ A\in L\setminus\{0\}$, we have $\rank(A) \geq d-1$. 
	\end{proposition}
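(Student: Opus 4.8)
The plan is to give an explicit construction of a $(d+1)$-dimensional subspace $L \subset {\rm S}(d,\R)$ (with $d$ odd) on which every nonzero matrix has rank $\geq d-1$. The natural idea is to build $L$ from a near-full-rank construction of even size and then restrict. Concretely, since $d$ is odd, $d+1$ is even, and Theorem~\ref{thm:Radon_Hurwitz} (together with the explicit ``Clifford relations'' construction recalled in Remark~\ref{rmk:partial_d_1_construct}) provides a $\big(\rho\big(\tfrac{d+1}{2}\big)+1\big)$-dimensional subspace of ${\rm S}(d+1,\R)$ consisting of invertible matrices apart from $0$. Unfortunately $\rho\big(\tfrac{d+1}{2}\big)+1$ can be much smaller than $d+1$, so this alone is far from enough; one instead wants a construction that is tailored to rank $\geq d-1$ rather than rank $= d$, and works in dimension $d$ directly.

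The approach I would actually pursue is to produce the subspace by hand using a Radon–Hurwitz-type system of the right order. For $d$ odd write $d = 2m+1$. The key observation is that a symmetric $d\times d$ matrix $A$ drops rank by at most $2$ as soon as some fixed $(d-1)\times(d-1)$ symmetric minor is invertible and the failure is confined to a one-dimensional direction; more robustly, we can arrange $A$ in a block form where a large even block is handled by a Clifford/Radon–Hurwitz family and the remaining $1$-dimensional piece is controlled separately. Precisely, I would take matrices of the shape
\begin{align*}
    A(x,s) =
    \begin{pmatrix}
        B(x) & v(x) \\
        v(x)^T & s
    \end{pmatrix},
\end{align*}
where $B(x)$ ranges over a $\dim$-maximal constant-rank family in ${\rm S}(2m,\R)$ built from a Radon–Hurwitz system of order $m$ (this gives $\rho(m)+1$ parameters by Proposition~3.2.9 in \cite{causin2007real}), $v(x)$ is a linear function chosen so that $v(x)$ lies outside the kernel of $B(x)$ whenever $B(x)$ is singular, and $s$ is the last free scalar parameter. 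Counting parameters, one then needs $\rho(m) + 1 + (\text{extra from }v) + 1 = d+1 = 2m+2$, and the point is to show the $v$-block can legitimately contribute the missing parameters while preserving rank $\geq 2m$; this is exactly the kind of bookkeeping done in \cite{fll02}, so I would follow that reference's construction and verify the rank bound directly.

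The rank verification is the heart of the argument: for a nonzero $A = B + (\text{border terms})$ in $L$, if $B$ itself is invertible then $\rank A \geq \rank B - 2 \cdot(\text{corank of the bordered part})$ bounds are too crude, so instead one argues that the kernel of $A$ must project injectively into the kernel of $B$ modulo the span of the border directions, forcing $\dim\ker A \leq \dim\ker B + 1 \leq 1 + 1 = 2$ in the worst case; and when $B = 0$ one checks the pure border matrix has rank $2$ (which is $\geq d-1$ only if $d \leq 3$, so one must actually ensure $B=0$ forces all parameters to vanish, i.e.\ the border directions are genuinely independent of the $B$-family). Making these corank estimates airtight — in particular ruling out the simultaneous degeneration of $B$ and $v$, which is where the Clifford relations $M_iM_j+M_jM_i=0$, $M_i^2=-I$, $M_i^T=-M_i$ get used — is the main obstacle. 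I expect the cleanest route is simply to quote the explicit subspace exhibited in the proof of Proposition~4.x of \cite{fll02} and translate its rank statement into our language; a fully self-contained construction would require reproving the relevant topological/Clifford-algebra input about $\rho(m)$, which is exactly the step I would rather cite than grind through.
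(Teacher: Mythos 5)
The paper itself supplies no proof of this proposition: it is quoted directly from~\cite{fll02}, so there is no internal argument against which to compare, and your closing move of simply citing~\cite{fll02} is exactly what the paper does.

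Your intermediate sketch, however, cannot be repaired as stated. Write $d=2m+1$ and suppose a $(d+1)$-dimensional subspace $L\subset{\rm S}(d,\R)$ is built from bordered matrices
\[
A=\begin{pmatrix}B&v\\ v^T&s\end{pmatrix},
\]
with the $B$-block drawn from a Radon--Hurwitz family of dimension $\rho(m)+1$ in ${\rm S}(2m,\R)$. The linear projection $L\to{\rm S}(2m,\R)$, $A\mapsto B$, then has rank at most $\rho(m)+1$, so its kernel inside $L$ has dimension at least $(d+1)-(\rho(m)+1)=d-\rho(m)$, which is $\geq 1$ for every $d$ and in fact grows linearly in $d$ since $\rho(m)=O(\log m)$. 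Every nonzero matrix in that kernel has $B=0$ and hence $\rank\leq 2$, violating $\rank\geq d-1$ as soon as $d\geq 5$. This is precisely the obstruction you flagged yourself (``the pure border matrix has rank $2$''), but the fix you proposed is self-contradictory: if the border parameters $(v,s)$ are free then the bad kernel above is nontrivial and contains rank-$\leq 2$ elements, while if $(v,s)$ are linearly determined by $B$ so that $B=0\Rightarrow A=0$, then $\dim L\leq\rho(m)+2 = O(\log d)$, far below $d+1$. The bordered--Radon--Hurwitz strategy therefore cannot reach dimension $d+1$; the actual construction in~\cite{fll02} must be tailored to the weaker condition $\rank\geq d-1$ from the start (exploiting that corank $\geq 2$ has codimension $3$ in ${\rm S}(d,\R)$), rather than being boosted from the full-rank Clifford construction by a border.
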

	This means that there is no way to expect a ``logarithmic law'' for ``$\mathfrak{d}_{d,1}(\mathbf{Q}) \leq d-2$'' as that of ``$\mathfrak{d}_{d,1}(\mathbf{Q})\leq d-1$'' as explained in Remark~\ref{rmk:rho_growing_order}. 
	
	As a final remark, we emphasize that all the results in this appendix indicate the complexity of $\mathfrak{d}_{d',n'}(\mathbf{Q})$: Even for $\mathfrak{d}_{d,1}(\mathbf{Q})$, the complexity will be at least as that of (3) in Question~\ref{ques_max_dim} (or Question~\ref{ques:partial_d_1}), which is a wide-open algebraic problem. However, it is still possible to obtain good results for small $d$ and $n$. For example, when $d+n\leq 5$, by direct computation, we have a complete classification of all $\mathbf{Q}$'s (Lemma~\ref{addth2}).
	
	All the properties of $\mathfrak{d}_{d,1}(\mathbf{Q})$ can be easily translated into the corresponding results in any of the four contexts mentioned at the beginning, and we omit these trivial discussions.

\subsection*{Acknowledgements} 
The authors would like to thank Philip T. Gressman and Yumeng Ou for helpful comments on an early manuscript.

\bibliographystyle{plain}
\bibliography{mybibfile}

\end{document}